\definecolor{dgreen}{RGB}{0,180,0}
\theoremstyle{plain}
\newtheorem{thm}{Theorem}[section]
\newtheorem{lem}[thm]{Lemma}
\newtheorem{prop}[thm]{Proposition}
\def\@rst #1 #2other{#1}
\newcommand\MR[1]{\relax\ifhmode\unskip\spacefactor3000 \space\fi
  \MRhref{\expandafter\@rst #1 other}{#1}}
\newcommand{\MRhref}[2]{\href{http://www.ams.org/mathscinet-getitem?mr=#1}{MR#2}}
\theoremstyle{definition}
\newtheorem{defn}[thm]{Definition}
\newtheorem{remark}[thm]{Remark}
\newtheorem{prob}[thm]{Problem}
\numberwithin{equation}{section}
\newcommand{\dsb}{\begin{adjustwidth}{2.5em}{0pt}
\begin{footnotesize}}
\newcommand{\dse}{\end{footnotesize}
\end{adjustwidth}}
\newcommand{\ssb}{\begin{adjustwidth}{2.5em}{0pt}}
\newcommand{\sse}{\end{adjustwidth}}
\newcommand{\aryb}{\begin{eqnarray*}}
\newcommand{\arye}{\end{eqnarray*}}
\def\alb#1\ale{\begin{align*}#1\end{align*}}
\def\allb#1\alle{\begin{align}#1\end{align}}
\newcommand{\eqb}{\begin{equation}}
\newcommand{\eqe}{\end{equation}}
\newcommand{\eqbn}{\begin{equation*}}
\newcommand{\eqen}{\end{equation*}}
\newcommand{\BB}{\mathbbm}
\newcommand{\ol}{\overline}
\newcommand{\ul}{\underline}
\newcommand{\op}{\operatorname}
\newcommand{\frk}{\mathfrak}
\newcommand{\eqD}{\overset{d}{=}}
\newcommand{\ep}{\epsilon}
\newcommand{\rta}{\rightarrow}
\newcommand{\wt}{\widetilde}
\newcommand{\wh}{\widehat} 
\newcommand{\mcl}{\mathcal}
\newcommand{\bdy}{\partial}
\newcommand{\tr}{{\mathrm{tr}}}
\newcommand{\SLE}{\mathrm{SLE}}
\let\originalleft\left
\let\originalright\right
\renewcommand{\left}{\mathopen{}\mathclose\bgroup\originalleft}
\renewcommand{\right}{\aftergroup\egroup\originalright}
\title{The Tutte embedding of the Poisson-Voronoi tessellation of the Brownian disk converges to $\sqrt{8/3}$-Liouville quantum gravity}
\date{  }
\author{
\begin{tabular}{c} Ewain Gwynne\\[-5pt]\small Cambridge \end{tabular}
\begin{tabular}{c} Jason Miller\\[-5pt]\small Cambridge \end{tabular}
\begin{tabular}{c} Scott Sheffield\\[-5pt]\small MIT \end{tabular}
}
\begin{document}

\maketitle

\begin{abstract}
Recent works have shown that an instance of a Brownian surface (such as the Brownian map or Brownian disk) a.s.\ has a canonical conformal structure under which it is equivalent to a $\sqrt{8/3}$-Liouville quantum gravity (LQG) surface.  In particular, Brownian motion on a Brownian surface is well-defined.  The construction in these works is indirect, however, and leaves open a basic question: is Brownian motion on a Brownian surface the limit of simple random walk on increasingly fine discretizations of that surface, the way Brownian motion on $\mathbb R^2$ is the $\epsilon \to 0$ limit of simple random walk on $\epsilon \mathbb Z^2$?

We answer this question affirmatively by showing that Brownian motion on a Brownian surface
is (up to time change) the $\lambda \to \infty$ limit of simple random walk
on the Voronoi tessellation induced by 
a Poisson point process whose intensity is $\lambda$ times the associated area measure.  
Among other things, this implies that as $\lambda \to \infty$ the
{\em Tutte embedding} (a.k.a.\ {\em harmonic embedding}) of the discretized Brownian disk converges to the canonical conformal embedding of the continuum Brownian disk, which in turn corresponds to $\sqrt{8/3}$-LQG. 
 
Along the way, we obtain other independently interesting facts about conformal embeddings of Brownian surfaces, including information about the Euclidean shapes of embedded metric balls and Voronoi cells. For example, we derive moment estimates that imply, in a certain precise sense, that these shapes are unlikely to be very long and thin.
\end{abstract}

\tableofcontents

\section{Main results}
\label{sec-intro}

\subsection{Overview}
\label{sec-overview}

This paper concerns relationships between several different topics, including Liouville quantum gravity and the Brownian map.  Let us begin by briefly reviewing the objects under consideration.

A planar map is a graph together with an embedding into the plane so that no two edges cross.  Two planar maps are considered to be equivalent if they differ by an orientation preserving homeomorphism of the plane. The study of planar maps goes back to work of Tutte \cite{tutte} and Mullin \cite{mullin-maps} from the 1960s.  A planar map can be viewed as a metric measure space by equipping it with the graph distance and assigning each vertex one unit of mass.  In recent years, there has been considerable progress in studying the large scale metric behavior of planar maps chosen uniformly at random.  Of particular relevance to the present article are the scaling limit results which give the convergence of uniformly random planar maps towards a continuous object in the Gromov-Hausdorff-Prokhorov topology.  The first results of this type were due to Le Gall \cite{legall-uniqueness} and Miermont \cite{miermont-brownian-map} which are both focused on random planar maps with the sphere topology.  In this case, the limiting object is a random metric measure space with the topology of the sphere \cite{legall-paulin-tbm,miermont-sphere} called the \emph{Brownian map}, which was first defined (in different forms) in~\cite{marc-mokk-tbm,legall-topological}, building on~\cite{cv-bijection,schaeffer-bijection,cs-superbrownian-excursion}.  The works \cite{legall-uniqueness,miermont-brownian-map} have since been extended to uniformly random planar maps with several other topologies, including the disk \cite{bet-mier-disk,gwynne-miller-simple-quad}, the plane \cite{curien-legall-plane}, and the half-plane \cite{bmr-uihpq,gwynne-miller-uihpq}.  The limiting objects that one obtains are collectively known as \emph{Brownian surfaces}.

The aforementioned limit theorems are concerned with the metric measure space structure of large uniformly random planar maps, but not how they are embedded into the plane.  However, it is an important problem to understand scaling limits for canonical embeddings of random planar maps.  This is motivated in part by a desire to better understand the relationship between statistical mechanics models (e.g., percolation, self-avoiding walks, the Ising model) on random planar maps and their counterparts on planar lattices.  It has long been believed that if the embedding is of a conformal type (e.g., Riemann uniformization, circle packing, or the Tutte embedding we consider here), then the large scale geometry of the statistical mechanics model should be the same as if it were considered on a planar lattice.  This idea underlies the famous KPZ relation~\cite{kpz-scaling,shef-kpz}, which serves to convert critical exponents computed on a random geometry to the corresponding exponents on a deterministic geometry, and has been used numerous times to give predictions for exponents for critical lattice models which were later verified using $\SLE$ techniques (e.g., \cite{duplantier-bm-exponents, lsw-bm-exponents1, lsw-bm-exponents2, lsw-bm-exponents3}).

Liouville quantum gravity (LQG) is another theory of random surfaces which was introduced by Polyakov \cite{polyakov-qg1,polyakov-qg2} in the 1980s in the context of string theory.  To define LQG, one starts with a (form of) the Gaussian free field (GFF) $h$ on a domain $\mcl D$ and then considers the random two-dimensional Riemannian manifold with metric tensor
\begin{equation}
\label{eqn:lqg_def}
e^{\gamma h(z)} (dx^2 + dy^2)
\end{equation}
where $\gamma \in (0,2]$ is a parameter.  This definition does not make rigorous mathematical sense since~$h$ is a distribution and not a function.  Making rigorous sense of various aspects of LQG has been a major topic of research in recent years. Some of this work builds on \cite{shef-kpz}, which constructs the volume form associated with~\eqref{eqn:lqg_def}, which is a random measure $\mu_h$ on $\mcl D$ (see~\cite{kahane,rhodes-vargas-review} for a more general construction of random measures of this type). One can construct various kinds of {\em LQG surfaces} by varying the precise definition of $h$ (what domain it lives on, how boundary conditions are chosen, whether one ``weights'' the law of $h$ in some locally absolutely continuous way). Generally, one writes {\em $\gamma$-LQG} to refer to LQG surfaces with parameter $\gamma$. The special case $\gamma=\sqrt{8/3}$ has long been known to be special: $\sqrt{8/3}$-LQG surfaces, like Brownian surfaces, are related to ``undecorated'' planar maps, and are also called {\em pure} LQG surfaces.

In fact, a recent series of works by the second two authors has shown that the Brownian map and the so-called $\sqrt{8/3}$-LQG sphere are in some sense equivalent (and similar statements can be made for disk, whole-plane, or half-plane variants) \cite{lqg-tbm1,lqg-tbm2,lqg-tbm3}. Although both $\sqrt{8/3}$-LQG spheres and Brownian maps come with a natural measure, they also have additional structure: a Brownian surface has a {\em metric}, and a $\sqrt{8/3}$-LQG surface has a {\em conformal structure} (i.e., an embedding into a flat domain, defined up to conformal automorphism of that domain).  The works \cite{lqg-tbm1,lqg-tbm2,lqg-tbm3} show that each of these objects can be canonically endowed with the other one's structure, and that once this is done the objects agree in law.

We have made the present work as self-contained as possible, which in particular means that the reader is not required to have read the series \cite{lqg-tbm1,lqg-tbm2,lqg-tbm3} relating Brownian and $\sqrt{8/3}$-LQG surfaces, or any papers about quantum Loewner evolution. The results we do need will be recalled and restated.

Very roughly, the argument in \cite{lqg-tbm1,lqg-tbm2,lqg-tbm3} proceeds as follows. First, \cite{lqg-tbm1} uses $h$ to construct a metric $D_h$ on $\mcl D$ using a growth process called \emph{quantum Loewner evolution (QLE)} \cite{qle}.  We will not need to recall the precise construction of $D_h$ or the definition of QLE here. Section~\ref{sec-lqg-metric} contains all of the background on $D_h$ that is necessary to understand this paper. Second, \cite{lqg-tbm2} shows that in the case of the $\sqrt{8/3}$-LQG sphere the corresponding metric measure space agrees in law with the Brownian map.  Thus, by sampling $h$ (which determines the $\sqrt{8/3}$-LQG sphere), and then generating the corresponding Brownian map, one obtains a {\em coupling} of the $\sqrt{8/3}$-LQG sphere and the Brownian map.  Finally, \cite{lqg-tbm3} uses certain ``welding and resampling'' arguments to show that in this coupling, the latter object almost surely determines the former.  In other words, one can recover the $\sqrt{8/3}$-LQG instance (i.e., the distribution $h$) as a measurable function of the corresponding Brownian surface instance.

This measurable function is obtained in a remarkably non-explicit way.  Although the argument tells us that an instance of a Brownian map (or disk, plane, or half-plane) has a canonical {\em conformal structure}, i.e., a canonical embedding into $\BB C$ (or a domain in $\BB C$) defined up to M\"obius transformation, it gives us no way to compute or even approximate that embedding using only information about the ``metric measure space'' structure of the Brownian surface.  Similarly, since a Brownian surface has a canonical conformal structure, we know that Brownian motion on a Brownian surface is well-defined, at least modulo a monotone reparameterization of time.\footnote{We will not consider the time parameterization in this paper, but we note that there is a canonical way to parameterize Brownian motion on a $\sqrt{8/3}$-LQG surface, called \emph{Liouville Brownian motion}~\cite{berestycki-lbm,grv-lbm}. We expect, but do not prove, that Liouville Brownian motion is the scaling limit of the parameterized walk in the setting of Theorem~\ref{thm-rw-conv0}; see Problem~\ref{prob-lbm}.}
But the argument tells us nothing about how to construct that Brownian motion: in particular, it does not tell us whether Brownian motion on a Brownian surface is the limit of simple random walk on some natural increasingly fine discretizations of that surface, the way Brownian motion on $\mathbb R^2$ is the $\epsilon \to 0$ limit of simple random walk on $\epsilon \mathbb Z^2$. 

The purpose of the present work is to construct the conformal structure of a Brownian surface in an {\em explicit} manner. To this end, we will start with an instance of a Brownian surface and then approximate it with the graph of cells associated with the Poisson-Voronoi tessellation.  More precisely, we will fix $\lambda > 0$ and then pick a Poisson point process with intensity measure given by $\lambda$ times the area measure on the Brownian surface.  The {\em cell} corresponding to a given point $x$ in the Poisson point process consists of those points that are at least as close to $x$ as they are to any other point of the Poisson point process (w.r.t.\ the metric on the Brownian surface).  Cells are considered to be adjacent if they have non-empty intersection.

Our main result (Theorem~\ref{thm-rw-conv0}) says that as $\lambda \rta \infty$, the random walk on the adjacency graph of Voronoi cells converges modulo parameterization to a limiting continuous path.  Moreover, this path is a Brownian motion (modulo parameterization) when it is embedded into $\BB C$ using the identification of Brownian surfaces with $\sqrt{8/3}$-LQG.  This gives an intrinsic way of constructing Brownian motion (modulo time parameterization) on a Brownian surface without any reference to LQG: indeed, the Brownian motion on the surface is simply the $\lambda\rta\infty$ limit of the random walk on the Voronoi cells.  

One can define the Tutte embedding of the adjacency graph of cells in terms of hitting probabilities for the simple random walk (in the same way that Riemann uniformization can be defined using the hitting probabilities for Brownian motion, i.e., harmonic measure).  Our results show that this Tutte embedding converges to $\sqrt{8/3}$-LQG in an appropriate sense as $\lambda\rta\infty$ (Theorem~\ref{thm-tutte-conv}).  This in particular gives a new, more explicit, proof of the main result of~\cite{lqg-tbm3}, which says that a $\sqrt{8/3}$-LQG surface is a.s.\ determined by its metric measure space structure.

Finally, we remark on some related works. Voronoi tessellations of Brownian surfaces have also been considered in other contexts: see, e.g.,~\cite{chapuy-voronoi-cells,guitter-voronoi-cells}. These papers raise interesting questions about the law of the partitioning of volume among the Voronoi cells, but they do not consider the adjacency graph of cells as we do here. 

The first two authors in~\cite{gm-uniqueness}, building on~\cite{dddf-lfpp,lqg-metric-estimates,local-metrics,gm-confluence}, recently constructed a metric on a $\gamma$-LQG surface for general $\gamma \in (0,2)$ using a completely different construction from the one in~\cite{lqg-tbm1,lqg-tbm2,lqg-tbm3}. 
It was shown in~\cite{gm-uniqueness} that the two constructions give the same metric. 
This paper will make no use of~\cite{gm-uniqueness}, but many of our estimates for the $\sqrt{8/3}$-LQG metric also work for general $\gamma \in (0,2)$; see the discussion just after Theorem~\ref{thm-tutte-conv} and also Remark~\ref{remark-general-gamma}.
\medskip

\noindent\textbf{Acknowledgements.} 
We thank two anonymous referees for helpful comments on an earlier version of this article.
E.G.\ was supported by a Herchel Smith fellowship and a Trinity College junior research fellowship.
S.S.\ was partially supported by NSF grants DMS-1712862 and DMS-1209044 and a Simons Fellowship with award number 306120.

\begin{figure}[ht!]
\begin{center}
\includegraphics[width=0.7\textwidth]{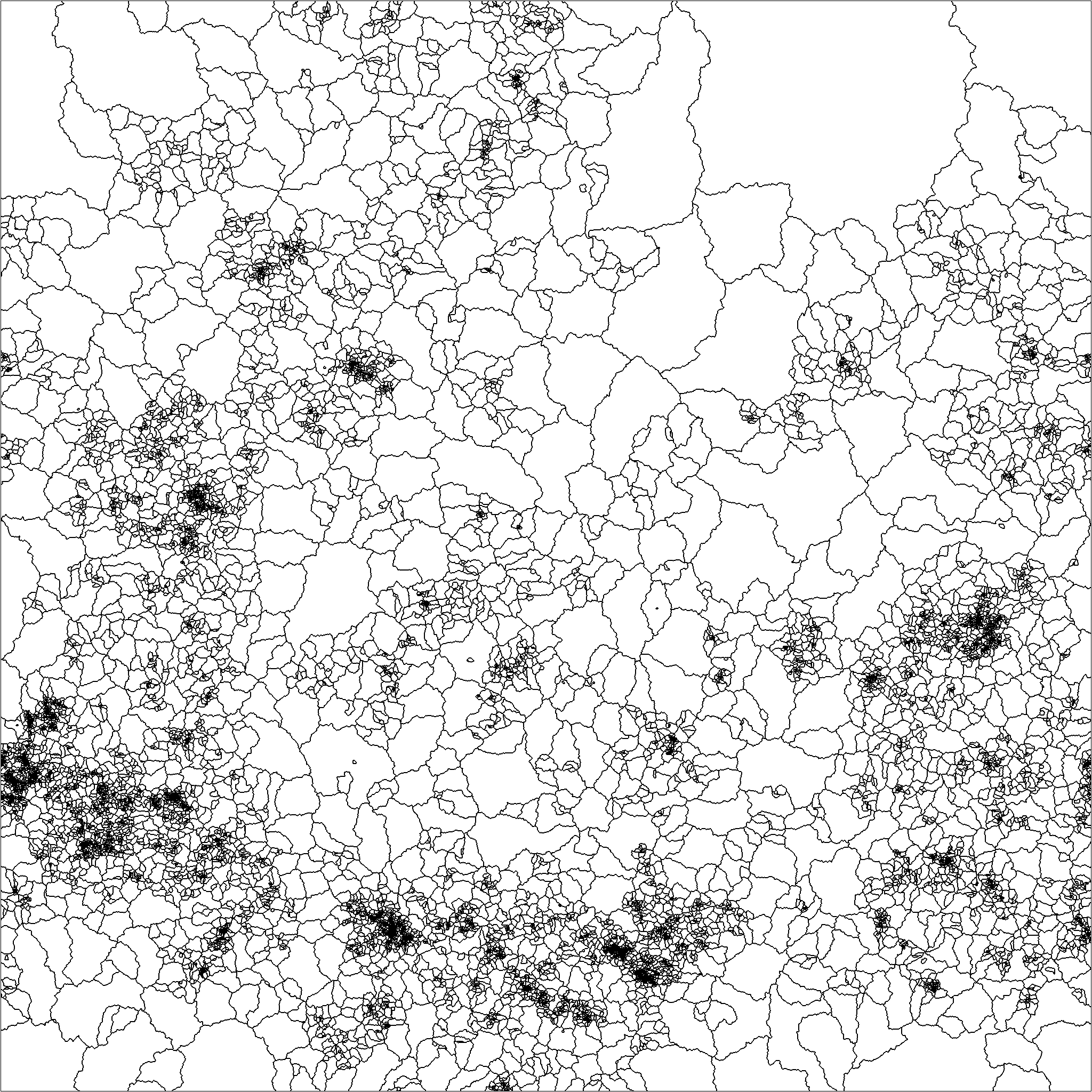}	
\end{center}
\caption{\label{fig:pv_sim} Simulation of the Poisson-Voronoi tessellation of $\sqrt{8/3}$-LQG. Although the cells differ greatly in Euclidean size, they appear to have moderate ``length-to-width ratios'' in the sense that a typical cell contains a Euclidean disk of diameter comparable to its own diameter.  A mathematical version of this observation (see Propositions~\ref{prop-lqg-ball-swallow} and~\ref{prop-lqg-ball-union}) plays a role in the proof that simple random walk on the adjacency graph of cells approximates Brownian motion.
}
\end{figure}

\subsection{Main results}
\label{sec-main-results}

Let $(\mcl X , D, \mu)$ be an instance of the Brownian map, disk, plane, or half-plane, equipped with its metric $D$ and its natural area measure $\mu$. 
Conditional on $(\mcl X,D,\mu )$, let $\mcl P^\lambda$ for $\lambda > 0$ be a Poisson point process on $\mcl X$ with intensity measure $\lambda \mu$.  
In the case of the Brownian map or disk (when $\mu$ is a finite measure) with area equal to $A$ we can equivalently define $\mcl P^\lambda$ as follows: sample $N \sim \op{Poisson}(\lambda A)$, independently from $(\mcl X , D , \mu  )$, and then conditional on $N$ and $(\mcl X , D , \mu  )$ sample $N$ points uniformly and independently from $\mu$. 
 
For $z\in\mcl P^\lambda$, let $H_z^\lambda$ be the \emph{Voronoi cell} which is the closure of the set of points in $\mcl X$ which are $D $-closer to $z$ than to any other point of $\mcl P^\lambda$.  There is a natural graph structure on $\mcl P^\lambda$ whereby $z,w\in\mcl P^\lambda$ are connected by an edge if and only if the cells $H_z^\lambda$ and $H_w^\lambda$ intersect along their boundaries. Equivalently, $z,w\in\mcl P^\lambda$ are joined by an edge if and only if there exists $u\in\mcl X$ such that $D(u,z) = D(u,w)$ and $D(u,x) \geq D(u,z)$ for each $x\in\mcl P^\lambda\setminus \{z,w\}$.  In the case of the Brownian disk or half-plane, we define $\bdy \mcl P^\lambda$ to be the set of $z\in\mcl P^\lambda$ for which the corresponding cell intersects $\bdy \mcl X$.

Our first main result (from which all of our other main results will follow) says that the simple random walk on $\mcl P^\lambda$ converges modulo time parameterization and that the limiting process coincides with Brownian motion under the embedding of $\mcl X$ into $\BB C$ which arises from its identification with a $\sqrt{8/3}$-LQG surface. 
In fact, the convergence is uniform over all choices of starting point in any given compact set $K\subset\mcl X$ (in the case of the Brownian map or disk, we can just take $K = \mcl X$). 
This gives us an explicit, intrinsic definition of Brownian motion on a Brownian surface which coincides with the definition which comes from LQG theory.
 
For $z \in \mcl X$, let $Y^{z,\lambda}$ be the simple random walk on $\mcl P^\lambda$ started from the point of $\mcl P^\lambda$ whose corresponding cell contains $z$ (if there is more than one such point, we choose one in an arbitrary manner).  
We extend $Y^{z,\lambda}$ from the integers to $[0,\infty)$ by declaring that for each $j\in\BB N$, the path $Y^{z,\lambda}|_{[j-1,j]}$ traverses the $D$-geodesic from $Y^{z,\lambda}_{j-1}$ to $Y^{z,\lambda}_j$ at constant speed.\footnote{It is easy to see that for any compact set $K\subset \mcl X$, the following is true. The supremum over all $j$ such that $Y_j^{z,\lambda} \in K$ of the $D$-diameter and the Euclidean diameter of $Y^{z,\lambda}|_{[j-1,j]}$ converges to zero in probability as $\lambda\rta\infty$. This is the only property of the continuous extension of the walk which is needed for our purposes. To see why this property holds, we observe that the $D$-diameter of $Y^{z,\lambda}|_{[j-1,j]}$ equals $D(Y^{z,\lambda}_{j-1}  , Y^{z,\lambda}_j)$. Since $Y_{j-1}^{z,\lambda}$ and $Y_j^{z,\lambda}$ are contained in adjacent Voronoi cells and the maximum $D$-diameter of all of the cells which intersect $K$ tends to zero in probability as $\lambda\rta\infty$ (by Lemma~\ref{lem-max-cell-diam}), we get the desired statement for the $D$-diameters. The desired property for Euclidean diameters follows since $D$ induces the same topology as the Euclidean metric.} 
In the case of the Brownian disk or half-plane, we stop $Y^{z,\lambda}$ at the first time it hits a point of $\bdy\mcl P^\lambda$.

\begin{thm}[Brownian motion on a Brownian surface] \label{thm-rw-conv0}
If $K\subset \mcl X$ is a compact set chosen in a manner which is measurable with respect to $(\mcl X , D , \mu)$, then as $\lambda\rta\infty$ the conditional law of the walk $Y^{z,\lambda}$ given $(\mcl X, D , \mu)$ converges in probability as $\lambda\rta 0$, uniformly over all $z\in K$, with respect to the topology on curves viewed modulo time parameterization (see Section~\ref{sec-cmp-metric} for a review of this topology). 
If we identify $\mcl X$ with the Riemann sphere, unit disk, complex plane, or upper half-plane using the correspondence between Brownian and $\sqrt{8/3}$-LQG surfaces, then the limit of the conditional law of $Y^{z,\lambda}$ given $(\mcl X,D,\mu)$ is standard planar Brownian motion started from $z$ (and stopped upon hitting the domain boundary in the case of a surface with boundary), viewed modulo time parameterization. 
\end{thm}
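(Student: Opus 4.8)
The plan is to establish convergence of the conditional law of $Y^{z,\lambda}$ in two stages: first a ``soft'' tightness-and-characterization argument showing that the walk converges to \emph{some} limiting diffusion modulo parameterization, and then an identification argument showing that this limit, read off under the $\sqrt{8/3}$-LQG embedding, is planar Brownian motion. For the first stage I would work under the LQG embedding from the start: identify $(\mcl X, D, \mu)$ with a $\sqrt{8/3}$-LQG surface $(\mcl D, h)$ via the correspondence of \cite{lqg-tbm1,lqg-tbm2,lqg-tbm3}, so that $D = D_h$ and $\mu = \mu_h$, and study the embedded walk $\wt Y^{z,\lambda}$ in $\mcl D \subset \BB C$. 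The key geometric input is that Voronoi cells $H_z^\lambda$ are not long and thin: each cell contains a Euclidean ball whose radius is comparable to the cell's Euclidean diameter, with good moment control on the ratio, uniformly over cells meeting a fixed compact set. This is exactly the ``length-to-width ratio'' statement advertised in Figure~\ref{fig:pv_sim} and is presumably proved earlier in the paper using moment estimates for $D_h$ and $\mu_h$; I would invoke it as a black box here. Given such control, the adjacency graph of cells is, at scale $\lambda$, a good-quality approximation of $\mcl D$ in the sense needed for known random-walk-to-Brownian-motion results on embedded graphs (e.g.\ the criteria of Berestycki--Lees, or the framework used for the discrete GFF / harmonic embeddings), which require (i) a uniform control on the modulus of continuity of the embedded walk, (ii) that the walk does not get ``trapped'' in small Euclidean regions, and (iii) a crossing/non-degeneracy estimate ruling out that the walk is squeezed along thin tubes. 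Items (i)--(iii) all follow from the cell-shape estimates together with standard properties of the LQG measure (positivity on open sets, finiteness on compacts) and of $D_h$.

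Once tightness modulo parameterization is in hand, any subsequential limit $\wt Y^z$ is a continuous process in $\mcl D$, and by the Markov property of the random walk it is a (time-changed) Markov process; the cell-shape estimates upgrade this to: $\wt Y^z$ has continuous sample paths, is non-degenerate (its support near any point is a full neighborhood), and — crucially — the simple random walk is \emph{harmonic} on the Tutte embedding by construction, so the limit is a local martingale modulo parameterization, i.e.\ a time-changed local martingale diffusion with continuous paths. A conformally-invariant-type argument (a continuous process that is a time-changed planar local martingale with non-degenerate, full-dimensional support, and whose exit distributions from smooth subdomains are harmonic measure) then forces $\wt Y^z$ to be a time change of planar Brownian motion. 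To pin this down I would show that for a fine mesh of nice subdomains $U \subset \mcl D$, the harmonic measure of the walk $\wt Y^{z,\lambda}$ from $z$ on $\partial U$ converges to Euclidean harmonic measure; this is where the Tutte/harmonic embedding enters — discrete harmonic functions on the cell graph, extended by the embedding, converge to continuum harmonic functions, again via the non-degeneracy and shape estimates. That identifies all subsequential limits, hence gives convergence in probability of the conditional law, uniformly over $z \in K$ (uniformity coming from uniformity in the cell-shape moment bounds over cells meeting $K$). Pulling back through the embedding gives the statement on the abstract surface $\mcl X$.

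I expect the main obstacle to be step (i)--(iii) above, and specifically proving the \emph{non-trapping} estimate: ruling out that the simple random walk on the cell graph spends an anomalously long time — or, worse, localizes in the embedded picture — inside a cluster of cells that is Euclidean-small. On $\BB R^2$ this is automatic by homogeneity, but here the LQG measure is wildly inhomogeneous, so one must show that even though individual cells can be Euclidean-tiny, the walk's position in $\BB C$ still moves diffusively. The resolution should combine the length-to-width control (so each step is ``genuinely two-dimensional'' in $\BB C$, not along a thin sliver) with an estimate that the walk cannot repeatedly cross a fixed small annulus without positive probability of escaping it — a discrete analogue of the Beurling estimate, transferred to the cell graph. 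Making this quantitative and uniform over $\lambda$ and over starting cells meeting $K$, using only the moment bounds on cell diameters and on $\mu_h$-masses of cells, is the technical heart of the argument; once it is established, the rest is a fairly standard invariance-principle-plus-characterization routine.
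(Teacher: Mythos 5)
Your overall plan -- tightness modulo parameterization, then identify the limit as Brownian motion via a martingale/harmonicity characterization -- runs into a genuine conceptual gap at the identification step. You work throughout under the \emph{a priori} LQG embedding $(\mcl D, h)$, which is the correct setting for the theorem, but then you write that ``the simple random walk is harmonic on the Tutte embedding by construction, so the limit is a local martingale.'' This conflates two different embeddings. The walk's embedded trajectory $\wh Y^{z,\lambda}$ in the \emph{a priori} embedding is not a martingale: the cell center points have no reason to average to the current position, because the environment is wildly spatially inhomogeneous. The Tutte embedding is the (different) embedding one builds \emph{so that} the walk becomes discrete harmonic, and showing that the Tutte embedding coincides with the \emph{a priori} embedding in the $\lambda\rta\infty$ limit is precisely the content of Theorem~\ref{thm-tutte-conv}, which the paper derives as a \emph{consequence} of Theorem~\ref{thm-rw-conv0}. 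Likewise, your fallback step -- proving that the walk's exit distributions from nice subdomains converge to Euclidean harmonic measure under the \emph{a priori} embedding -- is not a lemma on the way to the theorem; it essentially \emph{is} the theorem. So the characterization step as written is circular.

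The second thing missing from your outline is any mechanism for dealing with the lack of stationarity of the environment. You correctly flag that this is not a standard homogeneous RWRE problem, but your proposal stops at ``combine the length-to-width control with a Beurling-type estimate,'' without a framework under which such local estimates would yield an invariance principle. The paper's solution is structural, not just estimate-based: it does not work with the Brownian disk or sphere directly, but first with the \emph{0-quantum cone}, a special LQG surface whose Voronoi cell configuration satisfies a weak form of translation invariance (``translation invariance modulo scaling'') together with an ergodicity property. This is exactly what makes the external RWRE theorem of~\cite{gms-random-walk} (restated as Theorem~\ref{thm-general-clt-uniform}) applicable; the only hypothesis requiring serious work is the moment bound
\[
\BB E\!\left[\frac{\op{diam}(H_0)^2\,\op{deg}(H_0)}{\op{area}(H_0)}\right] < \infty,
\]
which is reformulated via a mass-transport argument (Lemma~\ref{lem-ball-cell-moment}) into a statement about LQG metric balls containing the origin, and then proved using the ``cells are not long and thin'' estimates (Propositions~\ref{prop-lqg-ball-swallow}, \ref{prop-lqg-ball-union}, \ref{prop-ball-vol}) that you correctly anticipate should be available. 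Once convergence is in hand for the 0-quantum cone, the result is transferred to the quantum sphere, disk, and wedge (hence to the Brownian surfaces) by local absolute continuity and scale invariance (Lemmas~\ref{lem-rw-conv-cone}--\ref{lem-rw-conv-disk}). So: your instinct that the cell-shape moment estimates are the technical heart is correct, but the framework that converts those moment bounds into an invariance principle -- the 0-quantum cone, translation invariance modulo scaling, and the verification of the four hypotheses of Theorem~\ref{thm-general-clt-uniform} -- is what your proposal is missing, and without it the harmonicity/martingale identification you propose does not get off the ground.
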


In light of Theorem~\ref{thm-rw-conv0}, one can define Brownian motion on $\mcl X$ to be the limit of the processes $Y^{z,\lambda}$ as $\lambda\rta\infty$. 
See Theorem~\ref{thm-rw-conv} below for a more general version of Theorem~\ref{thm-rw-conv0} which also applies to other $\sqrt{8/3}$-LQG surfaces.  Theorem~\ref{thm-rw-conv0} allows us to give intrinsic constructions of other objects on Brownian surfaces which can be derived from random walk.  For example, by combining it with the main result of~\cite{yy-lerw} we obtain an intrinsic definition of SLE$_2$ on a Brownian surface as the limit of the loop-erased random walk on Poisson-Voronoi tessellations.
Our techniques also imply variants of Theorem~\ref{thm-rw-conv0} in which the edges of $\mcl P^\lambda$ are assigned random conductances in a sufficiently nice way. For example, Theorem~\ref{thm-rw-conv0} is still true if the conductances are i.i.d.\ and the conductances and their reciprocals have finite expectation. Essentially, this is because the key step in the proof of Theorem~\ref{thm-rw-conv0} uses the main result of~\cite{gms-random-walk}, and the result in~\cite{gms-random-walk} allows for variable conductances. We will not state a general variable conductance theorem here, but we remark that if the reader wants to extend Theorem~\ref{thm-rw-conv0} to a particular variable conductance model, the key step will be checking that the hypotheses of~\cite{gms-random-walk} remain satisfied.

Theorem~\ref{thm-rw-conv0} also gives us an explicit construction of the embedding of a Brownian surface into $\BB C$. 
More precisely, one can define an explicit way of embedding the graphs $\mcl P^\lambda$ into $\BB C$ --- called the \emph{Tutte embedding} --- under which the metric and area measure on $\mcl P^\lambda$ inherited from $(\mcl X,D,\mu)$ converge to the $\sqrt{8/3}$-LQG metric and area measure, respectively, as $\lambda\rta\infty$.
For concreteness, let us focus on the case when $(\mcl X,D,\mu)$ is the Brownian disk (we do this since our embedding has a simpler definition when our surface has a boundary). 

We will now define the Tutte embedding $\Phi^\lambda$ into the closed unit disk $\ol{\BB D}$ of the Poisson point process $\mcl P^\lambda$ (equipped with the graph structure discussed above). 
Recall that $\bdy\mcl P^\lambda$ denotes the points in $\mcl P^\lambda$ whose corresponding cells intersect $\bdy\mcl X$. 
We first specify the points which will be sent (approximately) to 0 and 1.
Let $\BB z$ be sampled uniformly from $\mu$ and let $z_0$ be the a.s.\ unique (see Lemma~\ref{lem-zero-lqg-mass} below) element of $\mcl P^\lambda$ such that $\BB z \in H_{z_0}^\lambda$.  
Also let $\BB x$ be sampled uniformly from the canonical length measure on $\bdy\mcl X$ (which can be defined, e.g., by taking a limit of the re-scaled $\mu$-mass of $D$-neighborhoods of boundary arcs~\cite{legall-disk-snake}) and let $x_0$ be the a.s.\ unique element of $\bdy\mcl P^\lambda$ for which $\BB x \in H_{x_0}^\lambda$. 
Enumerate the other points in $\bdy \mcl P^\lambda$ as  $x_1,\dots,x_m$ so that if $j,k \in \{0,\dots,m\}$, then $j < k$ if and only if we hit the cell $H_{x_j}^\lambda$ before the cell $H_{x_k}^\lambda$ when we traverse $\bdy\mcl X$ in the counterclockwise direction started from $\BB x$. 

For $j\in \{0,\dots ,m\}$, we declare that $\Phi^\lambda(x_j) = e^{2\pi i p_j} \in\bdy\BB D$, where $p_j$ is the probability that a simple random walk on $\mcl P^\lambda$ started from $z_0$ first hits $\bdy\mcl P^\lambda$ at one of the points $x_0,\dots,x_j$. Note that this makes it so that the harmonic measure as seen from $z_0$ approximates the uniform measure on $\bdy\BB D$. 
This defines an embedding of $\bdy\mcl P^\lambda$. We then extend this embedding to be discrete harmonic on the rest of $\mcl P^\lambda$, equivalently we require that the position of each interior vertex of $\mcl P^\lambda$ under our embedding is the average of the positions of its neighbors. 

Note that under this embedding, the center point of the cell $x_0$ containing $\BB x$ maps to $e^0 = 1$.
Moreover, the $\mcl P^\lambda$-harmonic measure on $\bdy \mcl P^\lambda$ as viewed from $z_0$ approximates the uniform measure on $\bdy\BB D$, the average of which is zero. 
So, $\Phi^\lambda(z_0)$ is close to zero (but not necessarily exactly equal to zero) when $\lambda$ is large.

\begin{thm}[Tutte embedding convergence] \label{thm-tutte-conv} 
Let $(\mcl X , D , \mu)$ be a Brownian disk (with any fixed positive choice of area and boundary length) and let $h$ be the GFF-type distribution on $\BB D$ which parameterizes the $\sqrt{8/3}$-LQG surface\footnote{In particular, this surface is a quantum disk with a marked interior point at 0 and a marked boundary point at 1; see Section~\ref{sec-lqg-metric}.} corresponding to $(\mcl X , D,\mu)$ under the correspondence of~\cite{lqg-tbm1,lqg-tbm2,lqg-tbm3}.
Also let $ \mu_h$ and $D_h$ be the $\sqrt{8/3}$-Liouville quantum gravity area measure and metric, respectively, induced by $h$, so that $(\mcl X , D,\mu) = (\ol{\BB D} , D_h, \mu_h)$ as metric measure spaces.
If we identify $\mcl P^\lambda$ with its image under the Tutte embedding, then we have the following convergence in probability as $\lambda\rta\infty$. 
\begin{itemize}
\item The measure which assigns to each vertex of $\mcl P^\lambda$ a mass equal to the $\mu$-mass of its corresponding cell converges to $\mu_h$.  The same is true of the counting measure on vertices of $\mcl P^\lambda$, scaled by the factor~$\lambda^{-1}$. 
\item The maximum over all pairs of embedded vertices $z,w \in \mcl P^\lambda$ of the quantity $|D_h(z,w) - D(z,w)|$ converges to zero. 
\item The simple random walk on vertices of $\mcl P^\lambda$ started from $z_0$ converges modulo time parameterization to Brownian motion started from 0 and stopped upon hitting $\bdy\BB D$ in the quenched sense (i.e., its conditional law given $\mcl P^\lambda$ and $(\mcl X,D,\mu)$ converges weakly in probability as $\lambda\rta\infty$). 
\end{itemize}
\end{thm}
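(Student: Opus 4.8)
The plan is to deduce Theorem~\ref{thm-tutte-conv} from Theorem~\ref{thm-rw-conv0} together with the elementary fact that the coordinate functions of the Tutte embedding are discrete harmonic, so that $\Phi^\lambda(z)$ equals the mean position at which the simple random walk started from $z$ first exits through $\bdy\mcl P^\lambda$. Write $\iota\colon\mcl X\to\ol{\BB D}$ for the homeomorphism realizing the identification $(\mcl X,D,\mu)=(\ol{\BB D},D_h,\mu_h)$, normalized (consistently with the correspondence of~\cite{lqg-tbm1,lqg-tbm2,lqg-tbm3}, the footnote hypotheses, and the counterclockwise convention in the definition of $\Phi^\lambda$) so that $\iota(\BB z)=0$, $\iota(\BB x)=1$, and $\iota$ is orientation preserving; since $D_h$ induces the Euclidean topology on the compact set $\ol{\BB D}$, the maps $\iota$, $\iota^{-1}$ and $D_h$ are uniformly continuous. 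We claim that the whole theorem follows once we establish
\[
\Delta^\lambda := \max_{z\in\mcl P^\lambda}\left|\Phi^\lambda(z)-\iota(z)\right| \longrightarrow 0 \quad\text{in probability as } \lambda\to\infty ,
\]
together with the fact --- part of the verification of the hypotheses of~\cite{gms-random-walk} carried out in the proof of Theorem~\ref{thm-rw-conv0} --- that $\max_{z\in\mcl P^\lambda}\op{diam}_D(H_z^\lambda)\to 0$ in probability. Indeed, granting these two inputs: for the first bullet, the measure $\nu^\lambda:=\sum_{z\in\mcl P^\lambda}\mu(H_z^\lambda)\delta_z$ converges weakly to $\mu$ on $\mcl X$ because cells have vanishing diameter, so for bounded continuous $f$ on $\ol{\BB D}$ we have $\int f\,d(\Phi^\lambda_*\nu^\lambda)=\sum_z\mu(H_z^\lambda)f(\Phi^\lambda(z))=\sum_z\mu(H_z^\lambda)f(\iota(z))+O(\omega_f(\Delta^\lambda))\to\int (f\circ\iota)\,d\mu=\int f\,d\mu_h$, where $\omega_f$ is the modulus of continuity of $f$; the same argument with $\lambda^{-1}\#(\mcl P^\lambda\cap\cdot)$ in place of $\nu^\lambda$ handles the rescaled counting measure, using the law of large numbers $\lambda^{-1}\#(\mcl P^\lambda\cap\cdot)\to\mu$. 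For the second bullet, uniform continuity of $D_h$ and $\Delta^\lambda\to 0$ give $\max_{z,w\in\mcl P^\lambda}|D_h(\Phi^\lambda(z),\Phi^\lambda(w))-D_h(\iota(z),\iota(w))|\to 0$, and $D_h(\iota(z),\iota(w))=D(z,w)$. For the third bullet, $\iota$ carries the walk $Y^{z_0,\lambda}=Y^{\BB z,\lambda}$ to a curve that, by Theorem~\ref{thm-rw-conv0}, converges quenched and modulo time parameterization to Brownian motion from $\iota(\BB z)=0$ stopped on $\bdy\BB D$; since $\Delta^\lambda\to 0$, the Tutte-embedded walk stays uniformly close to this curve and hence has the same limit in the topology on curves modulo parameterization.

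It remains to prove $\Delta^\lambda\to 0$, and here we would treat boundary and interior vertices separately. For a boundary vertex $x_j$, the quantity $p_j$ is the probability that the walk from $z_0$ leaves $\bdy\mcl P^\lambda$ through $\bigcup_{i\le j}H_{x_i}^\lambda$, which by the ordering convention is, for large $\lambda$ (when cells are small), essentially a single initial counterclockwise arc of $\bdy\mcl X$ emanating from $\BB x$. Using the quenched convergence of the stopped walk from Theorem~\ref{thm-rw-conv0} and the smallness of boundary cells, $p_j$ converges to the harmonic measure from $0$ --- that is, the normalized Euclidean length --- of the arc of $\bdy\BB D$ which is the $\iota$-image of that boundary arc; since $\iota$ is an orientation-preserving homeomorphism this arc runs counterclockwise from $1$ to a point within $o(1)$ of $\iota(x_j)$, and since $H_{x_j}^\lambda$ touches $\bdy\mcl X$ and has vanishing diameter the point $\iota(x_j)$ is itself within $o(1)$ of $\bdy\BB D$. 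Hence $|e^{2\pi i p_j}-\iota(x_j)|\to 0$, with uniformity over $j$ coming from monotonicity of $j\mapsto p_j$ together with continuity and monotonicity of the limiting profile (a P\'olya/Dini argument). For an interior vertex $z$, write $\Phi^\lambda(z)=\mathbb E[\iota(Y^{z,\lambda}_\tau)\mid\mcl P^\lambda,(\mcl X,D,\mu)]+\mathbb E[(\Phi^\lambda-\iota)(Y^{z,\lambda}_\tau)\mid\mcl P^\lambda,(\mcl X,D,\mu)]$, where $\tau$ is the first exit time through $\bdy\mcl P^\lambda$. The second term is bounded in absolute value by $\max_j|\Phi^\lambda(x_j)-\iota(x_j)|\to 0$. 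For the first term, Theorem~\ref{thm-rw-conv0} (applied with $K=\mcl X$, which is compact) implies that, uniformly over $z\in\mcl X$, the quenched law of $\iota(Y^{z,\lambda}_\tau)$ converges weakly to the exit distribution on $\bdy\BB D$ of Brownian motion started from $\iota(z)$; since the two coordinate functions are bounded and harmonic, $\mathbb E[\iota(Y^{z,\lambda}_\tau)\mid\mcl P^\lambda,(\mcl X,D,\mu)]\to\iota(z)$, uniformly over $z$. Combining the two cases gives $\Delta^\lambda\to 0$.

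The main obstacle is the boundary analysis: identifying the limit of the discrete harmonic exit measure $p_j$ requires carefully matching the combinatorial boundary structure of the Voronoi tessellation (the cyclic ordering of the cells in $\bdy\mcl P^\lambda$ and the way each cell abuts $\bdy\mcl X$) with the continuum boundary of the $\sqrt{8/3}$-LQG disk, together with control of the stopped random walk uniformly up to the boundary --- precisely the regime in which the metric and the embedding are most delicate, and where the convergence of the Brownian-disk boundary length measure and the moment estimates ruling out long, thin embedded cells enter. By contrast, once Theorem~\ref{thm-rw-conv0} and the cell estimates underlying it are in hand, the interior argument and the passage from $\Delta^\lambda\to 0$ to the three stated convergences are soft; the genuinely hard analytic work lies in Theorem~\ref{thm-rw-conv0} itself.
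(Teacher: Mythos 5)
Your proof follows the same route as the paper: show $\Delta^\lambda=\max_{z\in\mcl P^\lambda}|\Phi^\lambda(z)-\iota(z)|\to 0$ in probability by combining the quenched exit-distribution convergence from Theorem~\ref{thm-rw-conv0} with the discrete harmonicity of the Tutte embedding, and then read off the three bullets. Your writeup is correct and simply fills in details (the boundary/interior case split, the uniformity-over-$j$ argument for $p_j$, and the vanishing cell diameters) that the paper's proof compresses into the single sentence ``From this and the definition of $\Phi^\lambda$, we get~\eqref{eqn-tutte-conv}.''
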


As we will explain in Section~\ref{sec-tutte-conv-proof}, Theorem~\ref{thm-tutte-conv} is a consequence of Theorem~\ref{thm-rw-conv0}.
Indeed, to prove Theorem~\ref{thm-tutte-conv} we just need to show that when $\lambda$ is large, the Tutte embedding of $\mcl P^\lambda$ is close to the image of $\mcl P^\lambda$ under the \emph{a priori} embedding of $(\mcl X , D , \mu)$ which comes from its identification with $(\ol{\BB D} , D_h, \mu_h)$.
Due to the manner in which the Tutte embedding is defined, this, in turn, follows from the statement that under the \emph{a priori} embedding, the random walk on $\mcl P^\lambda$ converges to Brownian motion modulo time parameterization, as asserted in Theorem~\ref{thm-rw-conv0}.  

We now briefly outline the proof of Theorem~\ref{thm-rw-conv0}. 
We want to show that under the \emph{a priori} embedding, the random walk on $\mcl P^\lambda$ converges to Brownian motion modulo time parameterization. 
This is a random walk in random environment (RWRE) problem: we have a random walk on $\mcl P^\lambda$ --- viewed as a random graph drawn in $\ol{\BB D}$ --- and we want to show that it approximates Brownian motion. 
However, this problem falls outside of the usual RWRE or random conductance model framework (as surveyed, e.g., in~\cite{bf-rwre-survey,biskup-rwre-survey}) because the Euclidean sizes of the Voronoi cells under the \emph{a priori} embedding vary dramatically from one location to another, so the environment is highly spatially inhomogeneous (see Figure~\ref{fig:pv_sim}) and in particular its law is not stationary with respect to spatial translations. 

Nevertheless, as explained in Section~\ref{sec-tutte-conv}, if we consider a certain special $\sqrt{8/3}$-LQG surface called a \emph{0-quantum cone} (which does not correspond to one of the ``standard" Brownian surfaces) then the associated adjacency graph of Poisson-Voronoi cells is in a certain sense ``translation invariant modulo a global rescaling.''
The paper~\cite{gms-random-walk} gives conditions under which random walk converges to Brownian motion modulo time parameterization in a random environment which is only required to satisfy this weaker form of translation invariance. 
We re-state the particular theorem from~\cite{gms-random-walk} which we will use as Theorem~\ref{thm-general-clt-uniform} below.
Once certain properties of our Voronoi cells have been established, this theorem shows that random walk on the Poisson-Voronoi tessellation of the 0-quantum cone converges to Brownian motion modulo time parameterization. 
One can then transfer to other $\sqrt{8/3}$-LQG surfaces (such as the ones corresponding to the Brownian map, disk, plane, and half-plane) via local absolute continuity considerations.

The key quantitative condition needed to apply the above RWRE theorem is that the quantity $\op{diam}(H_0)^2 \op{deg}(H_0) / \op{area}(H_0)$ has finite expectation, where $H_0$ is the Voronoi cell containing the origin for the $\gamma$-quantum cone with $\lambda=1$ and $\op{diam}$, $\op{deg}$, and $\op{area}$ denote its Euclidean diameter, degree (in the adjacency graph of Voronoi cells), and Lebesgue measure, respectively. 
We will show in Section~\ref{sec-finite-expectation}, using a mass-transport principle, that in fact it suffices to prove that
\eqb \label{eqn-ball-cell-moment0}
  \BB E\left[  \sum_{H\in\mcl H  : 0 \in B_H} \frac{\op{diam}(H)^2 \op{deg}(H)}{\op{area}(B_H)}   \right]  < \infty, 
\eqe
where $B_H$ is the smallest $D_h$-metric ball centered at the center point of $H$ (i.e., the point of $\mcl P$ which is in $H$) which contains $H$. 
This will be important for our purposes since it is easier to lower-bound the Lebesgue measure of an LQG metric ball than a Voronoi cell. 
In order to verify~\eqref{eqn-ball-cell-moment0}, we need to establish a number of estimates for $\sqrt{8/3}$-LQG metric balls and Voronoi cells which are of independent interest (see Section~\ref{sec-ball-estimates}). 

In particular, we show in Proposition~\ref{prop-lqg-ball-swallow} that a $\sqrt{8/3}$-LQG metric ball is extremely unlikely to be ``long and skinny" in the sense that it typically contains a Euclidean ball of radius comparable to its Euclidean diameter. 
This is done using a percolation argument for the GFF, similar to ones in~\cite{ding-dunlap-lqg-fpp,ding-goswami-watabiki,dzz-heat-kernel,dg-lqg-dim,df-lqg-metric,ding-dunlap-lgd,dddf-lfpp}.
We also show in Proposition~\ref{prop-ball-vol} that the $\sqrt{8/3}$-LQG mass of an LQG metric ball is highly concentrated around the fourth power of its $\sqrt{8/3}$-LQG radius. This is done by starting with estimates for the Brownian map~\cite{legall-geodesics}, then using the local independence properties of the GFF to establish a suitable concentration bound. 

The high-level strategy used in this paper (especially, the application of~\cite{gms-random-walk}) is similar to the strategy used in~\cite{gms-tutte} to prove the convergence to LQG of the Tutte embedding of the so-called \emph{mated-CRT map}. The mated-CRT map is a random planar map built by mating a pair of continuum random trees, which has an \emph{a priori} embedding into $\BB C$ due to the results of~\cite{wedges}. 
However, the proof of the needed bound for $\op{diam}(H_0)^2 \op{deg}(H_0) / \op{area}(H_0)$ in this paper, including the reduction to~\eqref{eqn-ball-cell-moment0} and the estimates for $\sqrt{8/3}$-LQG metric balls, is very different from the proof of the analogous bound in~\cite{gms-tutte}. 
The proof of this estimate comprises most of the technical work in this paper.

All of the arguments in this paper carry over verbatim to the $\gamma$-LQG metric for general $\gamma\in(0,2)$, as defined in~\cite{gm-uniqueness}, except for the proof of the ball volume concentration bound in Proposition~\ref{prop-ball-vol} (which uses estimates for the Brownian map, so only works for $\gamma=\sqrt{8/3}$). 
If we had an analog of Proposition~\ref{prop-ball-vol} for general $\gamma\in(0,2)$, we could immediately extend our results to Poisson-Voronoi tessellations of $\gamma$-LQG surfaces for all $\gamma\in(0,2)$. 
 
\begin{remark}[Embeddings of random planar maps] \label{remark-map-embedding}
Theorem~\ref{thm-tutte-conv} implies a scaling limit result for certain ``coarse-grained" embeddings of random planar maps toward $\sqrt{8/3}$-LQG, as we now explain. 
Suppose $\{M^n\}_{n\in\BB N}$ is a sequence of random planar maps with boundary which converge in law to the Brownian disk in the following sense. 
There are scaling constants $a_n,b_n,c_n > 0$ such that if we view the planar maps as curve-decorated metric measure spaces equipped with $a_n^{-1}$ times the graph distance, $b_n^{-1}$ times the counting measure on vertices, and the path which traces the boundary according to the natural ordering in such a way that each edge is traversed in $c_n^{-1}$ units of time, then the maps converge in law w.r.t.\ the Gromov-Hausdorff-Prokhorov-uniform (GHPU) topology, the analog of the Gromov-Hausdorff topology for curve-decorated metric measure spaces introduced in~\cite{gwynne-miller-uihpq}.

For $\lambda > 0$, we can define a Poisson-Voronoi tessellation of $M^n$ using a Poisson point process with respect to $\lambda b_n^{-1}$ times the counting measure on vertices of $M^n$. 
We can then define a ``coarse-grained" Tutte embedding of $M^n$ using this Poisson-Voronoi tessellation in exactly the same manner as in Theorem~\ref{thm-tutte-conv}. 
It can be seen from the GHPU convergence of $M^n$ to the Brownian disk that for each fixed $\lambda > 0$, the adjacency graph of Voronoi cells on $M^n$ converges in the total variation sense to the adjacency graph $\mcl P^\lambda$ defined above (here we emphasize that for fixed $\lambda$ the typical number of Voronoi cells is a tight random variable as $n\rta\infty$). 
Hence if we send $\lambda^n \rta\infty$ sufficiently slowly as $n\rta\infty$, we get an analog of Theorem~\ref{thm-tutte-conv} for the $\lambda^n$-coarse-grained Tutte embedding of $M^n$. 

More details regarding the above appeared in an earlier arXiv version of this paper, but were cut from the current version for brevity. 
\end{remark}

 \subsection{Outline}

The remainder of this article is structured as follows. In Section~\ref{sec-prelim}, we will fix some notation, state the scaling limit result from~\cite{gms-random-walk} which is used in the proof of our main results, and recall some facts about metric spaces and $\sqrt{8/3}$-LQG surfaces. 
In Section~\ref{sec-tutte-conv}, we prove Theorems~\ref{thm-rw-conv0} and \ref{thm-tutte-conv} assuming that a certain moment bound for Voronoi cells is satisfied.  
In Section~\ref{sec-ball-estimates} we will prove the required moment bound, along with a number of estimates for $\sqrt{8/3}$-LQG metric balls which are intermediate steps. 
Section~\ref{sec-open-problems} discusses several open problems related to the results of this paper.
Appendix~\ref{sec-voronoi-cells} contains the proofs of several elementary properties of Voronoi cells which follow from basic properties of Brownian surfaces and the GFF.

\section{Preliminaries}
\label{sec-prelim}

\subsection{Basic notation}
\label{sec-notation}

\noindent
We write $\BB N = \{1,2,3,\dots\}$ and $\BB N_0 = \BB N \cup \{0\}$. 
For $a < b$, we define the discrete interval $[a,b]_{\BB Z}:= [a,b]\cap\BB Z$. 
\medskip

\noindent
For $z\in\BB C$ and $r>0$, we write $B_r(0)$ for the open Euclidean ball of radius $r$ centered at $z$. 
We write $\op{diam} (\cdot) $ and $ \op{area}(\cdot) $ for Euclidean diameter and Lebesgue measure on $\BB C$, respectively.
\medskip

\noindent
For a metric space $(X,d)$, $x\in X$, and $r>0$, we write $B_r(x;d)$ for the open $d$-metric ball of radius $r$ centered at $x$. 
\medskip

\subsubsection{Asymptotics}
\label{sec-notation-asymp}

\noindent
If $f  :(0,\infty) \rta \BB R$ and $g : (0,\infty) \rta (0,\infty)$, we say that $f(\ep) = O_\ep(g(\ep))$ (resp.\ $f(\ep) = o_\ep(g(\ep))$) as $\ep\rta 0$ if $f(\ep)/g(\ep)$ remains bounded (resp.\ tends to zero) as $\ep\rta 0$. We similarly define $O(\cdot)$ and $o(\cdot)$ errors as a parameter goes to infinity. 
\medskip

\noindent
If $f,g : (0,\infty) \rta [0,\infty)$, we say that $f(\ep) \preceq g(\ep)$ if there is a constant $C>0$ (independent from $\ep$ and possibly from other parameters of interest) such that $f(\ep) \leq  C g(\ep)$. We write $f(\ep) \asymp g(\ep)$ if $f(\ep) \preceq g(\ep)$ and $g(\ep) \preceq f(\ep)$. 
\medskip

\noindent
Let $\{E^\ep\}_{\ep>0}$ be a one-parameter family of events. We say that $E^\ep$ occurs with
\begin{itemize}
\item \emph{polynomially high probability} as $\ep\rta 0$ if there is a $p > 0$ (independent from $\ep$ and possibly from other parameters of interest) such that  $\BB P[E^\ep] = 1 - O_\ep(\ep^p)$. 
\item \emph{superpolynomially high probability} as $\ep\rta 0$ if $\BB P[E^\ep] = 1 - O_\ep(\ep^p)$ for every $p>0$. 
\item \emph{exponentially high probability} as $\ep\rta 0$ if there exists $c >0$ (independent from $\ep$ and possibly from other parameters of interest) $\BB P[E^\ep] = 1 - O_\ep(e^{-c/\ep})$. 
\end{itemize}
We similarly define events which occur with polynomially, superpolynomially, and exponentially high probability as a parameter tends to $\infty$. 
\medskip

\noindent
We will often specify any requirements on the dependencies on rates of convergence in $O(\cdot)$ and $o(\cdot)$ errors, implicit constants in $\preceq$, etc., in the statements of lemmas/propositions/theorems, in which case we implicitly require that errors, implicit constants, etc., appearing in the proof satisfy the same dependencies.

\subsubsection{Metric spaces}
\label{sec-metric-prelim}

\noindent
Let $(X,d )$ be a metric space. 
For a curve $\gamma : [a,b] \rta X$, the \emph{$d $-length} of $\gamma$ is defined by 
\eqb \label{eqn-curve-length}
\op{len}\left( \gamma ; d  \right) := \sup_P \sum_{i=1}^{\# P} d (\gamma(t_i) , \gamma(t_{i-1})) 
\eqe 
where the supremum is over all partitions $P : a= t_0 < \dots < t_{\# P} = b$ of $[a,b]$. Note that the $d $-length of a curve may be infinite.
\medskip

\noindent
A $d$-\emph{geodesic} between two points $x,y \in X$ is a path from $x$ to $y$ of minimal $d$-length. 
\medskip

\noindent
A metric space $(X,d)$ is called a \emph{length space} if for each $z,w\in X$, the distance $d(z,w)$ is the infimum of the $d$-lengths of paths joining $z$ and $w$. 

\subsubsection{Metric on curves modulo time parameterization}
\label{sec-cmp-metric}

Our scaling limit results for random walk on embedded planar maps are with respect to the topology on curves modulo time parameterization, which we now recall. 
 If $\beta_1 : [0,T_{\beta_1}] \rta \BB C$ and $\beta_2 : [0,T_{\beta_2}] \rta \BB C$ are continuous curves defined on possibly different time intervals, we set 
\eqb \label{eqn-cmp-metric}
\BB d^{\op{CMP}} \left( \beta_1,\beta_2 \right) := \inf_{\phi } \sup_{t\in [0,T_{\beta_1} ]} \left| \beta_1(t) - \beta_2(\phi(t)) \right| 
\eqe 
where the infimum is over all increasing homeomorphisms $\phi : [0,T_{\beta_1}]  \rta [0,T_{\beta_2}]$ (the CMP stands for ``curves modulo parameterization"). It is shown in~\cite[Lemma~2.1]{ab-random-curves} that $\BB d^{\op{CMP}}$ induces a complete metric on the set of curves viewed modulo time parameterization. 

In the case of curves defined for infinite time, it is convenient to have a local variant of the metric $\BB d^{\op{CMP}}$. Suppose $\beta_1 : [0,\infty) \rta \BB C$ and $\beta_2 : [0,\infty) \rta \BB C$ are two such curves. For $r > 0$, let $T_{1,r}$ (resp.\ $T_{2,r}$) be the first exit time of $\beta_1$ (resp.\ $\beta_2$) from the ball $B_r(0)$ (or 0 if the curve starts outside $B_r(0)$). 
We define 
\eqb \label{eqn-cmp-metric-loc}
\BB d^{\op{CMP}}_{\op{loc}} \left( \beta_1,\beta_2 \right) := \int_1^\infty e^{-r} \left( 1 \wedge \BB d^{\op{CMP}}\left(\beta_1|_{[0,T_{1,r}]} , \beta_2|_{[0,T_{2,r}]} \right) \right) \, dr ,
\eqe 
so that $\BB d^{\op{CMP}}_{\op{loc}} (\beta^n , \beta) \rta 0$ if and only if for Lebesgue a.e.\ $r > 0$, $\beta^n$ stopped at its first exit time from $B_r(0)$ converges to $\beta$ stopped at its first exit time from $B_r(0)$ with respect to the metric~\eqref{eqn-cmp-metric}. 
Note that the definition~\eqref{eqn-cmp-metric} of $\BB d^{\op{CMP}}\left(\beta_1|_{[0,T_{1,r}]} , \beta_2|_{[0,T_{2,r}]} \right)$ makes sense even if one or both of $T_{1,r}$ or $T_{2,r}$ is infinite, provided we allow $\BB d^{\op{CMP}}\left(\beta_1|_{[0,T_{1,r}]} , \beta_2|_{[0,T_{2,r}]} \right) = \infty$ (this does not pose a problem due to the definition of the integrand in~\eqref{eqn-cmp-metric-loc}). 

If $(X,d,x_0)$ is a general metric space with a marked point, one can similarly define the metric on curves modulo time parameterization on $X$ but with $d$-distances in place of Euclidean distances and $d$-metric balls centered at $x_0$ in place of Euclidean balls centered at 0.

\subsection{Scaling limit for random walk on graphs of cells}
\label{sec-general-clt}

In this subsection we state a version of the main result of~\cite{gms-random-walk} which gives general conditions under which random walk on the adjacency graph of a random collection of cells (e.g., Voronoi cells) on $\BB C$ converges to Brownian motion.
This same result is also used in~\cite{gms-tutte} to prove an embedding convergence result for a different discretization of LQG. 
Let us first describe what we mean by an ``adjacency graph of cells". 

\begin{defn} \label{def-cell-config}
A \emph{cell configuration} on $\BB C$ consists of the following objects.
\begin{enumerate}
\item A locally finite collection $\mcl H$ of compact connected subsets of $\BB C$ (``cells") with non-empty interiors whose union is all of $\BB C$ and such that the intersection of any two elements of $\mcl H$ has zero Lebesgue measure.
\item A symmetric relation $\sim$ on $\mcl H\times \mcl H$ (``adjacency") such that if $H\sim H'$, then $H\cap H'\not=\emptyset$ and $H\not=H'$.  
\end{enumerate}
\end{defn}

We will typically slightly abuse notation by making the relation $\sim$ implicit, so we write $\mcl H$ instead of $(\mcl H,\sim )$.  
We view $\mcl H$ as a weighted graph whose vertices are the cells of $\mcl H$ and whose edge set is
\eqb \label{eqn-cell-edges}
\mcl E\mcl H := \left\{ \{H,H'\} \in \mcl H\times \mcl H : H\sim H' \right\} ,
\eqe
In~\cite{gms-random-walk}, one also allows for a conductance function on the edges of $\mcl H$. 
Here we will only consider cell configurations with unit conductances.
We note that the intersections of the cells of $\mcl H$ are required to have zero Lebesgue measure. 
We will check this condition for Voronoi cells in Lemma~\ref{lem-zero-lebesgue} below. 
   
We define a metric on the space of cell configurations by
\allb \label{eqn-cell-metric}
\BB d^{\op{CC}}(\mcl H,\mcl H') &:= \int_0^\infty e^{-r} \wedge \left( \inf_{f_r}  \sup_{z\in \BB C}  |z - f_r(z)| \right)     \,dr
\alle
where each of the infima is over all homeomorphisms $f_r : \BB C\rta \BB C$ such that $f_r$ takes each cell in $\mcl H $ which intersects $B_r(0)$ to a cell in $\mcl H'$ which intersects $B_r(0)$ and preserves the adjacency relation between these cells, and $f_r^{-1}$ does the same with $\mcl H$ and $\mcl H'$ reversed.  

In~\cite{gms-random-walk}, we proved that the simple random walk on a random cell configuration $\mcl H$ which satisfies the following hypotheses converges to Brownian motion.
Here, for $C>0$ and $z\in\BB C$ we write $C(\mcl H-z)$ for the cell configuration obtained by translating all of the cells by $-z$ then scaling all of the cells by $C$.  
\begin{enumerate}
\item \textbf{Translation invariance modulo scaling.} There is a (possibly random and $\mcl H$-dependent) increasing sequence of open sets $U_j \subset \BB C$, each of which is either a square or a disk, whose union is all of $\BB C$ such that the following is true. Conditional on $\mcl H$ and $U_j$, let $z_j$ for $j\in\BB N$ be sampled uniformly from Lebesgue measure on $U_j$. Then the shifted cell configurations $\mcl H - z_j$ converge in law to $\mcl H$ modulo scaling as $j\rta\infty$, i.e., there are random numbers $C_j >0$ (possibly depending on $\mcl H$ and $z_j$) such that $ C_j(\mcl H-z_j) \rta \mcl H$ in law with respect to the metric~\eqref{eqn-cell-metric}. \label{item-hyp-resampling} 
\item \textbf{Ergodicity modulo scaling.} Every real-valued measurable function $F = F(\mcl H)$ which is invariant under translation and scaling, i.e., $F(C(\mcl H-z)) = F(\mcl H)$ for each $z\in\BB C$ and $C>0$, is a.s.\ equal to a deterministic constant. \label{item-hyp-ergodic} 
\item \textbf{Finite expectation.} With $H_0$ the cell in $\mcl H$ containing 0,  \label{item-hyp-moment}
\eqb\label{eqn-hyp-moment}
\BB E\left[ \frac{\op{diam}(H_0)^2}{\op{area}(H_0)} \op{deg}(H_0)    \right] <\infty  
\eqe 
where $\op{diam}$, $\op{area}$, and $\op{deg}$ denote Euclidean diameter, Lebesgue measure, and vertex degree in $\mcl H$, respectively. 
\item \textbf{Connectedness along lines.} Almost surely, for each horizontal or vertical  line segment $L \subset \BB C$, the subgraph of $\mcl H$ induced by the set of cells which intersect $L$ is connected. \label{item-hyp-adjacency}
\end{enumerate}

The combination of hypotheses~\ref{item-hyp-resampling} and~\ref{item-hyp-ergodic} is referred to as \emph{ergodicity modulo scaling} in~\cite{gms-random-walk}. 
Several equivalent formulations of hypothesis~\ref{item-hyp-resampling} are given in~\cite[Definition 1.2]{gms-random-walk} (we will use a different formulation, in terms of a ``mass transport principle" in Section~\ref{sec-finite-expectation}).
The version of hypothesis~\ref{item-hyp-moment} given here is slightly simpler than the version in~\cite{gms-random-walk} since we are assuming unit conductances.
Hypothesis~\ref{item-hyp-adjacency} is automatically satisfied if any two cells which intersect are considered to be adjacent. This will always be the case for cell configurations considered in this paper. 
The following is~\cite[Theorem 3.10]{gms-random-walk}. 

\begin{thm}[\cite{gms-random-walk}] \label{thm-general-clt-uniform}
Let $\mcl H$ be a random cell configuration satisfying the above four hypotheses. 
For $z\in\BB C$, let $Y^z$ denote the simple random walk on $\mcl H$ started from $H_z$ (with conductances $\frk c$). For $j\in\BB N_0$, let $\wh Y_j^z$ be an arbitrarily chosen point of the cell $Y_j^z$ and extend $\wh Y^z$ from $\BB N_0$ to $[0,\infty)$ by piecewise linear interpolation. 
There is a deterministic covariance matrix $\Sigma$ with $\det\Sigma\not=0$ such that the following is true. For each fixed compact set $A \subset \BB C$, it is a.s.\ the case that as $\ep \rta 0$, the maximum over all $z\in A$ of the Prokhorov distance between the conditional law of $\ep \wh Y^{z/\ep}$ given $\mcl H$ and the law of Brownian motion started from $z$ with covariance matrix $\Sigma$, with respect to the topology on curves modulo time parameterization (as defined in Section~\ref{sec-cmp-metric}), tends to 0. 
\end{thm}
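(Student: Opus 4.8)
The plan is to follow the \emph{corrector method} for quenched invariance principles, adapted to the ``modulo scaling'' setting in which the environment $\mcl H$ is self-similar rather than genuinely translation invariant. The first step is to construct a \emph{harmonic embedding} $\Phi : \mcl H \to \BB C$, i.e., a function which is discrete-harmonic off every finite set of cells so that $\Phi(Y^z)$ is a martingale, and which is asymptotic to the identity. Writing $\Phi = \iota + \chi$, where $\iota$ sends each cell to its arbitrarily chosen marked point $\wh H$ and $\chi$ is the \emph{corrector}, Theorem~\ref{thm-general-clt-uniform} reduces to two assertions: (i) $\ep\,\Phi(Y^{z/\ep})$, with its natural time parameterization, converges to Brownian motion with a nondegenerate covariance matrix $\Sigma$; and (ii) $\ep\,\chi$ evaluated along the walk tends to $0$ uniformly over starting points in any compact set, so that $\ep\,\wh Y^{z/\ep}$ has the same scaling limit modulo time parameterization.

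For (i) I would invoke a martingale central limit theorem. The predictable quadratic variation of $\Phi(Y^z)$ after $n$ steps is a Birkhoff-type sum of the ``local energy'' of $\Phi$ over the cells visited. By hypotheses~\ref{item-hyp-resampling}--\ref{item-hyp-ergodic} (ergodicity modulo scaling), which furnish an ergodic theorem for the environment viewed from the walk once one keeps track of the random rescaling factors $C_j$ of hypothesis~\ref{item-hyp-resampling}, this sum grows linearly at a deterministic rate $\Sigma$, and the finite-expectation hypothesis~\eqref{eqn-hyp-moment} supplies both the integrability needed to identify $\Sigma$ and a Lindeberg-type condition ruling out that a few large-diameter cells carry a macroscopic share of the quadratic variation. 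Since the ergodic theorem holds for a.e.\ realization of $\mcl H$, this convergence is quenched. Nondegeneracy $\det\Sigma\neq 0$ is where hypothesis~\ref{item-hyp-adjacency} enters: projecting onto a coordinate axis and using that the cells meeting a fixed vertical (resp.\ horizontal) line induce a connected subgraph, one bounds the Dirichlet energy of the corresponding linear test function from above and hence the diffusivity in that direction from below.

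For (ii) the corrector $\chi$ is constructed as an $L^2$ object in the space of environments via a Kipnis--Varadhan-type variational/Dirichlet-form argument; hypothesis~\ref{item-hyp-moment} is precisely what makes its energy finite, and $\chi$ has stationary increments modulo scaling. Applying the ergodic theorem to these increments along $Y^z$ together with a maximal inequality yields $\max_{k\le n}|\chi(Y^z_k)| = o(\sqrt n)$, i.e.\ sublinearity. Tightness of the rescaled walks in the metric $\BB d^{\op{CMP}}_{\op{loc}}$ of~\eqref{eqn-cmp-metric-loc} then follows from (i), (ii), and an estimate (again from hypothesis~\ref{item-hyp-moment} via the ergodic theorem) showing that along the walk one rarely traverses a cell of Euclidean diameter comparable to the macroscopic scale --- so consecutive points $\ep\wh Y^{z/\ep}_k$ stay close; the ``modulo time parameterization'' topology is essential here since the walk's clock runs at a location-dependent speed and we make no attempt to control it. To upgrade to the uniform, quenched statement over a compact set $A$, note that $\Phi$ and $\chi$ are global objects not depending on the starting cell: one runs the above argument simultaneously along a fine net of starting points in $A$ and interpolates using an a.s.\ equicontinuity estimate for the rescaled walks of the same flavour as the tightness bound.

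The main obstacle is the construction and sublinearity of the corrector in the absence of honest translation invariance. In the classical stationary random-conductance model this is Kipnis--Varadhan plus the ergodic theorem (see, e.g., the surveys cited in Section~\ref{sec-general-clt}), but here one must first identify the correct ``Palm-type'' law of the environment seen from a uniformly chosen point of a large set $U_j$, show using hypothesis~\ref{item-hyp-resampling} that rescaling by $C_j$ makes this law converge to that of $\mcl H$ itself, and only then set up the $L^2$ theory and the ergodic theorem for the walk. Getting the bookkeeping of the scaling factors right --- so that ``sublinear'' and ``finite energy'' mean what they should relative to the macroscopic scale --- is the delicate part, and it is exactly for this that the finite-expectation hypothesis~\eqref{eqn-hyp-moment} is tailored.
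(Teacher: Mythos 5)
The present paper does not prove this statement: the text immediately above it says ``The following is~\cite[Theorem 3.10]{gms-random-walk},'' so the theorem is imported from the companion work and your sketch must be compared against the argument given there, not against anything in this paper.

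Your route is the classical corrector method (harmonic embedding $\Phi = \iota + \chi$, Kipnis--Varadhan construction of $\chi$, sublinearity, martingale CLT). The proof in~\cite{gms-random-walk} does not build a corrector. It instead works directly with the Dirichlet energy of affine test functions on the cell graph, uses the finite-expectation hypothesis~\eqref{eqn-hyp-moment} together with a mass-transport reformulation of hypothesis~\ref{item-hyp-resampling} to compare that energy to its Euclidean analogue, derives Russo--Seymour--Welsh-type crossing and effective-resistance estimates from hypothesis~\ref{item-hyp-adjacency}, and then identifies the scaling limit by showing that exit distributions of the walk from macroscopic sets converge to Euclidean harmonic measure; a characterization of Brownian motion modulo time parameterization via its exit measures closes the argument. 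The two approaches are genuinely different, and the choice in~\cite{gms-random-walk} is deliberate: working with exit distributions lets one stay entirely within the ``modulo parameterization'' framework, which is where the hypotheses actually have traction.

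There is a concrete gap in your step (i). You assert that the predictable quadratic variation of $\Phi(Y^z)$ ``grows linearly at a deterministic rate $\Sigma$,'' appealing to an ergodic theorem for the environment viewed from the walk. But the quadratic variation increment at step $k$ is of order $\op{diam}(Y_k)^2$, and in the scale-free setting cells of vastly different Euclidean sizes are visited comparably often, so $\sum_k \op{diam}(Y_k)^2$ does \emph{not} grow linearly in $k$; this is exactly why the theorem only claims convergence modulo time parameterization. Behind this is the structural obstruction you flag at the end but do not resolve: hypotheses~\ref{item-hyp-resampling}--\ref{item-hyp-ergodic} supply translation invariance only up to an unknown random rescaling, so there is no stationary law for the environment-seen-from-particle process, and both the Birkhoff-sum identification of $\Sigma$ and the $L^2$ construction of the corrector presuppose a stationarity that the scale-free setting simply does not provide. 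One can try to rescue the martingale CLT by time-changing to make the quadratic variation linear, but then the Lindeberg condition, the isotropy needed to identify $\Sigma$, and the corrector sublinearity all again require ergodic-theorem input that is not available on the step-counting clock. In short, the corrector framework as you have set it up pushes the scaling difficulty into exactly the place where it cannot be absorbed; the reference avoids this by never leaving the parameterization-free world.
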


\subsection{Liouville quantum gravity surfaces} 
\label{sec-lqg-prelim}

Fix $\gamma \in (0,2)$ (in fact, we will always take $\gamma =\sqrt{8/3}$). For $k\in \BB N_0$, a \emph{$\gamma$-Liouville quantum gravity surface} with $k$ marked points is an equivalence class of $(k+2)$-tuples $(U,h,z_1,\dots,z_k)$ where $U\subset\BB C$ is an open domain, $h$ is a distribution on $U$ (which we will always take to be a realization of some variant of the GFF on $U$), and $z_1,\dots,z_k \in U\cup \bdy U$. Two such $(k+2)$-tuples $(U,h,z_1,\dots,z_k)$ and $(\wt U , \wt h , \wt z_1,\dots , \wt z_k)$ are declared to be equivalent if there is a conformal map $\phi : \wt U \rta U$ such that 
\eqb \label{eqn-lqg-coord}
\wt h = h\circ \phi + Q\log |\phi'| \quad \op{and} \quad \phi(\wt z_j) =  z_j  ,\quad \forall j\in [1,k]_{\BB Z}  \quad \text{where} \quad Q = \frac{2}{\gamma}  + \frac{\gamma}{2} .
\eqe
We think of two equivalent $(k+2)$-tuples as above as corresponding to different parameterizations of the same surface.
We refer to the distribution $h$ corresponding to an LQG surface as the \emph{embedding} of the surface. 
The above definitions first appeared in~\cite{shef-kpz}, and also play an important role, e.g., in~\cite{shef-zipper,wedges}.

If the law of the field $h$ is locally absolutely continuous with respect to the law of the Gaussian free field on $U$, then we can define the \emph{$\gamma$-LQG area measure} $\mu_h$ on $U$, which is the a.s.\ limit of regularized versions of $e^{\gamma h(z)} \,dz$ as well as the \emph{$\gamma$-LQG boundary length measure} $\nu_h$ on $\bdy U$ (in the case when $U$ has a boundary). There are several equivalent ways to construct these measures: see, e.g.,~\cite{kahane,shef-kpz,rhodes-vargas-review}. 
By~\cite[Proposition 2.1]{shef-kpz}, if $h$ and $\wt h$ are related by a conformal map as in~\eqref{eqn-lqg-coord}, then 
\eqbn
\mu_{\wt X}(X) = \mu_h(\phi(X)) \: \text{$\forall$ Borel set $X\subset \wt U$} \quad \text{and} \quad \nu_{\wt h}(Y) = \nu_h(\phi(Y)) \: \text{$\forall$ Borel set $Y \subset \bdy U$} .
\eqen
This means that $\mu_h$ and $\nu_h$ can be viewed as measures on the LQG surface. 

In the special case when $\gamma =\sqrt{8/3}$, an LQG surface also admits a metric $D_h$, as shown in~\cite{lqg-tbm1,lqg-tbm2,lqg-tbm3}, and this metric is compatible with coordinate changes of the form~\eqref{eqn-lqg-coord}. We will review the basic properties of this metric in Section~\ref{sec-lqg-metric}. 

Henceforth we fix $\gamma =\sqrt{8/3}$. We now discuss several different types of $\sqrt{8/3}$-LQG surfaces which are introduced in~\cite{wedges}.

\subsubsection{Quantum cones}

The LQG surface which we will work with must frequently is the \emph{$\alpha$-quantum cone} for $\alpha \in (-\infty,Q)$, which is defined in~\cite[Definition~4.10]{wedges}. 
The $\alpha$-quantum cone is a doubly marked surface $(\BB C ,h , 0, \infty)$ whose $\gamma$-LQG measure $\mu_h$ has infinite total mass, but assigns finite mass to every bounded subset of $\BB C$. 
One way to obtain an $\alpha$-quantum cone is to start with a whole-plane GFF plus $\alpha\log (1/|\cdot|)$ then ``zoom in" near the origin and re-scale (i.e., add a constant to the field) so that the LQG area of a fixed set remains of constant order. See~\cite[Proposition~4.13(i)]{wedges} for a precise statement.  

Quantum cones with parameter $\alpha \in \{0,\sqrt{8/3} \}$ are especially natural, and these will be the main types of quantum cones which we will consider. 
The case $\alpha = \sqrt{8/3}$ is special since a GFF a.s.\ has a $-\sqrt{8/3}$-log singularity at a point sampled from its $\sqrt{8/3}$-LQG measure~\cite[Section 3.3]{shef-kpz}, so this surface can be thought of as describing the behavior of a general $\sqrt{8/3}$-LQG surface near such a point. 
Moreover, the $\sqrt{8/3}$-quantum cone, equipped with its $\sqrt{8/3}$-LQG metric and area measure, is equivalent to the Brownian plane as defined in~\cite{curien-legall-plane} (see Section~\ref{sec-lqg-metric}). 
In a similar vein, the 0-quantum cone describes the local behavior of a $\sqrt{8/3}$-LQG surface near a \emph{Lebesgue} typical point. 
The 0-quantum cone can be used to construct cell configurations which satisfy the translation invariance modulo scaling condition of Theorem~\ref{thm-general-clt-uniform}, which says that the origin is in some sense ``Lebesgue typical". 
 
We will need some properties of quantum cones which follow from the definition in~\cite[Definition 4.10]{wedges}, so we now recall this definition. Let $\alpha < Q$ and let $A : \BB R \rta \BB R$ be the process such that $A_t =B_t  + \alpha t$ for $t\geq 0$, where $B$ is a standard linear Brownian motion; and for $t < 0$, let $A_t = \wh B_{-t} + \alpha t$, where $\wh B$ is a standard linear Brownian motion conditioned so that $\wh B_t  + (Q-\alpha) t > 0$ for all $t> 0$ and taken to be independent of $B$. 
We define $h$ to be the random distribution such that if $h_r(0)$ denotes the average of $h$ on $\partial B_r(0)$ (see~\cite[Section 3.1]{shef-kpz} for the definition and basic properties of the circle average), then $t\mapsto h_{e^{-t}}(0)$ has the same law as the process $A$; and $h - h_{|\cdot|}(0)$ is independent from $h_{|\cdot|}(0)$ and has the same law as the analogous process for a whole-plane GFF. 
  
Since a quantum cone has only two marked points, one can get a different choice of $h$ corresponding to the same LQG surface (i.e., a different embedding of the quantum cone) by re-scaling space and applying the LQG coordinate change formula~\eqref{eqn-lqg-coord}. 
We will almost always consider the particular choice of distribution $h$ defined just above. This choice of $h$ is called the \emph{circle average embedding}, and is characterized by the fact that $1 = \sup\{r > 0 : h_r(0) + Q\log r = 0\}$. If $h$ is the circle-average embedding of an $\alpha$-quantum cone, then $h|_{\BB D}$ agrees in law with the corresponding restriction of a whole-plane GFF plus $-\alpha\log |\cdot|$, normalized so that its average over $\bdy\BB D$ is 0. 
 
The $\alpha$-quantum cone possesses a certain special scale invariance property, which we now describe.
Let $\alpha < Q$, let $h$ be the circle-average embedding of an $\alpha$-quantum cone, and let $\{h_r(z) : r > 0, z\in\BB C\}$ be its circle average process. 
We define
\eqb \label{eqn-mass-hit-time}
R_b := \sup\left\{ r > 0 : h_r(0) + Q \log r = \frac{1}{\sqrt{8/3} } \log b \right\} ,\quad\forall b > 0, 
\eqe 
where here $Q$ is as in~\eqref{eqn-lqg-coord}. That is, $R_b$ gives the largest radius $r > 0$ so that if we scale spatially by the factor $r$ and apply the change of coordinates formula~\eqref{eqn-lqg-coord}, then the average of the resulting field on $\partial \BB D$ is equal to $\frac{1}{\sqrt{8/3} } \log b$. Note that $R_0 = 0$ in the case of the circle average embedding. It is easy to see from the above definition of $h$ (and is shown in~\cite[Proposition 4.13(i)]{wedges}) that for each fixed $b>0$, 
\eqb \label{eqn-cone-scale}
h \eqD h(R_b \cdot) + Q \log R_b -  \frac{1}{\sqrt{8/3} } \log b .
\eqe

It is immediate from the definitions of $\mu_h$ and the $\sqrt{8/3}$-LQG metric $D_h$ that adding $\frac{1}{\sqrt{8/3}} \log b$ to the field scales $\sqrt{8/3}$-LQG areas by $b$ and $\sqrt{8/3}$-LQG distances by $b^{1/4}$ (in the case of the metric, see~\cite[Lemma 2.2]{lqg-tbm2} or Lemma~\ref{lem-metric-f} below). 
By the $\sqrt{8/3}$-LQG coordinate change formulas for $\mu_h$ and $D_h$, we therefore see that~\eqref{eqn-cone-scale} implies that
\eqb \label{eqn-cone-scale-mm}
\left( \BB C , D_h , \mu_h \right) \eqD \left( \BB C , b^{1/4} D_h , b \mu_h \right) , \quad \forall b > 0 ,
\eqe
where here we mean equality in law as metric measure spaces. We note that the property~\eqref{eqn-cone-scale-mm} is \emph{not} true with, say, a whole-plane GFF in place of a quantum cone. This property is a major reason for considering quantum cones.

\subsubsection{Quantum disks, spheres, and wedges.}

We will also have occasion to consider other special quantum surfaces besides just quantum cones.
We will not need as many properties of these, so we just briefly mention their definitions and refer to the cited references for more details. 

A \emph{quantum disk} is a quantum surface $(\BB D , h)$ defined in~\cite[Definition~4.21]{wedges} which behaves locally like a free-boundary GFF on $\BB D$, but is defined in a slightly different way. 
Its $\sqrt{8/3}$-LQG area measure and boundary length measure each have finite total mass, and one can consider quantum disks with specified boundary length (and random area) or with specified boundary length and area. 
One can also define a quantum disk with any number of marked boundary points and interior points sampled uniformly from its $\sqrt{8/3}$-LQG boundary length and area measures, respectively.

A \emph{quantum sphere} is a quantum surface $(\BB C ,h  )$ introduced in~\cite[Definition~4.21]{wedges} with $\mu_h(\BB C ) < \infty$.
Typically one considers a unit-area quantum sphere, which means we fix $\mu_h(\BB C) = 1$. Quantum spheres with other areas are obtained by re-scaling (equivalently, adding a constant to $h$). 
As in the case of the quantum disk, one can consider quantum spheres with one or more marked points sampled uniformly from the $\sqrt{8/3}$-LQG area measure.

For $\alpha \leq Q$, an \emph{$\alpha$-quantum wedge} is a quantum surface $(\BB H , h , 0,\infty)$ defined in~\cite[Definition~4.5]{wedges} which has finite mass in every neighborhood of 0 but infinite total mass. It is the half-plane analog of the $\alpha$-quantum cone considered above and satisfies the same scaling property~\eqref{eqn-cone-scale-mm} as the $\alpha$-quantum cone.  

\subsection{The $\sqrt{8/3}$-Liouville quantum gravity metric}
\label{sec-lqg-metric}

Suppose that $U\subset\BB C$ is a connected open set and $h$ is a random distribution on $U$ which is locally absolutely continuous with respect to the GFF on $U$, in the sense that for every $z\in U$, there is a neighborhood $V$ of $z$ such that the law of $h|_V$ is absolutely continuous with respect to the corresponding restriction of the GFF.
The papers~\cite{lqg-tbm1,lqg-tbm2,lqg-tbm3} show that one can define a $\sqrt{8/3}$-LQG metric $D_h$ associated with $h$. 
We will not need the precise definition of this metric here. 
Rather, we will only use a small number of basic properties of $D_h$, which we now record.  
\begin{enumerate}
\item \textbf{Bi-H\"older with respect to Euclidean metric.} The identity map from $U$, equipped with the Euclidean metric, to $(U,D_h)$ and its inverse are each a.s.\ locally H\"older continuous with a (non-explicit) H\"older exponent. In particular, $D_h$ induces the same topology on $U$ as the Euclidean metric. \label{item-metric-holder}
\item \textbf{Existence of geodesics.} Almost surely, for any $z,w\in U$ with $D_h(z,w) < D_h(z,\bdy U)$ there is a $D_h$-geodesic from $z$ to $w$, i.e., a path from $z$ to $w$ of minimal $D_h$-length. If the law of $h$ is absolutely continuous with respect to that of a free-boundary GFF in a neighborhood of every point of $\bdy U$, then $D_h$ extends to a metric on $\ol U$ and a.s.\ for each $z,w\in \ol U$, there is a $D_h$-geodesic in $\ol U$ from $z$ to $w$. \label{item-metric-geodesic}
\item \textbf{LQG coordinate change formula.} If $U,\wt U\subset \BB C$ and $\phi : \wt U\rta  U$ is a conformal map, then 
\eqb \label{eqn-lqg-coord-metric}
D_{h\circ \phi + Q\log|\phi'|}(z,w) = D_h(\phi(z) , \phi(w)) ,\quad \forall z,w\in \wt U,  
\eqe
for $Q =  2/ \sqrt{8/3}  + \ \sqrt{8/3}/2  = 5 / \sqrt 6$. \label{item-metric-coord}
\item \textbf{Locality.} If $V\subset U$ and $z,w\in V$, then $D_{h|_V}(z,w)$ is the infimum of the $D_h$-lengths of paths \emph{in $V$} from $z$ to $w$. In particular, metric balls are locally determined by $h$.  \label{item-metric-local}
\end{enumerate}
Property~\ref{item-metric-holder} and the first statement of property~\ref{item-metric-geodesic} follow from~\cite[Theorems 1.2 and 1.3]{lqg-tbm2}, respectively, and local absolute continuity. 
The second statement of~\ref{item-metric-holder} follows from the equivalence of the quantum disk and the Brownian disk, the existence of geodesics in the latter (see, e.g.,~\cite{bet-mier-disk}), and local absolute continuity. 
Properties~\ref{item-metric-coord} and~\ref{item-metric-local} are easy consequences of the construction of $D_h$ in~\cite{lqg-tbm1,lqg-tbm2,lqg-tbm3}; see, e.g.,~\cite[Lemmas~2.3 and~2.5]{gwynne-miller-gluing}. 
 
We will also use the fact that certain special LQG surfaces $(U,h)$ are equivalent to Brownian surfaces, in the sense that the metric measure space $(\ol U , D_h , \mu_h)$ agrees in law with a Brownian surface.\footnote{The agreement between $\sqrt{8/3}$-LQG surfaces and Brownian surfaces is only up to two unknown positive, deterministic scaling constants which relate the metrics and measures. In this paper, we will always assume that we have re-scaled the metric and area measure on the Brownian surface by this scaling factor so that one has exact agreement.} See~\cite[Corollary~1.5]{lqg-tbm2} for the sphere, disk, and plane cases and~\cite[Proposition~1.10]{gwynne-miller-uihpq} for the half-plane case.
\begin{itemize}
\item The quantum sphere is equivalent to the Brownian map.
\item The quantum disk is equivalent to the Brownian disk.
\item The $\sqrt{8/3}$-quantum cone is equivalent to the Brownian plane.
\item The $\sqrt{8/3}$-quantum wedge is equivalent to the Brownian half-plane. 
\end{itemize}

We will now explain another elementary property of $D_h$ which allows us to define $D_{h+f}$ whenever $f : U\rta\BB R$ is a random continuous function coupled with $h$, even if the law of $h+f$ is not locally absolutely continuous with respect to the GFF.   
 
\begin{lem} \label{lem-metric-f}
Suppose $h$ is a random distribution on a connected open set $U\subset\BB C$ and let $f : U\rta\BB R$ be a random continuous function (not necessarily independent from $h$).
If the laws of $h$ and $h+f$ are both locally absolutely continuous with respect to the GFF on $U$, then a.s.\
\eqb \label{eqn-metric-f-max}
\exp\left( \frac{1}{\sqrt 6} \min_{x \in U} f(x) \right) D_h(z,w)   \leq D_{h+f}(z,w) \leq \exp\left( \frac{1}{\sqrt 6}  \max_{x \in U} f(x) \right) D_h(z,w)  ,\quad\forall z,w\in U .
\eqe
In fact, it is a.s.\ the case that for each $z,w\in U$, 
\eqb \label{eqn-metric-f}
D_{h+f}(z,w)  =  \inf_{\gamma : z\rta w} \int_0^{\op{len}(\gamma ; D_h)} e^{f(\gamma(t))/\sqrt 6} \,dt
\eqe
where the infimum is over all simple paths from $z$ to $w$ parameterized by their $D_h$-length. 
\end{lem}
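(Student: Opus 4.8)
The plan is to establish the length--integral formula \eqref{eqn-metric-f} --- which is the substantive assertion --- and to read off \eqref{eqn-metric-f-max} from it by bounding the integrand $e^{f(\gamma(t))/\sqrt 6}$ between $e^{(\min_U f)/\sqrt 6}$ and $e^{(\max_U f)/\sqrt 6}$. (Equivalently, \eqref{eqn-metric-f-max} follows directly from the two tools below, since $\min_U f\le f\le \max_U f$ as functions on $U$ forces $D_{h+\min_U f}\le D_{h+f}\le D_{h+\max_U f}$.) The two facts I will use throughout are: (i) the constant--scaling property $D_{h+c}=e^{c/\sqrt 6}\,D_h$ for $c\in\BB R$, which is the $\gamma=\sqrt{8/3}$ case of Weyl scaling (\cite[Lemma~2.2]{lqg-tbm2}; recall $\sqrt{8/3}/4=1/\sqrt 6$); and (ii) the monotonicity of the metric in the field, namely $D_{h+g_1}\le D_{h+g_2}$ whenever $g_1\le g_2$ are continuous on $U$ and the laws of $h+g_1$ and $h+g_2$ are both locally absolutely continuous with respect to the GFF --- this is a consequence of the construction of $D_h$ in \cite{lqg-tbm1,lqg-tbm2,lqg-tbm3} (the approximating metrics used there are monotone in the field).

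The heart of the argument is a local estimate obtained from (i), (ii) and the locality property (Property~\ref{item-metric-local}). First, for any open $V\subset U$ and any path $P$ contained in $V$ one has $\op{len}(P;D_{h|_V})=\op{len}(P;D_h)$: the inequality $\ge$ is immediate since $D_{h|_V}\ge D_h$, while for $\le$ one notes that each sub-arc $P|_{[t_{i-1},t_i]}$ is itself a path in $V$, so $D_{h|_V}(P(t_{i-1}),P(t_i))\le \op{len}(P|_{[t_{i-1},t_i]};D_h)$ by the definition of $D_{h|_V}$, and summing over a partition gives the claim; the same identity holds with $h+f$ in place of $h$. Hence, if $V$ is small enough that $\sup_V f-\inf_V f<\delta$, applying (i) and (ii) to the fields $h|_V$ and $(h+f)|_V=h|_V+f|_V$ yields, for every path $P\subset V$,
\[
e^{(\inf_V f)/\sqrt 6}\,\op{len}(P;D_h)\;\le\;\op{len}(P;D_{h+f})\;\le\;e^{(\sup_V f)/\sqrt 6}\,\op{len}(P;D_h).
\]

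For the $\le$ direction of \eqref{eqn-metric-f}, fix a simple path $\gamma:[0,L]\to U$ from $z$ to $w$ parameterized by $D_h$-length (if no finite-$D_h$-length path exists there is nothing to prove). The image $\gamma([0,L])$ is $D_h$-compact, hence Euclidean-compact in $U$ by Property~\ref{item-metric-holder}, so by uniform continuity of $f$ near it we may, for any $\delta>0$, pick a partition $0=s_0<\cdots<s_n=L$ and open $V_i\supset\gamma([s_{i-1},s_i])$ with $\sup_{V_i}f-\inf_{V_i}f<\delta$; using the displayed estimate on each arc and $\op{len}(\gamma|_{[s_{i-1},s_i]};D_h)=s_i-s_{i-1}$,
\[
D_{h+f}(z,w)\le\sum_{i=1}^n\op{len}(\gamma|_{[s_{i-1},s_i]};D_{h+f})\le e^{\delta/\sqrt 6}\sum_{i=1}^n e^{(\sup_{[s_{i-1},s_i]}f\circ\gamma)/\sqrt 6}\,(s_i-s_{i-1}),
\]
and since $t\mapsto e^{f(\gamma(t))/\sqrt 6}$ is continuous, letting the mesh $\to 0$ and $\delta\to 0$ sends the right side to $\int_0^L e^{f(\gamma(t))/\sqrt 6}\,dt$; infimizing over $\gamma$ gives $\le$. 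For $\ge$, take $\eta$ to be a $D_{h+f}$-geodesic, or (since $D_{h+f}$ is a length metric) a $D_{h+f}$-near-geodesic from $z$ to $w$, chosen simple. The left inequality of the displayed estimate, applied with $V$ a bounded neighborhood of the compact image of $\eta$, shows $\op{len}(\eta;D_h)\le e^{-(\inf_V f)/\sqrt 6}\op{len}(\eta;D_{h+f})<\infty$, so $\eta$ may be reparameterized by $D_h$-length; covering it by finitely many small-oscillation neighborhoods as above and using the left inequality arc-by-arc gives $D_{h+f}(z,w)=\op{len}(\eta;D_{h+f})\ge e^{-\delta/\sqrt6}\sum_i e^{(\inf_{[s_{i-1},s_i]}f\circ\eta)/\sqrt6}\op{len}(\eta|_{[s_{i-1},s_i]};D_h)$, which as mesh $\to0$ and $\delta\to0$ tends to $\int_0^{\op{len}(\eta;D_h)}e^{f(\eta(t))/\sqrt6}\,dt\ge\inf_\gamma\int_0^{\op{len}(\gamma;D_h)}e^{f(\gamma(t))/\sqrt6}\,dt$. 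Combining the two directions proves \eqref{eqn-metric-f}.

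The main obstacle is ingredient (ii): it is the only step that is not a formal manipulation of the properties of $D_h$ recalled above, and it has to be extracted from the QLE-based construction (e.g.\ from the monotonicity in the field of the growth/metric-net approximations to $D_h$, or via comparison estimates in \cite{lqg-tbm3}). The remaining points are routine bookkeeping: working on a fixed compact piece of the relevant path so that finitely many neighborhoods $V_i$ suffice, verifying that the $D_{h+f}$-(near-)geodesic has finite $D_h$-length, and carrying out the double limit (mesh $\to0$, $\delta\to0$) in the Riemann sums.
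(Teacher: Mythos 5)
Your proof is correct and takes essentially the same route as the paper's: obtain the two-sided a.s.\ comparison \eqref{eqn-metric-f-max} on small subdomains, subdivide a $D_{h+f}$-near-geodesic into arcs on which $f$ has oscillation at most $\delta$, bound each arc's length, and take the mesh and $\delta$ to zero. The only real difference is how the a.s.\ comparison is sourced: the paper cites the proof of \cite[Lemma~2.2]{lqg-tbm2} directly (which bounds the QLE approximants under a bounded perturbation of the field), whereas you propose to deduce it from the constant Weyl-scaling identity together with a separate field-monotonicity property $g_1\le g_2\Rightarrow D_{h+g_1}\le D_{h+g_2}$; as you note yourself, that monotonicity must be extracted from the QLE construction, so both arguments rest on the same underlying property of the approximations. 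Your explicit treatment of the locality/length-restriction fact $\op{len}(P;D_{h|_V})=\op{len}(P;D_h)$ for $P\subset V$ is a clean way to make rigorous a step the paper carries out implicitly.
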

\begin{proof}
If $f \equiv c$ is constant, then by~\cite[Lemma 2.2]{lqg-tbm2}, one has $D_{h+f} = e^{c/\sqrt 6} D_h$. In fact, the proof of~\cite[Lemma 2.2]{lqg-tbm2} shows that a.s.\ \eqref{eqn-metric-f-max} holds. 
We will now deduce~\eqref{eqn-metric-f} from~\eqref{eqn-metric-f-max}.
To this end, fix $\ep >0$. 
Since $f$ is continuous, we can find a (possibly random) $\delta >0$ such that $|f(z) - f(w)| \leq \ep$ whenever $|z-w| \leq \delta$. 

Now fix $z,w\in U$ and let $\gamma : [0,T] \rta U$ be a path from $z$ to $w$ whose $D_{h+f}$-length is at most $D_{h+f}(z,w) +\ep$. 
Choose finitely many times $0 = t_0 < t_1 < \dots < t_n = T$ such that $\max_{j\in [1,n]_{\BB Z} } \max_{t\in [t_{j-1} , t_j]} |\gamma(t) - \gamma(t_{j-1})| \leq \delta$. 
By~\eqref{eqn-metric-f-max} (applied with $B_\delta(\gamma(t_{j-1}))$ in place of $U$) and our choice of $\delta$, we have (in the notation~\eqref{eqn-curve-length})
\eqbn
\exp\left( \frac{f(t_{j-1})  - \ep }{\sqrt 6}    \right) \op{len}\left( \gamma|_{[t_{j-1}  ,t_j]} ; D_h\right) 
\leq \op{len}\left( \gamma|_{[t_{j-1}  ,t_j]} ; D_{h+f} \right) 
\leq \exp\left( \frac{f(t_{j-1})  + \ep }{\sqrt 6}    \right) \op{len}\left( \gamma|_{[t_{j-1}  ,t_j]} ; D_h\right)  .
\eqen
This shows that the $D_h$-length of $\gamma$ is finite and, if $\gamma$ is parameterized by $D_h$-length, that the $D_{h+f}$-length of $\gamma$ and the integral $\int_0^{\op{len}(\gamma ; D_h)} e^{f(\gamma(t))/\sqrt 6} \,dt$ differ by a factor of at most $e^{\ep/\sqrt{6}}$. Sending $\ep\rta 0$ shows that $D_{h+f}(z,w)$ is at least the right side of~\eqref{eqn-metric-f}. We similarly get the reverse inequality. 
\end{proof}


Let $h$ be a random distribution on a connected open set $U\subset\BB C$ whose law is locally absolutely continuous with respect to the GFF on $U$ and let $f : U\rta\BB R$ be a random continuous function (not necessarily independent from $h$).
If $D_h$ is defined, we define $D_{h+f}$ by the formula~\eqref{eqn-metric-f}. 
We need to make sure that $D_{h+f}$ is well-defined (i.e., we get the same metric if we make a different choice of $h$ and $f$ with $h' + f' = h+f$) and that it is a measurable function of $h+f$ (a priori we only know that $D_{h+f}$ is a measurable function of $(h,f)$).  
The following lemma is an easy consequence of Lemma~\ref{lem-metric-f}. We will give the proof just below.

\begin{lem} \label{lem-weighted-metric}
Let $h$ and $f$ be as above and define $D_{h+f}$ by the formula~\eqref{eqn-metric-f}. 
Then $D_{h+f}$ is a.s.\ determined by $h+f$. 
Moreover, if $(h',f')$ is another pair consisting of a random distribution on a connected open set $U\subset\BB C$ whose law is locally absolutely continuous with respect to the GFF on $U$ and a random continuous function such that $h+f \eqD h'+f'$. 
Then $(h+f, D_{h+f}) \eqD (h'+f', D_{h'+f'})$. 
\end{lem}

Once Lemma~\ref{lem-weighted-metric} is established, it follows from the above properties of $D_h$ that $D_{h+f}$ is locally bi-H\"older continuous with respect to the Euclidean metric in the sense of property~\ref{item-metric-holder} above (so induces the same topology on $U$ as the Euclidean metric) and satisfies the LQG coordinate change formula~\eqref{eqn-lqg-coord} and the locality property~\ref{item-metric-local}. 
We also note that Lemma~\ref{lem-weighted-metric} implies that for a given choice of $h$, a.s.\ $D_{h+f}$ can be defined via the formula~\eqref{eqn-metric-f-max} \emph{simultaneously} for every choice of continuous function $f : U \rta \BB R$. Indeed, this follows by considering a countable collection of functions $f$ which is dense in the space of all continuous functions $U\rta \BB R$ w.r.t.\ the local uniform topology. 

\begin{proof}[Proof of Lemma~\ref{lem-weighted-metric}]
For a length metric $D$ on $U$ and a continuous function $f : \ol U \rta \BB R$, write $e^{f/\sqrt 6} \cdot D$ for the metric defined by the formula~\eqref{eqn-metric-f} with $D$ in place of $D_h$. 
From the definition~\eqref{eqn-metric-f}, one immediately gets the following additivity property: for every metric $D$ on $U$ which induces the Euclidean topology and any two continuous functions $f,g : \ol U \rta \BB R$, 
\eqb \label{eqn-metric-commute}
e^{ g / \sqrt 6} \cdot (e^{ f /\sqrt 6} \cdot D) = e^{ (f+g) / \sqrt 6 } \cdot D .
\eqe
Indeed, this follows since the two metrics in~\eqref{eqn-metric-commute} induce the same length measure on each path in~$U$. 

Suppose now that we are given two couplings $(h,f)$ and $(h',f')$ of a distribution whose law is locally absolutely continuous w.r.t.\ the GFF and a random continuous function such that $h+f \eqD h'+f'$. We can couple $(h,f,h',f')$ in such a way that $h+f = h'+f'$ and $(h,f)$ and $(h',f')$ are conditionally independent given $h+f$. 
By~\eqref{eqn-metric-commute} applied to the functions $f$ and $f'-f$ together with Lemma~\ref{lem-metric-f} applied to the distributions $h'$ and $h = h' + f' - f$, we get that a.s.\
\eqb
D_{h'+f'} = e^{  f' / \sqrt 6} \cdot D_{h'} = e^{ f / \sqrt 6} \cdot \left( e^{  (f'-f) / \sqrt 6} \cdot D_{h'} \right) = e^{  f / \sqrt 6} \cdot D_h = D_{h+f} .
\eqe
Hence $(h+f , D_{h+f}) \eqD (h'+f', D_{h'+f'})$.
Moreover, since $D_{h+f}$ and $D_{h'+f'}$ are conditionally independent given $h+f$ (by our choice of coupling), it follows that $D_{h+f}$ is a.s.\ determined by $h+f$. 
\end{proof}

\begin{remark} \label{remark-general-gamma}
All of the properties of $D_h$ discussed in this section also hold for the $\gamma$-LQG metric for general $\gamma \in (0,2)$ from~\cite{gm-uniqueness}, except that $1/\sqrt 6$ is replaced by $\gamma/d_\gamma$, where $d_\gamma$ is the Hausdorff dimension of the $\gamma$-LQG metric (which is not known explicitly).
In fact,~\cite{gm-uniqueness} shows that a list of properties similar to the ones discussed in this section uniquely characterize the $\gamma$-LQG metric up to a deterministic multiplicative constant. 
\end{remark}

\section{Proof of main results, assuming finite expectation hypothesis}
\label{sec-tutte-conv}

For a GFF-type distribution $h$ on a connected open domain $\mcl D \subset \BB C$, we write $D_h$ and $\mu_h$ for its $\sqrt{8/3}$-LQG metric and area measure, respectively. 
Conditional on $h$, for $\lambda>0$ we let $\mcl P_h^\lambda$ be a Poisson point process on $\mcl D$ with intensity measure $\lambda \mu_h$.
For $z\in\mcl P_h^\lambda$, let $H_{h,z}^\lambda \subset\ol{\mcl D}$ be the \emph{Voronoi cell} which is the closed set of points in $\BB C$ which are (weakly) $D_h$-closer to $z$ than to any other point of $\mcl P_h^\lambda$. 
We view $\mcl P_h^\lambda$ as a graph with two points $z,w\in\mcl P_h^\lambda$ joined by an edge if and only if $H_{h,z}^\lambda\cap H_{h,w}^\lambda \not=\emptyset$, equivalently, if and only if 
\eqb \label{eqn-cell-adjacency}
\exists u\in \ol{\mcl D} \: \text{such that} \: D_h(u,z) = D_h(u,w) \: \text{and} \: D_h(u,z)\leq D_h(u,x) ,\quad\forall x\in \mcl P_h^\lambda\setminus \{z,w\} .
\eqe

Extending the notation above, for $w\in \mcl D$ we write $H_{h,w}^\lambda$ for the (a.s.\ unique for $w$ deterministic, by Lemma~\ref{lem-zero-lebesgue} just below) Voronoi cell which contains $w$.
We define
\eqb
\mcl H_h^\lambda := \left\{ H_{h,z}^\lambda : z\in\mcl P_h^\lambda\right\} .
\eqe
We will often omit the subscript $h$ and/or the superscript $\lambda$ when these objects are clear from the context. 

In this subsection, we will prove all of our main results conditional on the following proposition. 
 
\begin{prop} \label{prop-cell-moment}
Suppose $(\BB C , h , 0, \infty)$ is a 0-quantum cone. 
Define the Voronoi cell configuration $\mcl H = \mcl H_h^1$ as above with $\lambda = 1$ and let $H_0$ be the cell which contains the origin. 
Then 
\eqb \label{eqn-cell-moment}
\BB E\left[ \frac{\op{diam}(H_0)^2 \op{deg}(H_0)}{\op{area}(H_0)}  \right] < \infty,
\eqe
where here $\op{deg}(H_0)$ denotes the degree of $H_0$ as a vertex of $\mcl H$. 
\end{prop}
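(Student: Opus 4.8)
The plan is to bound the expectation in \eqref{eqn-cell-moment} by splitting on the three geometric quantities involved. The key point is the scale invariance \eqref{eqn-cone-scale-mm} of the $0$-quantum cone: applying the mass transport principle (equivalently, a Palm-type / unimodularity argument for the translation-invariant-modulo-scaling configuration $\mcl H$), the expectation $\BB E[\op{diam}(H_0)^2 \op{deg}(H_0) / \op{area}(H_0)]$ can be rewritten as an integral against $\mu_h$ over a single Voronoi cell $H_{h,\BB z}^1$ containing a $\mu_h$-typical point $\BB z$, weighted appropriately. So it suffices to get good tail bounds on $\op{diam}(H_{h,\BB z}^1)$ and $\op{deg}(H_{h,\BB z}^1)$, and a good lower tail bound on $\op{area}(H_{h,\BB z}^1)$, where $\BB z$ is a point sampled from $\mu_h$ (or from Lebesgue measure, using that the $0$-cone describes the local behavior near a Lebesgue-typical point); and then combine these, using H\"older's inequality, to make the product integrable.

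First I would control the cell $H_{h,\BB z}^1$ in terms of $\sqrt{8/3}$-LQG metric balls. By definition of the Voronoi tessellation, $H_{h,\BB z}^1$ is contained in the $D_h$-metric ball $B_{R}(z; D_h)$ where $R = 2 D_h(z, \text{nearest other Poisson point})$, and it contains the ball of radius $\tfrac12 R'$ for a comparable $R'$; more importantly, the $D_h$-radius of the cell around its center has exponential tails given $h$ because the Poisson process has intensity $\lambda \mu_h = \mu_h$, so a cell of large $D_h$-diameter forces a large $\mu_h$-ball to contain no Poisson points, which happens with probability $e^{-\mu_h(\text{ball})}$. Then I would invoke the ball estimates advertised in the introduction: Proposition~\ref{prop-lqg-ball-swallow} says a $\sqrt{8/3}$-LQG metric ball is very unlikely to be long and thin, i.e.\ it typically contains a Euclidean ball of radius comparable to its Euclidean diameter — this gives the lower bound on $\op{area}(H_0)$ in terms of $\op{diam}(H_0)^2$ (up to rare bad events with superpolynomially small probability) — and Proposition~\ref{prop-ball-vol} says the $\sqrt{8/3}$-LQG mass of a metric ball is concentrated around the fourth power of its $\sqrt{8/3}$-LQG radius, which converts $D_h$-radius bounds into $\mu_h$-mass bounds and hence into Euclidean-diameter bounds via the Euclidean/LQG comparison.

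Next I would bound $\op{deg}(H_0)$. A cell $H_w^1$ adjacent to $H_0$ must have its center $w$ within $D_h$-distance $2\,\op{diam}_{D_h}(H_0) + 2\,\op{diam}_{D_h}(H_w^1)$ of the center of $H_0$ (since the two cells touch), so — on the event that all nearby cells have controlled $D_h$-diameter, which again holds off a rare event by the exponential tails above — all neighbors of $H_0$ have centers in a $D_h$-ball of controlled radius around $z$, and the number of such centers is Poisson with mean equal to the $\mu_h$-mass of that ball, which is controlled by Proposition~\ref{prop-ball-vol}. So $\op{deg}(H_0)$ has, say, stretched-exponential or at least high polynomial moments given the good event. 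Putting the three estimates together: on the good event, $\op{diam}(H_0)^2 \op{deg}(H_0)/\op{area}(H_0) \preceq \op{deg}(H_0) \cdot (\text{polynomial in } \op{diam}_{D_h}(H_0))$ which has finite expectation; on the bad event, which has superpolynomially small probability, one uses crude a priori bounds (the cell is contained in a fixed-probability-bounded region after rescaling, the degree is at most the total number of nearby Poisson points, etc.) together with H\"older's inequality to see the contribution is still finite.

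The main obstacle, I expect, is the lower bound on $\op{area}(H_0)$ — i.e.\ ruling out that the cell containing the origin is Euclidean-thin. This is exactly the content of Proposition~\ref{prop-lqg-ball-swallow}, whose proof is the technical heart of Section~\ref{sec-ball-estimates}: one must show that a $\sqrt{8/3}$-LQG metric ball, which is an irregular fractal set, nonetheless cannot be squeezed into a thin Euclidean neighborhood of a curve, and the quantitative tail on the "length-to-width ratio" must be strong enough (superpolynomial) to beat the polynomial factors coming from $\op{diam}(H_0)^2$ and the moments of $\op{deg}(H_0)$ after applying H\"older. A secondary subtlety is handling the interaction between the randomness of the Poisson process and the randomness of $h$ carefully when applying the mass transport / Palm argument, and making sure the various "good events" are defined in a scale-invariant way compatible with \eqref{eqn-cone-scale-mm} so that the mass transport principle applies cleanly.
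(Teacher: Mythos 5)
Your proposal correctly identifies the ingredients — a mass transport/Palm step, the ``swallow'' estimate (Proposition~\ref{prop-lqg-ball-swallow}), the ball-volume estimate (Proposition~\ref{prop-ball-vol}), and Poisson tails for the degree — but it misses the specific form of the mass transport step, and that is a genuine gap. Proposition~\ref{prop-lqg-ball-swallow} is a statement about $D_h$-metric \emph{balls}, not Voronoi cells, and the cell $H_0$ need not contain a metric ball whose Euclidean diameter is comparable to $\op{diam}(H_0)$: the smallest metric ball $B_{H_0}$ that contains $H_0$ can have $D_h$-radius much larger than the largest metric ball inscribed in $H_0$, so there is no direct route from the swallow estimate to a lower bound on $\op{area}(H_0)$. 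Your proposed reduction (re-rooting at a $\mu_h$-typical point) leaves $\op{area}(H_0)$ in the denominator, which is precisely the difficulty the paper flags at the start of Section~\ref{sec-finite-expectation}. The paper's actual mass transport (Lemma~\ref{lem-ball-cell-moment}) is on a \emph{decorated} cell configuration with decoration $K_H = B_H$, and the transport function is chosen so that $\op{area}(H_0)$ cancels against $\int_{\BB C}\BB 1_{x\in K_{H_0}}\,dx$, leaving $\op{area}(B_H)$ in the denominator of the transported sum: one gets
\[
\BB E\left[ \frac{\op{diam}(H_0)^2 \op{deg}(H_0)}{\op{area}(H_0)} \right] = \BB E\left[ \sum_{H\in\mcl H\,:\, 0\in B_H} \frac{\op{diam}(H)^2 \op{deg}(H)}{\op{area}(B_H)} \right],
\]
and since $H\subset B_H$, one can replace $\op{diam}(H)$ by $\op{diam}(B_H)$ and then Proposition~\ref{prop-lqg-ball-swallow} applies. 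Without this specific transport your argument stalls at exactly the point you identify as ``the main obstacle.''

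Two smaller problems: your claim that a cell of large $D_h$-diameter ``forces a large $\mu_h$-ball to contain no Poisson points'' is not correct as stated --- the cell can have a point $w$ close to its center $z_0$ on one side while extending far in another direction, so the $D_h$-radius of $H_0$ is not controlled by emptiness of a $D_h$-ball around $z_0$. The paper instead bounds the cell diameter by building a separating ``wall'' of small cells in a $D_h$-annulus (Lemmas~\ref{lem-separating-balls} and~\ref{lem-cell-in-ball}). Also, H\"older's inequality plays no role in the paper's argument: after the mass transport, Lemmas~\ref{lem-ball-ratio}--\ref{lem-cell-deg} give superpolynomial tails for each factor, so the transported sum has finite moments of \emph{all} positive orders (Proposition~\ref{prop-ball-moment}), and the first moment on the left-hand side is then finite for free.
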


Proposition~\ref{prop-cell-moment} is used in Section~\ref{sec-0cone} to check the finite expectation hypotheses of Theorem~\ref{thm-general-clt-uniform} for the Voronoi cell configuration associated with 0-quantum cone. 
The proof of Proposition~\ref{prop-cell-moment} is given in Section~\ref{sec-ball-estimates}. 
This proof is the most difficult step in the proofs of our main results, and requires us to establish several estimates for $\sqrt{8/3}$-LQG metric balls which are of independent interest.

The rest of this section is structured as follows. 
In Section~\ref{sec-0cone}, we explain why Proposition~\ref{prop-cell-moment} together with Theorem~\ref{thm-general-clt-uniform} implies a scaling limit result for random walk on the adjacency graph of Voronoi cells associated with a 0-quantum cone.
In Section~\ref{sec-rw-conv}, we transfer this result to random walk on Voronoi cells on other types of $\sqrt{8/3}$-LQG surfaces (including the ones corresponding to the Brownian map, disk, plane, and half-plane) using local absolute continuity and thereby prove Theorem~\ref{thm-rw-conv0}.
In Section~\ref{sec-tutte-conv-proof}, we deduce Theorem~\ref{thm-tutte-conv} from our scaling limit result for random walk.
The arguments in these three subsections are similar to the analogous arguments in~\cite[Section 3]{gms-tutte}.
In Section~\ref{sec-finite-expectation}, we give a re-formulation of Proposition~\ref{prop-cell-moment} which involves bounds for $\sqrt{8/3}$-LQG metric balls instead of Voronoi cells, and which turns out to be easier to prove than Proposition~\ref{prop-cell-moment} itself. 

Throughout this section, we will use several elementary properties of Voronoi cells whose proofs are collected in Appendix~\ref{sec-voronoi-cells} to avoid interrupting the main argument.

\subsection{Cell configuration corresponding to a 0-quantum cone}
\label{sec-0cone}

Let $(\BB C , h , 0, \infty)$ be a 0-quantum cone and write $\mcl P = \mcl P_h^1$ and $\mcl H =\mcl H_h^1$ for its associated Poisson point process and collection of Voronoi cells with $\lambda = 1$. 

\begin{prop} \label{prop-0cone}
The conclusion of Theorem~\ref{thm-general-clt-uniform} holds for the cell configuration $\mcl H$ above. Moreover, the covariance matrix $\Sigma$ of the limiting Brownian motion is a positive scalar multiple of the identity matrix.
\end{prop}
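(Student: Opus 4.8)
The plan is to verify the four hypotheses listed in Section~\ref{sec-general-clt} for the Voronoi cell configuration $\mcl H = \mcl H_h^1$ of the $0$-quantum cone and then apply Theorem~\ref{thm-general-clt-uniform}; the arguments parallel those of~\cite[Section~3]{gms-tutte}. As preliminaries one checks that $\mcl H$ is a cell configuration in the sense of Definition~\ref{def-cell-config}: the cells are compact, connected, and have non-empty interior (each center lies in the interior of its own cell), they cover $\BB C$, the collection is locally finite, and distinct cells meet in a Lebesgue-null set. These follow from the elementary properties of Voronoi cells proven in Appendix~\ref{sec-voronoi-cells} (in particular Lemma~\ref{lem-zero-lebesgue}) together with the a.s.\ $D_h$-density of $\mcl P_h^1$ in $\BB C$, which uses that $\mu_h$ charges every $D_h$-ball, assigns finite mass to bounded sets, and that $D_h$ induces the Euclidean topology (property~\ref{item-metric-holder}). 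Hypothesis~\ref{item-hyp-adjacency} (\textbf{connectedness along lines}) is then automatic: since any two intersecting cells are adjacent, a line segment passes through a finite chain of cells each consecutive pair of which shares a point and hence an edge (cf.\ the remark after the list of hypotheses in Section~\ref{sec-general-clt}). Hypothesis~\ref{item-hyp-moment} (\textbf{finite expectation}) is exactly Proposition~\ref{prop-cell-moment}.

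The substantive points are Hypotheses~\ref{item-hyp-resampling} and~\ref{item-hyp-ergodic}, i.e.\ \emph{ergodicity modulo scaling}. For \textbf{translation invariance modulo scaling}, we use that the $0$-quantum cone describes the local behaviour of $\sqrt{8/3}$-LQG near a Lebesgue-typical point. Take $U_j = B_j(0)$ and, conditionally on $h$, let $z_j$ be uniform on Lebesgue measure on $U_j$; recentering $h$ at $z_j$ and applying the coordinate change~\eqref{eqn-lqg-coord} to normalize (e.g.\ so that the circle average of the recentered field over a unit circle vanishes) produces a field whose law converges to that of the circle-average embedding of a $0$-quantum cone. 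This is where the translation invariance modulo additive constant of the whole-plane GFF enters, via the identification of $h|_{\BB D}$ with a normalized whole-plane GFF, together with the exact scaling relations~\eqref{eqn-cone-scale}--\eqref{eqn-cone-scale-mm}. Because the Voronoi cells are locally determined by $h$ (property~\ref{item-metric-local}) and $D_h$ is Euclidean-H\"older, convergence of the recentered normalized fields upgrades to $C_j(\mcl H - z_j) \to \mcl H$ in law with respect to the metric~\eqref{eqn-cell-metric}; one can also phrase this through the mass-transport formulation alluded to in Section~\ref{sec-finite-expectation} and~\cite[Definition~1.2]{gms-random-walk}. For \textbf{ergodicity modulo scaling}, let $F = F(\mcl H)$ be invariant under translations and scalings; the relation~\eqref{eqn-cone-scale-mm} exhibits $\mcl H$ at every scale as a rescaled $0$-quantum-cone cell configuration, and the field of a $0$-quantum cone is asymptotically independent across widely separated scales (tail triviality of the whole-plane GFF / quantum cone), so $F$ is measurable with respect to a trivial tail $\sigma$-algebra and hence a.s.\ constant, exactly as in~\cite[Section~3]{gms-tutte}.

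With the four hypotheses in hand, Theorem~\ref{thm-general-clt-uniform} gives the asserted scaling-limit convergence of simple random walk on $\mcl H$ to Brownian motion modulo time parameterization, for some deterministic $\Sigma$ with $\det\Sigma\neq 0$. It remains to see that $\Sigma$ is a positive scalar multiple of the identity. The circle-average embedding $h$ of the $0$-quantum cone is rotationally invariant in law about the origin: its radial part $t\mapsto h_{e^{-t}}(0)$ depends only on the modulus, its lateral part $h - h_{|\cdot|}(0)$ has the law of the rotationally invariant whole-plane GFF minus its circle averages, and the two are independent. By the coordinate change formulas for $\mu_h$ and $D_h$ applied to a rotation $\phi$ (so $|\phi'|=1$), the law of $\mcl H$ is invariant under $z\mapsto e^{i\theta}z$. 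Since $\Sigma$ is a deterministic functional of the law of $\mcl H$, applying Theorem~\ref{thm-general-clt-uniform} to the rotated configuration (whose random walk is the rotation of the original) shows that Brownian motion with covariance $R_\theta\Sigma R_\theta^{-1}$ agrees modulo time parameterization with Brownian motion with covariance $\Sigma$; since the law of Brownian motion modulo time parameterization determines its covariance up to a positive scalar and $R_\theta\Sigma R_\theta^{-1}$ has the same determinant as $\Sigma$, we get $R_\theta\Sigma R_\theta^{-1} = \Sigma$ for all $\theta$, forcing $\Sigma = c\,I$ with $c>0$.

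The main obstacle is the \emph{ergodicity modulo scaling} package --- pinning down the precise sense in which the Voronoi tessellation of a $0$-quantum cone is ``translation invariant modulo a global rescaling'' and establishing the accompanying tail triviality --- although the hardest single ingredient overall, the moment bound~\eqref{eqn-cell-moment}, is here taken as given (Proposition~\ref{prop-cell-moment}).
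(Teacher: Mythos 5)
Your proposal is correct and follows essentially the same route as the paper: verify Definition~\ref{def-cell-config} and the four hypotheses via the appendix lemmas and Proposition~\ref{prop-cell-moment}, then invoke Theorem~\ref{thm-general-clt-uniform} and use rotational invariance of the circle-average embedding to force $\Sigma = c\,I$. The one technical deviation is your choice $U_j = B_j(0)$ of deterministic balls in the translation-invariance hypothesis: the paper instead takes $U_j = B_{R_j}(0)$ with $R_j$ the random radius from~\eqref{eqn-mass-hit-time}, which makes $h(R_j\cdot) + Q\log R_j - \gamma^{-1}\log j$ \emph{exactly} equal in law to $h$ via~\eqref{eqn-cone-scale}, so one can appeal directly to~\cite[Proposition~4.13(ii)]{wedges} and translation invariance of the whole-plane GFF modulo additive constant without having to argue convergence of the recentered and renormalized field separately; your deterministic choice works too but requires the extra (unspelled-out) step of showing the recentered, normalized field law converges, since for fixed $j$ there is no exact equality in law.
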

\begin{proof}
By Lemma~\ref{lem-cell-basic}, the cells of $\mcl H$ are a.s.\ compact with non-empty interior and $\mcl H$ is locally finite. 
By Lemma~\ref{lem-zero-lebesgue}, a.s.\ the intersection of any two cells of $\mcl H$ has zero Lebesgue measure, and by definition any two cells which are adjacent in $\mcl H$ intersect. Therefore $\mcl H$ satisfies the conditions of Definition~\ref{def-cell-config}. We will now check the conditions of Theorem~\ref{thm-general-clt-uniform}.
\medskip

\noindent\textbf{Translation invariance modulo scaling.} For $j\in\BB N$, let $R_j$ be the largest $r> 0$ for which $h_r(0) + Q\log r =  \gamma^{-1} \log j$, where $h_{r}(0)$ denotes the circle average, as in~\eqref{eqn-mass-hit-time}. We will check the needed resampling property for $U_j = B_{R_j}(0)$. By~\eqref{eqn-cone-scale}, the field $h^j := h(R_j\cdot) + Q\log R_j - \gamma^{-1} \log j$ agrees in law with $h$. In particular, by the discussion just after~\cite[Definition 4.10]{wedges}, $h^j|_{\BB D}$ agrees in law with the corresponding restriction of a whole-plane GFF, normalized so that its circle average over $\bdy\BB D$ is 0. Consequently, if we sample $z_j$ uniformly from Lebesgue measure on $B_{R_j}(0)$, then the proof of~\cite[Proposition 4.13(ii)]{wedges} along with the translation invariance of the law of the whole-plane GFF, modulo additive constant, shows that the there is a sequence of random constants $C_j\rta\infty$ such that the law of $ h (C_j(\cdot-z_j)) + Q\log C_j $, restricted to any compact subset $K\subset \BB C$, converges to the law of $h|_K$ in the total variation sense as $j\rta\infty$.
By the LQG coordinate change formula, this implies that the joint law of $\mu_h(C_j(\cdot-z_j))|_K$ and $D_h(C_j(\cdot-z_j) , C_j(\cdot-z_j))|_K$ converges in the total variation sense to the joint law of $\mu_h|_K$ and $D_h|_K$ as $j\rta\infty$. 
This implies that $C_j(\mcl H-z_j) \rta \mcl H$ in law as $j\rta\infty$.
\medskip

\noindent\textbf{Ergodicity modulo scaling.} 
It is easily checked that $\bigcap_{R >0} \sigma\left(h|_{\BB C\setminus B_R(0)} \right)$ is the trivial $\sigma$-algebra (see, e.g.,~\cite[Lemma 2.2]{hs-euclidean} for the case of the whole-plane GFF; the case of $h$ can be treated in an identical manner due to~\cite[Definition 4.10]{wedges}). 
From this, it follows that also the intersection over all $R>0$ of the $\sigma$-algebra generated by $h|_{\BB C\setminus B_R(0)}$ and the set of points of $\mcl P$ which are contained in $\BB C\setminus B_R(0)$ is trivial.
For any $R>0$, there is an $R' = R'(R)  > 0$ such that each cell of $\mcl H$ which intersects $\BB C\setminus B_{R }(0)$ is contained in $\BB C\setminus B_{R'}(0)$, and we have $R'\rta\infty$ as $R\rta\infty$. 
It therefore follows that $\bigcap_{R>0} \sigma\left( \mcl H(B_R(0)) \right) $ is the trivial $\sigma$-algebra. 

To deduce condition~\ref{item-hyp-ergodic} from this, consider a real-valued function $F = F(\mcl H)$ satisfying $F(C(\mcl H-z))  = F(\mcl H)$ for each $C>0$ and $z\in\BB C$. If $F$ is determined by $\mcl H(B_R(0))$ for any $R>0$, then $F$ is equal to a deterministic constant a.s.\ since $F = F(B_R(0) - z)$ for every $z\in\BB C$ so $F$ is measurable with respect to the $\sigma$-algebra $\bigcap_{R>0} \sigma\left( \mcl H(B_R(0)) \right) $. In general, the conditional law of $F$ given $\mcl H(B_R(0))$ must be deterministic by the preceding sentence, so $F$ is independent from $\mcl H(B_R(0))$, whence the above claim implies that $F$ is equal to a deterministic constant a.s. 
\medskip

\noindent\textbf{Finite expectation.} This is the content of Proposition~\ref{prop-cell-moment}, which will be proven in Section~\ref{sec-ball-estimates}.
\medskip

\noindent\textbf{Connectedness along lines.} This follows since by definition two cells of $\mcl H$ are connected by an edge of $\mcl E\mcl H$ if and only if they intersect and the collection of cells is locally finite. 
\medskip

\noindent The covariance matrix $\Sigma$ is a scalar multiple of the identity since the law of $h$, and therefore the law of $\mcl H$, is invariant under rotations around the origin. 
\end{proof}

\subsection{Random walk on cells converges to Brownian motion}
\label{sec-rw-conv}

The following theorem is a generalization of Theorem~\ref{thm-rw-conv0} (recall the correspondence between Brownian and $\sqrt{8/3}$-LQG surfaces as described in Section~\ref{sec-lqg-metric}).

\begin{thm} \label{thm-rw-conv}
Suppose that we are in one of the following situations.
\begin{itemize}
\item $\mcl D = \BB C$, $\alpha < Q$, and $(\BB C  , h , 0, \infty)$ is an $\alpha$-quantum cone. 
\item $\mcl D = \BB C$ and $(\BB C , h , 0, \infty)$ is a doubly marked quantum sphere, e.g., with fixed area.
\item $\mcl D = \BB H$, $\alpha < Q$, and $(\BB H , h  , 0, \infty)$ is an $\alpha$-quantum wedge.
\item $\mcl D = \BB D$ and $(\BB D , h )$ is a quantum disk with fixed boundary length or fixed boundary length and area.
\end{itemize}
For $z\in \ol{\mcl D}$ and $\lambda > 0$, let $Y^{z,\lambda}  $ be the simple random walk on the adjacency graph of the Voronoi cell configuration $\mcl H_h^\lambda$.
Let $\wh Y^{z,\lambda}$ be the image of $Y^{z,\lambda}$ under the map which sends each Voronoi cell to its center point and extend $\wh Y^{z,\lambda}$ to a function from $[0,\infty)$ to $\ol{\mcl D}$ by piecewise linear interpolation at constant speed.  

For each deterministic compact set $K\subset \ol{\mcl D}$, the supremum over all $z\in K$ of the Prokhorov distance between the conditional law of $\wh Y^{z,\lambda} $ given $(h , \mcl P_h^\lambda)$ and the law of a standard two-dimensional Brownian motion started from $z$ (and stopped when it hits the boundary in the case of a quantum wedge or quantum disk), with respect to the metric on curves viewed modulo time parameterization (i.e., the metric~\eqref{eqn-cmp-metric} in the disk or half-plane case or the metric~\eqref{eqn-cmp-metric-loc} in sphere or whole-plane case) converges to 0 in probability as $\lambda\rta \infty$. 
\end{thm}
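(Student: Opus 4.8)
The plan is to establish Theorem~\ref{thm-rw-conv} first for the $0$-quantum cone, where it follows from Proposition~\ref{prop-0cone} (equivalently, Theorem~\ref{thm-general-clt-uniform}), and then to transfer it to each of the other surfaces on the list by local absolute continuity together with a localization argument near the marked points and the domain boundary. Throughout one works in the curves-modulo-parameterization topology, which is convenient here because a curve segment of small Euclidean diameter is negligible in this topology and because convergence of a curve is controlled by convergence of the curve stopped at its exit from balls. For the $0$-quantum cone: by the exact scale invariance~\eqref{eqn-cone-scale}--\eqref{eqn-cone-scale-mm}, if $R_\lambda$ is given by~\eqref{eqn-mass-hit-time} with $b=\lambda$, then $\mcl H_h^\lambda$ agrees in law with a spatial rescaling of $\mcl H_h^1$ by the random factor $R_\lambda^{-1}$, with $R_\lambda\rta\infty$ a.s.\ as $\lambda\rta\infty$, and $\wh Y^{z,\lambda}$ agrees in law (jointly with its environment) with $\ep\wh Y^{z/\ep,1}$ for $\ep=R_\lambda^{-1}$. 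Proposition~\ref{prop-0cone} gives, for each fixed compact $A\subset\BB C$, the a.s.\ convergence as $\ep\rta 0$ of the maximum over $z\in A$ of the Prokhorov distance between the conditional law of $\ep\wh Y^{z/\ep,1}$ given $\mcl H_h^1$ and the law of Brownian motion from $z$ with covariance $\Sigma$ a positive multiple of the identity; since $\Sigma$ is scalar, the limit is standard planar Brownian motion modulo time parameterization. Taking $A=K$ and plugging in $\ep=R_\lambda^{-1}$ yields the theorem for the $0$-quantum cone, in probability as $\lambda\rta\infty$.

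Next I would transfer to the other surfaces away from marked points and the boundary. Fix one of the remaining surfaces $(\mcl D,h)$ and a compact $K'\subset\mcl D$ bounded away from the marked points and from $\bdy\mcl D$. For $z\in K'$ and a sufficiently small Euclidean ball $B=B_\rho(z)$ with a neighborhood $V\supset\ol B$ contained in $\mcl D$ and away from the marked points, the law of $h|_V$ is mutually absolutely continuous with respect to the law of the field of a $0$-quantum cone restricted to a translate of $V$ near the origin. The obstruction to transferring the scaling limit across this absolute continuity is that a Voronoi cell is not a local function of $h$. To circumvent this I would use the locality of the $\sqrt{8/3}$-LQG metric (property~\ref{item-metric-local}) together with the ball estimates of Section~\ref{sec-ball-estimates} (the inputs behind Proposition~\ref{prop-cell-moment}): with probability tending to $1$ as $\lambda\rta\infty$, every Voronoi cell that intersects $B$ is contained in $V$ and all $D_h$-geodesics between nearby points of $V$ stay in $V$, so on this high-probability event the restriction of the cell configuration to $B$ --- hence the conditional law given $(h,\mcl P_h^\lambda)$ of $\wh Y^{z,\lambda}$ stopped at its first exit from $B$ --- is a measurable function of $(h|_V,\mcl P_h^\lambda\cap V)$. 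Since the corresponding high-probability bound and scaling-limit statement hold for the $0$-quantum cone, absolute continuity (with a dominated-convergence argument for the error events) transfers the convergence of this stopped walk to $(\mcl D,h)$, uniformly over $z\in K'$. Restarting the walk at successive ball-exit times and using the uniformity over starting points upgrades this to convergence of the full walk, uniformly over $z\in K'$. For surfaces with boundary one instead runs the walk until its first exit from $\{x: D_h(x,\bdy\mcl D)>\delta\}$, which for fixed $\delta>0$ lies in the bulk, and then lets $\delta\rta 0$, using that a Voronoi cell meeting $\bdy\mcl D$ has center at $D_h$-distance $o(1)$ from $\bdy\mcl D$ as $\lambda\rta\infty$ so that ``hitting a boundary cell'' and ``hitting $\bdy\mcl D$'' agree in the limit.

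It then remains to handle a general $z$ in a compact $K\subset\ol{\mcl D}$ that is possibly close to (or equal to) a marked point or to $\bdy\mcl D$, which I would do by localization. The portion of $\wh Y^{z,\lambda}$ before its first exit from a small Euclidean ball $B_\delta(z)$ (or before it is stopped, on a surface with boundary) is a curve of Euclidean diameter $O(\delta)$ --- using the bi-H\"older property~\ref{item-metric-holder} and, to control the overshoot of a single step, the fact that cells near $z$ are small with high probability --- and hence contributes $O(\delta)$ to the curves-modulo-parameterization distance, exactly as does the corresponding initial portion of the limiting Brownian motion. After this initial portion the walk is started near $\bdy B_\delta(z)$, which for small $\delta$ lies in the bulk region handled above; applying that case with its uniformity over starting points, sending $\lambda\rta\infty$ and then $\delta\rta 0$, and using continuity of Brownian exit and hitting laws to glue the two portions, gives the claim uniformly over $z\in K$. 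Transferring from these $\sqrt{8/3}$-LQG surfaces to the corresponding Brownian surfaces is then immediate from the correspondence recalled in Section~\ref{sec-lqg-metric}, which also yields Theorem~\ref{thm-rw-conv0}.

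The principal obstacle is the non-locality of Voronoi cells: a Radon--Nikodym argument cannot be applied directly to a cell configuration because its law in a region is not determined by the field in that region. Overcoming this is precisely where the $\sqrt{8/3}$-LQG metric ball estimates of Section~\ref{sec-ball-estimates} enter --- qualitatively, one needs that Voronoi cells are typically small and are not ``long and thin'', so that the configuration in a region localizes --- and the genuinely hard analytic input, the finite-expectation bound of Proposition~\ref{prop-cell-moment}, is deferred to that section. A secondary source of technicality is the bookkeeping required for the uniformity over $z\in K$ and for the behavior near the marked points and the domain boundary.
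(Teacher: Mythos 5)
Your plan for the $0$-quantum cone and for the transfer to $\alpha$-quantum cones (and thence the sphere) is essentially the paper's: Proposition~\ref{prop-0cone} together with Theorem~\ref{thm-general-clt-uniform} and the scale-invariance identity $\mcl H_h^\lambda\eqD R_\lambda^{-1}\mcl H_{h^\lambda}^1$ from~\eqref{eqn-cone-scale} gives the $0$-cone case (this is Lemma~\ref{lem-rw-conv-0cone}), and local absolute continuity with a localization step near the log singularity (first restricting to $B_\rho(0)\setminus B_\delta(0)$, then sending $\delta\rta 0$ using continuity of Brownian exit laws, then scaling to cover $\rho\geq 1$) gives the $\alpha$-cone case (Lemma~\ref{lem-rw-conv-cone}); the sphere then follows by local absolute continuity.

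Where you genuinely depart from the paper is in the treatment of the surfaces with boundary. You propose to run the walk in $\{x:D_h(x,\bdy\mcl D)>\delta\}$ and pass $\delta\rta 0$, invoking that boundary cells are small and that boundary-cell hitting should asymptotically agree with boundary hitting. The paper's Lemma~\ref{lem-rw-conv-disk} instead realizes the quantum disk with given boundary length as the complementary component $U$ of a $D_h$-metric ball $B_1(\infty;D_h)$ on a quantum sphere, observes by locality of the metric (property~\ref{item-metric-local}) that the Voronoi cells of $\mcl H_{h|_U}^\lambda$ away from $\bdy U$ coincide with those of the sphere's $\mcl H_h^\lambda$, applies Lemma~\ref{lem-rw-conv-sphere} to get convergence of the stopped walk in $U$, and then uses conformal invariance of Brownian motion to transfer to $(\BB D,h)$. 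The wedge is then handled from the disk by the same localization-and-scaling argument as in Lemma~\ref{lem-rw-conv-cone}. The advantage of the paper's device is that it sidesteps exactly the part of your argument that would need extra work: to make your collar-and-limit argument rigorous one needs, uniformly over $\lambda$ large and starting points within a $\delta$-collar of $\bdy\mcl D$, that the walk hits a boundary cell within Euclidean (or $D_h$) distance $o_\delta(1)$ of its starting point with probability $1-o_\delta(1)$ — a quantitative harmonic-measure estimate for the walk near the boundary, together with a bi-H\"older comparison of $D_h$ and the Euclidean metric that is uniform up to $\bdy\BB D$. Neither is established elsewhere in the paper, and the paper's conformal/locality trick means it never has to prove them. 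Your route is plausibly completable but strictly more work.

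One minor misattribution worth correcting: the cell-localization input you invoke (that with probability $\rta 1$ all Voronoi cells meeting a compact $K$ are contained in a slightly larger compact $K'$, so that the stopped walk is a function of $(h|_{K'},\mcl P_h^\lambda\cap K')$) is the soft Lemma~\ref{lem-max-cell-diam} from Appendix~\ref{sec-voronoi-cells}, not the hard estimates of Section~\ref{sec-ball-estimates}. The latter are needed only to verify the finite-expectation hypothesis~\eqref{eqn-hyp-moment} for the $0$-cone (Proposition~\ref{prop-cell-moment}), i.e.\ one layer earlier in the argument, and play no role in the absolute-continuity transfer.
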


We note that in Theorem~\ref{thm-rw-conv}, the walk is extended by piecewise linear interpolation whereas in Theorem~\ref{thm-rw-conv0} it follows $D  $-geodesics between the points of $\mcl P^\lambda$. This does not affect the conclusion of the theorem: indeed, by Lemma~\ref{lem-max-cell-diam} and the fact that $D_h$ induces the Euclidean topology, for any fixed compact set $K\subset\ol{\mcl D}$, the maximum over all adjacent pairs of vertices $z,w\in\mcl P_h^\lambda \cap K$ of the Euclidean diameter of every $D_h$-geodesic from $z$ to $w$ tends to zero in law as $\lambda\rta\infty$. The same is true with $D_h$-diameters in place of Euclidean diameters and/or line segments in place of $D_h$-geodesics.

We first prove Theorem~\ref{thm-rw-conv} in the case of the 0-quantum cone, using Proposition~\ref{prop-0cone}. 
This is the step in the proof where we go from a.s.\ converges to convergence in probability. 

\begin{lem} \label{lem-rw-conv-0cone}
Theorem~\ref{thm-rw-conv} is true with a 0-quantum cone in place of a $\gamma$-quantum cone. 
\end{lem}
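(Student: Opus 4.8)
The plan is to derive Theorem~\ref{thm-rw-conv} for the $0$-quantum cone from Theorem~\ref{thm-general-clt-uniform}, whose hypotheses hold for the cell configuration $\mcl H_h^1$ by Proposition~\ref{prop-0cone}. The key mechanism is the exact scale invariance~\eqref{eqn-cone-scale} of the $0$-quantum cone, which converts the $\lambda\rta\infty$ limit into the $\ep\rta 0$ limit appearing in Theorem~\ref{thm-general-clt-uniform}. Throughout I use that $\wh Y^{z,\lambda}$ is a measurable function of the environment $\mcl H_h^\lambda$ alone, so conditioning on $(h,\mcl P_h^\lambda)$ is the same as conditioning on $\mcl H_h^\lambda$.

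First I would record how the intensity parameter acts. Writing $c_\lambda := \gamma^{-1}\log\lambda$ with $\gamma = \sqrt{8/3}$, adding the constant $c_\lambda$ to $h$ multiplies $\mu_h$ by $\lambda$ and $D_h$ by $\lambda^{1/4}$; since scaling a metric by a global constant does not change which point of $\mcl P$ is closest, coupling $\mcl P_h^\lambda = \mcl P_{h+c_\lambda}^1$ yields $\mcl H_h^\lambda = \mcl H_{h+c_\lambda}^1$ as graph-decorated cell configurations, with the walks and their interpolations coinciding. Next I would apply~\eqref{eqn-cone-scale} with $b = \lambda$: since the Poisson point process is a measurable function of the $\sqrt{8/3}$-LQG area measure of the field, this upgrades to a joint equality in law of $(\mcl H_{h+c_\lambda}^1, (\wh Y^{z,\lambda})_z)$ with the analogous objects built from $\check h_\lambda := h(R_\lambda\cdot) + Q\log R_\lambda$, where $R_\lambda = R_\lambda(h)$ is as in~\eqref{eqn-mass-hit-time}. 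Finally, $\check h_\lambda = h\circ\phi_\lambda + Q\log|\phi_\lambda'|$ for the conformal map $\phi_\lambda(z) = R_\lambda z$, so the coordinate change formulas~\eqref{eqn-lqg-coord} and~\eqref{eqn-lqg-coord-metric} identify $\mcl H_{\check h_\lambda}^1$ with $R_\lambda^{-1}\mcl H_h^1$ (equipped with the Poisson point process $R_\lambda\mcl P_h^1$, which has the correct intensity $\mu_h$), adjacency preserved, and identify the walk on $\mcl H_{\check h_\lambda}^1$ started from the cell containing $z$ with $R_\lambda^{-1}$ times the walk on $\mcl H_h^1$ started from the cell containing $R_\lambda z$. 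Chaining these three identifications, for each fixed $\lambda$ and each fixed compact $K\subset\BB C$, the family of conditional laws of $\wh Y^{z,\lambda}$ given $(h,\mcl P_h^\lambda)$, indexed by $z\in K$, has the same law (jointly over $z$) as the family of conditional laws of $\ep\wh Y^{z/\ep}$ given $\mcl H_h^1$ with $\ep := R_\lambda^{-1}$ --- these being exactly the objects controlled by Theorem~\ref{thm-general-clt-uniform} when its arbitrarily chosen cell points are taken to be the Voronoi centers.

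To conclude, I would invoke Proposition~\ref{prop-0cone}, which says the limiting covariance matrix is a positive multiple of the identity, hence the limit agrees with standard planar Brownian motion viewed modulo time parameterization. Thus Theorem~\ref{thm-general-clt-uniform} gives that, a.s., as $\ep\rta 0$ the supremum over $z\in K$ of the Prokhorov distance between the conditional law of $\ep\wh Y^{z/\ep}$ given $\mcl H_h^1$ and the law of standard Brownian motion from $z$ (in the curves-modulo-parameterization metric) tends to $0$. Since it follows directly from~\eqref{eqn-mass-hit-time} and the definition of the $0$-quantum cone (for which the circle-average process $r\mapsto h_r(0)+Q\log r$ has negative drift as $r\rta 0$ and tends to $+\infty$ as $r\rta\infty$) that $R_\lambda\rta\infty$ a.s.\ as $\lambda\rta\infty$, the random sequence $\ep = R_\lambda^{-1}\rta 0$ a.s., so this supremum, evaluated at $\ep=R_\lambda^{-1}$, tends to $0$ a.s.\ and hence in probability. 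Because convergence in probability depends only on marginal laws, the equality in law from the previous paragraph transfers this to the corresponding supremum for the conditional law of $\wh Y^{z,\lambda}$ given $(h,\mcl P_h^\lambda)$, which is the assertion of Theorem~\ref{thm-rw-conv} for the $0$-quantum cone. The only delicate point is precisely this last transfer: Theorem~\ref{thm-general-clt-uniform} is an almost-sure statement about a single environment $\mcl H_h^1$, whereas here a different environment $\mcl H_h^\lambda$ is coupled to it for each $\lambda$, so one genuinely only obtains convergence in probability (this is why the lemma is phrased that way), and one must also use that the ``$\ep\rta 0$'' in Theorem~\ref{thm-general-clt-uniform} is a true continuum limit so that the random scale $\ep = R_\lambda^{-1}$ may be substituted.
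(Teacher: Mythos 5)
Your proof is correct and takes essentially the same approach as the paper: both exploit the exact scale invariance~\eqref{eqn-cone-scale} of the $0$-quantum cone (in the circle-average embedding) to show that $\mcl H_h^\lambda$ agrees in law with $\ep\,\mcl H_h^1$ for the random scale $\ep=R_\lambda^{-1}\rta 0$, and then apply Theorem~\ref{thm-general-clt-uniform} together with Proposition~\ref{prop-0cone} and transfer the resulting a.s.\ convergence at a single scale to convergence in probability for $\mcl H_h^\lambda$ using only the marginal equality in law at each fixed $\lambda$. One cosmetic slip in the parenthetical of your third identification: the point process underlying $R_\lambda^{-1}\mcl H_h^1$ (equivalently, $\mcl P_{\check h_\lambda}^1$) is $R_\lambda^{-1}\mcl P_h^1$, with intensity $\mu_h(R_\lambda\cdot)=\mu_{\check h_\lambda}$, not ``$R_\lambda\mcl P_h^1$'' with intensity $\mu_h$ --- but this aside plays no role in the argument.
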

\begin{proof}
By Brownian scaling the statement of the lemma is invariant under the operation of changing the embedding $h$ (i.e., replacing $h$ by $h(r\cdot)  + Q\log r$ for some possibly random $r>0$), so we can assume without loss of generality that $h$ has the circle-average embedding, as described in Section~\ref{sec-lqg-prelim} and~\cite[Definition 4.10]{wedges} (we could also, e.g., embed so that $\mu_h(\BB D ) =1$). 
 
Proposition~\ref{prop-0cone} together with Theorem~\ref{thm-general-clt-uniform} tells us that a.s.\ the conditional law given $\mcl H_h^1$ of the random walk on $\ep\mcl H_h^1$ converges in law as $\ep\rta0$ to standard two-dimensional Brownian motion modulo time parameterization, and the convergence is uniform over all starting points in any fixed compact subset of $\BB C$. 

For $\lambda  > 1$, we typically do not have $ \mcl H_h^\lambda  = \ep \mcl H_h^1$ for any $\ep > 0$ since $\mcl H_h^\lambda$ is defined by scaling the intensity measure of the Poisson point process rather than by scaling space. 
Nevertheless, we have $\ep \mcl H_h^1 \eqD\mcl H_h^\lambda$ for a certain random choice of $\ep$, as we now explain.

For $b  > 0$, let $R_b > 0$ be as in~\eqref{eqn-mass-hit-time} and let $h^b := h(R_b\cdot) + Q\log R_b - \frac{1}{\sqrt{8/3}} \log b$, so that by~\eqref{eqn-cone-scale}, $h^b\eqD h$. 
By the LQG coordinate change formula~\cite[Proposition 2.1]{shef-kpz}, $\mu_{h^b}(\cdot) =  b \mu_h(R_b^{-1}\cdot)$.  
Hence if $\mcl P_{h^b}^1$ is a Poisson point process with intensity measure $\mu_{h^b}$, then $R_b^{-1} \mcl P_{h^b}^1$ is a Poisson point process with intensity measure $b \mu_h$. 
Therefore, for $\lambda >0$, 
\eqb \label{eqn-cell-law}
\mcl H_{h^\lambda}^1 \eqD \mcl H_h^1 \quad \text{and} \quad \mcl H_h^\lambda   \eqD  R_\lambda^{-1} \mcl H_{h^\lambda}^1 . 
\eqe
Since $R_\lambda\rta \infty$ as $\lambda\rta\infty$, we now get the desired convergence in probability from Proposition~\ref{prop-0cone} and Theorem~\ref{thm-general-clt-uniform}.
\end{proof}

Using local absolute continuity, we can now transfer to other quantum surfaces, starting with the case of quantum cones.

\begin{lem} \label{lem-rw-conv-cone}
Theorem~\ref{thm-rw-conv} is true in the case of the $\alpha$-quantum cone for $\alpha <Q$. 
\end{lem}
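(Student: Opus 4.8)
The plan is to deduce the case of a general $\alpha$-quantum cone (with $\alpha < Q$) from the already-established case of the $0$-quantum cone (Lemma~\ref{lem-rw-conv-0cone}) by a local absolute continuity argument. The basic mechanism is that if $h$ is the circle-average embedding of an $\alpha$-quantum cone and $h^0$ is the circle-average embedding of a $0$-quantum cone, then the laws of $h|_{B_r(0) \setminus B_\rho(0)}$ and $h^0|_{B_r(0)\setminus B_\rho(0)}$ are mutually absolutely continuous for any fixed $0 < \rho < r$ (both restrictions differ from the corresponding restriction of a whole-plane GFF by the deterministic, locally bounded radial function $-\alpha\log|\cdot|$, resp.\ $0$, plus a normalizing constant, and on an annulus bounded away from $0$ and $\infty$ these are locally absolutely continuous with respect to one another). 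Hence, after conditioning on the Poisson point process and the field inside a fixed annulus, the law of the Voronoi cell configuration restricted to that annulus for the $\alpha$-cone is absolutely continuous with respect to that for the $0$-cone, and the random walk started from a point in a compact set $K$ (which we may take to be contained in such an annulus, or in a ball, by translating/rescaling) will, with probability tending to $1$ as the annulus grows, not exit the region of mutual absolute continuity before the relevant stopping times. Since Lemma~\ref{lem-rw-conv-0cone} gives convergence in probability of the quenched law of the walk to Brownian motion modulo parameterization for the $0$-cone, this transfers to the $\alpha$-cone.

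First I would reduce to the circle-average embedding: as in the proof of Lemma~\ref{lem-rw-conv-0cone}, the statement of the lemma is invariant under changing the embedding of the cone (replacing $h$ by $h(r\cdot) + Q\log r$ for random $r > 0$) by Brownian scaling, and invariant under scaling the intensity $\lambda$ by the relation $\mcl H_h^\lambda \eqD R_\lambda^{-1}\mcl H_{h^\lambda}^1$ from~\eqref{eqn-cell-law}, which holds verbatim for the $\alpha$-cone. So it suffices to prove convergence in probability as $\lambda \to \infty$ of the quenched law of $\wh Y^{z,\lambda}$ to Brownian motion modulo time parameterization, uniformly over $z$ in a fixed compact set $K \subset \BB C$, for $h$ the circle-average embedding of the $\alpha$-quantum cone. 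Second, I would fix a large radius $S$ so that $K \subset B_S(0)$, and recall that the conclusion of Theorem~\ref{thm-general-clt-uniform} (via Lemma~\ref{lem-rw-conv-0cone}) concerns the walk run until it exits a Euclidean ball; the metric~\eqref{eqn-cmp-metric-loc} only sees the walk up to its exit from $B_r(0)$ for each fixed $r$. For the walk started in $K$ and run until it first exits $B_S(0)$ (in the adjacency-graph sense: until it first hits a cell not contained in $B_{S'}(0)$ for appropriate $S' > S$, cf.\ Lemma~\ref{lem-max-cell-diam}), both the environment it sees and the walk itself are measurable functions of the data in a bounded region, specifically $h|_{B_{S''}(0)}$ and $\mcl P_h^\lambda \cap B_{S''}(0)$ for some $S'' > S'$.

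Third, I would invoke local absolute continuity: the law of $(h|_{B_{S''}(0)}, \mcl P_h^\lambda\cap B_{S''}(0))$ under the $\alpha$-cone is mutually absolutely continuous with the corresponding law under the $0$-cone (the Radon--Nikodym derivative being a function of $h|_{B_{S''}(0)}$ coming from comparison of $-\alpha\log|\cdot|$ with $0$ on $B_{S''}(0)$ against a whole-plane GFF, which is integrable; strictly speaking one should excise a tiny ball $B_\rho(0)$ around the origin where the log-singularity lives, but the probability that the walk from $K$ reaches $B_\rho(0)$ before exiting $B_S(0)$ tends to $0$ as $\rho \to 0$ uniformly in $\lambda$ by Lemma~\ref{lem-max-cell-diam} and properties of the LQG metric, so this is a negligible correction). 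Consequently, the event that the quenched law of $\wh Y^{z,\lambda}$ (run until exiting $B_S(0)$) is within $\delta$ of Brownian motion run until exiting $B_S(0)$, uniformly over $z \in K$ — which has probability tending to $1$ under the $0$-cone by Lemma~\ref{lem-rw-conv-0cone} — also has probability tending to $1$ under the $\alpha$-cone. Since $S$ and $\delta$ were arbitrary and the metric~\eqref{eqn-cmp-metric-loc} is an integral over exit radii weighted by $e^{-r}$, a standard diagonal argument (send $S \to \infty$ and $\delta \to 0$ slowly as $\lambda \to \infty$) upgrades this to convergence in probability with respect to~\eqref{eqn-cmp-metric-loc}. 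The covariance matrix of the limiting Brownian motion is the identity (up to the global scalar from Proposition~\ref{prop-0cone}, which we normalize away, or: it is a scalar multiple of the identity by rotational invariance of the $\alpha$-cone around the origin, and the scalar is the same as for the $0$-cone by the absolute continuity just used).

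The main obstacle is bookkeeping the interplay between the Euclidean-ball stopping rule in Theorem~\ref{thm-general-clt-uniform} and the adjacency-graph stopping rule for the walk, and ensuring the local absolute continuity comparison is applied on a region that (a) is bounded away from the origin so the comparison constant is finite, yet (b) contains everything the walk sees before the relevant exit time with probability close to $1$. Both issues are handled by the deterministic bounds on cell diameters of Lemma~\ref{lem-max-cell-diam} together with the fact (property~\ref{item-metric-holder}) that $D_h$ induces the Euclidean topology, so that cells near a fixed compact set have small Euclidean diameter for large $\lambda$; the argument is essentially identical to the analogous transfer step in~\cite[Section 3]{gms-tutte}, and I would follow that template.
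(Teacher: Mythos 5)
Your proposal follows the same broad strategy as the paper (reduce to the $0$-quantum cone via local absolute continuity), but there is a genuine gap in the absolute continuity step that the paper is careful to work around.

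You claim that the laws of the circle-average embeddings $h|_{B_r(0)\setminus B_\rho(0)}$ (for the $\alpha$-cone) and $h^0|_{B_r(0)\setminus B_\rho(0)}$ (for the $0$-cone) are mutually absolutely continuous for \emph{any} $0 < \rho < r$, on the grounds that both are ``a whole-plane GFF plus the bounded radial function $-\alpha\log|\cdot|$, resp.\ $0$.'' That description is only accurate inside the unit disk. For $|z|>1$, the radial part of the circle-average embedding of an $\alpha$-quantum cone is $\wh B_{\log|z|} - \alpha\log|z|$ where $\wh B$ is a Brownian motion \emph{conditioned} so that $\wh B_t + (Q-\alpha)t > 0$ for all $t>0$ (see~\cite[Definition 4.10]{wedges} and the discussion of quantum cones in Section~\ref{sec-lqg-prelim}), and this conditioning is on a zero-probability event and depends on $\alpha$. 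So outside $\BB D$, neither $h$ nor $h^0$ is a whole-plane GFF plus a smooth function, and your stated justification for absolute continuity on annuli with $r>1$ fails. Mutual absolute continuity of the conditioned processes on a finite time window may well be true, but it requires a separate Girsanov/$h$-transform argument that you have not supplied, and the claim as written is incorrect. The paper flags exactly this issue explicitly: local absolute continuity is only available on domains at positive distance from $0$ \emph{and} from $\partial\BB D$.

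Since $\BB d^{\op{CMP}}_{\op{loc}}$ requires observing the walk up to its exit from $B_r(0)$ for arbitrarily large $r$, you cannot avoid the region $|z|>1$ by a single choice of embedding. The paper's resolution is to first prove the convergence only for starting points in $B_\rho(0)$ with $\rho<1$ and exit radii below $1$, where absolute continuity inside $\BB D$ suffices (excising a small ball near $0$ and using the strong Markov property as you do), and then to obtain the result for $\rho\geq 1$ from the scale invariance~\eqref{eqn-cone-scale}/\cite[Proposition 4.13(i)]{wedges} of the quantum cone, exactly as in the translation-invariance-modulo-scaling verification in Proposition~\ref{prop-0cone}. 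Your outline would be correct if you added that second step (scale invariance to pass to large exit radii) in place of the erroneous claim of absolute continuity on arbitrary compact annuli.
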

\begin{proof} 

As in the proof of Lemma~\ref{lem-rw-conv-0cone}, we work with the circle-average embedding of the $\alpha$-quantum cone, which has the property that $h|_{\BB D}$ agrees in law with the corresponding restriction of a whole-plane GFF plus $-\alpha \log |\cdot|$, normalized so that its circle average over $\bdy\BB D$ is 0. We also let $\wt h$ be the circle-average embedding of a 0-quantum cone in $(\BB C, 0,\infty)$, so that $\wt h|_{\BB D} \eqD (h +\alpha\log|\cdot|)|_{\BB D}$. 

The statement of the lemma is essentially a consequence of Lemma~\ref{lem-rw-conv-0cone} and local absolute continuity (in the form of~\cite[Proposition 3.4]{ig1}), but a little care is needed since we only have local absolute continuity between the laws of a $h$ and $\wt h$ on domains at positive distance from 0 (due to the $\alpha$-log singularity of $h$) and from $\bdy \BB D$ (due to our choice of embedding). Throughout the proof, the Prokhorov distance is always taken with respect to the metric on curves viewed modulo time parameterization.
 
For $\rho  > 0$ and $z\in B_\rho(0)$, let $J_\rho^{z,\lambda}$ for $n\in\BB N$ be the exit time from $B_\rho(0)$ of the embedded walk $\wh Y^{z,\lambda}$ on $\mcl H_h^\lambda$. Also let $\mcl B^z$ be a standard two-dimensional Brownian motion started from $z$ and let $\tau_\rho^z$ be its exit time from $B_\rho(0)$. We need to show that for each $\rho > 0$, the supremum over all $z\in B_\rho(0)$ of the Prokhorov distance between the conditional laws of $\wh Y^{z,n}|_{[0,J_\rho^{z,n}]}$ and $\mcl B^{z}|_{[0,\tau_\rho^z]}$ given $\left(h,\mcl P_h^\lambda\right)$ converges to zero in probability as $\lambda\rta\infty$. 

We first consider a radius $\rho  \in (0,1)$ and deal with the log singularity at 0. 
For $\delta \in (0,\rho)$, choose $\zeta = \zeta(\delta) \in (0,\delta)$ such that the probability that a Brownian motion started from any point of $\BB D\setminus B_\delta(0)$ hits $B_\zeta(0)$ before leaving $\BB D$ is at most $\delta$. By Lemma~\ref{lem-rw-conv-0cone} and local absolute continuity it holds with probability tending to 1 as $\lambda \rta\infty$ that for each $z\in B_\rho(0) \setminus B_\delta(0)$, the Prokhorov distance between the conditional laws of $\wh Y^{z,\lambda}|_{[0,J_\rho^{z,\lambda}]}$ and $\mcl B^z|_{[0,\tau_\rho^z]}$ given $h$ is at most $\delta$. 
Since the law of $\mcl B^z|_{[0,\tau_\rho^z]}$ depends continuously on $z$, the Prokhorov-distance diameter of the set of laws of the curves $\mcl B^z|_{[0,\tau_\rho^z]}$ for $z\in B_\delta(0)$ tends to 0 as $\delta\rta 0$.  
 
By the last two sentences of the preceding paragraph and the strong Markov property of $Y^{z,\lambda}$ and of $\mcl B^z$, it holds with probability tending to 1 as $\lambda\rta\infty$ that for each $z\in B_\delta(0)$, the Prokhorov distance between the conditional laws of $Y^{z,\lambda}|_{[J_\delta^{z,\lambda} ,J_\rho^{z,\lambda}]}$ and $\mcl B^z|_{[0,\tau_\rho^z]}$ given $(h, \mcl P_h^\lambda )$ is $o_\delta(1)$, at a deterministic rate depending only on $\rho$. The distance between the curves $Y^{z,\lambda}|_{[J_\delta^{z,\lambda} ,J_\rho^{z,\lambda}]}$ and $Y^{z,\lambda}|_{[0,J_\rho^{z,\lambda}]}$, viewed modulo time parameterization, is at most $2\delta$. Sending $\delta \rta 0$ now gives the theorem statement in the case $\rho < 1$. 

The case when $\rho\geq 1$ follows from the case when $\rho \in (0,1)$ and the scale invariance property of the $\alpha$-quantum cone~\cite[Proposition 4.13(i)]{wedges}, applied similarly as in Proposition~\ref{prop-0cone}. 
\end{proof}

\begin{lem} \label{lem-rw-conv-sphere}
Theorem~\ref{thm-rw-conv} is true in the case of the quantum sphere. 
\end{lem}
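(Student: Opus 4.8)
The plan is to deduce the quantum sphere case from the quantum cone cases already treated (Lemmas~\ref{lem-rw-conv-0cone} and~\ref{lem-rw-conv-cone}) by repeating, essentially verbatim, the local absolute continuity argument used to prove Lemma~\ref{lem-rw-conv-cone}. Since the quantum sphere has no boundary, the relevant topology is that of the local metric~\eqref{eqn-cmp-metric-loc}, so it is enough to fix $r>0$ and show that, uniformly over starting points $z$ in any fixed compact set, the conditional law given $(h,\mcl P_h^\lambda)$ of the embedded walk $\wh Y^{z,\lambda}$ stopped at its first exit from $B_r(0)$ converges in probability as $\lambda\to\infty$ to that of Brownian motion started from $z$ and stopped at its first exit from $B_r(0)$; the case $z\notin B_r(0)$ is vacuous, and the integral over $r$ in~\eqref{eqn-cmp-metric-loc} takes care of the rest. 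Taking the marked points of the sphere to be $0$ and $\infty$, the point at $\infty$ never enters the picture, so only the marked point at $0$ --- near which $h$ may carry a logarithmic singularity --- requires attention.

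The first step is localization. By Lemma~\ref{lem-max-cell-diam} and the remark following Theorem~\ref{thm-rw-conv}, the maximal Euclidean and $D_h$-diameters of the Voronoi cells meeting $B_{2r}(0)$ tend to $0$ in probability as $\lambda\to\infty$. Together with the existence of geodesics between nearby interior points (property~\ref{item-metric-geodesic}) and the locality property~\ref{item-metric-local} of the $\sqrt{8/3}$-LQG metric, this yields an event $E^\lambda$ with $\BB P[E^\lambda]\to 1$ on which every cell meeting $B_r(0)$ is contained in $B_{2r}(0)$, no Poisson point outside $B_{2r}(0)$ has a cell meeting $B_r(0)$, and for all $z\in B_r(0)$ the walk $\wh Y^{z,\lambda}$ up to its first exit from $B_r(0)$ --- together with every cell it visits --- is contained in $B_{2r}(0)$ and is determined by $(h,\mcl P_h^\lambda)|_{B_{2r}(0)}$. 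Hence the supremum over $z$ of the Prokhorov distance we wish to bound differs, by an error tending to $0$ in probability, from a bounded $(h,\mcl P_h^\lambda)|_{B_{2r}(0)}$-measurable quantity.

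The second step is the transfer. For any $0<\zeta<\delta<2r$, the law of the restriction of the quantum sphere field $h$ to $B_{2r}(0)\setminus B_\zeta(0)$ is mutually absolutely continuous with respect to the law of the corresponding restriction of the field $\wt h$ of a $0$-quantum cone, by exactly the local absolute continuity considerations (via~\cite[Proposition~3.4]{ig1} and comparison with the whole-plane GFF) used in the proof of Lemma~\ref{lem-rw-conv-cone}; since $\mu_h$ restricted to a domain is a local functional of $h$ and $\mcl P_h^\lambda$ is, given $h$, a Poisson process of intensity $\lambda\mu_h$ for both fields, this passes to the pairs $(h,\mcl P_h^\lambda)$ with a Radon--Nikodym derivative that does not depend on $\lambda$. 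Choosing $\zeta=\zeta(\delta)$ small enough that Brownian motion started outside $B_\delta(0)$ reaches $B_\zeta(0)$ before exiting $B_r(0)$ with probability at most $\delta$, and then arguing as in the proof of Lemma~\ref{lem-rw-conv-cone} --- first transferring the conclusion of Lemma~\ref{lem-rw-conv-0cone} for starting points $z$ with $|z|\geq\delta$ (on the further event that the walk avoids $B_\zeta(0)$, so that the relevant quantity depends only on the field on $B_{2r}(0)\setminus B_\zeta(0)$), and then, for $z$ with $|z|<\delta$, running the walk until it first hits $\bdy B_\delta(0)$ and applying the strong Markov property of both the walk and Brownian motion --- one obtains the desired convergence, uniformly over $z$ in a compact set, up to an error that is $o_\delta(1)$. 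Letting $\lambda\to\infty$ and then $\delta\to0$ finishes the proof, with limiting law standard planar Brownian motion modulo time parameterization, since that is the limit in Lemma~\ref{lem-rw-conv-0cone}.

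The step I expect to require the most care is the localization: Voronoi cells are \emph{a priori} defined through the global metric $D_h$, so it is not immediate that the walk stopped at its exit from $B_r(0)$ is a measurable function of the field on a bounded region. This is precisely where the cell-diameter estimates are used --- they force every cell meeting $B_r(0)$ to be small (hence contained in $B_{2r}(0)$) and simultaneously preclude any distant Poisson point from having a cell reaching $B_r(0)$, so that the relevant $D_h$-distances inside $B_{2r}(0)$ can be computed using only $h|_{B_{2r}(0)}$ by the locality property~\ref{item-metric-local}. Granting this, the rest is a routine repetition of the proof of Lemma~\ref{lem-rw-conv-cone}.
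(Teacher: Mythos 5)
Your proof is correct, but it takes a longer route than necessary and does not quite exploit what Lemma~\ref{lem-rw-conv-cone} has already delivered. The paper's one-line proof rests on the observation that the quantum sphere's field carries a $\sqrt{8/3}$-log singularity at its marked point $0$, which is \emph{exactly} the singularity of the $\sqrt{8/3}$-quantum cone. Since Lemma~\ref{lem-rw-conv-cone} was proven for every $\alpha < Q$, in particular for $\alpha = \sqrt{8/3}$, one may compare the sphere directly to the $\sqrt{8/3}$-quantum cone: for fixed $r > 0$, the laws of the two fields restricted to $B_r(0)$ are mutually absolutely continuous on the nose (the log singularities cancel, and $\infty$ is bounded away), so no $\delta,\zeta$-cutoff near the origin is needed — the result transfers immediately. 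You instead return to the $0$-quantum cone (whose field does \emph{not} have a matching singularity at $0$) and re-run the two-scale $\zeta < \delta$ argument from the proof of Lemma~\ref{lem-rw-conv-cone} to neutralize the sphere's log singularity. That works, and your localization step (ensuring the stopped walk is, up to small error, a measurable function of the field and point process on $B_{2r}(0)$, via Lemma~\ref{lem-max-cell-diam} and property~\ref{item-metric-local}) is correctly executed, but it re-derives precisely the machinery that Lemma~\ref{lem-rw-conv-cone} was designed to encapsulate. What your route buys is self-containedness (you only invoke the $0$-cone case); what the paper's route buys is brevity, because once an $\alpha$-cone with $\alpha$ matching the sphere's singularity is available, the absolute continuity becomes global on $B_r(0)$ rather than only on an annulus.
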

\begin{proof} 
This is immediate from Lemma~\ref{lem-rw-conv-cone} and local absolute continuity. 
\end{proof}

\begin{lem} \label{lem-rw-conv-disk}
Theorem~\ref{thm-rw-conv} is true in the case of the quantum disk.
\end{lem}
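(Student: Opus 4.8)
The plan is to deduce the quantum disk case from cases of Theorem~\ref{thm-rw-conv} that are already available --- the $\alpha$-quantum cone case (Lemma~\ref{lem-rw-conv-cone}), which controls the walk in the interior of $\BB D$, and the quantum wedge case of Theorem~\ref{thm-rw-conv}, which controls the walk near $\bdy\BB D$ --- by combining local absolute continuity with a patching argument via the strong Markov property, following the template of~\cite[Section 3]{gms-tutte}. The new feature, compared with Lemmas~\ref{lem-rw-conv-cone} and~\ref{lem-rw-conv-sphere}, is that one must track the behavior of the walk all the way up to the time it is absorbed at $\bdy\mcl P_h^\lambda$, and match this with the Brownian motion stopped on hitting $\bdy\BB D$.

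First I would fix a compact set $K\subset\ol{\BB D}$ and choose a finite cover of $\ol{\BB D}$ by open sets $U_1,\dots,U_N\subset\BB C$ together with open sets $U_i'\Subset U_i$ with $\bigcup_i U_i'\supset\ol{\BB D}$, where each $U_i$ is either of \emph{interior type} ($\ol{U_i}\subset\BB D$) or of \emph{boundary type} ($\ol{U_i}\cap\bdy\BB D$ is a proper closed sub-arc of $\bdy\BB D$ that avoids any marked points of the disk). For an interior-type $U_i$, the law of $h|_{U_i}$ is mutually absolutely continuous with respect to the law of (a deterministic additive constant plus) the restriction to $U_i$ of the field of an $\alpha$-quantum cone, since both fields are locally absolutely continuous with respect to the whole-plane GFF away from the relevant singular points~\cite{wedges}. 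For a boundary-type $U_i$, after applying a conformal map taking a neighborhood of the arc $\ol{U_i}\cap\bdy\BB D$ to a neighborhood $V$ of a non-marked boundary point of $\BB H$ and using the $\sqrt{8/3}$-LQG coordinate change formula~\eqref{eqn-lqg-coord}, the law of $h|_{U_i}$ becomes mutually absolutely continuous with respect to the restriction to $V$ of the field of an $\alpha$-quantum wedge; near such a point both fields are locally absolutely continuous with respect to the free-boundary GFF, and the Radon--Nikodym comparison can be arranged to preserve the $\sqrt{8/3}$-LQG boundary length measure, hence (conditionally on the respective Poisson point processes) the adjacency graphs of Voronoi cells and the sets of boundary cells near the arc.

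Using these comparisons together with Lemma~\ref{lem-rw-conv-cone}, the quantum wedge case of Theorem~\ref{thm-rw-conv}, local absolute continuity in the form of~\cite[Proposition 3.4]{ig1}, the fact that the Voronoi cells meeting any fixed compact subset of $\ol{\BB D}$ have Euclidean diameter tending to $0$ in probability as $\lambda\rta\infty$ (Lemma~\ref{lem-max-cell-diam}, together with $D_h$ inducing the Euclidean topology), and the general-position properties of Voronoi cells from Appendix~\ref{sec-voronoi-cells} (which guarantee that $\bdy\mcl P_h^\lambda$ approximates $\bdy\BB D$), I would establish the following local statement: for each $i$, the conditional law given $(h,\mcl P_h^\lambda)$ of the embedded walk $\wh Y^{z,\lambda}$ started from $z\in U_i'$ and run until the first time it exits $U_i$ or hits $\bdy\mcl P_h^\lambda$ converges in probability as $\lambda\rta\infty$, uniformly over $z\in U_i'$, to the law of Brownian motion started from $z$ and run until the first time it exits $U_i$ or hits $\bdy\BB D$, with respect to the metric~\eqref{eqn-cmp-metric} on curves modulo time parameterization. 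I would then patch these local couplings: using the strong Markov property, define stopping times $0=\sigma_0<\sigma_1<\cdots$ at which the walk successively exits the set $U_i$ in whose core $U_i'$ it currently sits (or hits $\bdy\mcl P_h^\lambda$), do the analogous construction for a coupled Brownian motion, and apply the local coupling on each excursion. Since each excursion has Euclidean diameter tending to $0$, since the walk a.s.\ hits $\bdy\mcl P_h^\lambda$ after finitely many excursions (for fixed $\lambda$, $\mcl P_h^\lambda$ is a finite connected graph with nonempty boundary set), and since the modulus of continuity of Brownian motion controls the accumulated error, one gets convergence in the curves-modulo-parameterization metric, uniformly over $z\in K$, exactly as in~\cite[Section 3]{gms-tutte}.

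The hard part will be the treatment of the boundary. There are two delicate points. First, one must set up the local absolute continuity near a boundary arc so that it genuinely matches the quantum wedge picture, keeping precise track of the $\sqrt{8/3}$-LQG boundary length measure so that the discrete boundary structure $\bdy\mcl P_h^\lambda$ and the absorbing set on the wedge side really do correspond; without this the local coupling would fail to produce the correct stopped walk. Second, one must control the walk in its \emph{last} excursion, the one during which it is absorbed at $\bdy\mcl P_h^\lambda$: one needs that with probability tending to $1$ the walk cannot traverse a macroscopic portion of $\bdy\BB D$ without being absorbed, so that its exit point from $\ol{\BB D}$ is close to that of the limiting Brownian motion, and that the curves-modulo-parameterization distance is insensitive to the microscopic behavior near the absorption point. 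Both points are handled by the boundary-type local coupling above together with the smallness of the Euclidean diameters of the cells near $\bdy\BB D$ (Lemma~\ref{lem-max-cell-diam} and Appendix~\ref{sec-voronoi-cells}).
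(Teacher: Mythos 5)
Your proposal takes a genuinely different route from the paper, and it has a structural problem in the present logical order. You propose to deduce the disk case by patching together a quantum-cone comparison in the interior with a quantum-wedge comparison near $\bdy\BB D$. But in this paper the quantum wedge case of Theorem~\ref{thm-rw-conv} is not available at this stage: it is established \emph{after} and \emph{from} the disk case (see the proof of Theorem~\ref{thm-rw-conv}, which derives the wedge case from Lemma~\ref{lem-rw-conv-disk} by the argument of Lemma~\ref{lem-rw-conv-cone}). Since the wedge also has a boundary, it presents the same ``absorption at the boundary'' difficulty as the disk, so you cannot simply swap the order; you would need to give an independent proof of the wedge case, which is not easier than the disk case itself. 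As written, the proposal is therefore circular.

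The paper sidesteps the boundary entirely rather than confronting it. One realizes the quantum disk as the complementary component $U$ of a $D_h$-metric ball centered at $\infty$ inside a doubly marked quantum sphere; conditionally on the boundary length $\nu_h(\bdy U)$, the surface $(U, h|_U, 0)$ is a quantum disk. Because $U$ is the complement of a metric ball, the locality property of $D_h$ guarantees that the Voronoi cells of $\mcl P_h^\lambda\cap U$ computed with $D_{h|_U}$ agree with those computed with $D_h$ as long as they do not meet $\bdy U$, so the walk on $\mcl H_{h|_U}^\lambda$ stopped on hitting a boundary cell coincides with the walk on $\mcl H_h^\lambda$ stopped on hitting $\bdy U$. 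The already-proved sphere case (Lemma~\ref{lem-rw-conv-sphere}) then gives convergence to Brownian motion stopped on hitting $\bdy U$, and conformal invariance plus a scaling and conditioning step converts this to the statement for a quantum disk with fixed boundary length and area. No separate boundary analysis, no wedge, and no tracking of the boundary-length measure under Radon--Nikodym comparisons is required. Your two ``delicate points'' near the boundary (matching $\bdy\mcl P_h^\lambda$ across the coupling, and controlling the final excursion before absorption) are precisely what this trick avoids, and in the proposal they are asserted to be handled but are not actually resolved.

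If you want to repair your approach, the cleanest fix is simply to replace the ``wedge'' ingredient by the sphere-embedding trick: do not attempt a boundary-type local comparison at all; instead argue once and for all that the stopped walk on a quantum disk is identical in law to the stopped walk on a metric-ball complement inside a sphere, and appeal to Lemma~\ref{lem-rw-conv-sphere}.
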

\begin{proof}
Let $(\BB C , h , 0, \infty)$ be a doubly marked quantum sphere conditioned on the event that the $D_{h}$-distance from 0 to $\infty$ is at least 1 and let $U$ be the connected component of $\BB C\setminus B_1(\infty; D_{h})$ which contains 0.
Then the conditional law of the quantum surface $(U , h|_U , 0)$ given $\nu_{h}(\bdy U)$ is that of a quantum disk with one marked point in its interior, with given boundary length (this follows, e.g., from the construction of $D_{h}$ using QLE in~\cite{lqg-tbm1}).

If we let $\mcl P_{h}^\lambda$ be a Poisson point process with intensity measure $\lambda \mu_{h}$, then $\mcl P_{h}^\lambda\cap U$ is a Poisson point process on $U$ with intensity measure $\lambda\mu_{h|_U}$. 
Let $\mcl H_{h|_U}^\lambda$ be the configuration of Voronoi cells defined using the set of points $\mcl P_{h}^\lambda\cap U$ and the metric $D_{h|_U}$.
Then each cell of $\mcl H_{h|U}^\lambda$ which does not intersect $\bdy U$ is identical to the corresponding cell of $\mcl H_{h}^\lambda $ with the same center point.
It therefore follows from Lemma~\ref{lem-rw-conv-sphere} that the maximum over all $z\in U$ of Prokhorov distance between the following two laws, with respect to the topology on curves viewed modulo time parameterization, tends to 0 as $\lambda \rta \infty$: 
\begin{itemize}
\item The conditional law given $(h , \mcl H_{h}^\lambda)$ of the random walk on $\mcl H_{h|_U}^\lambda$ stopped upon hitting a cell which intersects $\bdy U$, embedded into $U$ and linearly interpolated as in Theorem~\ref{thm-rw-conv}.
\item The law of Brownian motion a started from 0 and stopped upon hitting $\bdy U$.
\end{itemize}
By the conformal invariance of Brownian motion and the first paragraph, this gives the statement of the lemma for the quantum disk with random boundary length $\nu_{h}(\bdy U)$. By scale invariance, this implies the statement of Theorem~\ref{thm-rw-conv} for a doubly marked quantum disk with any fixed boundary length. 
By conditioning on the area of such a quantum disk, we also get the statement for a quantum disk with fixed area and boundary length. 
\end{proof}

\begin{proof}[Proof of Theorem~\ref{thm-rw-conv}]
Lemmas~\ref{lem-rw-conv-cone}, \ref{lem-rw-conv-sphere}, and~\ref{lem-rw-conv-disk} give the theorem statement in the quantum cone, quantum sphere, and quantum disk cases, respectively. The case of the quantum wedge follows from the case of the quantum disk and the same argument as in the proof of Lemma~\ref{lem-rw-conv-cone}. 
\end{proof}

\subsection{Proof of Tutte embedding convergence result}
\label{sec-tutte-conv-proof}

\begin{proof}[Proof of Theorem~\ref{thm-tutte-conv}]
Let $(\BB D , h , 0,1)$ be a quantum disk with fixed boundary length and area, with one marked boundary point and one marked interior point.
Let $D_h$ and $\mu_h$ be the $\sqrt{8/3}$-LQG metric and area measure and let $\xi_h$ be the path which traverses $\bdy\BB D$ counterclockwise from 1 to 1 in such a way that it traverses one unit of $\sqrt{8/3}$-LQG length in one unit of time. 
By~\cite[Corollary 1.5]{lqg-tbm2}, we know that the curve-decorated metric measure space $(\ol{\BB D} , D_h , \mu_h , \xi_h )$ is a Brownian disk with unit area and boundary length.
Furthermore, by the definition of a marked quantum disk, if we condition on this curve-decorated metric measure space then the marked point 0 is a uniform sample from $\mu_h$.

For $\lambda > 0$, define the Poisson point process $\mcl P^\lambda$, the Voronoi tessellation $\mcl H^\lambda$, and the Tutte embedding $\Phi^\lambda : \mcl P^\lambda \rta \ol{\BB D}$ as in the discussion just above Theorem~\ref{thm-tutte-conv} for the Brownian disk $(\mcl X , D , \mu , \xi) = (\ol{\BB D} , D_h , \mu_h , \xi_h )$.
Note that here the space $\mcl X$ is identified with $\ol{\BB D}$, so in particular $\mcl P^\lambda\subset \ol{\BB D}$.  

We will now argue that 
\eqb \label{eqn-tutte-conv}
\max_{z\in\mcl P^\lambda} |\Phi^\lambda(z) - z| \rta 0 ,\quad \text{in probability as $\lambda\rta\infty$}. 
\eqe
Indeed, Theorem~\ref{thm-rw-conv} implies that the maximum over all vertices $z \in \mcl P_h^\lambda$ of the Prokhorov distance between the Euclidean harmonic measure on $\bdy\BB D$ as viewed from $z$ and the $\mcl P^\lambda$-harmonic measure on $\bdy\mcl P^\lambda$ as viewed from $z$ tends to zero in probability as $n\rta\infty$. 
From this and the definition of $\Phi^\lambda$, we get~\eqref{eqn-tutte-conv}.

The first two convergence statements in the theorem statement are immediate from~\eqref{eqn-tutte-conv} (for the convergence of re-scaled counting measure, we use that $\lambda^{-1}$ times the counting measure on $\mcl P^\lambda$ converges in probability to $\mu_h$ since the intensity measure of $\mcl P^\lambda$ is $\lambda \mu_h$).
The convergence statement for the random walk on $\mcl P^\lambda$ follows from~\eqref{eqn-tutte-conv} and Theorem~\ref{thm-rw-conv}. 
\end{proof}

\subsection{A reformulation of the finite expectation hypothesis}
\label{sec-finite-expectation}

As in Section~\ref{sec-0cone}, let $(\BB C , h , 0, \infty)$ be a 0-quantum cone and write $\mcl P = \mcl P_h^1$ and $\mcl H =\mcl H_h^1$ for its associated Poisson point process and collection of Voronoi cells with $\lambda = 1$. 
Proving Proposition~\ref{prop-cell-moment} (i.e., the finite expectation hypothesis in Theorem~\ref{thm-general-clt-uniform} for $\mcl H$) directly turns out to be difficult since Voronoi cells depend on the field in a rather delicate way, so it is not clear how to lower-bound the Lebesgue measure of the origin-containing cell $H_0$. 
Instead, we will use the following lemma which allows us to lower-bound the Lebesgue measure of an LQG metric ball instead.

\begin{lem} \label{lem-ball-cell-moment}
For a Voronoi cell $H\in\mcl H$, we write $B_H$ for the smallest $D_h$-metric ball centered at the center point of $H$ (i.e., the point of $\mcl P$ which is in $H$) which contains $H$. We have
\eqb \label{eqn-ball-cell-moment}
\BB E\left[ \frac{\op{diam}(H_0)^2}{\op{area}(H_0)} \op{deg}(H_0) \right] 
 = \BB E\left[  \sum_{H\in\mcl H : 0 \in B_H} \frac{\op{diam}(H)^2 \op{deg}(H)}{\op{area}(B_H)}   \right]  .
\eqe
\end{lem}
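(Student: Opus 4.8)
The plan is to recognize the claimed identity as an instance of the mass transport principle for the Voronoi tessellation of the $0$-quantum cone --- one of the equivalent reformulations of the translation invariance modulo scaling hypothesis~\ref{item-hyp-resampling} (see~\cite[Definition~1.2]{gms-random-walk}). Concretely, we use a mass transport principle of the following shape: if $T$ is a nonnegative Borel function of a cell configuration together with an ordered pair of points of $\BB C$ which transforms under the diagonal action of translations and dilations by $T(a\mcl H + b;\, ax+b,\, ay+b) = a^{-2} T(\mcl H; x, y)$ for all $a>0$ and $b\in\BB C$ --- equivalently, so that $\mcl H \mapsto \int_{\BB C} T(\mcl H; 0, z)\,dz$ and $\mcl H\mapsto\int_{\BB C} T(\mcl H; z, 0)\,dz$ are dilation-invariant functionals transforming covariantly under translations --- then $\BB E[ \int_{\BB C} T(\mcl H; 0, z)\, dz ] = \BB E[ \int_{\BB C} T(\mcl H; z, 0)\, dz ]$.

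We would apply this with $H_x$ the a.s.\ unique cell of $\mcl H$ containing $x$ (well-defined for deterministic $x$; see Lemma~\ref{lem-zero-lebesgue}), and
$$T(\mcl H; x, y) := \mathbbm{1}_{y \in B_{H_x}}\, \frac{\op{diam}(H_x)^2\, \op{deg}(H_x)}{\op{area}(H_x)\, \op{area}(B_{H_x})} .$$
First I would record that $T$ is well-defined and nonnegative: by Lemma~\ref{lem-cell-basic} the cells of $\mcl H$ are compact with nonempty interior, and since $D_h$ induces the Euclidean topology (property~\ref{item-metric-holder}) the bounding metric ball $B_{H_x}$ is a bounded subset of $\BB C$, so $0 < \op{area}(H_x) \le \op{area}(B_{H_x}) < \infty$. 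Next I would verify the required covariance: under $\mcl H \mapsto a\mcl H + b$ each cell $H$ is sent to $aH + b$, so $\op{diam}$ is multiplied by $a$, $\op{area}$ by $a^2$, and $\op{deg}$ is unchanged; moreover, by the LQG coordinate change formula for $D_h$ (property~\ref{item-metric-coord}) the map $z\mapsto az+b$ sends $D_h$-metric balls to $D_h$-metric balls of the same radius, so $B_{H_x}$ is sent to $aB_{H_x}+b$. Combining these gives $T(a\mcl H + b; ax+b, ay+b) = a^{-2} T(\mcl H; x, y)$, as needed.

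It then remains to match the two sides of~\eqref{eqn-ball-cell-moment} with the two sides of the mass transport identity. Since $\int_{\BB C} \mathbbm{1}_{z \in B_{H_0}}\, dz = \op{area}(B_{H_0})$, we have $\int_{\BB C} T(\mcl H; 0, z)\, dz = \op{diam}(H_0)^2 \op{deg}(H_0) / \op{area}(H_0)$, so the left-hand side of~\eqref{eqn-ball-cell-moment} equals $\BB E[\int_{\BB C} T(\mcl H; 0, z)\, dz]$. For the other side, using that $\bigcup_{H\in\mcl H} H = \BB C$, that distinct cells overlap in Lebesgue-null sets (Lemma~\ref{lem-zero-lebesgue}), and that for Lebesgue-a.e.\ $z$ the cell $H_z$ is the unique cell containing $z$, we get
$$\int_{\BB C} T(\mcl H; z, 0)\, dz = \sum_{H\in\mcl H} \op{area}(H)\, \mathbbm{1}_{0 \in B_H}\, \frac{\op{diam}(H)^2 \op{deg}(H)}{\op{area}(H)\, \op{area}(B_H)} = \sum_{H\in\mcl H :\, 0 \in B_H} \frac{\op{diam}(H)^2 \op{deg}(H)}{\op{area}(B_H)} ,$$
so the right-hand side of~\eqref{eqn-ball-cell-moment} equals $\BB E[\int_{\BB C} T(\mcl H; z, 0)\, dz]$. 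The mass transport identity then yields~\eqref{eqn-ball-cell-moment}.

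I do not expect a genuinely hard step: once the mass transport principle is available, the argument is bookkeeping. The point requiring care is checking that the version of the mass transport principle imported from~\cite{gms-random-walk} applies to $T$ verbatim --- in particular confirming the precise covariance requirement and that no additional integrability or ``locality of transport'' hypothesis is imposed; this matters because both sides of~\eqref{eqn-ball-cell-moment} could a priori be infinite (their finiteness is exactly the content of Proposition~\ref{prop-cell-moment}, which is proved separately later). A secondary technical point is the behavior of the bounding balls $B_H$ under dilations, which must be justified via the LQG coordinate change formula for $D_h$ rather than via $D_h$-balls being Euclidean balls.
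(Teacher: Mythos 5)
Your overall plan coincides with the paper's: recognize~\eqref{eqn-ball-cell-moment} as an instance of the mass transport principle, choose the transport function so that integrating out one variable recovers the left side and integrating out the other recovers the right side, and use translation invariance modulo scaling to equate the two expectations. Up to the harmless relabeling of which of the two marked points indexes the cell, your $T(\mcl H;x,y)$ is exactly the paper's $F(\mcl H,\{B_H\}_{H\in\mcl H},y,x)$, and your evaluation of both integrals is correct. Your observation that no integrability hypothesis is needed (both sides may a priori be $+\infty$, since $F$ is nonnegative) is also correct and worth saying.

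There is, however, a gap at the step where you invoke the mass transport principle from~\cite{gms-random-walk}. That principle is stated for functions that are measurable with respect to the \emph{cell configuration} $\mcl H$ and the pair of marked points, and nothing else. Your $T$ is not such a function: the bounding balls $B_{H}$ are $D_h$-metric balls, and the cell configuration $\mcl H$ does not determine the metric $D_h$, so the decorations $\{B_H\}_{H\in\mcl H}$ carry randomness beyond $\mcl H$. You flag the behavior of $B_H$ under dilations as a ``secondary technical point'' to be handled by the coordinate change formula, but the more fundamental obstruction is that $T$ is not defined on the space of cell configurations at all, so the mass transport identity you are trying to quote is simply unavailable for this $T$ as written. (Relatedly, the specific claim that $z\mapsto az+b$ sends $D_h$-metric balls to $D_h$-metric balls of the same radius is not right; the image is a metric ball for the \emph{transformed} field, and the correct statement is the covariance of the pair $(\mcl H,\{B_H\})$ under the diagonal action on the whole random environment.) The paper addresses this explicitly: it introduces \emph{decorated} cell configurations $(\mcl H,\{K_H\}_{H\in\mcl H})$, defines translation invariance modulo scaling for these (Definition~\ref{def-decorated-tims}), verifies that $(\mcl H,\{B_H\})$ has this property, and then applies the corresponding mass-transport reformulation (Lemma~\ref{lem-mass-transport}). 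The fix is routine---the paper itself calls it a ``trivial reformulation,'' and the proof of Lemma~\ref{lem-mass-transport} is by the same argument as the undecorated case in~\cite[Appendix~A]{gms-random-walk}---but it has to be made; without it your proof does not compile.
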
 

Lemma~\ref{lem-ball-cell-moment} is essentially a consequence of the ``mass transport" definition of translation invariance modulo scaling in~\cite[Definition 1.2]{gms-random-walk}. However, the balls $B_H$ are not functions of the cell configuration $\mcl H$ itself (they depend on additional randomness from the field) so we will need a trivial reformulation of the mass transport condition which allows for this. 

A \emph{decorated cell configuration} is a cell configuration $\mcl H$ together with a compact set $K_H\subset \BB C$ associated with each cell $H\in\mcl H$. 
We can define a topology on the space of decorated cell configurations by the obvious extension of~\eqref{eqn-cell-metric}: 
\allb \label{eqn-decorated-cell-metric}
&\BB d^{\op{DCC}}\left( (\mcl H , \{K_H\}_{H\in\mcl H} ) ,(\mcl H' , \{K_{H'}'\}_{H'\in\mcl H'} ) \right) \notag \\ 
&\qquad := \int_0^\infty e^{-r} \wedge \inf_{f_r} \big\{ \max_{z\in \BB C} |z - f_r(z)|   
  + \max_{H \in  \mcl H(B_r(0))} \BB d^{\op{Haus}}(K_{H} , K_{f_r(H)}' ) \big\} \,dr
\alle  
where $\BB d^{\op{Haus}}$ denotes the Hausdorff distance and each of the infima is over all homeomorphisms $f_r : \BB C\rta \BB C$ such that $f_r$ takes each cell in $\mcl H(B_r(0))$ to a cell in $\mcl H'(B_r(0))$ and preserves the adjacency relation, and $f_r^{-1}$ does the same with $\mcl H$ and $\mcl H'$ reversed.

\begin{defn} \label{def-decorated-tims}
We say that a random decorated cell configuration $(\mcl H , \{K_H\}_{H\in\mcl H})$ is \emph{translation invariant modulo scaling} if it satisfies the following obvious extension of the definition of translation invariance modulo scaling for cell configurations.
There is a (possibly random and $(\mcl H , \{K_H\}_{H\in\mcl H})$-dependent) increasing sequence of open sets $U_j \subset \BB C$, each of which is either a square or a disk, whose union is all of $\BB C$ such that the following is true. Conditional on $(\mcl H , \{K_H\}_{H\in\mcl H})$ and $U_j$, let $z_j$ for $j\in\BB N$ be sampled uniformly from Lebesgue measure on $U_j$. Then there are random numbers $C_j >0$ (possibly depending on $(\mcl H , \{K_H\}_{H\in\mcl H})$ and $z_j$) such that 
\eqbn
\left( C_j(\mcl H-z_j) , \{ C_j (K_H - z_j) \}_{H\in\mcl H}\right) \rta  (\mcl H , \{K_H\}_{H\in\mcl H}) 
\eqen
in law with respect to the metric~\eqref{eqn-decorated-cell-metric}.
\end{defn} 

Exactly as in~\cite[Definition 1.2]{gms-random-walk}, one can formulate various equivalent definitions of translation invariance modulo scaling for cell configurations and prove that these definitions are equivalent via exactly the same arguments as in~\cite[Appendix A]{gms-random-walk}. 
For our purposes, we will need the ``mass transport" formulation of translation invariance modulo scaling for decorated cell configurations.

\begin{lem}[Mass transport condition] \label{lem-mass-transport}
A random decorated cell configuration is $(\mcl H , \{K_H\}_{H\in\mcl H})$ is translation invariant modulo scaling in the sense of Definition~\ref{def-decorated-tims} if and only if it satisfies the following condition. 
Suppose that $F(\mcl H , \{K_H\}_{H\in\mcl H} , x,y)$ is a non-negative measurable function on the space of decorated cell configurations with two marked points in $\BB C$ such that $F$ is covariant with respect to dilations and translations of the plane in the sense that for each $C>0$ and $z\in\BB C$, 
\eqb \label{eqn-mass-transport-commutation}
F\left( C(\mcl H -z) , \{C(K_H-z) \}_{H\in\mcl H} , C(x-z) , C(y-z)  ) \right) = C^{-2} F\left( \mcl H , \{K_H\}_{H\in\mcl H} , x,y \right) .
\eqe
Then
\eqb \label{eqn-mass-transport} 
\BB E\left[ \int_{\BB C} F\left(\mcl H , \{K_H\}_{H\in\mcl H} , x, 0 \right)\, dx \right] = \BB E\left[ \int_{\BB C} F\left(\mcl H , \{K_H\}_{H\in\mcl H} , 0, y \right) \,dy  \right].
\eqe  
\end{lem}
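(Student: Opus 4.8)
The statement is the ``decorated'' analogue of the equivalence between the mass-transport formulation of translation invariance modulo scaling and the local-limit formulation of Definition~\ref{def-decorated-tims}, which for ordinary cell configurations is established in~\cite[Appendix~A]{gms-random-walk}. The plan is to rerun that argument with the compact decorations $K_H$ carried along at every step. The only genuinely new point is measurability and continuity of the decoration-valued maps, and this is arranged by the very definition of $\BB d^{\op{DCC}}$ in~\eqref{eqn-decorated-cell-metric}: with respect to that metric, the scaling--translation operation $T_{z,C}(\mcl H,\{K_H\}):=(C(\mcl H-z),\{C(K_H-z)\})$ and the functionals that read off the origin-containing cell together with its decoration (and their restrictions to $B_r(0)$) are continuous off a null set, exactly as in the undecorated case. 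So I would not reprove any of the soft lemmas of~\cite{gms-random-walk}; I would only re-run the same skeleton with $(\mcl H,\{K_H\})$ in place of $\mcl H$ and check at each step that nothing depends on the decorations.

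For the forward implication (Definition~\ref{def-decorated-tims} $\Rightarrow$~\eqref{eqn-mass-transport}), fix $F\geq 0$ satisfying the covariance~\eqref{eqn-mass-transport-commutation}, write $\omega=(\mcl H,\{K_H\})$, and set $\Psi(\omega):=\int_{\BB C}F(\omega,x,0)\,dx$ and $\Xi(\omega):=\int_{\BB C}F(\omega,0,y)\,dy$. Substituting $0=C(z-z)$ and changing variables $x\mapsto C(x-z)$ in~\eqref{eqn-mass-transport-commutation}, and symmetrically in the other slot, shows
\[ \Psi(T_{z,C}\omega)=\int_{\BB C}F(\omega,x,z)\,dx \qquad\text{and}\qquad \Xi(T_{z,C}\omega)=\int_{\BB C}F(\omega,z,y)\,dy ; \]
here the dilation factor $C$ cancels precisely because $F$ has ``degree $-2$'', which is the whole point of the normalization in~\eqref{eqn-mass-transport-commutation}. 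Now take the sets $U_j$ and the points $z_j$ (uniform on $U_j$ given $\omega$ and $U_j$) furnished by Definition~\ref{def-decorated-tims}, so that $T_{z_j,C_j}\omega\rta\omega$ in law. After the standard reduction of~\cite[Appendix~A]{gms-random-walk}, which replaces $F$ by an increasing sequence of covariant truncations seeing only pairs of cells within bounded graph distance of one another (the graph distance on $\mcl H$ being unchanged by $T_{z,C}$, so that covariance is preserved), the functionals $\Psi,\Xi$ become bounded and a.s.\ continuous, so $T_{z_j,C_j}\omega\rta\omega$ in law yields $\BB E[\Psi(T_{z_j,C_j}\omega)]\rta\BB E[\Psi(\omega)]$ and likewise for $\Xi$. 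On the other hand, since $z_j$ is uniform on $U_j$ given $\omega$, Fubini (legitimate as $F\geq 0$) gives
\[ \BB E[\Psi(T_{z_j,C_j}\omega)]-\BB E[\Xi(T_{z_j,C_j}\omega)] = \BB E\!\left[\frac{1}{\op{area}(U_j)}\left(\int_{\BB C\setminus U_j}\!\int_{U_j}F(\omega,x,y)\,dy\,dx-\int_{U_j}\!\int_{\BB C\setminus U_j}F(\omega,x,y)\,dy\,dx\right)\right]. \]
After truncation only pairs $(x,y)$ lying in cells within bounded graph distance contribute, so this ``boundary discrepancy'' is carried by a graph-collar of $\bdy U_j$; since the $U_j$ are squares or disks with $\op{area}(U_j)\rta\infty$ (a F\o{}lner-type property), it tends to $0$, exactly as in~\cite[Appendix~A]{gms-random-walk}. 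Hence $\BB E[\Psi(\omega)]=\BB E[\Xi(\omega)]$ for each truncation, and letting the truncation parameter go to $\infty$ by monotone convergence gives~\eqref{eqn-mass-transport}.

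For the converse I would run the construction of~\cite[Appendix~A]{gms-random-walk} which manufactures the sets $U_j$, points $z_j$, and constants $C_j$ witnessing Definition~\ref{def-decorated-tims} out of the validity of~\eqref{eqn-mass-transport}, again carrying $\{K_H\}$ through: one applies~\eqref{eqn-mass-transport} to a separating family of bounded, scale-translation-covariant functionals built from continuous functions of $T_{z,C}\omega$ restricted to $B_r(0)$, and uses this to upgrade convergence of means to convergence in law for $\BB d^{\op{DCC}}$. I expect the main obstacle, in both directions, to be the single non-soft ingredient: the boundary-discrepancy estimate, i.e.\ that the $F$-mass transported across $\bdy U_j$ is $o(\op{area}(U_j))$. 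This is exactly what the graph-distance truncation together with the F\o{}lner property of the $U_j$ is meant to deliver; every other step is the purely measure-theoretic argument of~\cite{gms-random-walk} with $(\mcl H,\{K_H\})$ substituted for $\mcl H$.
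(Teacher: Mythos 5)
Your proposal is correct and takes the same approach as the paper, which simply observes that the argument of~\cite[Appendix A]{gms-random-walk} for undecorated cell configurations goes through verbatim once one carries the decorations $\{K_H\}$ along at each step. You fill in more detail about what that argument looks like (the $\Psi/\Xi$ change of variables, the truncation, the F\o{}lner boundary estimate), but these are exactly the steps the referenced appendix provides, so the underlying strategy is identical.
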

\begin{proof}
This follows from exactly the same argument used for undecorated cell configurations in~\cite[Appendix A]{gms-random-walk}. 
\end{proof}
 
\begin{proof}[Proof of Lemma~\ref{lem-ball-cell-moment}]
For a decorated cell configuration $(\mcl H , \{K_H\}_{H\in\mcl H})$ and $x,y\in\BB C$, define 
\eqbn
F\left( \mcl H , \{K_H\}_{H\in\mcl H} ,  x ,y \right) := \frac{\op{diam}(H_y)^2  \op{deg}(H_y) }{ \op{area}(H_y)  \op{area}(K_{H_y})  } \BB 1_{x \in K_{H_y}} .
\eqen
Obviously, this choice of $F$ satisfies the condition~\eqref{eqn-mass-transport-commutation}. 

Now let $\mcl H$ be the particular cell configuration consisting of Voronoi cells on the 0-quantum cone.
It is easily verified that, with $B_H$ as in the statement of the lemma, the decorated cell configuration $(\mcl H , \{B_H\}_{H\in\mcl H})$ is translation invariant modulo scaling in the sense of Definition~\ref{def-decorated-tims}. 
By Lemma~\ref{lem-mass-transport}, we therefore have
\eqb \label{eqn-ball-mass-transport}
\BB E\left[ \int_{\BB C} F\left(\mcl H , \{B_H\}_{H\in\mcl H} , x , 0\right) \,dx  \right]  = \BB E\left[ \int_{\BB C} F\left(\mcl H , \{B_H\}_{H\in\mcl H} , 0 , y\right) \,dy  \right]  .
\eqe 

Clearly, 
\eqb \label{eqn-mass-transport-int1}
\int_{\BB C} F\left(\mcl H , \{B_H\}_{H\in\mcl H} , x , 0\right) \,dx 
= \frac{\op{diam}(H_0)^2}{\op{area}(H_0)} \op{deg}(H_0) .
\eqe 
By breaking up the integral into a sum of the integrals over each of the cells $H\in\mcl H$, we get
\eqb \label{eqn-mass-transport-int2}
\int_{\BB C} F\left(\mcl H , \{B_H\}_{H\in\mcl H} , 0, y\right) \,dy 
= \sum_{H\in\mcl H : 0 \in B_H}   \frac{\op{diam}(H )^2  \op{deg}(H ) }{  \op{area}(B_{H })  }  .
\eqe 
Plugging~\eqref{eqn-mass-transport-int1} and~\eqref{eqn-mass-transport-int2} into~\eqref{eqn-ball-mass-transport} gives~\eqref{eqn-ball-cell-moment}
\end{proof}

In light of Lemma~\ref{lem-ball-cell-moment}, we only need to prove that the expectation on the right side of~\eqref{eqn-ball-cell-moment} is finite.
Actually, we will prove the following much stronger statement. 

\begin{prop} \label{prop-ball-moment}
For each $p  >0$, we have
\eqb \label{eqn-ball-moment}
\BB E\left[ \left( \sum_{H\in\mcl H : 0 \in B_H}   \frac{\op{diam}(H )^2  \op{deg}(H ) }{  \op{area}(B_{H })  }  \right)^p \right] < \infty .
\eqe
\end{prop}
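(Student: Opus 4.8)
The plan is to bound the sum in~\eqref{eqn-ball-moment} by controlling three quantities: the $\sqrt{8/3}$-LQG diameters of the metric balls $B_H$ (equivalently, the $\mu_h$-masses of cells, since a cell of mass roughly $1/\lambda = 1$ has $D_h$-diameter of order one); the Euclidean diameters $\op{diam}(H)$; and the degrees $\op{deg}(H)$. The key structural observation is that each term $\op{diam}(H)^2\op{deg}(H)/\op{area}(B_H)$ is controlled once we know that $B_H$ is not ``long and thin'' in the Euclidean sense --- so that $\op{area}(B_H)$ is comparable to $\op{diam}(B_H)^2 \geq \op{diam}(H)^2$ --- and once we have a bound on $\op{deg}(H)$. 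Combined with the fact that the number of cells $H$ with $0 \in B_H$ is itself controlled (a cell contributes only if its $D_h$-ball around its center swallows the origin, which forces that center to be $D_h$-close to $0$ and hence, by volume considerations, there can't be too many such cells with large balls), we reduce to a handful of moment estimates for $\sqrt{8/3}$-LQG metric balls centered at (or near) a Lebesgue-typical point.

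The main steps, in order, are as follows. First I would use the scale invariance~\eqref{eqn-cone-scale-mm} of the $0$-quantum cone together with the relation~\eqref{eqn-cell-law} to reduce everything to estimates that are ``local near the origin,'' and set up a decomposition of the sum according to the dyadic scale of $D_h$-radius of $B_H$: write $\mcl H_k := \{H \in \mcl H : 0 \in B_H,\ \text{$D_h$-radius of }B_H \in [2^{-k-1}, 2^{-k})\}$ roughly speaking (allowing both signs of $k$). Second, for each fixed scale I would estimate: (a) the cardinality of $\mcl H_k$ --- a cell in $\mcl H_k$ has its center within $D_h$-distance $2^{-k}$ of $0$, and distinct centers of cells with comparable ball-radii are $D_h$-separated by a definite amount depending on their mutual $\mu_h$-masses, so $\#\mcl H_k$ is stochastically dominated by a Poisson-type count that is negligible for $k$ large (small balls) by the local finiteness of $\mcl P$, and controlled for $k$ small (large balls) because a ball of large $D_h$-radius through $0$ forces a large $\mu_h$-mass near $0$; (b) for each such cell, $\op{diam}(H)^2/\op{area}(B_H) \leq \op{diam}(B_H)^2/\op{area}(B_H)$, which by Proposition~\ref{prop-lqg-ball-swallow} (the ``metric balls are not long and thin'' estimate) is bounded by $1$ up to a polynomially-small-probability exceptional event, and the $\mu_h$-mass of $B_H$ is within a polynomial factor of the fourth power of its $D_h$-radius by Proposition~\ref{prop-ball-vol}; (c) the degree $\op{deg}(H)$ --- an adjacent cell must have its center within $D_h$-distance $\lesssim \op{diam}_{D_h}(H)$ of the center of $H$ together with at least one more Poisson point condition, so $\op{deg}(H)$ is dominated by a Poisson count over a region of $\mu_h$-mass comparable to $\op{diam}_{D_h}(H)^4$, and the needed estimate on this mass is again supplied by Proposition~\ref{prop-ball-vol}. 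Third, I would assemble the pieces: raise the $k$-indexed sum to the $p$-th power, use that the per-scale contributions decay (super)polynomially in $|k|$ with all moments finite, and sum a convergent series, applying Hölder/Minkowski to handle the $p$-th power of an infinite sum.

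The hard part will be step (b)--(c): the quantitative estimates for $\sqrt{8/3}$-LQG metric balls --- namely Proposition~\ref{prop-lqg-ball-swallow} (a metric ball contains a Euclidean ball of comparable diameter with overwhelming probability) and Proposition~\ref{prop-ball-vol} (the $\mu_h$-mass of a metric ball is concentrated, with all moments of the ratio to the fourth power of the radius, and all moments of the reciprocal ratio, finite). These are exactly the estimates flagged in the introduction as ``of independent interest,'' and they rely on delicate use of the confluence of geodesics, the Markov property of the GFF, and the known distribution of masses of metric balls in the Brownian map/disk. Proving that the \emph{reciprocal} of $\op{area}(B_H)$ has good moments --- i.e.\ ruling out metric balls that are anomalously thin or have anomalously small Euclidean area --- is the most subtle point, since it is an \emph{upper} bound on a denominator and requires a lower bound on the Euclidean size of an LQG metric ball, which does not follow from the Hölder continuity of the LQG metric alone. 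Once those two propositions are in hand, the bookkeeping in step three is routine: the exceptional-probability contributions are killed by choosing the polynomial exponents large relative to $p$, and the main term is a geometric-type series.
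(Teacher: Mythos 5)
Your dyadic decomposition is a genuinely different bookkeeping from the paper's, which instead proves directly (via Lemmas~\ref{lem-ball-ratio} and~\ref{lem-cell-deg}) that $\sum_{H : 0\in B_H} \op{diam}(H)^2\op{deg}(H)/\op{area}(B_H) \leq C^3$ with superpolynomially high probability as $C\rta\infty$, from which all positive moments follow in one stroke; this avoids the per-scale Hölder/Minkowski juggling. You correctly identify Propositions~\ref{prop-lqg-ball-swallow} (via \ref{prop-0cone-swallow}) and~\ref{prop-ball-vol} as the key analytic inputs, and your observation that adjacent cells' centers lie within $D_h$-distance $\lesssim \op{diam}_{D_h}(H)$ of the center of $H$ (since a shared boundary point is $D_h$-equidistant from the two centers and within $\text{radius}(B_H)$ of $z$) is correct and is the right way to start on step (c). Two smaller points: Proposition~\ref{prop-0cone-swallow} gives $\op{diam}(B_H)^2/\op{area}(B_H)\lesssim C^{2\zeta}$, not $\lesssim 1$, and only for balls whose $D_h$-radius is at least $C^{-1}$; the paper's Lemma~\ref{lem-ball-ratio} therefore also has to rule out cells with $0\in B_H$ and tiny $D_h$-radius separately (by noting there is no Poisson point in $B_{2C^{-1/\zeta}}(0;D_h)$ with high probability), which your sketch glosses over.

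The substantive gap is in your step 2(a), the ``$k$ small (large balls)'' case. Your justification --- ``a ball of large $D_h$-radius through $0$ forces a large $\mu_h$-mass near $0$'' --- does not give a bound. A large $\mu_h$-mass in $B_H$ places no constraint on the Poisson points in $B_H$: $H$ is the cell of a single point and can be long and thin, wandering through a region full of other Poisson points. What you actually need is a bound on the probability that any cell meeting a neighborhood of $0$ has large $D_h$-diameter, equivalently a bound on the probability that there exists a Poisson-free $D_h$-ball of radius $\gtrsim 2^{-k}$ near $0$, uniformly in the (continuum of) ball centers. The paper obtains exactly this in Lemma~\ref{lem-cell-in-ball} via a ``wall of cells'' construction (Lemma~\ref{lem-separating-balls}): cover $B_C(0;D_h)$ by $O(C^{4+\zeta})$ unit $D_h$-balls, around each build a ring of $O(C^\zeta)$ $D_h$-balls of radius $\tfrac12 C^\zeta$ separating the inner region from infinity, note that each wall ball has $\mu_h$-mass $\geq C^{4\zeta(1-\zeta)}$ by Proposition~\ref{prop-0cone-ball-bounds} and hence contains a Poisson point except with probability $e^{-C^{4\zeta(1-\zeta)}}$, and conclude that no cell can cross the ring. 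This is a separate, necessary ingredient --- without it neither the cell count nor the degree can be bounded --- and it is the piece your proposal does not identify. Once you have it, the $\mcl H_k$ with $k$ very negative are empty with overwhelming probability and your scale sum closes; without it the argument does not.
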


We emphasize that Proposition~\ref{prop-ball-moment} does \emph{not} imply that $ \frac{\op{diam}(H_0)^2}{\op{area}(H_0)} \op{deg}(H_0) $ has finite moments of all positive orders (rather, we only get that it has a finite first moment) since Lemma~\ref{lem-ball-cell-moment} does not allow us to compare moments of order greater than 1. 
The rest of the paper is devoted to the proof of Proposition~\ref{prop-ball-moment}.

\section{Estimates for LQG metric balls}
\label{sec-ball-estimates}

The goal of this section is to establish Proposition~\ref{prop-ball-moment}, which together with Lemma~\ref{lem-ball-cell-moment} will conclude the proof of our main results.
Along the way, we will establish a number of estimates for LQG metric balls which are of independent interest (see in particular Propositions~\ref{prop-lqg-ball-upper}, \ref{prop-lqg-ball-swallow}, \ref{prop-lqg-ball-union}, and~\ref{prop-ball-vol}).

We start out in Section~\ref{sec-wn-decomp} by introducing two random distributions on $\BB C$ which are defined using the white noise decomposition of the GFF and which have certain nice properties that the GFF itself does not. 
The first of these distributions, which we call $\wh h$, possesses exact scale and translation invariance properties (not just scale and translation invariance modulo additive constant, like the whole-plane GFF). 
The second, which we call $\wh h^\tr$, has the property that its restrictions to two sets at distance at least $1/5$ from each other are independent. 
We then state a general lemma (Lemma~\ref{lem-gff-compare}) which allows us to compare $\wh h$, $\wh h^\tr$, and the whole-plane GFF.

In Section~\ref{sec-eucl-ball}, we prove an upper bound for the LQG distance across a Euclidean annulus (Proposition~\ref{prop-lqg-ball-upper}) as well as estimates to the effect that an LQG metric ball $B$ typically contains a Euclidean ball of radius comparable to the Euclidean diameter of $B$ (Propositions~\ref{prop-lqg-ball-swallow} and~\ref{prop-lqg-ball-union}). These are proven using percolation arguments which rely crucially on the local independence property of $\wh h^\tr$.
In Section~\ref{sec-ball-vol}, we prove that the LQG area of an LQG metric ball of radius $r$ is tightly concentrated around $r^{4+o_r(1)}$ (Proposition~\ref{prop-ball-vol}).
This is proven by starting with known estimates for metric balls in the Brownian map, then transferring to the GFF using the equivalence of Brownian surfaces and LQG surfaces~\cite{lqg-tbm2}, and finally using the local independent property of $\wh h^\tr$ to go from events of high probability to events of superpolynomially high probability.

In Section~\ref{sec-0cone-estimates}, we transfer the estimates of the preceding subsections from the whole-plane GFF to the 0-quantum cone. 
In Section~\ref{sec-moment-proof}, we conclude the proof of Proposition~\ref{prop-ball-moment}. 

\subsection{White-noise approximation of the Gaussian free field}
\label{sec-wn-decomp}

In this subsection we will introduce various white-noise approximations of the GFF which are often more convenient to work with than the GFF itself. Similar approximations to the ones used here were also studied in~\cite{ding-goswami-watabiki,dzz-heat-kernel,dg-lqg-dim}. Let $W$ be a space-time white noise on $\BB C\times [0,\infty)$, i.e., $\{(W,f) : f\in L^2(\BB C\times [0,\infty))\}$ is a centered Gaussian process with covariances $\BB E[(W,f) (W,g) ]  = \int_\BB C\int_0^\infty f(z,s) g(z,s) \, ds\,dz$. For $f\in L^2(\BB C\times [0,\infty))$ and Borel measurable sets $A\subset\BB C$ and $I\subset [0,\infty)$, 
we slightly abuse notation by writing 
\eqbn
\int_B\int_I f(z,s) \, W(ds,dz) := (W , f \BB 1_{A\times I} ) .
\eqen

For an open set $U \subset \BB C$, we write $p_U(s ; z,w)$ for the transition density of Brownian motion killed upon exiting $U$, so that for $s\geq 0$, $z\in \BB C$, and $A\subset \ol U$, the integral $\int_A p_U(s;z,w) \,dw$ gives the probability that a standard planar Brownian motion $\mcl B$ started from $z$ satisfies $\mcl B([0,s]) \subset U$ and $\mcl B_s \in A$. We also write 
\eqbn
p(s;z,w) := p_{\BB C}(s;z,w) =  \frac{1}{2\pi s} \exp\left( - \frac{|z-w|^2}{2s} \right) .
\eqen  

We define the centered Gaussian process
\eqb \label{eqn-wn-decomp}
\wh h_t (z) := \sqrt\pi \int_{\BB C} \int_{t^2}^1 p (s/2 ;z,w) \, W(ds,dw)  ,\quad \forall t \in [0,1] , \quad \forall z\in \BB C .
\eqe  
We set $\wh h : =\wh h_0$. 
By~\cite[Lemma 3.1]{ding-goswami-watabiki} and Kolmogorov's criterion, each $\wh h_t$ for $t\in (0,1]$ admits a continuous modification. Henceforth whenever we work with $\wh h_t$ we will assume that it has been replaced by such a modification. 
The process $\wh h$ does not admit a continuous modification, but its integral against any smooth compactly supported test function has finite variance, so it makes sense as a distribution. 
We record for reference the formula
\eqb \label{eqn-wn-var}
\op{Var}\left( \wh h_{\wt t}(z) - \wh h_t(z) \right) = \log (\wt t/t),\quad\forall z \in \BB C, \quad\forall 0 < t <\wt t < 1 ,
\eqe
which is immediate from~\eqref{eqn-wn-decomp}.

The distribution $\wh h$ is often more convenient to work with than the GFF thanks to the following symmetries, which are immediate from the definition. 
\begin{itemize}
\item \textit{Rotation/translation/reflection invariance.} The law of $\wh h$ is invariant with respect to rotation, translation, and reflection of the plane.
\item \textit{Scale invariance.} For $\delta \in (0,1]$, one has $ \wh h(\delta \cdot) - \wh h_\delta(\delta \cdot)  \eqD  \wh h$. 
\item \textit{Independent increments.} For $\delta \in (0,1)$, $\wh h  - \wh h_\delta $ is independent from $\wh h_\delta$. 
\end{itemize}
   
One property which $\wh h$ does not possess is spatial independence. To get around this, we will sometimes work with a truncated variant of $\wh h $ where we only integrate over a ball of finite radius. We define
\eqb \label{eqn-wn-decomp-tr}
\wh h^\tr(z) := \sqrt\pi \int_0^1 \int_{\BB C} p_{B_{1/10}(z)}(s/2; z,w)   \, W(dw,dt)  
\eqe 
and we interpret $\wh h^\tr$ as a random distribution.  
The key property enjoyed by $\wh h^\tr$ is spatial independence: if $A,B\subset \BB C$ with $\op{dist}(A,B) \geq 1/5$, then $ \wh h^\tr|_A $ and $\wh h^\tr|_B$ are independent. Indeed, this is because $\wh h^\tr|_A $ and $\wh h^\tr|_B $ are determined by the restrictions of the white noise $W$ to the disjoint sets $B_{1/10}(A) \times \BB R_+ $ and $B_{1/10}(B)\times \BB R_+ $, respectively.  
Unlike $\wh h$, the distribution $\wh h^\tr$ does not possess any sort of scale invariance but its law is still invariant with respect to rotations, translations, and reflections of $\BB C$.

The following lemma is proven using elementary calculations for the transition density $p_U(t;z,w)$ together with the Kolmogorov continuity theorem (see, e.g.,~\cite[Lemma 3.1]{dg-lqg-dim}). It will allow us to use $\wh h^\tr$ or $\wh h$ in place of the GFF in many of our arguments. 

\begin{lem} \label{lem-gff-compare}

For any compact set $K\subset\BB C$, there is a coupling $(h , \wh h , \wh h^\tr)$ of a whole-plane GFF normalized so that $h_1(0) = 0$ and the distributions from~\eqref{eqn-wn-decomp} and~\eqref{eqn-wn-decomp-tr} such that the following is true. For $h^1,h^2 \in \{h ,  \wh h , \wh h^\tr\}$, the distribution $(h^1-h^2)|_K$ a.s.\ admits a continuous modification 
and there are constants $c_0,c_1 > 0$ depending only on $K$ such that for $A>1$, 
\eqb \label{eqn-gff-compare}
\BB P\left[\max_{z\in K} |(h^1-h^2)(z)| \leq A \right] \geq 1 - c_0 e^{-c_1 A^2} .
\eqe
In fact, in this coupling one can arrange so that $\wh h$ and $\wh h^\tr$ are defined using the same white noise $W$. 
\end{lem}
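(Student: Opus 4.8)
The plan is to realize $h$, $\wh h$, and $\wh h^\tr$ as measurable functionals of one space-time white noise $W$ on $\BB C\times(0,\infty)$, and then to reduce the whole statement to the following claim: for each of the three pairs $h^1,h^2\in\{h,\wh h,\wh h^\tr\}$, the difference $(h^1-h^2)|_K$ is (a modification of) a centered Gaussian field which is a.s.\ continuous, indeed H\"older, with pointwise variance bounded uniformly over $z\in K$ by a constant depending only on $K$. Granting this claim, \eqref{eqn-gff-compare} is immediate from Gaussian concentration: if $M=\max_{z\in K}|(h^1-h^2)(z)|$ is the supremum of a continuous centered Gaussian process with $\sigma^2:=\sup_{z\in K}\op{Var}((h^1-h^2)(z))<\infty$, then by the Borell--TIS inequality $\BB P[M>A]\le 2e^{-(A-\BB E M)^2/(2\sigma^2)}$ for $A>\BB E M$, and since $\sigma^2$ and $\BB E M$ depend only on $K$ this is at most $c_0 e^{-c_1 A}$ for all $A>1$ after adjusting $c_0\ge1$ and $c_1>0$; a union bound over the three pairs then gives the lemma.

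For the coupling, I would let $\wh h$ and $\wh h^\tr$ be given by~\eqref{eqn-wn-decomp} with $t=0$ and by~\eqref{eqn-wn-decomp-tr}, both built from $W|_{\BB C\times(0,1]}$ (this is precisely the last assertion of the lemma), and set $h(z):=\sqrt\pi\int_{\BB C}\int_0^\infty\big(p(s/2;z,w)-\bar p(s/2;w)\big)\,W(ds,dw)$, where $\bar p(s/2;w)$ is the average of $p(s/2;\cdot,w)$ over the unit circle $\bdy\BB D$. Subtracting $\bar p$ makes the $s\rta\infty$ tail converge and forces $h_1(0)=0$ a.s., and the standard heat-kernel white-noise construction of the GFF identifies $h$ in law with the whole-plane GFF normalized so that $h_1(0)=0$. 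Verifying this identification carefully (that the resulting covariance is the correct one — in particular positive-definite, and that its double average over $\bdy\BB D$ vanishes) is one point where a little care is required, but it is routine.

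The substance is the continuity and variance bound for the differences; since $h-\wh h^\tr=(h-\wh h)+(\wh h-\wh h^\tr)$ it suffices to handle the pairs $(h,\wh h)$ and $(\wh h,\wh h^\tr)$. For $(h,\wh h)$: cancelling the common $s\le1$ part, $h-\wh h$ is the sum of a single $z$-independent Gaussian variable $-\sqrt\pi\int_{\BB C}\int_0^1\bar p(s/2;w)\,W(ds,dw)$, whose variance is finite because $\int_{\BB C}\bar p(s/2;w)^2\,dw\preceq s^{-1/2}$ is integrable on $(0,1]$, plus $\sqrt\pi\int_{\BB C}\int_1^\infty\big(p(s/2;z,w)-\bar p(s/2;w)\big)\,W(ds,dw)$, which for $z$ in a neighborhood of $K$ is smooth in $z$ (one may differentiate under the integral since for $s\ge1$ all $z$-derivatives of $p(s/2;z,w)$ are square-integrable in $w$) with finite, $K$-uniform variance. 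For $(\wh h,\wh h^\tr)$: $\wh h-\wh h^\tr=\sqrt\pi\int_{\BB C}\int_0^1 q_s(z,w)\,W(ds,dw)$, where $q_s(z,\cdot):=p(s/2;z,\cdot)-p_{B_{1/10}(z)}(s/2;z,\cdot)$ is the sub-probability density of the position at time $s/2$ of a Brownian motion from $z$ that has already exited $B_{1/10}(z)$; Gaussian bounds on the exit time of Brownian motion from a ball give $\int_{\BB C}q_s(z,w)^2\,dw\preceq e^{-c/s}$ and $\int_{\BB C}\big(q_s(z,w)-q_s(z',w)\big)^2\,dw\preceq e^{-c/s}|z-z'|^2$ for $z,z'$ near $K$, so $\op{Var}((\wh h-\wh h^\tr)(z))\preceq\int_0^1 e^{-c/s}\,ds<\infty$ uniformly on $K$ and Kolmogorov's continuity criterion produces a continuous modification.

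I expect the main obstacle to be the kernel estimates in the $(\wh h,\wh h^\tr)$ comparison: controlling $q_s$ and its spatial increments near $\bdy B_{1/10}(z)$, where $q_s$ is of the same order as $p$ itself, and handling the joint dependence of $p_{B_{1/10}(z)}(s/2;z,w)$ on the ``source'' $z$ and on the domain $B_{1/10}(z)$. These are exactly the ``elementary calculations'' referenced in the statement (of the type in~\cite[Lemma 3.1]{dg-lqg-dim}), so I anticipate only bookkeeping rather than a conceptual difficulty.
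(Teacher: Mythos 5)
Your plan is correct and takes essentially the same route the paper relies on: the paper proves this lemma by citing \cite[Lemma 3.1]{dg-lqg-dim}, which combines heat-kernel $L^2$ estimates for the two basic differences $h-\wh h$ and $\wh h-\wh h^\tr$ with the Kolmogorov continuity theorem and Gaussian concentration, precisely the argument you lay out. Two small remarks: your Borell--TIS step in fact yields the stronger Gaussian tail $e^{-c_1 A^2}$ (more than the stated $e^{-c_1 A}$ requires), and your explicit realization of $h$ by subtracting the circle-average kernel $\bar p(s/2;\cdot)$ from the white-noise integrand is a correct and convenient device---it forces $h_1(0)=0$ a.s., reproduces the covariance $-\log|z-z'|+\log^+|z|+\log^+|z'|$, and puts all three fields on the same probability space.
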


The existence of continuous modifications in Lemma~\ref{lem-gff-compare} allows us to define the $\sqrt{8/3}$-LQG metrics $D_{\wh h}$ and $D_{\wh h^\tr}$. Indeed, this is because we know how to define $D_{h + f}$ when $h$ is a GFF and $f$ is a continuous function (see the discussion just after Lemma~\ref{lem-metric-f}). 
Moreover, we get that $D_{\wh h}$ and $D_{\wh h^\tr}$ each a.s.\ induces the Euclidean topology on $\BB C$.  
Due to Lemma~\ref{lem-metric-f}, the estimate~\eqref{eqn-gff-compare} will allow us to compare $D_{\wh h}$ and $D_{\wh h^\tr}$ to the $\sqrt{8/3}$-LQG metrics induced by a GFF. 

The following lemma will be used when we apply the scaling property of $\wh h$. 

\begin{lem} \label{lem-use-btis} 
For each bounded domain $U \subset \BB C$, there are constants $c_0 , c_1 > 0$ depending only on $U$ such that for $\delta\in (0,1)$ and $C>0$, 
\eqb \label{eqn-use-btis}
\BB P\left[ \max_{z,w \in U : |z-w| \leq \delta} |\wh h_\delta(z) -\wh h_\delta(w)| \leq C \right] \geq 1 - c_0 \delta^{-2} e^{-c_1 C^2  } .
\eqe 
\end{lem}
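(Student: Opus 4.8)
The plan is to reduce, via a union bound, to a modulus-of-continuity estimate on Euclidean balls of radius comparable to $\delta$, and then to apply the Borell--TIS inequality on each such ball.

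The first step is to read off the covariance structure of $\wh h_\delta$ from~\eqref{eqn-wn-decomp}. Using the Chapman--Kolmogorov identity $\int_{\BB C} p(s/2;z,u)\,p(s/2;u,w)\,du = p(s;z,w)$, one gets $\op{Cov}(\wh h_\delta(z),\wh h_\delta(w)) = \tfrac12\int_{\delta^2}^1 s^{-1} e^{-|z-w|^2/(2s)}\,ds$, and combining this with $\op{Var}(\wh h_\delta(z)) = \log(1/\delta)$ (consistent with~\eqref{eqn-wn-var}, since $\wh h_1\equiv 0$) yields
\[
\op{Var}\big(\wh h_\delta(z)-\wh h_\delta(w)\big) = \int_{\delta^2}^1 \frac{1-e^{-|z-w|^2/(2s)}}{s}\,ds \le \int_{\delta^2}^1 \min\!\Big(1,\tfrac{|z-w|^2}{2s}\Big)\frac{ds}{s}.
\]
When $|z-w|\le\delta$ the right-hand side is at most $|z-w|^2/(2\delta^2)\le 1/2$, and when $|z-w|$ is any fixed bounded multiple of $\delta$ it is bounded by a universal constant $\sigma_0^2$. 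Applying the same bound to second differences shows that the canonical-metric increments of the process $(z,w)\mapsto \wh h_\delta(z)-\wh h_\delta(w)$, restricted to pairs of points in a ball of radius $O(\delta)$, are controlled by $\delta^{-1}$ times Euclidean distance (equivalently, $\wh h_\delta$ is ``smooth at scale $\delta$''), with all implicit constants independent of $\delta$ and, by translation invariance of $\wh h$, of the location of the ball.

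For the localization, cover $U$ by balls $B_{3\delta}(x_i)$ with $x_i$ ranging over the grid $\delta\BB Z^2$ and $B_{3\delta}(x_i)\cap U\neq\emptyset$; there are at most $C_U\delta^{-2}$ of them with $C_U$ depending only on $\op{diam}(U)$, and any $z,w\in U$ with $|z-w|\le\delta$ lie in a common such ball. Fix $i$ and consider the centered Gaussian process $(z,w)\mapsto \wh h_\delta(z)-\wh h_\delta(w)$ on $B_{3\delta}(x_i)\times B_{3\delta}(x_i)$, which is a.s.\ continuous since $\wh h_\delta$ admits a continuous modification (by~\cite[Lemma~3.1]{ding-goswami-watabiki}) and whose supremum is $M_i:=\max_{z,w\in B_{3\delta}(x_i)}|\wh h_\delta(z)-\wh h_\delta(w)|$. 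Its pointwise variance is at most $\sigma_0^2$, and by Dudley's entropy bound together with the scale-invariant increment estimate of the previous paragraph, $\BB E[M_i]\le K_0$ for a universal constant $K_0$. The Borell--TIS inequality then gives $\BB P[M_i>C]\le 2\exp\!\big(-(C-K_0)^2/(2\sigma_0^2)\big)$ for all $C>K_0$.

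Finally, a union bound over the at most $C_U\delta^{-2}$ balls gives, for $C>K_0$,
\[
\BB P\Big[\max_{\substack{z,w\in U\\ |z-w|\le\delta}}|\wh h_\delta(z)-\wh h_\delta(w)|>C\Big] \le 2C_U\,\delta^{-2}\exp\!\big(-(C-K_0)^2/(2\sigma_0^2)\big),
\]
and since $c_1 C^2 - (C-K_0)^2/(2\sigma_0^2)$ is bounded above once $c_1<1/(2\sigma_0^2)$, the right-hand side is at most $c_0\delta^{-2}e^{-c_1 C^2}$ for suitable $c_0$; for $C\le K_0$ the asserted inequality is vacuous since $\delta^{-2}\ge 1$, provided $c_0$ is taken large enough. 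The only genuinely non-routine point is the uniform-in-$\delta$ bound $\BB E[M_i]\le K_0$: this is exactly where one uses that rescaling space by $\delta^{-1}$ turns $\wh h_\delta|_{B_{3\delta}(x_i)}$ into a Gaussian field on a fixed ball whose increments are controlled independently of $\delta$ (and of $x_i$), as furnished by the displayed variance estimate extended to a bound on $\int_{\delta^2}^1 s^{-1}\min(1,|z-w|^2/s)\,ds$ valid for all $|z-w|$; Dudley's bound then closes it, and everything else is bookkeeping with constants.
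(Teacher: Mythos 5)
Your proposal is correct and follows essentially the same route as the paper's proof: both tile $U$ by $O(\delta^{-2})$ pieces of diameter $O(\delta)$, bound the expected oscillation of $\wh h_\delta$ on each piece by a metric-entropy criterion (you use Dudley, the paper invokes Fernique's criterion, essentially interchangeable here), apply the Borell--TIS inequality, and close with a union bound. Your explicit covariance computation reproduces exactly the increment-variance bound $\op{Var}(\wh h_\delta(z)-\wh h_\delta(w))\le |z-w|^2/\delta^2$ that the paper simply cites from \cite{ding-goswami-watabiki}.
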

\begin{proof}
It is easily seen (see~\cite[Lemma 3.1]{ding-goswami-watabiki}) that for $\delta > 0$, $\op{Var}(\wh h_\delta(z) - \wh h_\delta(w)) \leq |z-w|^2/\delta^2$, which is of course smaller than $|z-w| /\delta$ whenever $|z-w| \leq \delta$.  
By Fernique's criterion~\cite{fernique-criterion} (see~\cite[Theorem 4.1]{adler-gaussian} or~\cite[Lemma 2.3]{dzz-heat-kernel} for the version we use here), we find that for each square $S\subset\BB C$ with side length $\delta/2$, 
\eqbn
\BB E\left[ \max_{z,w\in S} |\wh h_\delta(z) - \wh h_\delta(w)| \right] \leq A ,
\eqen
for a universal constant $A>0$.
Combining this with the Borell-TIS inequality~\cite{borell-tis1,borell-tis2} (see, e.g.,~\cite[Theorem 2.1.1]{adler-taylor-fields}), we get that for each such square $S$, 
\eqbn
\BB P\left[ \max_{z,w\in S} |\wh h_\delta(z) - \wh h_\delta(w)| \leq C \right]  \geq 1 - c_0 e^{- c_1 C^2  }  
\eqen
for universal constants $c_0 ,c_1 > 0$. 
A union bound over $O_\delta(\delta^{-2})$ such squares whose union contains $U$ concludes the proof. 
\end{proof}

\subsection{Comparing LQG metric balls and Euclidean balls}
\label{sec-eucl-ball}

Throughout this subsection, we let $h$ be a whole-plane GFF normalized so that $h_1(0) = 0$. 
We will prove the following three propositions, which relate $D_h$-metric balls and Euclidean balls. 
Our first estimate implies in particular that a $D_h$-metric ball is extremely unlikely to have an unusually large Euclidean diameter. 
This estimate is related to the fact that $\mu_h(\BB D)$ has negative moments of all orders (see~\cite[Lemma 4.5]{shef-kpz} or~\cite[Theorem 2.12]{rhodes-vargas-review}) but is proven in a very different way. 

\begin{prop} \label{prop-lqg-ball-upper}
For each fixed $\rho \in (0,1)$, it holds with superpolynomially high probability as $\ep\rta 0$ that 
\eqb \label{eqn-lqg-ball-upper}
 D_h\left(  B_\rho(0) , \bdy \BB D \right) \geq \ep  .
\eqe
\end{prop}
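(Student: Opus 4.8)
The plan is to reduce Proposition~\ref{prop-lqg-ball-upper} to a tail estimate for the $D_h$-distance across a single Euclidean annulus of fixed modulus, and then to establish that estimate by a percolation/multi-scale argument in which the number of approximately independent ``chances'' for a crossing to be short grows like $\log(1/\ep)$ — which is what produces the \emph{super}polynomial (rather than merely polynomial) decay.

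\textbf{Reduction.} Applying the coordinate-change formula \eqref{eqn-lqg-coord-metric} to a dilation $z\mapsto cz$ and using that the whole-plane GFF is scale invariant modulo an additive constant, we may assume $\rho<1/4$. Then any path from $\ol{B_\rho(0)}$ to $\bdy\BB D$ starts inside $B_{1/4}(0)$ and must exit $B_{1/2}(0)$, so it contains a sub-path crossing the annulus $\ol{B_{1/2}(0)}\setminus B_{1/4}(0)$; hence $D_h(B_\rho(0),\bdy\BB D)\geq D_h(\bdy B_{1/4}(0),\bdy B_{1/2}(0))$ and it suffices to bound the latter below by $\ep$ with superpolynomially high probability. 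By the locality property (item~\ref{item-metric-local}) one may replace $D_h$ here by the internal metric of a Euclidean neighborhood of the annulus, which depends on $h$ only through its restriction there; after rescaling, Lemma~\ref{lem-gff-compare} then lets one compare this with the corresponding internal metric of $\wh h^\tr$, up to an independent multiplicative error. A covering of $\ol{B_\rho(0)}$ by a fixed finite net reduces everything to the single-annulus statement.

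\textbf{The annulus-crossing tail bound.} Tile a neighborhood of $\ol{B_{1/2}(0)}\setminus B_{1/4}(0)$ by boxes of side $s=s(\ep)\rta 0$, and declare a box \emph{good} if the internal $D_{\wh h^\tr}$-distance between a pair of opposite sides of (a slight enlargement of) the box exceeds a threshold $\delta=\delta(\ep)$, where $s$ and $\delta$ are tuned — using Lemma~\ref{lem-use-btis} and the fact that the circle average of the field on a box of side $s$ is a Gaussian of variance $\asymp\log(1/s)$ — so that a path which crosses order $1/s$ such good boxes side-to-side has $D_h$-length at least $\ep$ after converting back via Lemma~\ref{lem-metric-f}. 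Because the internal crossing distance is a.s.\ positive (by item~\ref{item-metric-holder}, $D_h$ and hence $D_{\wh h^\tr}$ induces the Euclidean topology, so disjoint compacta are at positive distance) and depends only on $\wh h^\tr$ restricted to a bounded neighborhood of the box, the events that boxes at mutual distance $\geq 1/5$ are good are independent and each has probability tending to $1$; a standard planar-percolation estimate then shows that with superpolynomially high probability there is a ``blocking annulus'' of good boxes, many layers thick, separating $B_{1/4}(0)$ from $\bdy B_{1/2}(0)$, forcing any crossing path to traverse order $1/s$ good boxes. An essentially equivalent route: decompose a crossing path near its inner endpoint into its crossings of the $\asymp\log(1/\ep)$ dyadic annuli at small scales, which are approximately independent by the scaling/Markov structure of $h$; the combinatorial gain from having $\asymp\log(1/\ep)$ roughly independent scales, each contributing a factor bounded away from $1$, again yields superpolynomial decay, needing only the a.s.\ positivity of a single reference-scale crossing distance.

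\textbf{Main obstacle.} The delicate point is preventing the comparison between $D_h$ and the white-noise metrics from destroying the superpolynomial rate: Lemma~\ref{lem-gff-compare} gives only exponential-in-$A$ control of $\max_K|h-\wh h^\tr|$, which degrades to a mere polynomial in $\ep$ if one takes $A\asymp\log(1/\ep)$. This forces the comparison to be localized (carried out box by box, or scale by scale, at a scale depending on $\ep$), and one must feed the resulting errors, together with barrier-type estimates for the Gaussian circle-average process, into the percolation bound in such a way that the $e^{-c/s}$-type probability from percolation dominates. Establishing that the good-box probability tends to $1$ quickly enough, uniformly in the $\ep$-dependent scale, is the heart of the argument.
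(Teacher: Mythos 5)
There is a genuine gap in your first (percolation) route, and it is precisely at the point you flag as ``the heart of the argument.'' You propose to tile the fixed annulus $\ol{B_{1/2}(0)}\setminus B_{1/4}(0)$ by boxes of side $s(\ep)\rta 0$ and to use the independence of $\wh h^\tr$ at Euclidean distance $\geq 1/5$ to run a percolation bound. But the truncation radius $1/10$ in the definition of $\wh h^\tr$ is \emph{fixed}, not $s$-dependent, so inside the fixed annulus you can only ever fit $O(1)$ mutually $1/5$-separated boxes. A dual crossing path $P$ has length $|P| \gtrsim 1/s$, but since $P$ is contained in a set of unit diameter, at most $O(1)$ of its boxes have disjoint $B_{1/10}$-neighborhoods. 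The union-bound/Peierls argument therefore bottoms out at a polynomial in $\ep$ (with a bounded exponent) no matter how small you take $s$ — it cannot produce superpolynomial decay. The second route you sketch (dyadic annuli, $\asymp\log(1/\ep)$ approximately independent scales, each with crossing-distance bounded away from zero with probability $\geq p$) has the same problem: $(1-p)^{\log(1/\ep)}$ is polynomial, not superpolynomial.

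The missing idea is to run the percolation at a \emph{fixed unit scale} but in a \emph{large} square annulus $\mathcal{A}_n$ ($n\rta\infty$), where unit boxes along a crossing path do give $\gtrsim n$ disjoint $1/10$-neighborhoods and hence a probability bound $1-O(e^{-cn})$ for a blocking circuit of good boxes (this is Lemma~\ref{lem-annulus-perc-tr}). One then exploits the exact scale invariance of $\wh h$ to map $\mathcal{A}_n$ down to $\mathcal{A}_\rho$ by the dilation $z\mapsto z/n$, at which point the good-box threshold and the comparison errors between $\wh h^\tr$, $\wh h$, and $h$ all pick up factors of size $e^{O(n^{1/2})}$ (Lemmas~\ref{lem-gff-compare} and~\ref{lem-use-btis}). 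Choosing $n\asymp(\log\ep^{-1})^{3/2}$ is then exactly the tuning needed: $e^{-cn}=e^{-c(\log\ep^{-1})^{3/2}}$ is superpolynomial in $\ep$ while the error $e^{O(n^{1/2})}=e^{O((\log\ep^{-1})^{3/4})}$ is subpolynomial, so the rescaled distance bound is still $\geq\ep$. Your reduction and your general instincts (work with $\wh h^\tr$, localize the comparison, identify the Gaussian-fluctuation error as the bottleneck) are sound, but without the large-domain/rescale step, the argument does not close.
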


We next state two closely related estimates to the effect that a $D_h$-metric ball typically contains a Euclidean ball of radius comparable to its Euclidean diameter.
These are analogs for $D_h$-metric balls of estimates for space-filling SLE cells from~\cite[Section 3]{ghm-kpz} and~\cite[Section 4]{gms-tutte}, and will be used to control the ratio $\op{diam}(H_0)^2/\op{area}(B_{H_0})$ appearing in Proposition~\ref{prop-ball-moment}. 

\begin{prop} \label{prop-lqg-ball-swallow}
With superpolynomially high probability as $\ep\rta 0$, each $D_h$-metric ball which intersects both $\bdy B_\rho(0)$ and $\bdy \BB D$ contains a Euclidean ball of radius at least $\ep$. 
\end{prop}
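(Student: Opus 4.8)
The plan is to deduce the proposition from Proposition~\ref{prop-lqg-ball-upper} together with a quantitative, superpolynomially-strong version of the bi-Hölder continuity of $D_h$ (property~\ref{item-metric-holder}).

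First I would reduce to metric balls of a fixed range of radii and with centers in a fixed region. If $B = B_r(x;D_h)$ is a $D_h$-metric ball meeting both $\bdy B_\rho(0)$ and $\bdy\BB D$, pick $y_1 \in B\cap\bdy B_\rho(0)$ and $y_2\in B\cap\bdy\BB D$; then
\[
D_h(\bdy B_\rho(0),\bdy\BB D) \le D_h(y_1,y_2) \le D_h(y_1,x) + D_h(x,y_2) < 2r .
\]
Applying Proposition~\ref{prop-lqg-ball-upper} with $\ep^{1/2}$ in place of $\ep$ (and with any fixed radius in $(\rho,1)$ in place of $\rho$, using $\bdy B_\rho(0)\subset B_{\rho'}(0)$), with superpolynomially high probability as $\ep\to 0$ one has $D_h(\bdy B_\rho(0),\bdy\BB D) \ge \ep^{1/2}$, hence $r \ge \ep^{1/2}/2$ for every such $B$. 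The possibility that $x$ is Euclidean-far from $\BB D$, or that $r$ is enormous, is ruled out up to superpolynomially small probability using the exact scale invariance of $\wh h$ and Proposition~\ref{prop-lqg-ball-upper} at all dyadic Euclidean scales together with a Borel--Cantelli argument; I would suppress this routine-but-tedious step. So it remains to show that, with superpolynomially high probability, each ball $B = B_r(x;D_h)$ with $r \ge \ep^{1/2}/2$ and $x \in B_R(0)$ for $R$ a fixed power of $1/\ep$ contains a Euclidean ball of radius $\ep$.

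To produce the Euclidean ball it suffices to exhibit a point $z$ such that $D_h(x,z)$ plus the $D_h$-diameter of $B_\ep(z)$ is $<r$, since then $B_\ep(z) \subset B_r(x;D_h)=B$. By the coordinate change~\eqref{eqn-lqg-coord-metric} and the identity $D_{h+c} = e^{c/\sqrt 6}D_h$ (Lemma~\ref{lem-metric-f}), the $D_h$-diameter of a Euclidean ball $B_\ep(z)$ equals $\ep^{5/6}e^{h_\ep(z)/\sqrt 6}$ times the $\sqrt{8/3}$-LQG diameter of the unit disk for a suitably normalized GFF-type field (here $5/6 = Q/\sqrt 6$ with $Q = 5/\sqrt 6$), which via Lemma~\ref{lem-gff-compare} may be taken to be a whole-plane GFF. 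For a \emph{single} $z$ this diameter is at most $\ep^{2/3}$ (say) with probability $1-O(\ep^p)$ for some \emph{fixed} $p$, not for all $p$, because the circle average $h_\ep(z)$ is Gaussian with variance of order $\log(1/\ep)$, so $e^{h_\ep(z)/\sqrt 6}$ has only a power-law upper tail at scale $\ep^{-1}$; a bare union bound over an $\ep$-net of centers therefore does not reach superpolynomially high probability. The way around this is to use that $B$ is \emph{macroscopic}: being connected and meeting both $\bdy B_\rho(0)$ and $\bdy\BB D$, it has Euclidean diameter $\ge 1-\rho$, so it contains Euclidean squares at all of the $\Theta(\log(1/\ep))$ dyadic scales between $\ep$ and a constant. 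One then runs a percolation argument in which a small square $S$ (at an appropriate sub-scale) is declared \emph{good} when it has a $D_h$-crossing of length $\le \ep^{2/3}$ through a point whose field value is not atypical; a square is good with probability bounded below by a universal constant, and the spatial independence of $\wh h^\tr$ (restrictions to sets at Euclidean distance $\ge 1/5$ are independent, cf.~\eqref{eqn-wn-decomp-tr}) makes the goodness of suitably separated squares across these scales sufficiently independent that, with superpolynomially high probability, \emph{some} good square lies along a $D_h$-geodesic inside $B$, furnishing the desired Euclidean $\ep$-ball. Lemma~\ref{lem-gff-compare} transfers everything back to the whole-plane GFF, and for $\ep$ small $\ep^{2/3} < \ep^{1/2}/2 \le r$ closes the argument.

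I expect the main obstacle to be exactly this last step. The lognormal fluctuations of the circle average make "every Euclidean $\ep$-ball in the region has small $D_h$-diameter" merely a polynomially-high-probability statement, so the superpolynomial bound demanded by the proposition forces a genuinely multiscale argument: one must arrange the "good square" events so that they are nearly independent across the many scales inside a macroscopic $D_h$-ball (exploiting the spatial independence of $\wh h^\tr$) while still implying containment of a Euclidean ball. Getting this combinatorics right — presumably in tandem with the closely related Proposition~\ref{prop-lqg-ball-union} and the volume estimate Proposition~\ref{prop-ball-vol} — is where the real work of the section lies.
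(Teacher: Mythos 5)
Your diagnosis of the key obstacle is right: the lognormal tail of the circle average $h_\ep(z)$ makes ``every Euclidean $\ep$-ball in a fixed region has small $D_h$-diameter'' only polynomially likely, so a superpolynomial bound must exploit the spatial independence of $\wh h^\tr$ across many separated regions. But your proposed mechanism is circular. You write that since $B$ ``has Euclidean diameter $\ge 1-\rho$, \dots it contains Euclidean squares at all of the $\Theta(\log(1/\ep))$ dyadic scales between $\ep$ and a constant.'' This does not follow: a connected open set of macroscopic Euclidean diameter can be arbitrarily thin and so contain no Euclidean square larger than its inradius, and lower-bounding the inradius of $B$ is exactly what Proposition~\ref{prop-lqg-ball-swallow} asserts. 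Without a supply of separated squares known to lie \emph{inside} $B$, the multiscale independence argument has nothing to act on.

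The paper never reasons about the shape of $B$; it reasons about a \emph{crossing path}. Since $B$ is open, connected, and meets both $\bdy B_\rho(0)$ and $\bdy\BB D$, it contains a continuous path crossing the annulus $B_{(1+\rho)/2}(0)\setminus B_\rho(0)$ \emph{however thin $B$ may be}. The percolation input (Lemmas~\ref{lem-annulus-perc-tr} and~\ref{lem-annulus-perc-wn}) produces, with superpolynomially high probability, a closed chain of ``good'' unit squares separating the inner and outer boundaries of a rescaled square annulus, where $S$ is good if (i) $D_{\wh h^\tr}(S,\bdy B_{1/2}(S))\ge 1/C$ and (ii) \emph{every} Euclidean ball of a small fixed radius meeting $S$ has small $D_{\wh h^\tr}$-diameter. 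Any crossing path must pass through a good square, hence through the center of a small Euclidean ball of small $D_h$-diameter; Proposition~\ref{prop-lqg-ball-upper} (applied at radius $(1+\rho)/2$) then shows this ball lies deep enough inside $B$ to be contained in it. Note that your good-square condition (``has a $D_h$-crossing of length $\le \ep^{2/3}$'') points in the wrong direction: the paper needs a \emph{lower} bound on the distance to cross $S$ (condition (i)), and it is this that drives Proposition~\ref{prop-lqg-ball-upper} and hence the containment step. Finally, the paper derives both Propositions~\ref{prop-lqg-ball-upper} and~\ref{prop-lqg-ball-swallow} as corollaries of the \emph{same} percolation lemma, rather than proving~\ref{prop-lqg-ball-upper} first and then running a second, separate percolation as you suggest.
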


\begin{prop} \label{prop-lqg-ball-union}
Fix $\zeta \in (0,1)$. 
With superpolynomially high probability as $\delta \rta 0$, each $D_h$-metric ball $B \subset \BB D$ with $\op{diam}(B) \leq \delta$ contains a Euclidean ball of radius at least $\op{diam}(B)^{1+\zeta}$. 
\end{prop}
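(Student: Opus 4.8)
The plan is to deduce Proposition~\ref{prop-lqg-ball-union} from Proposition~\ref{prop-lqg-ball-swallow} by a scaling-and-covering argument. Proposition~\ref{prop-lqg-ball-swallow} controls $D_h$-metric balls of Euclidean diameter of constant order which cross a fixed Euclidean annulus, and I want to upgrade this to all small scales and all locations simultaneously. The device that makes this painless is the observation that adding a (possibly random) constant $c$ to the field does not change the \emph{collection} of $D_h$-metric balls: since $B_s(x;D_{h+c}) = B_{s e^{-c/\sqrt 6}}(x;D_h)$, it only relabels radii. Hence any event of the form ``every $D_h$-metric ball with a prescribed Euclidean property has some other Euclidean property'' is invariant under such a shift. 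Since a rescaled, translated copy of a whole-plane GFF equals, modulo an additive constant, a whole-plane GFF (modulo which the whole-plane GFF is scale and translation invariant), the LQG coordinate change formula (property~\ref{item-metric-coord} of Section~\ref{sec-lqg-metric}) lets me transport such events between scales for free, without ever tracking the constant. In particular no white-noise field is needed here; everything is deduced from Proposition~\ref{prop-lqg-ball-swallow}.

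First I would establish the following fixed-scale statement: with superpolynomially high probability as $\ep\rta 0$, every $D_h$-metric ball $B$ with $\op{diam}(B)\geq 1$ and $B\cap B_{10}(0)\neq\emptyset$ contains a Euclidean ball of radius at least $\ep$. Because $D_h$ is a length metric, its metric balls are connected; so such a $B$ contains a point $p\in B_{10}(0)$ and a point at Euclidean distance $\geq 1/4$ from $p$, whence $B$ crosses the Euclidean annulus $B_{1/8}(q)\setminus B_{1/16}(q)$ for some $q$ in a fixed finite net of $B_{10}(0)$ of mesh $<1/16$. Applying to each of these finitely many annuli a rescaled and translated version of Proposition~\ref{prop-lqg-ball-swallow} --- legitimate by translation invariance of the whole-plane GFF modulo additive constant together with the observation of the previous paragraph (the rescaling changes the radius of the swallowed Euclidean ball by a fixed factor, which is absorbed into $\ep$) --- and taking a union bound over the net gives the fixed-scale statement.

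Next I would bootstrap to all scales and then union bound. Fix $\zeta'\in(0,\zeta)$. For $k\in\BB N$ and $z_0$ ranging over a $2^{-k}$-net $\mcl N_k$ of $\BB D$, let $E_{k,z_0}$ be the event that every $D_h$-metric ball $B$ with $B\cap B_{2^{-k}}(z_0)\neq\emptyset$ and $\op{diam}(B)\in[2^{-k},2^{-k+1}]$ contains a Euclidean ball of radius at least $(2^{-k})^{1+\zeta'}$. Any such $B$ is contained in $B_{3\cdot 2^{-k}}(z_0)$, so rescaling space by $2^{k}$ about $z_0$ and applying the coordinate change formula turns $B$ into a metric ball --- for a field which is a whole-plane GFF plus a random additive constant --- lying in $B_3(0)$ and of Euclidean diameter in $[1,2]$. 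By the additive-constant invariance and the fixed-scale statement applied with $\ep=(2^{-k})^{\zeta'}$, the probability that this rescaled ball fails to contain a Euclidean ball of radius $\ep$ is $O((2^{-k})^{\zeta'p})$ for every $p$, uniformly in $z_0$; undoing the rescaling, $\BB P[E_{k,z_0}^c]$ is superpolynomially small in $2^{-k}$, uniformly in $z_0\in\mcl N_k$. Now choose a fixed $k_0=k_0(\zeta,\zeta')$ with $k(\zeta-\zeta')\geq 1+\zeta$ for all $k\geq k_0$, and take $\delta$ small (in particular $\delta\leq 2^{1-k_0}$). Since $\#\mcl N_k = O(2^{2k})$, for every $q>0$ the probability that $E_{k,z_0}$ fails for some $k\geq\lceil\log_2(1/\delta)\rceil$ and some $z_0\in\mcl N_k$ is at most $\sum_{k\geq\lceil\log_2(1/\delta)\rceil} O(2^{2k})\cdot O((2^{-k})^{\zeta'p}) = O(\delta^q)$ upon taking $p$ large, i.e.\ superpolynomially small in $\delta$. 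On the complementary event, any $D_h$-metric ball $B\subset\BB D$ with $\op{diam}(B)\leq\delta$ satisfies $\op{diam}(B)\in[2^{-k},2^{-k+1})$ for some $k\geq\lceil\log_2(1/\delta)\rceil\geq k_0$, meets $B_{2^{-k}}(z_0)$ for some $z_0\in\mcl N_k$, and hence contains a Euclidean ball of radius $\geq(2^{-k})^{1+\zeta'}\geq\op{diam}(B)^{1+\zeta}$, where the last inequality uses $\op{diam}(B)<2^{-k+1}$ and $k\geq k_0$. This is the assertion of the proposition.

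The main difficulty I anticipate is organizational rather than conceptual: one must keep careful track of the changes of scale and of the associated random, unbounded additive constants (handled via the ``the collection of metric balls is unchanged'' device), and tune the exponents --- $\zeta'$ versus $\zeta$, and the cutoff $k_0$ --- so that the constant-order losses incurred at each scale are absorbed while the per-scale superpolynomial bounds still survive the union bound over the $\asymp 2^{2k}$ locations at scale $2^{-k}$. The one substantive input beyond Proposition~\ref{prop-lqg-ball-swallow} is that $D_h$-metric balls are connected, which holds because $D_h$ is a length metric, and is exactly what lets a metric ball of Euclidean diameter $\asymp 1$ cross a Euclidean annulus and thereby fall within the scope of Proposition~\ref{prop-lqg-ball-swallow}.
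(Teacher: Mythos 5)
Your proposal is correct and takes essentially the same approach as the paper's proof: both note that the conclusion of Proposition~\ref{prop-lqg-ball-swallow} is invariant under adding a constant to $h$, use the scale and translation invariance of the whole-plane GFF modulo additive constant to propagate that proposition to all scales and locations with a uniformly superpolynomial rate, and then take a union bound over dyadic scales and a finite net of centers. Your intermediate ``fixed-scale'' statement, the coarser $2^{-k}$-net (the paper uses $2^{-100k}\BB Z^2$, but any polynomial-size net works), and the explicit bookkeeping with $\zeta'<\zeta$ and the cutoff $k_0$ are purely presentational differences.
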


We will prove Propositions~\ref{prop-lqg-ball-upper} and~\ref{prop-lqg-ball-swallow} simultaneously using a percolation argument which is similar to ones from~\cite{ding-dunlap-lqg-fpp,ding-goswami-watabiki,dzz-heat-kernel,dg-lqg-dim,df-lqg-metric,ding-dunlap-lgd,dddf-lfpp}.  Proposition~\ref{prop-lqg-ball-union} will be deduced from Proposition~\ref{prop-lqg-ball-swallow} and a union bound. 

For $\rho  >0$, define the square annulus $\mcl A_\rho$ and its inner and outer boundaries by
\eqb \label{eqn-square-annulus-def}
\mcl A_\rho :=  [-2\rho, 2\rho]^2 \setminus (-\rho,\rho)^2 ,
\quad  \bdy_{\op{in}}\mcl A_{\rho} := \bdy ([-\rho,\rho]^2) ,\quad \op{and} \quad
 \bdy_{\op{out}}\mcl A_{\rho} := \bdy([-2\rho ,2\rho]^2)  .
\eqe 
The main step in the proof of the above propositions is Lemma~\ref{lem-annulus-perc-tr} just below.
For $n\in\BB N$, we consider the restriction to $\mcl A_n$ of the truncated white noise field $\wh h^\tr$ of~\eqref{eqn-wn-decomp-tr}. 
The reason for considering $\mcl A_n$ instead of $\mcl A_1$, say, is that two sets need to be at Euclidean distance at least $1/5$ from each other for the restrictions of $\wh h^\tr$ to be independent and we want to define lots of independent events. 

Basic properties of the $\sqrt{8/3}$-LQG metric show that for each $1\times 1$ square $S\subset \mcl A_n$, it holds with probability tending to 1 as $C\rta\infty$ that the $D_{\wh h^\tr}$-distance from the boundary of the $1/2$-neighborhood $\bdy B_{1/2}(S)$ to $S$ is at least $1/C$ and each Euclidean ball of radius $e^{-C n^{2/3}}$ which intersects $S$ has $D_{\wh h^\tr }$-diameter at most $e^{-n^{2/3}}$ (this last condition would also hold with $e^{-C n^{2/3}}$ and $e^{-n^{2/3}}$ replaced by, e.g., $n^{-C}$ and $n^{-1}$ or $1$ and $1/C$, but we use $e^{-n^{2/3}}$ since we will get an error of order $e^{n^{1/2}}$ in Lemma~\ref{lem-annulus-perc-wn} below).
The restrictions of $\wh h^\tr$ to squares which lie at distance at least $1/5$ from one another are independent, so the adjacency graph of ``good" squares which satisfy the above properties looks like a very supercritical percolation on $\BB Z^2$ when $C$ is large. 
Hence with exponentially high probability in $n$, there is path of such good squares which separates the inner and outer boundaries of $\mcl A_n$. 
This implies analogs of Propositions~\ref{prop-lqg-ball-upper} and~\ref{prop-lqg-ball-swallow} for $\wh h^\tr|_{\mcl A_n}$.  In Lemma~\ref{lem-annulus-perc-wn}, we set $n \asymp (\log\ep^{-1})^{3/2}$ and transfer from $\wh h^\tr|_{\mcl A_n}$ to $\wh h|_{\mcl A_\rho}$ (and thereby to $h|_{\mcl A_\rho}$) using Lemma~\ref{lem-gff-compare} and the scale invariance properties of the field $\wh h$. 

\begin{lem} \label{lem-annulus-perc-tr} 
Define $\mcl A_n$ for $n\in\BB N$ as in~\eqref{eqn-square-annulus-def}. 
There are universal constants $a_0 , a_1   > 0$ and $C>1$ such that for each $n\in\BB N$, it holds with probability at least $1-a_0 e^{-a_1 n}$ that the following is true.
\begin{enumerate}
\item The $D_{\wh h^\tr}$-distance from $ \bdy_{\op{in}}\mcl A_n$ to $\bdy_{\op{out}}\mcl A_n$ is at least $1/C$. \label{item-annulus-perc-dist}
\item Each path from  $ \bdy_{\op{in}}\mcl A_n$ to $\bdy_{\op{out}}\mcl A_n$ intersects a Euclidean ball with Euclidean radius $e^{-C n^{2/3}}$ and $D_{\wh h^\tr}$-diameter at most $e^{-n^{2/3}}$. \label{item-annulus-perc-ball}
\end{enumerate} 
\end{lem}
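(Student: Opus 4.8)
The plan is to run a percolation argument on unit squares, in the spirit of~\cite{ding-dunlap-lqg-fpp,ding-goswami-watabiki,dg-lqg-dim}, exploiting that the restrictions of $\wh h^\tr$ to sets at Euclidean distance $\geq 1/5$ from one another are independent. For $v\in\BB Z^2$ write $S_v := v + [0,1]^2$ and $V_v := B_{3/5}(S_v)$, and note that a direct computation gives $\op{dist}(V_v , V_w) \geq 4/5 > 1/5$ whenever $|v-w|_\infty \geq 3$, so that $\wh h^\tr|_{V_v}$ and $\wh h^\tr|_{V_w}$ are independent for such $v,w$. Given a large constant $C>1$ (to be chosen), call $S_v$ \emph{good} if (i) $D_{\wh h^\tr|_{V_v}}(S_v , \bdy B_{1/2}(S_v)) \geq 1/C$ and (ii) every Euclidean ball of radius $e^{-Cn^{2/3}}$ which intersects $S_v$ has $D_{\wh h^\tr|_{V_v}}$-diameter at most $e^{-n^{2/3}}$; since $\ol{B_{1/2}(S_v)}\subset V_v$, both conditions make sense, and by the locality property of the $\sqrt{8/3}$-LQG metric (property~\ref{item-metric-local} of Section~\ref{sec-lqg-metric}, which also holds for $D_{h+f}$) the event $\{S_v\text{ good}\}$ is determined by $\wh h^\tr|_{V_v}$.

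Next I would bound $\BB P[S_v\text{ not good}]$, which by translation invariance of the law of $\wh h^\tr$ does not depend on $v$. Since $D_{\wh h^\tr|_{V_0}}$ a.s.\ induces the Euclidean topology, the $D_{\wh h^\tr|_{V_0}}$-distance between the disjoint compact sets $S_0$ and $\bdy B_{1/2}(S_0)$ is a.s.\ positive, so $\BB P[\text{(i) fails}]\to 0$ as $C\to\infty$. For (ii) I would use that $D_{\wh h^\tr|_{V_0}}$ is a.s.\ H\"older continuous with respect to the Euclidean metric on $\ol{B_{1/5}(S_0)}$ with a \emph{deterministic} exponent $\alpha\in(0,1)$ and a random, a.s.\ finite prefactor $M$ (this follows from the H\"older continuity of the LQG metric together with Lemmas~\ref{lem-gff-compare} and~\ref{lem-metric-f}); failure of (ii) then forces $M(2e^{-Cn^{2/3}})^\alpha > e^{-n^{2/3}}$, i.e.\ $M > 2^{-\alpha}e^{(C\alpha-1)n^{2/3}}$, which for $C>2/\alpha$ has probability tending to $0$ as $n\to\infty$. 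Hence for any prescribed $p_0>0$ there are $C=C(p_0)$ and $n_0=n_0(p_0)$ with $\BB P[S_v\text{ not good}]<p_0$ for all $n\geq n_0$ and all $v$. The field $\{\BB 1[S_v\text{ good}]\}_{v\in\BB Z^2}$ has range of dependence $3$, so for $p_0$ small enough (depending only on this range) it stochastically dominates an i.i.d.\ site percolation on $\BB Z^2$ at parameter above the threshold for the existence of circuits (Liggett--Schonmann--Stacey); standard supercritical percolation estimates then give, with probability at least $1-a_0e^{-a_1n}$, a ($*$-connected) circuit of good squares contained in $[-2n+2,2n-2]^2\setminus(-n-2,n+2)^2$ whose union separates $\bdy_{\op{in}}\mcl A_n$ from $\bdy_{\op{out}}\mcl A_n$. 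For $n<n_0$ the asserted bound is vacuous after enlarging $a_0$, so it suffices to treat $n\geq n_0$.

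On the event that such a circuit exists I would verify both conclusions deterministically. Let $\gamma$ be any path from $\bdy_{\op{in}}\mcl A_n$ to $\bdy_{\op{out}}\mcl A_n$. It must enter some good square $S=S_v$ of the circuit, say at a point $p$; because the circuit lies at Euclidean distance $\geq 2$ from both $\bdy_{\op{in}}\mcl A_n$ and $\bdy_{\op{out}}\mcl A_n$, the path $\gamma$ reaches points at distance $\geq 2$ from $S$, so the sub-path $\eta$ of $\gamma$ from $p$ up to its first exit of $B_{1/2}(S)$ is contained in $V_v$ and joins $S$ to $\bdy B_{1/2}(S)$. By locality (property~\ref{item-metric-local}) and goodness~(i), $\op{len}(\gamma;D_{\wh h^\tr}) \geq \op{len}(\eta;D_{\wh h^\tr}) \geq D_{\wh h^\tr|_{V_v}}(S,\bdy B_{1/2}(S)) \geq 1/C$, which gives conclusion~\ref{item-annulus-perc-dist}. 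For conclusion~\ref{item-annulus-perc-ball}, the Euclidean ball $B := \ol{B_{e^{-Cn^{2/3}}}(p)}$ intersects $S$, hence by goodness~(ii) has $D_{\wh h^\tr|_{V_v}}$-diameter — and a fortiori $D_{\wh h^\tr}$-diameter, since $D_{\wh h^\tr}\leq D_{\wh h^\tr|_{V_v}}$ by locality — at most $e^{-n^{2/3}}$, and $\gamma$ meets $B$ since $p\in\gamma\cap B$.

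The main obstacle is making the bound on $\BB P[S_v\text{ not good}]$ uniform in $n$, specifically for condition~(ii): one needs a modulus-of-continuity estimate for $D_{\wh h^\tr}$ at the scale $e^{-Cn^{2/3}}$ with a \emph{deterministic} H\"older exponent and a prefactor whose law is independent of $n$, so that taking $C>2/\alpha$ opens a gap of order $n^{2/3}$ in the exponent and drives the failure probability of~(ii) to $0$ (with $n$ below the resulting threshold absorbed into $a_0$). The second delicate point is keeping every event that defines goodness measurable with respect to $\wh h^\tr|_{V_v}$, which is precisely why the goodness conditions are phrased using the locally restricted metric $D_{\wh h^\tr|_{V_v}}$ rather than the global metric $D_{\wh h^\tr}$; the locality property then lets one pass back to $D_{\wh h^\tr}$ in the deterministic deduction above.
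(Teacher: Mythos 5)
Your proposal is correct and follows the same strategy as the paper's own proof: define ``good'' unit squares on which the $\sqrt{8/3}$-LQG metric associated with $\wh h^\tr$ behaves as expected, use the range-$1/5$ spatial independence of $\wh h^\tr$ to set up a finite-range percolation on good squares, obtain a separating circuit of good squares with exponentially high probability, and deduce both conclusions deterministically from the circuit. The only differences are cosmetic: you invoke Liggett--Schonmann--Stacey stochastic domination rather than the paper's direct Peierls-type union bound over dual $*$-crossings (both are standard and equivalent here), and you spell out the H\"older-continuity estimate showing the per-square failure probability for condition~(ii) is bounded uniformly in $n$ once $C$ exceeds the inverse H\"older exponent---a point the paper leaves implicit but which is indeed needed for the universal choice of $C$.
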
  

See Figure~\ref{fig-annulus-perc} for an illustration of the statement and proof of Lemma~\ref{lem-annulus-perc-tr}. 
We will eventually apply the lemma with $n \asymp (\log\ep^{-1})^{3/2}$, so that $e^{-n}$ is smaller than any power of $\ep$ and $e^{-n^{2/3}} \approx \ep$.

\begin{figure}[ht!]
\begin{center}
\includegraphics[scale=.65]{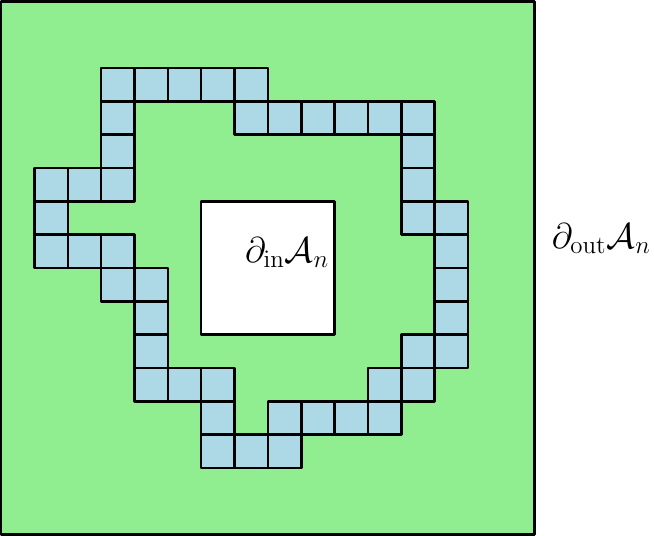} 
\caption{\label{fig-annulus-perc} The square annulus $\mcl A_n$ of~\eqref{eqn-square-annulus-def} is shown in light green. To prove Lemma~\ref{lem-annulus-perc-tr}, we use a percolation argument (based on the local independence property of $\wh h^\tr$) to show that with extremely high probability, we can find a collection of $1\times 1$ squares $S$ (light blue) whose union disconnects the inner and outer boundaries of $\mcl A_n$ and such that each path which crosses one of these squares has to have $D_{\wh h^\tr}$-length at least $1/C$ and has to hit a Euclidean ball of radius $e^{-C n^{2/3}}$ and $D_{\wh h^\tr}$-diameter at most $e^{-n^{2/3}}$. 
After re-scaling by $1/n$, this statement is used to prove Propositions~\ref{prop-lqg-ball-upper}, \ref{prop-lqg-ball-swallow}, and~\ref{prop-lqg-ball-union}. 
}
\end{center}
\end{figure}

\begin{proof}[Proof of Lemma~\ref{lem-annulus-perc-tr}]
Let $p\in (0,1)$ be a small universal constant to be chosen later, in a universal manner. 
For $n\in\BB N$, let $\mcl S(\mcl A_n)$ be the set of unit side length squares with corners in $\BB Z^2$ whose Euclidean $1$-neighborhood satisfies $B_1(S) \subset \mcl A_n$. For $S\in \mcl S(\mcl A_n)$ and $C>1$, let $E_S(C)$ be the event that the following is true. 
\begin{enumerate}
\item $D_{\wh h^\tr }  \left( S , \bdy B_{1/2}(S)  \right) \geq 1/C$.
\item Each Euclidean ball of radius $e^{-C n^{2/3}}$ which intersects $S$ has $D_{\wh h^\tr|_{B_{1/2}(S)}}$-diameter at most $e^{-n^{2/3}}$. 
\end{enumerate}
Then $E_S(C)$ is a.s.\ determined by $\wh h^\tr|_{B_{1/2}(S)}$. By~\cite[Theorem 1.2]{lqg-tbm2} and Lemma~\ref{lem-gff-compare} (see the discussion at the end of Section~\ref{sec-lqg-metric}), the identity map from $B_{1/2}(S)$, equipped with the Euclidean metric, to $B_{1/2}(S)$, equipped with $D_{\wh h^\tr|_{B_{1/2(S)}} }$, and its inverse are a.s.\ locally H\"older continuous. 
In particular, $D_{\wh h^\tr|_{B_{1/2}(S)}}$ induces the same topology on $B_{1/2}(S)$ as the Euclidean metric.  
Since the law of $\wh h^\tr$ is invariant under spatial translation, it follows that there exists $C = C(p) > 1$ such that $\BB P[E_S(C)] \geq 1-p$ for every $S\in \mcl S(\mcl A_n)$. Henceforth fix such a $C$. 
  
View $\mcl S(\mcl A_n)$ as a graph with two squares considered to be adjacent if they share an edge. 
We claim that if $p$ is chosen sufficiently small, then for appropriate constants $a_0,a_1 > 0$ as in the statement of the lemma, it holds for each $n\in\BB N$ that with probability at least $1- a_0 e^{-a_1 n}$, we can find a path $\mcl P$ of squares in $\mcl S(\mcl A_n)$ which disconnects $\bdy_{\op{in}}\mcl A_n$ from $\bdy_{\op{out}} \mcl A_n$ such that $E_S(C)$ occurs for each $S\in\mcl P$. 

Assume the claim for the moment. If a path $\mcl P$ as in the claim exists, then each Euclidean path from $\bdy_{\op{in}}\mcl A_n$ to $\bdy_{\op{out}} \mcl A_n$ must pass through one of the squares $S\in\mcl P$. Since $E_S(C)$ occurs for each such square, each such path must hit a Euclidean ball of radius $e^{-C n^{2/3}}$ centered at a point of $S$ which has $D_{\wh h^\tr} \leq  D_{\wh h^\tr|_{B_{1/2}(S)}}$-diameter at most $e^{-n^{2/3}}$, i.e., condition~\ref{item-annulus-perc-ball} in the lemma statement holds. Furthermore, since $B_{1/2}(S) \subset\mcl A_n$ for each $S\in \mcl S(\mcl A_n)$, any path from $\bdy_{\op{in}}\mcl A_n$ to $\bdy_{\op{out}} \mcl A_n$ must cross one of the annuli $B_{1/2}(S) \setminus S $ for some $S\in \mcl P$. Since $E_S(C)$ occurs for each such $S$, condition~\ref{item-annulus-perc-dist} in the lemma statement holds. 
   
It remains only to prove the claim.
Let $\mcl S^*(\mcl A_n)$ be the graph whose squares are the same as the squares of $\mcl S(\mcl A_n)$, but with two squares considered to be adjacent if they share a corner or an edge, instead of only considering squares to be adjacent if they share an edge.
We define the inner and outer boundaries of $\mcl S^*(\mcl A_n)$ to be the set of squares which lie at Euclidean distance 1 from $\bdy_{\op{in}}\mcl A_n$ and $\bdy_{\op{out}}\mcl A_n$, respectively (recall that squares in $\mcl S(\mcl A_n)$ satisfy $B_1(S)\subset \mcl A_n$).
By planar duality, it suffices to show that if $p ,a_0, a_1$ are chosen appropriately, then it holds with probability at least $1-a_0 e^{-a_1 n}$ that there does \emph{not} exist a simple path in $\mcl S^*(\mcl R_n)$ from the inner boundary of $\mcl A_n$ to the outer boundary of $\mcl A_n$ consisting of squares for which $E_S(C)$ does not occur. This will be proven by a standard argument for subcritical percolation.

By the definition~\eqref{eqn-wn-decomp-tr} of $\wh h^{\tr}$, the event $E_S (C)$ is a.s.\ determined by the restriction of the white noise $W$ to $B_{1}(S) \times \BB R_+ $. In particular, $E_S(C)$ and $E_{\wt S}(C)$ are independent whenever $B_1(S)\cap B_1(\wt S) = \emptyset$. 
For each fixed deterministic simple path $ P$ in $\mcl S^*(\mcl R_n)$, we can find a set of at least $|P|/100$ squares hit by $P$ for which the neighborhoods $B_1(S)$ are disjoint. 
Since the events $E_S(C)$ for these $|P|/100$ squares are independent and each has probability at least $1-p$, the probability that $E_S(C)$ fails to occur for every square in $ P$ is at most $p^{| P|/100}$.

We now take a union bound over all simple paths $P$ in $\mcl S^*(\mcl A_n)$ connecting the inner and outer boundaries. For $k \in [n,16n^2]_{\BB Z}$, the number of such paths with $|P| = k$ is at most $ 4 n 8^{k }$ since there are $4 n$ possible initial squares along the inner boundary of $\mcl A_n$ and 8 choices for each step of the path. Combining this with the estimate in the preceding paragraph, we find that the probability of an inner-outer crossing in $\mcl S^*(\mcl A_n)$ consisting of squares for which $E_S(C)$ does not occur is at most
\eqbn
  4 n \sum_{k=n}^{16n^2} p^{k/100} 8^{k+1} , 
\eqen
which is bounded above by an exponential function of $n$ provided we take $p < 8^{-100}$.  
\end{proof}

We now transfer from $\wh h^\tr|_{\mcl A_n}$ to $\wh h|_{\mcl A_\rho}$.

\begin{lem} \label{lem-annulus-perc-wn} 
There is a universal constant $C >1$ such that for each $\rho \in (0,1)$, it holds with superpolynomially high probability as $\ep\rta 0$ that the following is true. 
\begin{enumerate}
\item The $D_{\wh h }$-distance from $ \bdy_{\op{in}}\mcl A_\rho$ to $\bdy_{\op{out}}\mcl A_\rho$ is at least $\ep^{1/2}$. \label{item-annulus-perc-dist-wn}
\item Each path from  $ \bdy_{\op{in}}\mcl A_\rho$ to $\bdy_{\op{out}}\mcl A_\rho$ intersects a Euclidean ball with Euclidean radius at least $ \ep^C $ and $D_{\wh h }$-diameter at most $\ep$.  \label{item-annulus-perc-ball-wn}
\end{enumerate}
\end{lem}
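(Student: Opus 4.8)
The plan is to choose an auxiliary scale $n = n(\ep)\in\BB N$ with $2\log\ep^{-1} \le n^{2/3} \le 3\log\ep^{-1}$ for $\ep$ small (so $n\asymp(\log\ep^{-1})^{3/2}$ and $n\rta\infty$), apply Lemma~\ref{lem-annulus-perc-tr} at this scale to control $D_{\wh h^\tr}$ across the large annulus $\mcl A_n$, transfer the resulting estimates from $\wh h^\tr$ to $\wh h$ on $\mcl A_n$, and finally rescale $\mcl A_n$ down to $\mcl A_\rho$ by the factor $\delta := \rho/n\in(0,1)$ using the exact scale invariance of $\wh h$. With this choice of $n$, the percolation error $a_0 e^{-a_1 n}$ from Lemma~\ref{lem-annulus-perc-tr} is $O_\ep(\ep^p)$ for every $p>0$, while $e^{-n^{2/3}}\asymp\ep^{O(1)}$, so that Lemma~\ref{lem-annulus-perc-tr} gives, with superpolynomially high probability as $\ep\rta 0$: the $D_{\wh h^\tr}$-distance from $\bdy_{\op{in}}\mcl A_n$ to $\bdy_{\op{out}}\mcl A_n$ is at least $1/C$, and every path from $\bdy_{\op{in}}\mcl A_n$ to $\bdy_{\op{out}}\mcl A_n$ intersects a Euclidean ball of radius $e^{-Cn^{2/3}}$ with $D_{\wh h^\tr}$-diameter at most $e^{-n^{2/3}}$.

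Next I would transfer these statements from $\wh h^\tr$ to $\wh h$. Using Lemma~\ref{lem-gff-compare}, together with the fact that $\wh h-\wh h^\tr$ is a smooth Gaussian field with \emph{uniformly bounded} pointwise variance (an elementary estimate for the killed heat kernel, in contrast with the logarithmically growing variance of $\wh h$ itself), a Fernique/Borell--TIS argument shows that $\max_{\mcl A_n}|\wh h-\wh h^\tr|$ exceeds $C'\sqrt{\log n}+u$ with probability at most $C' n^2 e^{-c u^2}$; hence one can couple $\wh h$ and $\wh h^\tr$ so that $\max_{\mcl A_n}|\wh h-\wh h^\tr|\le A:=2(\log\ep^{-1})^{3/4}$ with superpolynomially high probability as $\ep\rta 0$. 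Since $A$ grows more slowly than $\log\ep^{-1}$, we have $e^{A/\sqrt 6}=\ep^{-o_\ep(1)}$. On this event, Lemma~\ref{lem-metric-f} shows that $D_{\wh h}$ and $D_{\wh h^\tr}$, and their restrictions to the relevant sub-domains, differ by a multiplicative factor at most $e^{A/\sqrt 6}$ on $\mcl A_n$. Using that any path from $\bdy_{\op{in}}\mcl A_n$ to $\bdy_{\op{out}}\mcl A_n$ contains a sub-path lying in $\ol{\mcl A_n}$ that joins the two boundary components, and that the small Euclidean balls in question lie well inside $\mcl A_n$, this upgrades the previous paragraph to the analogous two statements for $D_{\wh h}$, with $1/C$ replaced by $e^{-A/\sqrt 6}/C$ and $e^{-n^{2/3}}$ by $e^{A/\sqrt 6}e^{-n^{2/3}}$.

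Finally I would rescale. Set $\delta:=\rho/n$, so $\delta\mcl A_n=\mcl A_\rho$. By the scale invariance of $\wh h$, the field $\wh g(\cdot):=\wh h(\delta\cdot)-\wh h_\delta(\delta\cdot)$ has the law of $\wh h$ and is independent of $\wh h_\delta(\delta\cdot)$, and by the coordinate change~\eqref{eqn-lqg-coord-metric} we have $D_{\wh h}(\delta z,\delta w)=D_{\wh g+f}(z,w)$ where $f(z):=\wh h_\delta(\delta z)+Q\log\delta$ is a continuous function. Since $\wh h_\delta$ has pointwise variance $\log(1/\delta)\asymp\log\log\ep^{-1}$, a Fernique/Borell--TIS argument as in the proof of Lemma~\ref{lem-use-btis} gives $M:=\max_{\mcl A_n}|f|\le Q\log(n/\rho)+\max_{\mcl A_\rho}|\wh h_\delta|\le 3(\log\ep^{-1})^{3/4}$ with superpolynomially high probability, again with $e^{M/\sqrt 6}=\ep^{-o_\ep(1)}$. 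Intersecting the $\wh h$-version of the previous paragraph applied to $\wh g$ (legitimate since $\wh g\eqD\wh h$) with this last event — a superpolynomially high probability event, by independence of $\wh g$ and $f$ — and applying Lemma~\ref{lem-metric-f} once more, we obtain $D_{\wh h}(\bdy_{\op{in}}\mcl A_\rho,\bdy_{\op{out}}\mcl A_\rho)=D_{\wh g+f}(\bdy_{\op{in}}\mcl A_n,\bdy_{\op{out}}\mcl A_n)\ge e^{-M/\sqrt 6}e^{-A/\sqrt 6}/C\ge\ep^{1/2}$ for $\ep$ small, while any path across $\mcl A_\rho$ pulls back under $z\mapsto\delta z$ to a path across $\mcl A_n$ and hence hits a ball of Euclidean radius $\delta e^{-Cn^{2/3}}\ge\ep^{O(1)}$ whose $D_{\wh h}$-diameter is at most $e^{M/\sqrt 6}e^{A/\sqrt 6}e^{-n^{2/3}}\le\ep^{2-o_\ep(1)}\le\ep$ for $\ep$ small. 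Adjusting the universal constant $C$ gives the claim.

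The main obstacle is the middle step: transferring from $\wh h^\tr$ to $\wh h$ with superpolynomially high probability while keeping the metric distortion down to $\ep^{-o_\ep(1)}$. This is exactly what forces the scale $n\asymp(\log\ep^{-1})^{3/2}$ — $n$ must be large enough that the percolation error $e^{-a_1 n}$ beats every power of $\ep$, yet small enough that $e^{-Cn^{2/3}}$ does not fall below $\ep^{O(1)}$ (otherwise condition~\ref{item-annulus-perc-ball-wn} would be vacuous) — and it relies on the quantitatively sharp bound $\max_{\mcl A_n}|\wh h-\wh h^\tr|=O(\sqrt{\log n})$, which uses the bounded variance of $\wh h-\wh h^\tr$ (equivalently, Gaussian rather than merely exponential concentration). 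The remaining work is bookkeeping to verify which error terms are $\ep^{-o_\ep(1)}$, which are $\asymp\ep^{O(1)}$, and which are superpolynomially small in $\ep$.
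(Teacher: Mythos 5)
Your proposal is correct and follows essentially the same route as the paper's proof: apply Lemma~\ref{lem-annulus-perc-tr} at scale $n\asymp(\log\ep^{-1})^{3/2}$, transfer from $\wh h^\tr$ to $\wh h$ on $\mcl A_n$ via a Gaussian tail bound for $\wh h-\wh h^\tr$, and rescale by $\rho/n$ using the exact scale invariance of $\wh h$ together with Lemma~\ref{lem-metric-f} and a Borell--TIS bound on $\wh h_{\rho/n}$. Your explicit observation that the transfer step requires Gaussian (not merely exponential) concentration for $\wh h-\wh h^\tr$ is a sound reading of what the computation actually uses: the paper's proof invokes the bound $1-c_0 n^2 e^{-c_1 A^2}$ even though Lemma~\ref{lem-gff-compare} as stated only gives $e^{-c_1 A}$, and the stronger Gaussian form (which holds by Borell--TIS since the difference field has uniformly bounded variance) is exactly what makes the error term superpolynomial at scale $A\asymp n^{1/2}$.
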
  
\begin{proof} 
We will apply Lemma~\ref{lem-annulus-perc-tr} with $n \asymp (\log \ep^{-1})^{3/2}$ together with a scaling argument. 
We first establish an estimate for $D_{\wh h}$-distances in $\mcl A_n$. 
By Lemma~\ref{lem-gff-compare} and a union bound over $O_n(n^2)$ Euclidean balls of unit radius which cover $\mcl A_n$, we can find constants $c_0  ,c_1 > 0$ and a coupling of $\wh h$ and $\wh h^\tr$ such that
\eqbn
\BB P\left[ \max_{z\in \mcl A_n} |(\wh h - \wh h^\tr)(z) | \leq A \right] \geq 1 - c_0 n^2 e^{-c_1 A^2} ,\quad \forall A > 0. 
\eqen
If $\max_{z\in \mcl A_n} |(\wh h - \wh h^\tr)(z) | \leq A$, then by the scaling property of LQG distances (Lemma~\ref{lem-metric-f}), 
\eqbn
e^{-A/\sqrt 6} D_{\wh h^\tr}(z,w) \leq D_{\wh h}(z,w) \leq e^{A/\sqrt 6} D_{\wh h^\tr}(z,w) ,\quad \forall z,w\in \mcl A_n .
\eqen
Setting $A = \sqrt 6 n^{1/2} $ and applying Lemma~\ref{lem-annulus-perc-tr}, we see that there is a universal constant $C>1$ such that with exponentially high probability as $n\rta\infty$, the following is true.
\begin{enumerate}
\item The $D_{\wh h }$-distance from $ \bdy_{\op{in}}\mcl A_n$ to $\bdy_{\op{out}}\mcl A_n$ is at least $C^{-1} e^{- n^{1/2} }$.  
\item  Each path from  $ \bdy_{\op{in}}\mcl A_n$ to $\bdy_{\op{out}}\mcl A_n$ intersects a Euclidean ball with Euclidean radius $e^{-C n^{2/3}}$ and $D_{\wh h }$-diameter at most $e^{-n^{2/3} + O_n(n^{1/2})}$,  
\end{enumerate} 
with the rate of the $O_n(n^{1/2})$ universal.

We now use a scaling argument to transfer from $\mcl A_n$ to $\mcl A_\rho$. 
Recall that $(\wh h  - \wh h_{\rho/n})  ((\rho/n)\cdot)   \eqD \wh h$.
By the LQG coordinate change formula~\eqref{eqn-lqg-coord-metric} and Lemma~\ref{lem-metric-f},
\alb
&(\rho/n)^{-  Q / \sqrt 6 } \exp\left(  - \frac{1}{\sqrt 6} \max_{x \in \mcl A_n} \wh h_{\rho/n}(x) \right) D_{\wh h}(z,w)  \notag \\
&\qquad \leq D_{(\wh h  - \wh h_{\rho/n})  ((\rho/n)\cdot) }((n/\rho) z , (n/\rho) w ) \notag \\
&\qquad \qquad \leq (\rho/n)^{- Q / \sqrt 6} \exp\left(- \frac{1}{\sqrt 6}  \min_{x \in \mcl A_n} \wh h_{\rho/n}(x) \right) D_{\wh h}(z,w) ,
\quad \forall z,w \in \mcl A_\rho .
\ale
Choose a finite set $\mcl Z_n$ of $O_n(n^4)$ points $z\in\mcl A_n$ such that $\mcl A_n\subset \bigcup_{z\in\mcl Z_n} B_{\rho/n}(z)$. 
By Lemma~\ref{lem-use-btis} (applied with $\delta = \rho/n$ and $C=(1/2) n^{1/2}$), the Gaussian tail bound applied to the $O_n(n^4)$ centered Gaussian random variables $\wh h_{\rho/n}(z)$ for $z\in \mcl Z_n$, each of which has variance $\log(\rho/n)$, and a union bound, we can find constants $c_0' ,c_1' >0$, depending only on $\rho$, such that  
\eqbn
\BB P\left[ \max_{x\in \mcl A_n} |\wh h_{\rho/n}(x)| \leq n^{1/2}  \right] \geq 1 - c_0' n^4 e^{-c_1' n / \log n } .
\eqen 
Hence, with probability at least $1 -  c_0' n^4 e^{-c_1' n /\log n }$, 
\eqb \label{eqn-annulus-perc-scale}
e^{-O_n(n^{1/2})} D_{\wh h}(z,w) 
\leq D_{(\wh h  - \wh h_{\rho/n})  ((\rho/n)\cdot)}((n/\rho) z , (n/\rho) w ) 
\leq e^{O_n(n^{1/2})} D_{\wh h}(z,w) ,
\quad \forall z,w \in \mcl A_\rho ,
\eqe 
with the rate of the $O_n(n^{1/2})$ deterministic and depending only on $\rho$. 

We know that $(\wh h  - \wh h_{\rho/n})  ((\rho/n)\cdot) \eqD \wh h$, so by combining~\eqref{eqn-annulus-perc-scale} and the conclusion of the first paragraph with $(\wh h  - \wh h_{\rho/n})  ((\rho/n)\cdot)$ in place of $h$, we get that (after possibly shrinking $c_0'$ and $c_1'$) it holds with probability at least $1 -  c_0' n^4 e^{-c_1' n  }$ that the following is true.
\begin{enumerate}
\item The $D_{\wh h }$-distance from $ \bdy_{\op{in}}\mcl A_\rho$ to $\bdy_{\op{out}}\mcl A_\rho$ is at least $e^{-O_n(n^{1/2})} $.  
\item Each path from  $ \bdy_{\op{in}}\mcl A_\rho$ to $\bdy_{\op{out}}\mcl A_\rho$ intersects a Euclidean ball with Euclidean radius $(\rho/n) e^{-C n^{2/3}}$ and $D_{\wh h }$-diameter at most $e^{-n^{2/3} + O_n(n^{1/2})}$. 
\end{enumerate} 
We now choose $n  = \lfloor (\log \ep^{-1})^{3/2} \rfloor$. This makes it so that $n^4 e^{-c_1' n /\log n }$ decays faster than any positive power of $\ep$, $e^{-n^{1/2}}$ decays slower than any positive power of $\ep$, $(\rho/n) e^{-C n^{2/3}} = \ep^{C+o_\ep(1)}$, and $e^{-n^{2/3} + O_n(n^{1/2})} = \ep^{1 + o_\ep(1)}$. Making this choice of $n$ and possibly slightly adjusting $C$ and $\ep$ concludes the proof.  
\end{proof}

\begin{proof}[Proof of Proposition~\ref{prop-lqg-ball-upper}]
By Lemma~\ref{lem-gff-compare}, we can couple $h$ and $\wh h$ in such a way that $(h-\wh h)|_{\BB D}$ is a continuous function and with superpolynomially high probability as $\ep\rta 0$, one has $\max_{z\in \BB D} |(h -\wh h)(z)| \leq (\log\ep^{-1})^{2/3}$. Combining this with Lemma~\ref{lem-annulus-perc-wn} and the scaling property of LQG distances shows that for each fixed square annulus $A\subset \BB D$ such that the ratio of its inner and outer side lengths is 4, it holds with superpolynomially high probability as $\ep\rta 0$ (at a rate depending on $A$) that the $D_h$-distance between the inner and outer boundaries of $A$ is at least $\ep$. 
We can find finitely many such square annuli contained in $\BB D\setminus B_\rho(0)$ such that the union of their inner boundaries disconnects the inner and outer boundaries of $\BB D\setminus B_\rho(0)$. 
Each path between the inner and outer boundaries of $\BB D\setminus B_\rho(0)$ must cross between the inner and outer boundaries of one of these square annuli, so applying the preceding estimate once to each such annulus and taking a union bound concludes the proof. 
\end{proof}

\begin{proof}[Proof of Proposition~\ref{prop-lqg-ball-swallow}]
Via the same argument as in the proof of Proposition~\ref{prop-lqg-ball-upper}, Lemma~\ref{lem-annulus-perc-wn} implies that there is a universal constant $C>1$ such that with superpolynomially high probability as $\ep\rta 0$, each path from  $B_{\rho}(0)$ to $\bdy B_{(1+\rho)/2}(0)$ intersects a Euclidean ball with Euclidean radius at least $ \ep^C $ and $D_h$-diameter at most $\ep$. 
In particular, each $D_h$-ball $B$ which intersects both $\bdy B_\rho(0)$ and $\bdy \BB D$ intersects a Euclidean ball of radius at least $ \ep^C $ and $D_h$-diameter at most $\ep$ which is contained in $B_{(1+\rho)/2}(0)$.  
On the other hand, Proposition~\ref{prop-lqg-ball-upper} shows that with superpolynomially high probability as $\ep\rta 0$, the $D_h$-distance from $B_{(1+\rho)/2}(0)$ to $\bdy\BB D$ is at least $2\ep$, in which case the aforementioned Euclidean ball is contained in the $D_h$-metric ball $B$. 
Replacing $\ep^C$ by $\ep$ concludes the proof.
\end{proof}

\begin{proof}[Proof of Proposition~\ref{prop-lqg-ball-union}]
Observe that the conclusion of Proposition~\ref{prop-lqg-ball-swallow} does not depend on the choice of additive constant for $h$.
By the scale and translation invariance of the law of $h$, modulo additive constant, we see that Proposition~\ref{prop-lqg-ball-swallow} implies that for each $\rho\in (0,1)$, $r > 0$, and $z\in\BB C$, it holds with superpolynomially high probability as $\ep\rta 0$, at a rate which is uniform in $r$ and $z$, that each $D_h$-metric ball which intersects both $\bdy B_{\rho r}(z)$ and $\bdy B_r(z)$ contains a Euclidean ball of radius at least $\ep r$. 

By a union bound, with superpolynomially high probability as $\delta \rta 0$ that the following is true.
For each $k\in\BB N$ with $2^{-k} \leq \delta$ and each $z\in  \BB D\cap (2^{-100 k} \BB Z^2)$, each $D_h$-metric ball which intersects both $\bdy B_{2^{-k-1}}(z)$ and $\bdy B_{2^{-k}}(z)$ contains a Euclidean ball of radius at least $2^{-(1+\zeta/2) k}$. 
If $B\subset \BB D$ is a $D_h$-metric ball with $\op{diam}(B) \leq \delta$, then there exists $k\in\BB N$ with $2^{-k} \leq \op{diam}(B) \leq 2^{-k+1}$ and $z\in \BB D\cap ( 2^{-100 k} \BB Z^2 )$ 
such that $B$ intersects both $\bdy B_{2^{-k-1}}(z)$ and $\bdy B_{2^{-k}}(z)$. 
Therefore, $B$ contains a Euclidean ball of radius at least $2^{-(1+\zeta/2) k} \geq \op{diam}(B)^{1+\zeta}$, as required.
\end{proof}

\subsection{Volume estimates for LQG metric balls}
\label{sec-ball-vol}

The goal of this subsection is to establish the following estimate for the LQG mass of LQG metric balls. 

\begin{prop} \label{prop-ball-vol}
Let $h$ be a whole-plane GFF normalized so that $h_1(0) = 0$. 
For each $\zeta \in (0,1)$, it holds with superpolynomially high probability as $\ep\rta 0$ that
\eqb \label{eqn-ball-vol}
s^{4+\zeta} \leq \mu_{h}\left(B_s(z ; D_h) \right) \leq   s^{4 - \zeta }, \quad \forall s \in (0,\ep]  , \quad \forall z \in \BB D .
\eqe
\end{prop}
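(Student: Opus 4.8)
The plan is to establish \eqref{eqn-ball-vol} by importing the known volume estimates for metric balls in the Brownian map and Brownian plane, transferring them to the whole-plane GFF via the equivalence of Brownian and $\sqrt{8/3}$-LQG surfaces together with local absolute continuity, and then boosting from a bound with error probability tending to zero to one with superpolynomially small error probability using the local independence of $\wh h^\tr$ --- exactly the philosophy of Section~\ref{sec-eucl-ball}. The first step is a reduction: using property~\ref{item-metric-holder} of $D_h$ (the identity map between the Euclidean and $D_h$ metrics is locally bi-H\"older), Proposition~\ref{prop-lqg-ball-upper} to control the Euclidean size of small $D_h$-balls, and the continuity of $(z,s)\mapsto \mu_h(B_s(z;D_h))$, it suffices to prove that for each fixed $z\in\BB D$ and each large $k\in\BB N$ the event
$$\left\{ 2^{-(4+\zeta)k}\le \mu_h\!\left(B_{2^{-k}}(z;D_h)\right)\le 2^{-(4-\zeta)k}\right\}$$
fails with probability at most $2^{-pk}$ for every $p>0$, with a rate depending only on $\zeta$. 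A union bound over a sufficiently fine net of $\BB D$ and over all $k\ge \log_2\ep^{-1}$ then yields the proposition after slightly shrinking $\zeta$.

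For the single-point, single-scale estimate I would first replace $h$ by the white-noise field $\wh h$, and then by $\wh h^\tr$, using Lemma~\ref{lem-gff-compare} together with the metric comparison of Lemma~\ref{lem-metric-f}; the error incurred has Gaussian tails and is harmless. Using the exact scale invariance $\wh h(2^{-k}\cdot)-\wh h_{2^{-k}}(2^{-k}\cdot)\eqD \wh h$ and the LQG coordinate change formulas for $\mu_h$ and $D_h$ (recall $\sqrt6\cdot\sqrt{8/3}=4$, which is why the exponent is $4$), the event at scale $2^{-k}$ around $z$ becomes, after rescaling by $2^k$ and absorbing the circle average $\wh h_{2^{-k}}(z)$ into an additive constant, an event at unit scale for $\wh h$ on a Euclidean region of diameter $\asymp 2^k$; the constant $\wh h_{2^{-k}}(z)$ is Gaussian of variance $O(k)$ and so contributes only a $2^{o(k)}$ multiplicative error to the ball mass. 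The continuum input enters at unit scale: by the equivalence of the quantum sphere (resp.\ the $\sqrt{8/3}$-quantum cone) with the Brownian map (resp.\ the Brownian plane), local absolute continuity, and the classical estimates for the volume of a metric ball around a typical point of these spaces, the $\mu_{\wh h}$-mass of a unit $D_{\wh h}$-ball around a point lies within a factor $C$ of a fixed constant with probability at least $1-p(C)$, where $p(C)\to 0$ as $C\to\infty$. Since restrictions of $\wh h^\tr$ to sets at Euclidean distance $\ge 1/5$ are independent, the ``good'' unit cells in the rescaled region dominate a highly supercritical site percolation, and a percolation/union-bound argument just like the proof of Lemma~\ref{lem-annulus-perc-tr} shows that, with superpolynomially high probability in $2^{-k}$, every $D_{\wh h}$-ball of radius $\asymp 1$ meeting the region has $\mu_{\wh h}$-mass of order one. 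Unwinding the rescaling and transferring $\wh h^\tr\rta\wh h\rta h$ via Lemmas~\ref{lem-gff-compare} and~\ref{lem-metric-f} completes this step.

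I expect two points to be the main obstacle. The first is that a deterministic point $z\in\BB D$ is ``Lebesgue typical'' rather than ``$\mu_h$ typical,'' so the local picture near $z$ is a $0$-quantum cone, which is \emph{not} one of the standard Brownian surfaces; to import the sharp $s^{4\pm\zeta}$ control I would work with $\mu_h$-sampled centers (via a rooted-measure/Girsanov argument, under which the center acquires the log singularity characteristic of a $\mu_h$-typical point and the local structure becomes that of the Brownian plane) and then transfer from a statement valid for $\mu_h$-a.e.\ center back to every center using the continuity of $z\mapsto B_s(z;D_h)$ and the monotonicity of $s\mapsto \mu_h(B_s(z;D_h))$. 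The second, more delicate, point is the upgrade from a Brownian-surface bound with error probability merely tending to zero to one that is superpolynomially small: this is precisely where the local independence of $\wh h^\tr$ is essential, and the percolation argument must be organized so that the compounding of errors over the $\asymp k$ scales and the $\asymp 2^{ck}$ cells is controlled. Finally, the lower and upper bounds in \eqref{eqn-ball-vol} are run through the same scheme but are supplemented, respectively, by the fact that $B_s(z;D_h)$ contains a macroscopic Euclidean ball (Propositions~\ref{prop-lqg-ball-swallow} and~\ref{prop-lqg-ball-union}) together with a lower bound on the $\mu_h$-mass of Euclidean balls, and by the fact that $B_s(z;D_h)$ is contained in a Euclidean ball of controlled radius (Proposition~\ref{prop-lqg-ball-upper}) together with the standard moment bounds for the $\mu_h$-mass of a Euclidean ball.
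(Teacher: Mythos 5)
Your high-level plan --- import sharp ball-volume control from Brownian surfaces, pass to the white-noise field and then to its truncated version, and exploit the $1/5$-range independence of $\wh h^\tr$ to upgrade the error probability --- matches the paper's philosophy. But the specific mechanism you propose for the crucial boosting step does not work, and this is the heart of the proof.

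You want to conclude that ``every $D_{\wh h}$-ball of radius $\asymp 1$ meeting the rescaled region has $\mu_{\wh h}$-mass of order one'' with superpolynomially high probability via a percolation argument ``just like Lemma~\ref{lem-annulus-perc-tr}.'' Percolation gives exponentially high probability for the \emph{existence} of some path or circuit of good cells (a crossing event is monotone in the site variables, and a single good crossing suffices). The ball-volume statement you need is of the opposite type: it requires \emph{every} cell in a region of linear size $\asymp 2^k$ to be good. If each cell is good only with probability $1-p(C) < 1$, a union bound over the $\asymp 2^{2k}$ cells gives failure probability $\asymp 2^{2k} p(C)$, which does not decay in $k$ at all; and independence of the cells only makes ``all cells good'' \emph{less} likely, not more. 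Supercriticality of the good percolation is simply irrelevant here. The paper resolves this with a different observation: it starts from an \emph{almost sure} uniform Brownian-map estimate (Lemma~\ref{lem-bm-ball-bounds}), rescales to conclude that the ball-volume bound holds simultaneously at $\gtrsim r^{2p}$ sites of $3\BB Z^2$ in $B_{r^p}(0)$ with probability $1-o_r(1)$ (Lemma~\ref{lem-good-ball-tr}), and then uses the inequality $\BB P\bigl[\bigcap_{i=1}^N E_i\bigr] \le \BB P[E_1]^N$ for i.i.d.\ events to \emph{deduce} $\BB P[E_r(0)] \ge (1-o_r(1))^{1/(c r^{2p})} \ge 1 - r^{-2p}$ (Lemma~\ref{lem-ball-vol-upper-tr}). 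Since $p$ is arbitrary, this gives the superpolynomial decay directly, with no percolation argument and no union bound over scales or a net of $\BB D$: the a.s.\ input is already uniform in $(z,s)$.

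Two further comments. First, your concern about ``Lebesgue-typical vs.\ $\mu_h$-typical'' centers (hence the $0$-quantum cone vs.\ Brownian plane) is moot for this proposition: the Brownian-map input in Lemma~\ref{lem-bm-ball-bounds} is~\cite[Corollary 6.2]{legall-geodesics} (plus the H\"older bound on the snake), which is an a.s.\ estimate uniform over \emph{all} centers and radii, not a statement about a $\mu$-sampled point. Second, your closing suggestion to derive the bounds from Propositions~\ref{prop-lqg-ball-swallow}/\ref{prop-lqg-ball-union}/\ref{prop-lqg-ball-upper} together with moment bounds for $\mu_h$ on Euclidean balls cannot produce the sharp exponent $4\pm\zeta$: that route is exactly what Lemmas~\ref{lem-min-ball-diam} and~\ref{lem-max-ball-diam} do, and it only achieves polynomial (and not universal in the exponent) failure probabilities, with the $D_h$-to-Euclidean comparison introducing losses. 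The Brownian-surface input is what pins the exponent to $4$, and the independence trick is what makes the error superpolynomial.
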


We will extract Proposition~\ref{prop-ball-vol} from known ball volume estimates for the Brownian map, which say that a.s.\ the volume of every ball of radius $s$ in the Brownian map simultaneously is bounded above and below by constants times $s^{4-\zeta}$ and $s^{4+\zeta}$ (stated as Lemma~\ref{lem-bm-ball-bounds}). 
These estimates together with the equivalence of the Brownian map and the quantum sphere do not immediately imply~\eqref{eqn-ball-vol} since we are working with a whole-plane GFF instead of a quantum sphere. 
One could attempt to transfer the estimates using some sort of quantitative local absolute continuity, but we instead take a different approach which we find to be easier.
We note that Proposition~\ref{prop-ball-vol} has not previously appeared in the Brownian map literature, although closely related results have been established (see the proof of Lemma~\ref{lem-bm-ball-bounds} below).

Local absolute continuity (without any quantitative Radon-Nikodym derivative bound) shows that a.s.\ the $\mu_h$-mass of every $D_h$-ball of radius $s \in (0,1)$ which is contained in $\BB D$ is bounded above and below by constants times $s^{4-\zeta}$ and $s^{4+\zeta}$. 
To turn this into a bound which holds with superpolynomially high probability instead of just a.s., we first use Lemma~\ref{lem-gff-compare} and scale invariance considerations to transfer from $h|_{\BB D}$ to the restriction of the truncated white-noise field $\wh h^\tr$ of~\eqref{eqn-wn-decomp-tr} to $B_R(0)$ for a large value of $R$ (Lemma~\ref{lem-good-ball-tr}).
The restrictions of $\wh h^\tr$ to radius-1 Euclidean balls contained in $B_R(0)$ which lie at distance at least $1/5$ from one another are independent. 
Hence, the fact that an event (in our setting, bounds for the $\mu_h$-mass of $D_h$-balls contained in the Euclidean ball) holds \emph{simultaneously} for all such Euclidean balls with high probability shows that in fact the event for a single Euclidean ball has to hold with \emph{extremely} high probability (Lemma~\ref{lem-ball-vol-upper-tr}). 
We then transfer back to $h$ to conclude the proof. 

Let us first record what we get from Brownian map estimates.

\begin{lem} \label{lem-bm-ball-bounds}
If $h$ is a whole-plane GFF normalized so that $h_1(0) = 0$, then a.s.\ 
\eqb \label{eqn-ball-sup-h}
\sup_{s \in (0,1)}  \sup_{z\in \BB D} \frac{\mu_h (B_s(z ; D_h)}{s^{4-\zeta }} < \infty 
\quad \op{and} \quad 
\inf_{s \in (0,1)}  \inf_{z\in \BB D} \frac{\mu_h (B_s(z ; D_h)}{s^{4+\zeta }} > 0   .
\eqe 
The same is true with the field $\wh h$ of~\eqref{eqn-wn-decomp} in place of $h$. 
\end{lem}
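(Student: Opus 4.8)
The plan is to deduce~\eqref{eqn-ball-sup-h} for the whole-plane GFF from the analogous a.s.\ statement for the Brownian map, using the equivalence of the Brownian map with the quantum sphere together with local absolute continuity, and then to pass to $\wh h$ via Lemma~\ref{lem-gff-compare}. First I would fix a unit-area quantum sphere $(\BB C , \mathbf h)$, whose field restricted to any fixed bounded open set is mutually absolutely continuous with respect to a whole-plane GFF there. By the equivalence of the quantum sphere and the Brownian map (Section~\ref{sec-lqg-metric}) together with the known estimates for Brownian map ball volumes recalled above, a.s.\ the quantities $\sup_{s \in (0,1)} \sup_{z\in\BB D} \mu_{\mathbf h}(B_s(z;D_{\mathbf h}))/s^{4-\zeta}$ and $\left(\inf_{s\in(0,1)} \inf_{z\in\BB D} \mu_{\mathbf h}(B_s(z;D_{\mathbf h}))/s^{4+\zeta}\right)^{-1}$ are finite.

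The point requiring care is that balls $B_s(z;D_{\mathbf h})$ with $z\in\BB D$ and $s$ near $1$ need not lie in a fixed bounded set, so the above quantities are not measurable functions of $\mathbf h$ restricted to a bounded set and local absolute continuity does not apply directly. To handle this I would split the range of $s$. Cover $\ol{\BB D}$ by finitely many Euclidean balls $B_{\delta}(z_i)$, $i=1,\dots,m$, with $B_{2\delta}(z_i)$ bounded and $\ol{\BB D}\subset \bigcup_i B_{\delta/2}(z_i)$. For each $i$, by the locality property~\ref{item-metric-local} of the metric and of $\mu_{\mathbf h}$, the supremum of $\mu_{\mathbf h}(B_s(z;D_{\mathbf h}))/s^{4-\zeta}$ over $z\in \ol{B_{\delta/2}(z_i)}$ and $s>0$ with $B_s(z;D_{\mathbf h})\subset B_\delta(z_i)$, as well as the corresponding infimum of $\mu_{\mathbf h}(B_s(z;D_{\mathbf h}))/s^{4+\zeta}$, are measurable with respect to $\mathbf h|_{B_\delta(z_i)}$; by the previous paragraph they are a.s.\ finite and positive, respectively. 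Local absolute continuity on $B_\delta(z_i)$ then gives the same conclusion with a whole-plane GFF $h$ (normalized so $h_1(0)=0$) in place of $\mathbf h$. For the complementary range --- pairs $(z,s)$ with $z\in\BB D$, $s\in(0,1)$ and $B_s(z;D_h)\not\subset B_{\delta/2}(z)$ --- one has $s \ge \inf_{w\in\ol{\BB D}} D_h(w,\bdy B_{\delta/2}(w))$, which is a.s.\ positive since $D_h$ induces the Euclidean topology (property~\ref{item-metric-holder}); moreover $\bigcup_{w\in\ol{\BB D}} B_1(w;D_h)$ is a.s.\ a bounded set, so $\mu_h(B_s(z;D_h)) \le \mu_h(B_R(0))$ for a random $R<\infty$, and since on this range $s$ is bounded below by a positive random constant the bounds in~\eqref{eqn-ball-sup-h} hold trivially. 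Combining the two ranges yields~\eqref{eqn-ball-sup-h} for $h$.

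For the field $\wh h$, I would apply Lemma~\ref{lem-gff-compare} with $K=\ol{\BB D}$ to couple $h$ and $\wh h$ so that $f:=(\wh h-h)|_{\BB D}$ admits a continuous modification, hence is a.s.\ bounded. Then $D_{\wh h}=D_{h+f}$ and Lemma~\ref{lem-metric-f} gives $e^{-\|f\|_\infty/\sqrt 6}D_h \le D_{\wh h} \le e^{\|f\|_\infty/\sqrt 6}D_h$ on $\BB D$, while $\mu_{\wh h}=e^{\gamma f}\mu_h$ with $\gamma=\sqrt{8/3}$; consequently any $D_{\wh h}$-ball of radius $s<1$ centered in $\BB D$ is contained in, and contains, $D_h$-balls of radii comparable to $s$, and has $\mu_{\wh h}$-mass comparable to its $\mu_h$-mass. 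The bounds~\eqref{eqn-ball-sup-h} for $h$ therefore imply the same bounds for $\wh h$, with different but still a.s.\ finite and positive random constants.

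I expect the main obstacle to be purely organizational: identifying exactly which suprema and infima are determined by the restriction of the field to a bounded set (so that local absolute continuity applies) and carefully isolating the ``large radius'' balls that escape this framework, rather than any serious probabilistic difficulty --- all the analytic input (the Brownian map volume bounds, the locality and bi-H\"older continuity of $D_h$, and the comparison estimates of Lemmas~\ref{lem-metric-f} and~\ref{lem-gff-compare}) is already available.
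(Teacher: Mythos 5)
Your overall strategy (Brownian map ball-volume bounds via the sphere/Brownian map equivalence, then local absolute continuity, then the coupling Lemma~\ref{lem-gff-compare} for $\wh h$) is the same as the paper's, but there is one genuine gap in the second paragraph. You write that the unit-area quantum sphere $(\BB C , \mathbf h)$ has field ``restricted to any fixed bounded open set'' mutually absolutely continuous with respect to a whole-plane GFF there. That is not correct: any embedding of a quantum sphere into $\BB C$ must fix at least two marked points (the paper uses $0$ and $\infty$), and the embedded field has a $\gamma$-$\log$ singularity at each finite marked point. In a neighborhood of such a point the law of $\mathbf h$ is \emph{singular} with respect to the whole-plane GFF, so local absolute continuity fails there. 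Because your covering of $\ol{\BB D}$ by small Euclidean balls $B_\delta(z_i)$ necessarily includes a ball meeting the marked point at $0$, the transfer step breaks for that ball. The paper handles exactly this obstruction by first obtaining~\eqref{eqn-ball-sup-h} only on $\BB D\setminus B_{1/2}(0)$ (away from the log singularity), where absolute continuity does hold, and then invoking translation invariance of the whole-plane GFF modulo additive constant to recover the statement on all of $\BB D$; you could equivalently fix your argument by embedding the quantum sphere so that all its marked points lie outside $\ol{\BB D}$ before running the ball-covering decomposition.

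Aside from that, your decomposition into ``small'' metric balls (handled inside a fixed $B_\delta(z_i)$ using the locality property of $D_h$ and $\mu_h$) and ``large'' metric balls (whose radius is bounded below a.s.\ since $D_h$ induces the Euclidean topology) is a sound and somewhat more explicit version of what the paper implicitly uses, and the comparison argument for $\wh h$ via Lemmas~\ref{lem-gff-compare} and~\ref{lem-metric-f} is correct and matches the paper.
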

\begin{proof}
We will use estimates for the Brownian map, so we need to work with a quantum sphere due to the equivalence of the Brownian map and quantum sphere~\cite[Corollary 1.4]{lqg-tbm2}.
Let $h^{\op{Sph}}$ be an embedding into $\BB C$ of the quantum sphere (say, conditioned to have LQG area at least 1), normalized so that two marked points sampled uniformly from $\mu_{h^{\op{Sph}}}$ are sent to $0$ and $\infty$ and so that $1 = \sup\{r > 0 : h_r^{\op{Sph}}(0) + Q \log r = 0\}$, provided $h_r^{\op{Sph}}(0) + Q \log r =0$ for some $r > 0$.  
This choice of normalization makes it so that the laws of $h^{\op{Sph}}|_{\BB D \setminus B_{1/2}(0) }$ and $h|_{\BB D \setminus B_{1/2}(0)}$ are mutually absolutely continuous on the event $\{h_r^{\op{Sph}}(0) = 0\}$ (the laws of the restrictions of the fields to $\BB D$ are not absolutely continuous since $h^{\op{Sph}}$ has a $\gamma$-log singularity at 0).

By~\cite[Corollary 6.2]{legall-geodesics} and the equivalence of the Brownian map and the quantum sphere, 
\eqbn
\sup_{s > 0}  \sup_{z\in \BB C} \frac{\mu_{h^{\op{Sph}}} (B_s(z ; D_{h^{\op{Sph}}})}{s^{4-\zeta }} < \infty .
\eqen
Furthermore, since the continuum label process (the ``head of the Brownian snake") used to define the Brownian map is a.s.\ H\"older continuous with any exponent less than $1/4$, a.s.\ 
\eqbn
\inf_{s > 0}  \inf_{z\in \BB C} \frac{\mu_{h^{\op{Sph}}} (B_s(z ; D_{h^{\op{Sph}}})}{s^{4+\zeta }} > 0 .
\eqen
By the local absolute continuity between $h$ and $h^{\op{Sph}}$, we obtain~\eqref{eqn-ball-sup-h} with $\BB D\setminus B_{1/2}(0)$ in place of $\BB D$. By the translation invariance of the law of $h$, modulo additive constant, we get~\eqref{eqn-ball-sup-h}. By Lemma~\ref{lem-gff-compare}, the same is true with $\wh h$ in place of $h$. 
\end{proof}

We will now transfer to an estimate for $\wh h^\tr$ restricted to a large ball. 

\begin{lem} \label{lem-good-ball-tr}
Fix $p > 1$ and $\zeta \in (0,1)$. 
There is a universal constant $c>0$ and a random set $\mcl Z_{r^p} \subset B_{r^p}(0) \cap (3\BB Z^2)$ independent from $\wh h^\tr$ such that with probability tending to 1 as $r\rta\infty$, one has $\#\mcl Z_{r^p}  \geq c r^{2p} $ and for each $z\in \mcl Z_{r^p}$, 
\eqb \label{eqn-good-ball-tr}
s^{4 + \zeta } \leq \mu_{\wh h^\tr}\left(B_s(z ; D_{\wh h^\tr}) \right) \leq   s^{4 - \zeta }, \quad \forall s \leq r^{-\zeta} \min\left\{ 1 ,    D_{\wh h^\tr}(w , \bdy B_1(z)) \right\} , \quad \forall w \in B_1(z) .
\eqe 
\end{lem}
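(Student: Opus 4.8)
The plan is to reduce \eqref{eqn-good-ball-tr} to the a.s.\ Brownian-map volume bounds of Lemma~\ref{lem-bm-ball-bounds} (in the form stated there for the field $\wh h$), transfer them to the truncated field $\wh h^\tr$, localize so that the relevant event depends only on $\wh h^\tr$ near a single unit ball, and then exploit the spatial independence of $\wh h^\tr$ to force the estimate to hold at $\gtrsim r^{2p}$ prescribed lattice points simultaneously.

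\emph{Transfer and localization.} By Lemma~\ref{lem-gff-compare} there is a coupling of $\wh h$ and $\wh h^\tr$ (via the same white noise $W$) for which $A := \max_{z \in \ol{B_1(0)}} |(\wh h - \wh h^\tr)(z)|$ is finite with Gaussian upper tails, being the supremum of a continuous Gaussian field. On the event $\{A \le a\}$, Lemma~\ref{lem-metric-f} (in the form \eqref{eqn-metric-f-max}) gives $e^{-a/\sqrt 6} D_{\wh h} \le D_{\wh h^\tr} \le e^{a/\sqrt 6} D_{\wh h}$ on $B_1(0)$, and the analogous elementary comparison for the area measures gives $e^{-\gamma a}\mu_{\wh h} \le \mu_{\wh h^\tr} \le e^{\gamma a}\mu_{\wh h}$ on $B_1(0)$. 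Combining these with Lemma~\ref{lem-bm-ball-bounds} for $\wh h$ (with $\zeta/2$ in place of $\zeta$, to absorb the factors above) produces a random threshold $\rho_0 \in (0,1)$, determined by $\wh h^\tr|_{\ol{B_1(0)}}$, such that for all $w \in B_1(0)$ and all $s \le \rho_0 \wedge D_{\wh h^\tr}(w, \bdy B_1(0))$ the two-sided bound in \eqref{eqn-good-ball-tr} holds. Here the condition $s \le D_{\wh h^\tr}(w, \bdy B_1(0))$ forces $B_s(w ; D_{\wh h^\tr}) \subset B_1(0)$, so by the locality of the $\sqrt{8/3}$-LQG metric and area measure (property~\ref{item-metric-local}) all quantities appearing are measurable functions of $\wh h^\tr|_{\ol{B_1(0)}}$, hence of $W|_{\ol{B_{11/10}(0)} \times [0,1]}$. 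One also needs a \emph{quantitative} tail bound $\BB P[\rho_0 < \delta] = o_\delta(\delta^q)$ for every $q > 0$; this requires revisiting the proof of Lemma~\ref{lem-bm-ball-bounds}: its lower bound rests on the H\"older continuity of the head of the Brownian snake, and the H\"older constant of a Brownian-snake-type process has Gaussian tails, yielding super-polynomially small failure probabilities, while the upper volume bound of~\cite{legall-geodesics} can be refined similarly.

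\emph{Independence across the lattice and the choice of $\mcl Z_{r^p}$.} For $z \in 3\BB Z^2$, let $E_z(r)$ denote the event that \eqref{eqn-good-ball-tr} holds at $z$, and let $\rho_0(z)$ be the translate to $z$ of the threshold $\rho_0$ above. By translation invariance of the law of $\wh h^\tr$ and the previous paragraph, $E_z(r)$ is determined by $W|_{\ol{B_{11/10}(z)} \times [0,1]}$, and since every admissible scale $s$ in \eqref{eqn-good-ball-tr} satisfies $s \le r^{-\zeta}$ we have $E_z(r) \supseteq \{\rho_0(z) \ge r^{-\zeta}\}$, so $\BB P[E_z(r)^c] \le \BB P[\rho_0 < r^{-\zeta}] =: q_r$ with $q_r$ decaying faster than any power of $r^{-1}$. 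Distinct points of $3\BB Z^2$ are at Euclidean distance $\ge 3 > 2 \cdot \tfrac{11}{10}$, so the closed balls $\ol{B_{11/10}(z)}$ are pairwise disjoint and the events $\{E_z(r)\}_{z \in 3\BB Z^2}$ are mutually independent. Since $\#(3\BB Z^2 \cap B_{r^p - 2}(0)) \asymp r^{2p}$ and $q_r = o(r^{-2p})$, a union bound shows that with probability tending to $1$ as $r \to \infty$ the event $E_z(r)$ occurs for every $z \in 3\BB Z^2 \cap B_{r^p - 2}(0)$. We then take $\mcl Z_{r^p}$ to be any subset of $3\BB Z^2 \cap B_{r^p - 2}(0)$ of cardinality $\ge c r^{2p}$ for a suitable universal $c > 0$; as a deterministic (or auxiliary-randomness-thinned) function of the lattice alone, it is independent of $\wh h^\tr$, and \eqref{eqn-good-ball-tr} holds for all $z \in \mcl Z_{r^p}$ with probability $\to 1$.

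The main obstacle is precisely the quantitative tail bound $\BB P[\rho_0 < \delta] = o_\delta(\delta^q)$: the bare a.s.\ statement of Lemma~\ref{lem-bm-ball-bounds} cannot guarantee that the delicate volume estimate \eqref{eqn-good-ball-tr} holds at \emph{all} of $\gtrsim r^{2p}$ prescribed points at once, so the Brownian-map inputs (together with the Gaussian tails of the comparison field $\wh h - \wh h^\tr$) must be re-examined to extract super-polynomially small failure probabilities. Everything else is a routine assembly of Lemmas~\ref{lem-gff-compare} and~\ref{lem-metric-f}, the locality of $D_h$ and $\mu_h$, and the spatial independence of $\wh h^\tr$.
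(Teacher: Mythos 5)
Your proposal inverts the logical structure of the paper's argument, and in doing so leaves the genuinely hard step unproved. You correctly reduce everything to a quantitative tail bound $\BB P[\rho_0 < \delta] = o_\delta(\delta^q)$ for every $q>0$ on the failure scale $\rho_0$ of the volume estimate in a single unit ball, but then you simply assert that this follows by ``re-examining'' the Brownian-snake inputs to Lemma~\ref{lem-bm-ball-bounds}. That assertion is exactly the content of the subsequent Lemma~\ref{lem-ball-vol-upper-tr} (and, after transfer, of Proposition~\ref{prop-ball-vol}), and it is what the present lemma is a \emph{stepping stone towards}, not something one may take as input. If you already had the superpolynomial tail for $\rho_0$, a union bound would make Lemma~\ref{lem-good-ball-tr} immediate with a deterministic $\mcl Z_{r^p}$ — but then the whole two-lemma scaffold would be pointless. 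Extracting superpolynomial tails directly from~\cite{legall-geodesics} and from the H\"older modulus of the head of the Brownian snake, and then pushing them through the local absolute continuity between the quantum sphere and the whole-plane GFF (where the Radon--Nikodym derivative would itself need quantitative control), is a substantial project that the paper deliberately circumvents.

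The mechanism the paper uses, and which your proposal misses entirely, is the exact scale invariance of $\wh h$: writing $\wh h^{r^p} := (\wh h - \wh h_{r^{-p}})(r^{-p}\cdot) \eqD \wh h$, the purely \emph{qualitative} a.s.\ statement of Lemma~\ref{lem-bm-ball-bounds} on $\BB D$, applied once, becomes (after the LQG coordinate change) a statement that holds with probability $\to 1$ simultaneously across the whole of $B_{r^p}(0)$ for the rescaled field. The price is that the coordinate change introduces the coarse field $\wh h_{r^{-p}}$, so the volume bounds transfer cleanly only at lattice points where $\wh h_{r^{-p}}$ is not too large — this is why $\mcl Z_{r^p}$ must be the \emph{random} set $\{z : \sup_{w\in B_1(z)} |\wh h_{r^{-p}}(r^{-p}w)| \le \zeta^2\log r\}$, which is nontrivial but is measurable with respect to $\wh h_{r^{-p}}$ and hence independent of $\wh h^{r^p}$ (and of the coupled $\wh h^\tr$). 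Your deterministic choice of $\mcl Z_{r^p}$ is only viable if the single-ball estimate already has superpolynomial tails, which is precisely what is not yet available at this stage. So the gap is not a minor technicality: your route requires, as an input, the conclusion that the paper's two-step argument (scale to spread the a.s.\ event over $\gtrsim r^{2p}$ independent balls, then invert the independence to deduce a $1-r^{-2p}$ bound at a single ball) is designed to deliver.
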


The reason why we need to restrict to the set $\mcl Z_{r^p}$ in Lemma~\ref{lem-good-ball-tr}, instead of looking at all points in $B_{r^p}(0)\cap (3\BB Z^2)$, is as follows.
To transfer from an estimate on $\BB D$ to an estimate on $B_{r^p}(0)$, we will use the scale invariance property of the white noise field $\wh h$, which says that $\wh h(r^{-p}\cdot) -\wh h_{r^{-p}}(r^{-p}\cdot) $ has the same law as $\wh h$ and is independent from $\wh h_{r^{-p}}$. 
We will restrict attention to the set of points where $\wh h_{r^{-p}}$ is not unusually large, which is independent from $\wh h(r^{-p}\cdot) -\wh h_{r^{-p}}(r^{-p}\cdot) $, then couple $\wh h^\tr$ with $\wh h(r^{-p}\cdot) -\wh h_{r^{-p}}(r^{-p}\cdot) $ using Lemma~\ref{lem-gff-compare}.

\begin{proof}[Proof of Lemma~\ref{lem-good-ball-tr}] 
\noindent\textit{Step 1: re-scaling from $\BB D$ to $B_{r^p}(0)$.} We will re-scale with the eventual aim of transferring Lemma~\ref{lem-bm-ball-bounds} to an estimate with $B_{r^p}(0)$ in place of $\BB D$ and $\wh h$ in place of $h$. This will lead to the definition of $\mcl Z_{r^p}$.
If we set $\wh h^{r^p} := (\wh h - \wh h_{r^{-p}})(r^{-p} \cdot)  $, then $\wh h^{r^p} \eqD \wh h$ and $\wh h^{r^p}$ is independent from $\wh h_{r^{-p} }$.  
Let  
\eqb \label{eqn-good-ball-def}
\mcl Z_{r^p} := \left\{ z\in B_{r^p}(0) \cap (3\BB Z^2) : \sup_{w \in B_1(z)} |\wh h_{r^{-p} }(r^{-p} w)| \leq \zeta^2 \log r \right\} .
\eqe 
We emphasize that $\mcl Z_{r^p}$ is determined by $\wh h_{r^{-p}}$, so is independent from $\wh h^{r^p}$. 

We will now argue that there is a universal constant $c>0$ such that 
\eqb \label{eqn-good-balls}
\BB P\left[\#\mcl Z_{r^p}  \geq c r^{2p} \right] \rta 1 \quad \text{as} \quad r \rta\infty.
\eqe
To see this, we observe that each $\wh h_{r^{-p}}(r^{-p} z)$ for $z\in 3\BB Z^2$ is Gaussian with variance $\log r^p$. By the Gaussian tail bound, $\BB P[|\wh h_{r^{-p}}(r^{-p} z)| \leq \frac{\zeta^2}{2} \log r]$ tends to 1 as $r\rta\infty$, uniformly over all $z\in 3\BB Z^2$.
By Markov's inequality, it holds with probability tending to 1 as $r\rta\infty$ that the number of $z\in B_{r^p}(0) \cap (3\BB Z^2)$ for which $|\wh h_{r^{-p}}(r^{-p} z)| \leq \frac{\zeta^2}{2} \log r$ is at least $\frac12 \#\left[  B_{r^p}(0) \cap (3\BB Z^2) \right]$, say.
This last quantity is at least $c r^{2p}$ for some universal constant $c>0$. 
By Lemma~\ref{lem-use-btis}, it holds with probability tending to 1 as $r \rta\infty$ that
\eqbn
\sup_{z \in B_{r^p}(0) \cap (3\BB Z^2)} \sup_{w\in B_1(z)} |\wh h_{r^{-p}}(r^{-p} w) - \wh h_{r^{-p}}(r^{-p} z)| \leq \frac{\zeta^2}{2} \log r .
\eqen
By combining these estimates, we get~\eqref{eqn-good-balls}. 
\medskip

\noindent\textit{Step 2: estimate for LQG balls centered at points of $\mcl Z_{r^p}$.}
By the LQG coordinate change formula~\eqref{eqn-lqg-coord-metric} and Lemma~\ref{lem-metric-f}, for each $z\in \mcl Z_{r^p}$ and each $x,y\in B_1(z)$, 
\eqbn
 r^{(p Q-\zeta^2) /\sqrt 6} D_{\wh h|_{r^{-p} B_1(z)} }(r^{-p}  x , r^{-p} y)  
\leq D_{\wh h^{r^p}|_{B_1(z)}}(x,y) 
\leq r^{(p Q+\zeta^2) /\sqrt 6} D_{\wh h|_{r^{-p} B_1(z)} }(r^{-p} x , r^{-p} y)
\eqen
Moreover, the analogous properties for the LQG measure show that
\eqbn
 r^{\sqrt{8/3} ( p Q-\zeta^2) } \mu_{\wh h  }(r^{-p} A)  
\leq \mu_{\wh h^{r^p} }(A) 
\leq r^{\sqrt{8/3} ( p Q+\zeta^2) } \mu_{\wh h  }(r^{-p} A)  ,
\quad \forall A \subset B_1(z) \quad \text{Borel}.
\eqen
Combining this with Lemma~\ref{lem-bm-ball-bounds} (with $\wh h$ in place of $h$) shows that with probability tending to 1 as $r \rta\infty$, it holds for each $z\in \mcl Z_{r^p}$ that
\eqb \label{eqn-good-ball-ratio}
s^{4 + a \zeta} \leq \mu_{\wh h}(B_s(w ; D_{\wh h}) ) \leq   s^{4 - a \zeta } ,\quad \forall s \leq r^{-\zeta} \min\{1 , D_{\wh h}(w , \bdy B_1(z)) \} ,\quad \forall w \in B_1(z) ,
\eqe 
where here $a  > 0$ is a universal constant. Note that we used that $4/\sqrt 6 = \sqrt{8/3}$ to cancel two large powers of $r$ and we used that $r^{\zeta^2} \leq s^{-\zeta}$ for $s\leq r^{-\zeta }$ to absorb a small power of $r$ into a power of $s$.  
\medskip

\noindent\textit{Step 3: transferring from $\wh h$ to $\wh h^\tr$.} 
By Lemma~\ref{lem-gff-compare} and a union bound over $O_{r^p}(r^{2p} )$ Euclidean balls of radius 1 which cover $B_{r^p}(0)$, we can couple $\wh h^{r^p}$ and $\wh h^\tr$ in such a way that with probability tending to 1 as $r \rta\infty$, we have $\max_{z\in B_{r^p +1}(0)} |(\wh h^{r^p} - \wh h^\tr)(z)| \leq (\log r)^{2/3}$. 
Since $\wh h^{r^p}$ is independent from $\mcl Z_{r^p}$, we can take $\wh h^\tr$ to be independent from $\mcl Z_{r^p}$ in this coupling. 
Our choice of coupling together with~\eqref{eqn-good-balls} and~\eqref{eqn-good-ball-ratio} shows that with probability tending to 1 as $r\rta\infty$, it holds for each $z\in \mcl Z_{r^p}$, each $w\in B_1(z)$, and each $s \leq r^{-\zeta}  e^{- \frac{1}{\sqrt 6} (\log r)^{2/3} } \min\left\{ 1 ,   D_{\wh h^\tr}(w , \bdy B_1(z)) \right\}$ that
\eqbn
e^{-\frac{1}{\sqrt 6} (\log r)^{2/3} } s^{4 -  a \zeta } \leq  \mu_{\wh h^\tr}(B_s(w ; D_{\wh h^\tr}) ) \leq  e^{\frac{1}{\sqrt 6} (\log r)^{2/3} } s^{4 -  a \zeta }   .
\eqen 
After adjusting $\zeta$ appropriately, this gives~\eqref{eqn-good-ball-tr}.
\end{proof}

We can now go from events with probability tending to 1 to events with superpolynomially high probability.

\begin{lem} \label{lem-ball-vol-upper-tr}
For each $\zeta \in (0,1)$, it holds with superpolynomially high probability as $\ep\rta 0$ that
\eqb \label{eqn-ball-upper-tr}
s^{4+\zeta} \leq \mu_{\wh h^\tr}\left(B_s(w ; D_{\wh h^\tr}) \right) \leq   s^{4 - \zeta }, 
\quad \forall s \leq  \ep \min\left\{ 1 ,  D_{\wh h^\tr}(w , \bdy \BB D) \right\}, 
\quad \forall w \in \BB D .
\eqe
\end{lem}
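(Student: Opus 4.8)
The plan is to observe that the statement of the lemma is exactly the $z=0$ case of the per-square estimate already proved in Lemma~\ref{lem-good-ball-tr} (with $r=\ep^{-1/\zeta}$, using that $B_1(0)=\BB D$ and $\bdy B_1(0)=\bdy\BB D$), and to upgrade the ``probability tending to $1$'' conclusion of that lemma to ``superpolynomially high probability'' by exploiting the spatial independence of $\wh h^\tr$. For $z\in\BB C$ and $\ep\in(0,1)$, let $E_z^\ep$ be the event that $s^{4+\zeta}\leq\mu_{\wh h^\tr}(B_s(w;D_{\wh h^\tr}))\leq s^{4-\zeta}$ for every $w\in B_1(z)$ and every $s\leq\ep\min\{1,D_{\wh h^\tr}(w,\bdy B_1(z))\}$. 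Then $E_0^\ep$ is precisely the event asserted in Lemma~\ref{lem-ball-vol-upper-tr}, and Lemma~\ref{lem-good-ball-tr} (applied with $r=\ep^{-1/\zeta}$ and any fixed $p>1$) says that $\BB P[\#\mcl Z_{r^p}\geq c r^{2p}\text{ and }E_z^\ep\text{ holds }\forall z\in\mcl Z_{r^p}]\rta 1$ as $\ep\rta 0$, where $\mcl Z_{r^p}\subset B_{r^p}(0)\cap(3\BB Z^2)$ is independent of $\wh h^\tr$.

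First I would record that $E_z^\ep$ is a function of $\wh h^\tr|_{B_1(z)}$: whenever $w\in B_1(z)$ and $s\leq\ep\min\{1,D_{\wh h^\tr}(w,\bdy B_1(z))\}$ one has $s<D_{\wh h^\tr}(w,\bdy B_1(z))$, so $B_s(w;D_{\wh h^\tr})\subset B_1(z)$, and by the locality of $D_{\wh h^\tr}$ (property~\ref{item-metric-local}) and of the LQG measure $\mu_{\wh h^\tr}$, this metric ball, its mass, and the quantity $D_{\wh h^\tr}(w,\bdy B_1(z))$ are all measurable with respect to $\wh h^\tr|_{B_1(z)}$. Consequently the events $\{E_z^\ep\}_{z\in 3\BB Z^2}$ are mutually independent, since $\op{dist}(B_1(z),B_1(z'))\geq 1$ for distinct $z,z'\in 3\BB Z^2$ and restrictions of $\wh h^\tr$ to sets at distance at least $1/5$ are independent; and by translation invariance of the law of $\wh h^\tr$ they all have the same probability, which I abbreviate $q_\ep:=\BB P[(E_0^\ep)^c]$.

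The heart of the argument is the resulting bootstrap. Fix $p>1$, write $r=\ep^{-1/\zeta}$ and $\mcl Z=\mcl Z_{r^p}$, and set $G_\ep:=\{\#\mcl Z\geq c r^{2p}\}\cap\bigcap_{z\in\mcl Z}E_z^\ep$. Since $\mcl Z$ is independent of $\wh h^\tr$, conditioning on $\mcl Z$ and using the independence of the $E_z^\ep$ gives $\BB P[\bigcap_{z\in\mcl Z}E_z^\ep\mid\mcl Z]=(1-q_\ep)^{\#\mcl Z}$, so
\[
\BB P[G_\ep]=\BB E\left[\BB 1_{\{\#\mcl Z\geq c r^{2p}\}}(1-q_\ep)^{\#\mcl Z}\right]\leq (1-q_\ep)^{c r^{2p}} .
\]
But $\BB P[G_\ep]\rta 1$ as $\ep\rta 0$ by Lemma~\ref{lem-good-ball-tr}, so for $\ep$ small we get $(1-q_\ep)^{c r^{2p}}\geq 1/2$, which forces $q_\ep\leq (\log 2)/(c r^{2p})=(\log 2/c)\,\ep^{2p/\zeta}$. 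Since $p>1$ was arbitrary, $q_\ep$ decays faster than every power of $\ep$, i.e., $E_0^\ep$ occurs with superpolynomially high probability as $\ep\rta 0$, which is the claim.

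The step I expect to be most delicate is not any hard estimate — Lemma~\ref{lem-good-ball-tr} already carried out the analytic work — but rather the measurability and independence bookkeeping of the second and third paragraphs: one must check carefully that $E_z^\ep$ really depends only on $\wh h^\tr|_{B_1(z)}$ (this is exactly where the constraint $s\leq\ep\,D_{\wh h^\tr}(w,\bdy B_1(z))$ is needed, namely to keep the relevant metric balls inside $B_1(z)$), and that Lemma~\ref{lem-good-ball-tr} genuinely furnishes $\mcl Z_{r^p}$ independently of $\wh h^\tr$ with polynomially many (in $1/\ep$) points for every power $p$; it is precisely the freedom to take $p$ arbitrarily large that converts the polynomial bound on $q_\ep$ into a superpolynomial one. (If one prefers not to single out $z=0$, the identical argument applied to a fixed finite family of squares whose unit neighborhoods cover $\ol{\BB D}$, combined with the locality of metric balls, yields the statement just as well; but exploiting $B_1(0)=\BB D$ makes even that unnecessary.)
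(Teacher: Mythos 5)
Your proof is correct and follows the same route as the paper: both identify the target event with $E_{\epsilon^{-1/\zeta}}(0)$, use the spatial independence of $\wh h^\tr$ on $3\BB Z^2$ together with the independence of $\mcl Z_{r^p}$ from $\wh h^\tr$ to bound $\BB P[E_0^\ep]^{c r^{2p}}$ from below by $1-o_\ep(1)$, rearrange to get $q_\ep \preceq r^{-2p}$, and then take $p$ arbitrarily large. The only cosmetic difference is that you write the bound as a direct expectation $\BB E[\BB 1_{\{\#\mcl Z\geq c r^{2p}\}}(1-q_\ep)^{\#\mcl Z}]\leq(1-q_\ep)^{c r^{2p}}$ rather than the paper's conditional probability $\BB P[E_r(z)\ \forall z\in\mcl Z_r\mid\#\mcl Z_r\geq c r^{2p}]$, which is an equivalent presentation; your explicit check that $E_z^\ep$ is $\wh h^\tr|_{B_1(z)}$-measurable (via locality and the constraint $s\leq\ep\,D_{\wh h^\tr}(w,\bdy B_1(z))$) is a useful piece of bookkeeping the paper leaves implicit.
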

\begin{proof}
Fix $p >1$, which we will eventually send to $\infty$. 
For $z\in  3\BB Z^2 $ and $r > 1$, let 
\eqbn
E_r(z) :=  \left\{s^{4+\zeta} \leq \mu_{\wh h^\tr}\left(B_s(w ; D_{\wh h^\tr}) \right) \leq  s^{4 - \zeta }, \: \forall s \leq r^{-\zeta} \min\left\{ 1 ,  D_{\wh h^\tr}(w , \bdy B_1(z)) \right\} ,\: \forall w \in B_1(z)  \right\} .
\eqen
Note that the event in~\eqref{eqn-ball-upper-tr} is the same as $E_{\ep^{-1/\zeta}}(0)$. 
Furthermore, if we let $\mcl Z_{r^p} \subset B_{r^p}(0)\cap (3\BB Z^2)$ be the random set independent from $\wh h^\tr$ from Lemma~\ref{lem-good-ball-tr}, then that lemma tells us that with probability tending to 1 as $r\rta\infty$, we have $\#\mcl Z_{r^p} \geq c r^{2p}$ and $E_r(z)$ occurs for every $z\in \mcl Z_{r^p}$. 

The fields $\wh h^\tr|_{B_1(z)}$ for different choices of $z \in 3\BB Z^2$ are independent and the law of $\wh h^\tr$ is invariant with respect to spatial translations. 
Consequently, the events $E_r(z)$ for different choices of $z\in 3\BB Z^2$ are independent. 
Since $\wh h^\tr$ is independent from $\mcl Z_{r^p}$, we get that 
\eqb
1 - o_r(1) \leq \BB P\left[ E_r(z) ,\: \forall z\in \mcl Z_r \,|\, \mcl Z_r \geq c r^{2p} \right] \leq   \BB P\left[ E_r(0) \right]^{c r^{2p} } .
\eqe
If $r$ is large enough that this $1-o_r(1)$ is at least $e^{-c}$, then re-arranging gives  
\eqbn
\BB P[E_r(0)] \geq e^{-1/r^{2p} } \geq  1 - r^{-2p}
\eqen
where here we have used the elementary inequality $1-e^{-x} \leq x$. Since $p> 1$ can be made arbitrarily large, we get that $E_r(0)$ occurs with superpolynomially high probability as $r\rta\infty$. Setting $r = \ep^{-1/\zeta}$ now concludes the proof. 
\end{proof}

\begin{proof}[Proof of Proposition~\ref{prop-ball-vol}]
By Lemmas~\ref{lem-gff-compare} and~\ref{lem-ball-vol-upper-tr} together with the scale invariance of the law of $h$, modulo additive constant, and the fact that the law of $h_2(0)$ is Gaussian with constant-order variance, it holds with superpolynomially high probability as $\ep\rta 0$ that
\eqbn
s^{4+\zeta} \leq \mu_{h}\left(B_s(w ; D_{h}) \right) \leq   s^{4 - \zeta }, 
\quad \forall s \leq  \ep^{1/2} \min\left\{ 1 ,  D_{h}(w , \bdy  B_2(0) ) \right\}, 
\quad \forall w \in B_2(0)   .
\eqen
By Proposition~\ref{prop-lqg-ball-upper}, it holds with superpolynomially high probability as $\ep\rta 0$ that $D_h(\bdy\BB D,\bdy B_2(0)) \geq \ep^{1/2}$. 
Combining these estimates gives~\eqref{eqn-ball-vol}. 
\end{proof}

\subsection{Estimates for the 0-quantum cone}
\label{sec-0cone-estimates}

We now want to shift attention from the whole-plane GFF to the 0-quantum cone, with a view toward proving Proposition~\ref{prop-ball-moment}.
To this end, we will transfer the main results of the preceding subsections to the case of a 0-quantum cone. We start with estimates for the LQG areas of LQG metric balls which follows from Proposition~\ref{prop-ball-vol}.

\begin{prop} \label{prop-0cone-ball-bounds}
Let $(\BB C , h , 0, \infty)$ be a 0-quantum cone. 
\begin{enumerate}
\item With superpolynomially high probability as $C\rta\infty$, one has $C^{-1} \leq \mu_h(B_1(0 ; D_h)) \leq C $.   \label{item-0cone-mass} 
\item For $\zeta\in (0,1)$, it holds with superpolynomially high probability as $C\rta \infty$ that $C^{-\zeta} \leq \mu_h(B_1(z; D_h)) \leq C^\zeta$ for each $z\in B_C(0 ; D_h)$. \label{item-0cone-union} 
\end{enumerate}
\end{prop}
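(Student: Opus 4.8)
The plan is to deduce Proposition~\ref{prop-0cone-ball-bounds} from the whole-plane GFF volume estimate of Proposition~\ref{prop-ball-vol} by transferring to the $0$-quantum cone, using its scale invariance~\eqref{eqn-cone-scale}--\eqref{eqn-cone-scale-mm} together with the local absolute continuity between (a suitable restriction of) the circle-average embedding of the $0$-quantum cone and the whole-plane GFF. First I would fix the circle-average embedding of $h$, so that $h|_{\BB D}$ agrees in law with the corresponding restriction of a whole-plane GFF normalized to have average $0$ on $\bdy\BB D$; comparing normalizations (the law of $h_1(0)$, say, is Gaussian with constant-order variance) and applying Proposition~\ref{prop-ball-vol} shows that, with superpolynomially high probability as $\delta\rta 0$, one has $s^{4+\zeta}\leq \mu_h(B_s(z;D_h))\leq s^{4-\zeta}$ for all $s\in(0,\delta]$ and all $z$ in, say, $B_{1/2}(0)$. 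This already controls the $\mu_h$-mass of \emph{small} LQG metric balls in a neighborhood of $0$ in the Euclidean sense.

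The issue is that part~\ref{item-0cone-mass} concerns $B_1(0;D_h)$, an LQG metric ball of unit \emph{LQG} radius — which is typically of tiny Euclidean size but not of a deterministic Euclidean size — and part~\ref{item-0cone-union} ranges over all $z\in B_C(0;D_h)$, again an unbounded-Euclidean-diameter region. To handle part~\ref{item-0cone-mass}, I would use the exact scaling property~\eqref{eqn-cone-scale-mm}: for each $b>0$, $(\BB C, D_h,\mu_h)\eqD(\BB C, b^{1/4}D_h, b\mu_h)$ as metric measure spaces, equivalently, via~\eqref{eqn-cone-scale}, $h\eqD h(R_b\cdot)+Q\log R_b-\frac{1}{\sqrt{8/3}}\log b$ with $R_b$ as in~\eqref{eqn-mass-hit-time}. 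Choosing $b$ so that the rescaled field is again the circle-average embedding and combining with Proposition~\ref{prop-lqg-ball-upper} (to know that the $D_h$-distance from $B_\rho(0)$ to $\bdy\BB D$ is bounded below with superpolynomially high probability, so that $B_1(0;D_h)$ is contained in a fixed Euclidean ball with high probability) and the small-ball volume estimate above, we get $C^{-1}\leq \mu_h(B_1(0;D_h))\leq C$ with superpolynomially high probability as $C\rta\infty$; the scaling lets us convert the ``small-$s$'' estimate into a statement about the order-one radius $s=1$ at the cost of rescaling, while controlling the Euclidean scale of $B_1(0;D_h)$ with the distance estimate.

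For part~\ref{item-0cone-union}, I would cover $B_C(0;D_h)$ by the $D_h$-balls $B_1(z;D_h)$ for $z$ ranging over an appropriate net and apply part~\ref{item-0cone-mass} (or rather its translated analogue) to each; this requires knowing that $B_C(0;D_h)$ has superpolynomially-bounded Euclidean diameter and that it can be covered by at most a polynomial-in-$C$ number of such balls, which follows from Proposition~\ref{prop-lqg-ball-upper} and standard covering/volume arguments, together with the translation invariance of the law of $h$ modulo additive constant (which is how one gets the estimate for $B_1(z;D_h)$ uniformly in $z$ — one must be careful that $0$ is a marked point of the cone, so strictly one uses local absolute continuity of $h|_V$ to a GFF on sets $V$ at positive distance from $0$ and $\infty$, exactly as in Lemma~\ref{lem-bm-ball-bounds}). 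A union bound over the polynomially-many balls in the cover, each failing with superpolynomially small probability, then gives the uniform bound $C^{-\zeta}\leq \mu_h(B_1(z;D_h))\leq C^\zeta$ over all $z\in B_C(0;D_h)$ with superpolynomially high probability, after adjusting $\zeta$.

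The main obstacle will be bookkeeping the interplay between the LQG metric scale and the Euclidean scale: the $0$-quantum cone's exact scaling~\eqref{eqn-cone-scale-mm} is the essential tool that makes ``radius-$1$ LQG ball'' estimates reduce to ``small-radius'' GFF estimates, but one must carefully track the random rescaling factor $R_b$, verify that the rescaled field is again a circle-average embedding (so Proposition~\ref{prop-ball-vol} applies without loss), and ensure the probabilistic errors remain superpolynomial after the rescaling and the union bound over a net. The log-singularity of $h$ at $0$ (and the structure near $\infty$) requires the same care as in Lemma~\ref{lem-bm-ball-bounds}, namely working on annular regions bounded away from $0$ and $\infty$ and invoking translation invariance modulo additive constant; this is routine but needs to be spelled out.
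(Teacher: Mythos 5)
For part~\ref{item-0cone-mass} your plan matches the paper's proof: fix the circle-average embedding, use Proposition~\ref{prop-lqg-ball-upper} to ensure $B_{C^{-1}}(0;D_h)\subset\BB D$ with superpolynomially high probability, apply Proposition~\ref{prop-ball-vol} to bound $\mu_h\bigl(B_{C^{-1}}(0;D_h)\bigr)$, and then invoke the exact scaling~\eqref{eqn-cone-scale-mm} (distances scale by $C$, areas by $C^4$) to convert the small-radius statement into the radius-$1$ one.

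For part~\ref{item-0cone-union}, however, you take a genuinely different and more cumbersome route that has a real gap. You propose to cover $B_C(0;D_h)$ by a polynomial-in-$C$ net of unit $D_h$-balls and then union-bound, obtaining the per-ball estimate by ``translated part~\ref{item-0cone-mass}'' via local absolute continuity ``as in Lemma~\ref{lem-bm-ball-bounds}.'' The problem is that unquantified local absolute continuity only transfers almost-sure statements; it does not transfer superpolynomial tail bounds, and indeed in the paper Lemma~\ref{lem-bm-ball-bounds} is only an a.s.\ statement whose upgrade to a superpolynomial-probability statement requires the $\wh h^{\tr}$-independence argument, not absolute continuity. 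So the per-center probability bound you want to feed into your union bound is not available by the mechanism you cite. There is also a circularity worry: the natural way to bound the size of the covering net (Lemma~\ref{lem-ball-covering}) is to \emph{use} Proposition~\ref{prop-0cone-ball-bounds}, which is what you are trying to prove. The key observation you miss is that Proposition~\ref{prop-ball-vol} is already stated \emph{uniformly over all $z\in\BB D$} with superpolynomially high probability, so no covering or translation is needed: the paper simply applies it to get $C^{-8-\zeta}\le\mu_h\bigl(B_{C^{-2}}(z;D_h)\bigr)\le C^{-8+\zeta}$ for all $z\in\BB D$ simultaneously, uses Proposition~\ref{prop-lqg-ball-upper} to get $B_{C^{-1}}(0;D_h)\subset\BB D$ with superpolynomially high probability, and then scales by~\eqref{eqn-cone-scale-mm} (distances by $C^2$, areas by $C^8$) to obtain the claim for all $z\in B_C(0;D_h)$ in one step.
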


By the scaling property~\eqref{eqn-cone-scale-mm} of the 0-quantum cone, the law of $(\BB C , D_h,\mu_h)$ as a metric measure space is invariant under scaling distances by $b^{1/4}$ and areas by $b$, for any $b>0$. We will often use this fact in conjunction with Proposition~\ref{prop-0cone-ball-bounds} without comment. 

\begin{proof}[Proof of Proposition~\ref{prop-0cone-ball-bounds}]
The proposition statement does not depend on the choice of embedding for $h$, so we can assume without loss of generality that $h$ is given the circle-average embedding. 
Recall that $h|_{\BB D}$ agrees in law with the corresponding restriction of a whole-plane GFF normalized so that its circle average over $\bdy\BB D$ is zero. 
By Proposition~\ref{prop-lqg-ball-upper}, it holds with superpolynomially high probability as $C\rta\infty$ that $B_{C^{-1}}(0 ; D_h) \subset \BB D$.
Hence Proposition~\ref{prop-ball-vol} (applied with $\zeta = 1$, say) shows that with superpolynomially high probability as $C\rta\infty$,
\eqb \label{eqn-0cone-unscaled}
C^{-5 } \leq \mu_h(B_{C^{-1}}(0 ; D_h) ) \leq C^{ -3 } .
\eqe 
By the scale invariance property of the 0-quantum cone~\eqref{eqn-cone-scale-mm}, we have $(\BB C , D_h , \mu_h) \eqD (\BB C , C  D_h , C^{ 4} \mu_h)$ as metric measure spaces. 
Therefore,~\eqref{eqn-0cone-unscaled} implies that with superpolynomially high probability as $C \rta\infty$, one has $C^{-1} \leq \mu_h(B_1(0 ; D_h)) \leq C $, which is assertion~\ref{item-0cone-mass}.

We now prove assertion~\ref{item-0cone-union} via a similar argument.
By Proposition~\ref{prop-ball-vol} and our above description of the law of $h|_{\BB D}$, it holds with superpolynomially high probability as $C\rta\infty$ that
\eqbn
C^{-8- \zeta} \leq \mu_h(B_{C^{-2}}( z ; D_h)) \leq C^{-8+\zeta}, \quad \forall z \in \BB D  .
\eqen
Furthermore, by Proposition~\ref{prop-lqg-ball-upper}, it holds with superpolynomially high probability as $C\rta\infty$ that $B_{C^{-1} }(0;D_h) \subset \BB D$.
Hence with superpolynomially high probability as $C\rta\infty$, 
\eqb \label{eqn-0cone-union0}
C^{-8- \zeta} \leq \mu_h(B_{C^{-2}}( z ; D_h)) \leq C^{-8+\zeta}, \quad \forall z \in B_{C^{-1}}(0 ; D_h) .
\eqe
We now scale distances by $C^2$ and areas by $C^8$ and apply~\eqref{eqn-cone-scale-mm} as above to deduce assertion~\ref{item-0cone-union} from~\eqref{eqn-0cone-union0}. 
\end{proof}

We next record an estimate to the effect that $D_h$-metric balls have to contain Euclidean metric balls of radius comparable to their Euclidean diameters.

\begin{prop} \label{prop-0cone-swallow}
Let $(\BB C ,h , 0, \infty)$ be a 0-quantum cone and let $\zeta\in (0,1)$. 
With superpolynomially high probability as $C\rta\infty$, each $D_h$-ball $B$ which is contained in $B_C(0)$ and which has $D_h$-radius at least $C^{-1}$ contains a Euclidean ball of radius at least $C^{-\zeta} \op{diam}(B)$.
\end{prop}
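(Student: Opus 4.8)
The plan is to deduce this from Proposition~\ref{prop-lqg-ball-union} (applied to the whole-plane GFF) together with the scale invariance~\eqref{eqn-cone-scale} of the $0$-quantum cone. Throughout I would work with the circle-average embedding, so that $h|_{\BB D}$ agrees in law with the whole-plane GFF normalized to have zero average on $\bdy\BB D$. By the locality property of $D_h$ (property~\ref{item-metric-local}), any $D_h$-ball whose closure lies in $\tfrac12\BB D$ is determined by $h|_{\BB D}$, so Propositions~\ref{prop-lqg-ball-union} and~\ref{prop-ball-vol}, the uniform version of Proposition~\ref{prop-lqg-ball-swallow} appearing in the proof of Proposition~\ref{prop-lqg-ball-union} (crossing a Euclidean annulus $B_{\rho r}(z)\setminus B_r(z)$ forces the presence of a Euclidean ball of radius $\geq \ep r$, uniformly in $z,r$), and the standard upper bound $\mu_h(B_\rho(z))\leq \rho^{3}$ all hold verbatim for the $0$-quantum cone field with $\tfrac12\BB D$ in place of $\BB D$. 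Combining Proposition~\ref{prop-ball-vol} with the Euclidean mass bound gives, as a consequence, that w.s.p.h.p.\ every $D_h$-ball $B\subset\tfrac12\BB D$ satisfies $\op{diam}(B)\geq (D_h\text{-rad}(B))^{N_0}$ for a fixed constant $N_0$.

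Next I would use~\eqref{eqn-cone-scale} to transport the statement about $D_h$-balls inside $B_C(0)$ into $\tfrac12\BB D$. Fix a small $\epsilon>0$, and for $k\in\BB N$ set $b_k:=e^{k\epsilon}$ and $h^{b_k}:=h(R_{b_k}\cdot)+Q\log R_{b_k}-\tfrac1{\sqrt{8/3}}\log b_k$, so that $h^{b_k}\eqD h$, $D_{h^{b_k}}(z,w)=b_k^{-1/4}D_h(R_{b_k}z,R_{b_k}w)$ and $\mu_{h^{b_k}}(\cdot)=b_k^{-1}\mu_h(R_{b_k}\,\cdot)$. From the behaviour of the circle-average process of the $0$-quantum cone, the map $k\mapsto R_{b_k}$ takes a value in the window $[2C,C^{3}]$ for at least one index $k$ lying in a range of $O(\log C)$ consecutive integers, and for such a good index one has $b_k^{-1/4}\geq C^{-5/2+o(1)}$ (using $\log b_k=\tfrac{10}{3}\log R_{b_k}+O(\sqrt{\log C})$). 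Now $z\mapsto R_{b_k}^{-1}z$ carries any $D_h$-ball $B\subset B_C(0)$ with $D_h$-radius $\geq C^{-1}$ to a $D_{h^{b_k}}$-ball $B'=R_{b_k}^{-1}B\subset\tfrac12\BB D$ with $D_{h^{b_k}}$-radius $b_k^{-1/4}(D_h\text{-rad}(B))\geq C^{-4}$ and with $\op{diam}(B')=R_{b_k}^{-1}\op{diam}(B)$. Since ``$B$ contains a Euclidean ball of radius $\geq\delta\op{diam}(B)$'' is invariant under Euclidean rescaling, it is equivalent to the same assertion for $B'$. By the last sentence of the previous paragraph (applied to $h^{b_k}$), $\op{diam}(B')\geq C^{-N}$ for a fixed $N$; and by the transferred Proposition~\ref{prop-lqg-ball-union} when $\op{diam}(B')$ is small, resp.\ the transferred uniform Proposition~\ref{prop-lqg-ball-swallow} applied at scale $\op{diam}(B')$ otherwise, $B'$ contains a Euclidean ball of radius $\geq\op{diam}(B')^{1+\zeta_0}=\op{diam}(B')^{\zeta_0}\op{diam}(B')\geq C^{-N\zeta_0}\op{diam}(B')$. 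Taking $\zeta_0:=\zeta/N$ yields the assertion for $B$, and since $B$ was arbitrary and the transferred estimates for all $O(\log C)$ candidate indices hold simultaneously w.s.p.h.p., we are done.

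The main obstacle is the probabilistic bookkeeping around the random scaling factor $R_{b}$. Because $\log R_b$ has Gaussian fluctuations of order $\sqrt{\log b}$ about $\tfrac3{10}\log b$, no single choice of $b$ polynomial in $C$ makes the event $\{R_b\in[2C,C^{3}]\}$ hold with superpolynomially (rather than merely polynomially) high probability, whereas taking $b$ superpolynomial in $C$ makes the rescaled radius $b^{-1/4}(D_h\text{-rad}(B))$ superpolynomially small and destroys the lower bound $\op{diam}(B')\geq C^{-N}$. This is exactly why one must work with the whole dyadic family $\{b_k\}$ at once: there is always a good index, the number of candidate indices is only $O(\log C)$, and for each fixed $k$ the failure probabilities of the transferred estimates for $h^{b_k}\eqD h$ (the same for every $k$) are superpolynomially small in $C$, so the union over the $O(\log C)$ candidates is still superpolynomially small. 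Making this reduction precise — and checking that the transferred versions of Propositions~\ref{prop-lqg-ball-union}, \ref{prop-lqg-ball-swallow}, and~\ref{prop-ball-vol} can be applied uniformly over the relevant $O(\log C)$ scales with a deterministic parameter tending to zero — is the technical heart of the proof.
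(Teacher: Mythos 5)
Your argument has a genuine gap at the step where you assert that, with superpolynomially high probability, every $D_h$-ball $B\subset\tfrac12\BB D$ satisfies $\op{diam}(B)\geq (D_h\text{-rad}(B))^{N_0}$ for a \emph{fixed} constant $N_0$, which you then use to conclude $\op{diam}(B')\geq C^{-N}$ for fixed $N$ and to set $\zeta_0:=\zeta/N$ once and for all. No such bound is available with superpolynomial probability. The ingredient you would combine with Proposition~\ref{prop-ball-vol} is a simultaneous Euclidean-ball mass bound of the form $\mu_h(B_\rho(z))\leq\rho^a$ for all $z\in\BB D$ and all $\rho\geq\delta$; but for any fixed $a$ this holds only with \emph{polynomially} high probability in $\delta^{-1}$, and the admissible exponent is far smaller than your $3$ (the estimate invoked in the proof of Lemma~\ref{lem-min-ball-diam} requires $a<10/3-2\sqrt{8/3}\approx 0.07$ for the exceptional probability even to vanish). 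Packaged as Lemma~\ref{lem-min-ball-diam}, one obtains only $\BB P[\op{diam}(B_\ep(z;D_h))\geq\ep^{q},\ \forall z\in\BB D]\geq 1-\ep^{\alpha(q)+o_\ep(1)}$, with $\alpha(q)$ finite for each fixed $q$. Plugging in $\ep\asymp C^{-4}$ and $q=N_0$ fixed gives failure probability $\asymp C^{-4\alpha(N_0)}$, polynomial in $C$. The conclusion you then draw is likewise only polynomially likely.

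The paper's proof confronts exactly this obstacle and resolves it by a limiting-exponent device: it fixes $\ul q\in(0,\tfrac{8}{(2+\sqrt{8/3})^2})$ and $\ol q>\tfrac{8}{(2-\sqrt{8/3})^2}$, proves the conclusion with probability $1-C^{-(3\alpha(\ol q))\wedge\beta(\ul q)+o_C(1)}$, and then uses that $\alpha(\ol q),\beta(\ul q)\to\infty$ as $\ol q\to\infty$, $\ul q\to 0$, so for each target power $p$ one chooses $\ul q,\ol q$ to beat it. To repair your version you must do the same, which forces $N_0$ (hence $N$) to grow and $\zeta_0=\zeta/N$ to shrink with the target rate $p$; one then has to verify that Proposition~\ref{prop-lqg-ball-union} can still be applied with this $p$-dependent $\zeta_0$ while keeping the error below $C^{-p}$ (the superpolynomial rate there depends on $\zeta$, so this requires care). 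Carrying the fixed-but-arbitrary parameters $\ul q,\ol q$ through the whole argument, as the paper does, is precisely the bookkeeping that avoids this issue.

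A secondary observation: the scaling reduction via the random radii $R_{b_k}$, the argument that some $k$ among $O(\log C)$ candidates has $R_{b_k}\in[2C,C^3]$, and the union bound over those scales, is substantially more elaborate than what the paper does. The paper applies Lemmas~\ref{lem-min-ball-diam}, \ref{lem-max-ball-diam} and Propositions~\ref{prop-lqg-ball-union}, \ref{prop-lqg-ball-upper} directly to $h|_{\BB D}$ (which agrees in law with the whole-plane GFF restricted to $\BB D$), obtains the estimate for $D_h$-balls inside $B_{C^{-1}}(0;D_h)$ with $D_h$-diameter in $[C^{-3},C^{-1}]$, and then invokes the scale invariance of the $0$-quantum cone once, at the very end, by adding a deterministic constant to $h$. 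This eliminates the $R_b$ machinery and the $\log b_k = \tfrac{10}{3}\log R_{b_k}+O(\sqrt{\log C})$ bookkeeping that you correctly identify as the technical heart of your version.
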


We will deduce Proposition~\ref{prop-0cone-swallow} from Propositions~\ref{prop-lqg-ball-upper} and~\ref{prop-lqg-ball-union}.
Before we can do so, however, we need some basic polynomial tail estimates for the minimal and maximal radii of Euclidean balls.
This is because Proposition~\ref{prop-0cone-swallow} only holds for LQG balls with sufficiently small Euclidean diameter and because $\op{diam}(B)^{1+\zeta}$ can be much smaller than $C^{-\zeta} \op{diam}(B)$ if $\op{diam}(B)$ is tiny. 
 
\begin{lem} \label{lem-min-ball-diam}
Let $h$ be a whole-plane GFF normalized so that $h_1(0) = 0$.  
For each $q > \tfrac{8}{(2-\sqrt{8/3})^2}$ and each $\ep \in (0,1)$, 
\eqb \label{eqn-min-ball-diam}
\BB P\left[ \op{diam}\left( B_\ep(z; D_h) \right) \geq \ep^q ,\: \forall z \in \BB D \right] \geq 1 -  \ep^{\alpha(q ) + o_\ep(1)}  ,
\eqe  
where the rate of the $o_\ep(1)$ depends only on $q$ and
\eqb \label{eqn-min-ball-exponent}
\alpha(q ) :=      \frac{3q}{16} \left(\frac{10}{3} - \frac{4}{q} \right)^2 - 2q  .
\eqe 
\end{lem}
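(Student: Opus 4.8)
The plan is to bound the probability that some $D_h$-metric ball $B_\ep(z;D_h)$ for $z\in\BB D$ has Euclidean diameter smaller than $\ep^q$ by a union bound over a polynomially fine grid of Euclidean balls, combined with the following key observation: if $\op{diam}(B_\ep(z;D_h)) < \ep^q$ then $B_\ep(z;D_h)$ is contained in some small Euclidean ball $B_{\ep^q}(w)$ with $w$ in the grid, and since $B_\ep(z;D_h)$ is a $D_h$-metric ball of radius $\ep$ centered at $z$, this forces the $D_h$-distance from $z$ to $\bdy B_{2\ep^q}(w)$ (say) to be at least $\ep$. In other words, the bad event implies that there is a small Euclidean ball $B_{\rho}(w)$ with $\rho = \ep^q$ across whose surrounding annulus the $D_h$-distance is unusually \emph{large} (at least $\ep = \rho^{1/q}$, which is much larger than the ``typical'' scale $\rho^{\text{(something)}}$ when $q$ is large). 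So the lemma reduces to an upper tail estimate for $D_h$-distance across a Euclidean annulus of radius $\rho$, at scale $\rho^{1/q}$.

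First I would set up the grid: cover $\BB D$ by $O(\ep^{-2q})$ Euclidean balls of radius $\ep^q$, so that the bad event is contained in the union, over grid points $w$, of the event $G_w := \{D_h(B_{\ep^q}(w), \bdy B_{2\ep^q}(w)) \geq \ep\}$ (up to adjusting constants/the grid). Then I would estimate $\BB P[G_w]$ using the scale and translation invariance of the law of $h$ modulo additive constant: writing $\rho = \ep^q$, the field $h(\rho\cdot + w) - h_\rho(w)$ has the same law as $h$ modulo the additive constant $h_\rho(w) \sim N(0, \log\rho^{-1})$, so by the LQG coordinate change and scaling ($D_{h+c} = e^{c/\sqrt6}D_h$), the event $G_w$ becomes, after rescaling, roughly $\{e^{h_\rho(w)/\sqrt6}\,\rho^{Q/\sqrt6}\, D_{\wt h}(B_1(0),\bdy B_2(0)) \gtrsim \ep = \rho^{1/q}\}$ for an independent copy $\wt h$. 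Since $D_{\wt h}(B_1(0),\bdy B_2(0))$ has at worst a power-law upper tail (indeed superpolynomially high probability of being bounded, by Proposition~\ref{prop-lqg-ball-upper}, but here I need the \emph{lower} bound on distances being large — actually I need an upper tail for this distance, which follows from the negative-moment bounds on $\mu_h(\BB D)$ or directly: distances across $B_2\setminus B_1$ exceeding $t$ has probability at most a polynomial in $1/t$, and in fact much better), the dominant contribution to $\BB P[G_w]$ comes from the Gaussian fluctuation of $h_\rho(w)$ needing to be atypically large, of order $(\tfrac1{\sqrt6}Q - \tfrac1q \cdot\sqrt6\cdot\tfrac1{?})$... more precisely one needs $h_\rho(w) \gtrsim \sqrt6(1/q - Q/\sqrt6)\log\rho^{-1}$, i.e. $h_\rho(w) \gtrsim (\sqrt6/q - Q)\log\rho^{-1}$; wait, one needs $h_\rho(w) + (Q/\sqrt6)\sqrt6\log\rho \geq \dots$; I'll organize this so the required deviation of the Gaussian $h_\rho(w)$ (variance $\log\rho^{-1} = q\log\ep^{-1}$) is of size $\beta\log\rho^{-1}$ where $\beta$ depends on $q$ and on $Q = 5/\sqrt6 = 2/\sqrt{8/3} + \sqrt{8/3}/2$. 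The Gaussian estimate then gives $\BB P[G_w] \leq \rho^{\beta^2/2 + o(1)} = \ep^{q\beta^2/2 + o(1)}$.

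Combining with the union bound over $O(\ep^{-2q})$ grid points yields a failure probability at most $\ep^{q\beta^2/2 - 2q + o(1)}$, and matching this with the stated exponent $\alpha(q) = \tfrac{3q}{16}\left(\tfrac{10}{3} - \tfrac4q\right)^2 - 2q$ pins down $\beta = \tfrac{\sqrt6}{4}\left(\tfrac{10}{3} - \tfrac4q\right)$ (so that $q\beta^2/2 = \tfrac{3q}{16}(\tfrac{10}{3}-\tfrac4q)^2$), and the constraint $\alpha(q) > 0$ (equivalently $q > \tfrac{8}{(2-\sqrt{8/3})^2}$, using $\sqrt6/4 \cdot 10/3 = 5/(2\sqrt6)$ and $Q = 5/\sqrt6$) is exactly the hypothesis, ensuring the bound is nontrivial. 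I expect the main obstacle to be the careful bookkeeping of the change-of-scale computation — tracking the additive constant, the power of $\rho$ from the $Q\log|\phi'|$ term, and the $1/\sqrt6$ factor relating field shifts to metric scalings — so that the Gaussian deviation exponent comes out to precisely $\beta$ above; a secondary point is confirming that the upper tail of $D_{\wt h}(B_1(0),\bdy B_2(0))$ (and of $\max_{z\in\BB D}|(h-\wt h)(z)|$-type errors in any coupling) is negligible compared to the Gaussian cost, which follows from Lemma~\ref{lem-gff-compare} and crude bounds and only contributes to the $o_\ep(1)$.
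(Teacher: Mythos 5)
Your strategy is sound in spirit but differs from the paper's, which avoids the metric-side upper tail you need. The paper observes that if $B_\ep(z;D_h)$ has Euclidean diameter $<\ep^q$ then it is contained in the Euclidean ball $B_{\ep^q}(z)$, and then plays two measure estimates against each other: a cited tail bound (the proof of Lemma~3.7 of~\cite{dg-lqg-dim}) saying that with probability at least $1-\delta^{3p^2/16-2}$ every Euclidean ball of radius $\delta$ centered in $\BB D$ has $\mu_h$-mass at most $\delta^{10/3-p}$, and Proposition~\ref{prop-ball-vol}, which gives $\mu_h(B_\ep(z;D_h))>\ep^{4+\zeta}$ for all $z\in\BB D$ with superpolynomially high probability. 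Choosing $p=10/3-(4+\zeta)/q$ and $\delta=\ep^q$ makes the two mass bounds collide, and the exponent $q(3p^2/16-2)\to\alpha(q)$ falls out. Your route replaces the measure bound by a direct Gaussian-deviation estimate for the circle average at scale $\ep^q$ applied to $D_h$-distances; the arithmetic ($\beta=Q-\sqrt6/q$, $q\beta^2/2-2q=\alpha(q)$) does match, because the cited measure estimate is itself a union bound plus the same Gaussian tail for $h_\delta(w)$ — you would essentially be reproving it on the metric side. The two things you would still need to supply are: (i) the event $G_w$ should be written with a supremum, $\{\sup_{z'\in\ol{B_{\ep^q}(w)}}D_h(z',\bdy B_{2\ep^q}(w))\geq\ep\}$, not the set-to-set infimum distance $D_h(B_{\ep^q}(w),\bdy B_{2\ep^q}(w))$ (the inequality $D_h(z,\bdy B_{2\ep^q}(w))\geq D_h(\bdy B_{\ep^q}(w),\bdy B_{2\ep^q}(w))$ goes the wrong way for your union bound); and (ii) an upper-tail bound for the rescaled, normalized crossing distance $\sup_{z'\in B_{1/2}(0)}D_{\wt h}(z',\bdy B_1(0))$. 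Point (ii) is genuine: Proposition~\ref{prop-lqg-ball-upper} gives the \emph{lower} tail of distances, and ``negative moments of $\mu_h(\BB D)$'' also push in the wrong direction; you would instead want positive-moment bounds on $\mu_h$ (or a direct construction of a path of controlled $D_h$-length). That estimate is standard but not established in the paper, which is one reason the authors route the argument through the measure rather than the metric.
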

\begin{proof}
By standard estimates for the $\sqrt{8/3}$-LQG measure (see, e.g., the proof of~\cite[Lemma 3.7]{dg-lqg-dim} with $\gamma=\sqrt{8/3}$), for $p  > 2\sqrt{8/3}$, 
it holds with probability at least $1-\delta^{ 3p^2/16  - 2}    $ that each Euclidean ball centered at a point of $\BB D$ with radius at least $\delta $ has $\mu_h$-mass at most $\delta^{ 10/3 - p   }$. 
We now fix $\zeta \in (0,1)$, which we will eventually send to 0. 
Applying the above estimate with $p = 10/3 - (4+\zeta)/q$ and $\delta = \ep^q =  \ep^{(4+\zeta)/(10/3 - p)}$ and shows that with probability at least $1 - \ep^{\alpha(q) + o_\zeta(1) + o_\ep(1)}$ (with the $o_\zeta(1)$ deterministic and independent of $\ep$), 
each Euclidean ball centered at a point of $\BB D$ with radius $\ep^{ q}$ has $\mu_h$-mass at most $\ep^{4+\zeta}$. 

By Proposition~\ref{prop-ball-vol}, with superpolynomially high probability as $\ep\rta 0$, $\mu_h(B_\ep(z;D_h))  > \ep^{4+\zeta}$ for each $z\in \BB D$. 
In particular, no such ball $B_\ep(z;D_h)$ can be contained in a Euclidean ball with $\mu_h$-mass at most $\ep^{4+\zeta}$. 
Combining this with the preceding paragraph and sending $\zeta\rta 0$ concludes the proof.
\end{proof}

\begin{lem} \label{lem-max-ball-diam}
Let $h$ be a whole-plane GFF normalized so that $h_1(0) = 0$.  
For each $q \in \left( 0 , \tfrac{8}{(2+\sqrt{8/3})^2} \right)$ and each $\ep \in (0,1)$, 
\eqb \label{eqn-max-ball-diam}
\BB P\left[ \op{diam}\left( B_\ep(z; D_h) \right) \leq \ep^q ,\: \forall z \in \BB D \right] \geq 1 -  \ep^{\beta(q ) + o_\ep(1)}  ,
\eqe  
where the rate of the $o_\ep(1)$ depends only on $q$ and
\eqbn 
\beta(q ) := \frac{3 q}{16  } \left( \frac{10}{3} + \frac{4}{q} \right)^2 - 2 q  .
\eqen
\end{lem}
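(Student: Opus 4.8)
The plan is to mirror the proof of Lemma~\ref{lem-min-ball-diam}, replacing the lower bound on $\mu_h$-mass of Euclidean balls by an upper bound and the upper bound on $\mu_h(B_\ep(z;D_h))$ by a lower bound. First I would recall the standard $\sqrt{8/3}$-LQG moment estimate in the ``other direction'': for $p > 2\sqrt{8/3}$ (equivalently, working with the correct sign so that the relevant exponent is $10/3 + p$ rather than $10/3 - p$), there is a polynomial-rate event on which every Euclidean ball centered at a point of $\BB D$ with radius at least $\delta$ has $\mu_h$-mass \emph{at least} $\delta^{10/3 + p}$; the probability of the complement is $\delta^{3p^2/16 - 2 + o_\delta(1)}$, by the same computation as in the proof of~\cite[Lemma 3.7]{dg-lqg-dim} but using the lower tail of the Gaussian multiplicative chaos mass instead of the upper tail. (The constraint $q < 8/(2+\sqrt{8/3})^2$ is exactly what makes the resulting exponent $\beta(q)$ positive, analogously to how $q > 8/(2-\sqrt{8/3})^2$ makes $\alpha(q)$ positive in Lemma~\ref{lem-min-ball-diam}.)

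Next, fix $\zeta \in (0,1)$ (to be sent to $0$ at the end) and apply this estimate with $\delta = \ep^q$ and $p = (4-\zeta)/q - 10/3$ — note $p > 0$ and indeed $p > 2\sqrt{8/3}$ precisely because $q$ lies in the stated range and $\zeta$ is small — to conclude that with probability at least $1 - \ep^{\beta(q) + o_\zeta(1) + o_\ep(1)}$, every Euclidean ball centered at a point of $\BB D$ with radius $\ep^q$ has $\mu_h$-mass at least $\ep^{4-\zeta}$. On the other hand, Proposition~\ref{prop-ball-vol} gives that with superpolynomially high probability as $\ep\rta 0$, $\mu_h(B_s(z;D_h)) \leq s^{4-\zeta}$ for all $s \leq \ep$ and all $z\in\BB D$; in particular $\mu_h(B_\ep(z;D_h)) \leq \ep^{4-\zeta}$. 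Combining the two events: on their intersection, for each $z\in\BB D$ the metric ball $B_\ep(z;D_h)$ has $\mu_h$-mass at most $\ep^{4-\zeta}$, so it cannot contain any Euclidean ball of radius $\ep^q$ (which has $\mu_h$-mass at least $\ep^{4-\zeta}$); hence $\op{diam}(B_\ep(z;D_h)) \leq 2\ep^q$ — and after adjusting $q$ slightly (or absorbing the factor $2$ into $\ep^{o_\ep(1)}$) we get $\op{diam}(B_\ep(z;D_h)) \leq \ep^q$. Finally, sending $\zeta\rta 0$ turns the $o_\zeta(1)$ in the exponent into the stated $o_\ep(1)$, yielding~\eqref{eqn-max-ball-diam}.

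The main obstacle I expect is bookkeeping around the lower-tail GMC estimate rather than anything conceptual: the proof of~\cite[Lemma 3.7]{dg-lqg-dim} is written for the upper tail of $\mu_h$-mass, and one must check that the analogous lower-tail bound (``balls are not too small'') holds with the claimed exponent $3p^2/16 - 2$, uniformly over all Euclidean balls centered in $\BB D$ via a union bound over a dyadic grid of scales and centers. This is routine — it follows from the negative moments of $\mu_h(B_r(z))$ (which are finite of all orders, e.g.\ via~\cite[Theorem 2.12]{rhodes-vargas-review}) together with the scaling relation $\mu_h(B_r(z)) \eqD r^{2 + \gamma^2/2 + o(1)} \cdot (\text{constant-order})$ and a Gaussian tail bound on the circle average — but it requires care to get the exponent to match $\beta(q)$ exactly. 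Everything else (the choice of $p$, the cancellation of powers, the interplay with Proposition~\ref{prop-ball-vol}) is a direct transcription of the Lemma~\ref{lem-min-ball-diam} argument with inequalities reversed.
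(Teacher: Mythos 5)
There is a genuine gap in your proof, and it sits exactly where you wave your hands: the inference ``$B_\ep(z;D_h)$ cannot contain any Euclidean ball of radius $\ep^q$, hence $\op{diam}(B_\ep(z;D_h)) \leq 2\ep^q$'' is false in general. A set may fail to contain any Euclidean ball of radius $\ep^q$ and still have enormous Euclidean diameter (think of a long thin snake). This direction of the argument is genuinely harder than Lemma~\ref{lem-min-ball-diam}: there, one may observe that if $\op{diam}(B_\ep(z;D_h)) < \ep^q$ then $B_\ep(z;D_h) \subset B_{\ep^q}(z)$ for the center $z\in\BB D$, so the $\mu_h$-mass upper bound on Euclidean balls directly contradicts the $\mu_h$-mass lower bound on LQG balls from Proposition~\ref{prop-ball-vol}. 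The contrapositive you need here does not follow by merely ``reversing inequalities.''

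To close the gap, you must invoke Proposition~\ref{prop-lqg-ball-union}, which supplies exactly the missing geometric input: with superpolynomially high probability, a $D_h$-metric ball $B\subset\BB D$ with $\op{diam}(B)\leq\delta$ contains a Euclidean ball of radius at least $\op{diam}(B)^{1+\zeta}$. The paper's argument therefore introduces an auxiliary exponent $\wt q\in(q,8/(2+\sqrt{8/3})^2)$, applies the GMC lower-tail mass estimate at the coarser Euclidean scale $\ep^{\wt q}$, and argues: if some $B_\ep(z;D_h)$ had Euclidean diameter $\geq\ep^q$, then by Proposition~\ref{prop-lqg-ball-union} it would contain a Euclidean ball of radius $\geq\ep^{\wt q}$ (this uses $\wt q > q$ to absorb the $1+\zeta$ power), which would force $\mu_h(B_\ep(z;D_h))\geq\ep^{4-\zeta}$, contradicting the upper bound from Proposition~\ref{prop-ball-vol}. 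Sending $\zeta\to 0$ and then $\wt q\to q$ gives the clean exponent $\beta(q)$. The remaining ingredients of your sketch (the lower-tail GMC estimate with exponent $3p^2/16-2$, the choice $p=(4-\zeta)/\wt q-10/3$, the appeal to Proposition~\ref{prop-ball-vol}) match the paper, but without Proposition~\ref{prop-lqg-ball-union} — and without the slack parameter $\wt q$ it necessitates — the argument does not go through.
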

\begin{proof} 
Fix $\wt q \in  \left( q , \tfrac{8}{(2+\sqrt{8/3})^2} \right)$, which we will eventually send to $q$, and $\zeta \in (0,1)$, which we will eventually send to 0. 
By standard estimates for the $\sqrt{8/3}$-LQG measure (see, e.g.,~\cite[Lemma 2.5]{gms-harmonic} with $\gamma=\sqrt{8/3}$), for $p  > 2\sqrt{8/3}$ 
it holds with probability at least $1-\delta^{3p^2/16 - 2}    $ that each Euclidean ball centered at a point of $\BB D$ with radius at least $\delta  $ has $\mu_h$-mass at least $\delta^{10/3 + p}$. 
Applying the above estimate with $p =   (4-\zeta)/\wt q - 10/3$ and $\delta = \ep^{\wt q} =  \ep^{(4-\zeta)/(10/3 + p)}$ and shows that with probability at least $1 - \ep^{\beta(\wt q) + o_\zeta(1) + o_\ep(1)}$ (with the $o_\zeta(1)$ deterministic and independent of $\ep$), 
each Euclidean ball centered at a point of $\BB D$ with radius at least $\ep^{ \wt q}$ has $\mu_h$-mass at least $\ep^{4-\zeta}$. 

By Proposition~\ref{prop-ball-vol} (applied with $B_2(0)$ in place of $\BB D$ and $\ep^{1/2}$ in place of $\ep$), we see that with superpolynomially high probability as $\ep\rta 0$, $\mu_h(B_\ep(z;D_h)) <  \ep^{4 - \zeta}$ for each $z\in \BB D$. 
By Proposition~\ref{prop-lqg-ball-union}, it holds with superpolynomially high probability as $\ep\rta 0$ that each $D_h$-ball centered at a point of $\BB D$ which has Euclidean diameter at least $\ep^q$ contains a Euclidean ball of radius at least $\ep^{\wt q}$.
By the preceding estimates, with probability at least $1 - \ep^{\beta(\wt q) + o_\zeta(1) + o_\ep(1)}$, each such $D_h$-ball has $\mu_h$-mass at least $\ep^{4-\zeta}$ and hence $D_h$-radius strictly larger than $\ep$.  
Sending $\zeta \rta 0$ and then $\wt q\rta q$ concludes the proof.
\end{proof}

\begin{proof}[Proof of Proposition~\ref{prop-0cone-swallow}] 
Let $\ul q \in \left( 0 , \tfrac{8}{(2+\sqrt{8/3})^2}\right)$ and $  \ol q  > \tfrac{8}{(2-\sqrt{8/3})^2}$. We will eventually send $\ul q$ to 0 and $\ol q$ to $\infty$. 
By Lemmas~\ref{lem-min-ball-diam} and~\ref{lem-max-ball-diam}, it holds with probability at least $1 - C^{- (3 \alpha(\ol q) ) \wedge \beta(\ul q)  + o_C(1)}$ that each $D_h$-ball contained in $\BB D$ with $D_h$-diameter between $C^{-3}$ and $C^{-1}$ has Euclidean diameter between $C^{-3 \ol q}$ and $C^{-\ul q}$. 
By Proposition~\ref{prop-lqg-ball-union}, it holds with superpolynomially high probability as $C\rta\infty$ that each $D_h$-ball contained in $\BB D$ with Euclidean diameter at most $C^{-\ul q}$ contains a Euclidean ball of radius at least $\op{diam}(B)^{1+\zeta/(2\ol q)}$. 
By Proposition~\ref{prop-lqg-ball-upper}, it holds with superpolynomially high probability as $C\rta\infty$ that $B_{C^{-1}}(0;D_h)\subset \BB D$.
Hence with probability at least $1 - C^{-(3\alpha(\ol q)) \wedge \beta(\ul q)  + o_C(1)}$, each $D_h$-ball contained in $B_{C^{-1}}(0;D_h)$ with $D_h$-diameter in $[C^{-3} , C^{-1}]$ contains a Euclidean ball of radius at least $\op{diam}(B)^{1+\zeta/(3 \ol q)} \geq C^{-\zeta} \op{diam}(B)$. 
This statement does not depend on the choice of embedding $h$, so we can add $\frac{2}{\sqrt 6} \log C$ to $h$ (i.e., scale distances by $C^2$) to get that the event in the statement of the lemma holds with probability at least $1 - C^{-(3\alpha(\ol q)) \wedge \beta(\ul q)  + o_C(1)}$. 
Since $\alpha(\ol q) , \beta(\ul q) \rta\infty$ as $\ol q\rta\infty$ and $\ul q \rta 0$, this concludes the proof.
\end{proof}

\subsection{Proof of the moment estimate}
\label{sec-moment-proof}

Throughout this subsection, we let $h$ be the circle-average embedding of a 0-quantum cone in $(\BB C , 0, \infty)$. 
We also fix $\lambda = 1$ and define the Poisson point process $\mcl P := \mcl P_h^1$ and the collection of Voronoi cells $\mcl H = \mcl H_h^1$. 
We recall that $H_0$ is the a.s.\ unique cell in $\mcl H$ which contains 0. 

To prove Proposition~\ref{prop-ball-moment} (and thereby Proposition~\ref{prop-cell-moment}), we first establish an upper bound for the $D_h$-diameter of a Voronoi cell (Lemma~\ref{lem-cell-in-ball}) by building a ``wall" of Voronoi cells in the annulus between two concentric $D_h$-balls (Lemma~\ref{lem-separating-balls}). 
Using this and Proposition~\ref{prop-0cone-swallow} allows us to simultaneously bound $\op{diam}(B_H)^2 / \op{area}(B_H)$ for all of the Voronoi cells $H$ with $0\in B_H$, where here we recall that $B_{H }$ is the smallest $D_h$-ball centered at the center point of $H$ which contains $H$ (Lemma~\ref{lem-ball-ratio}). 
We will then prove an upper bound for the number of cells with $0\in B_H$ and for the maximal degree of these cells (Lemma~\ref{lem-cell-intersect}) and combine these estimates to get Proposition~\ref{prop-ball-moment}. 

\begin{lem} \label{lem-ball-covering}
Let $\zeta\in (0,1)$. 
With superpolynomially high probability as $C\rta \infty$, the ball $B_C(0;D_h)$ is contained in the union of at most $C^{4+\zeta}$ $D_h$-metric balls of radius 1.
\end{lem}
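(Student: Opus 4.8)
Looking at this lemma, I need to show that with superpolynomially high probability, the metric ball $B_C(0;D_h)$ of radius $C$ can be covered by at most $C^{4+\zeta}$ metric balls of radius $1$.

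Let me think about the strategy. The natural approach is a greedy covering argument combined with volume estimates. The key point is that the $\sqrt{8/3}$-LQG area measure $\mu_h$ assigns mass roughly $C^4$ to $B_C(0;D_h)$ (by the scaling property of the 0-quantum cone plus Proposition~\ref{prop-0cone-ball-bounds}), and each metric ball of radius $1$ has $\mu_h$-mass at least of constant order. A greedy packing of disjoint balls of radius $1/2$ then gives a bound on the number needed, since disjoint balls of radius $1/2$ each contain a definite amount of mass. The subtlety is that I need lower bounds on the mass of radius-$1/2$ balls that hold *uniformly* over all centers in $B_C(0;D_h)$, and the error in such a uniform bound should be controlled — this is exactly what part 2 of Proposition~\ref{prop-0cone-ball-bounds} provides (after rescaling).

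\begin{proof}
We may assume $h$ has the circle-average embedding; the statement does not depend on the choice of embedding. Fix $\zeta \in (0,1)$ and let $\zeta' \in (0,\zeta)$ be a small constant to be chosen. Consider a maximal collection of points $z_1,\dots,z_M \in B_C(0;D_h)$ which are pairwise at $D_h$-distance at least $1$ from one another. By maximality, the balls $B_1(z_i ; D_h)$ cover $B_C(0;D_h)$, so it suffices to show that $M \leq C^{4+\zeta}$ with superpolynomially high probability as $C\to\infty$. The balls $B_{1/2}(z_i ; D_h)$ are pairwise disjoint and all contained in $B_{C+1}(0 ; D_h) \subset B_{2C}(0;D_h)$, so
\eqb \label{eqn-ball-covering-packing}
\sum_{i=1}^M \mu_h\left( B_{1/2}(z_i ; D_h) \right) \leq \mu_h\left( B_{2C}(0 ; D_h) \right) .
\eqe

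We now bound the two sides of~\eqref{eqn-ball-covering-packing}. For the right side, the scaling property~\eqref{eqn-cone-scale-mm} of the 0-quantum cone gives $(\BB C , D_h , \mu_h) \eqD (\BB C , (2C)^{1/4} D_h , 2C \, \mu_h)$ as metric measure spaces wait — I need to be careful: scaling distances by $(2C)^{1/4}\cdot$ no, let me redo. By~\eqref{eqn-cone-scale-mm} with $b = (2C)^4$, we have $(\BB C, D_h, \mu_h) \eqD (\BB C, (2C) D_h, (2C)^4 \mu_h)$, so $\mu_h(B_{2C}(0;D_h)) \eqD (2C)^4 \mu_h(B_1(0;D_h))$, and by Proposition~\ref{prop-0cone-ball-bounds}(1) it holds with superpolynomially high probability as $C\to\infty$ that $\mu_h(B_{2C}(0;D_h)) \leq C^{4 + \zeta'}$. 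For the left side, we apply Proposition~\ref{prop-0cone-ball-bounds}(2) after rescaling: by~\eqref{eqn-cone-scale-mm} with $b = 2^4$, the law of $(\BB C, D_h,\mu_h)$ equals that of $(\BB C, 2 D_h, 2^4\mu_h)$, so the statement of Proposition~\ref{prop-0cone-ball-bounds}(2) translates into the assertion that with superpolynomially high probability as $C\to\infty$, $\mu_h(B_{1/2}(z; D_h)) \geq C^{-\zeta'}$ for every $z \in B_{C}(0;D_h)$ (apply the proposition with a suitable power of $C$ in place of $C$, using that $B_C(0;D_h) \subset B_{C'}(0;D_h)$ for $C' \geq C$ and that the quantitative tail is polynomial).

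Combining these two estimates with~\eqref{eqn-ball-covering-packing}, we get that with superpolynomially high probability as $C\to\infty$,
\eqb
M \cdot C^{-\zeta'} \leq \sum_{i=1}^M \mu_h\left( B_{1/2}(z_i ; D_h) \right) \leq \mu_h\left(B_{2C}(0;D_h)\right) \leq C^{4+\zeta'} ,
\eqe
so $M \leq C^{4 + 2\zeta'}$. Choosing $\zeta' = \zeta/2$ concludes the proof.
\end{proof}

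The main obstacle here is getting the \emph{uniform} lower bound $\mu_h(B_{1/2}(z;D_h)) \geq C^{-\zeta'}$ to hold with superpolynomially high probability simultaneously over all $z \in B_C(0;D_h)$, which is precisely why Proposition~\ref{prop-0cone-ball-bounds}(2) is stated with a union over $z \in B_C(0;D_h)$ and with superpolynomially high probability; without that input one would only get an a.s.\ statement with an uncontrolled constant. The rescaling bookkeeping via~\eqref{eqn-cone-scale-mm} to line up the radii $1/2$, $1$, $C$, $2C$ is routine but must be done carefully so that the polynomial error rates in $C$ are preserved.
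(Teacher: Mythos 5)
Your proof is correct and follows essentially the same approach as the paper: take a maximal $1$-net of $B_C(0;D_h)$, use disjointness of the radius-$1/2$ balls together with the mass bounds from Proposition~\ref{prop-0cone-ball-bounds} (and the scale invariance~\eqref{eqn-cone-scale-mm}) to bound the number of centers. A cosmetic note: you should remove the stray internal deliberation (``wait --- I need to be careful \dots no, let me redo'') that was left in the middle of the argument.
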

\begin{proof}
Let $\mcl Z$ be a maximal collection of points in $B_C(0;D_h)$ such that the balls $B_{1/2}(z;D_h)$ for $z\in\mcl Z$ are disjoint. 
By Proposition~\ref{prop-0cone-ball-bounds}, it holds with superpolynomially high probability as $C\rta\infty$ that $\min_{z\in \mcl Z} \mu_h(B_{1/2}(z;D_h)) \geq C^{-\zeta/2}$ and $\mu_h(B_C(z;D_h)) \leq C^{4+\zeta/2}$, which implies that $\#\mcl Z\leq C^{4+\zeta}$. 
By the maximality of $\mcl Z$, each point of $B_C(0;D_h)$ is contained in $B_1(z;D_h)$ for some $z\in\mcl Z$.
\end{proof}

The following lemma shows that Voronoi cells are extremely unlikely to have a larger quantum diameter than one would expect. 

\begin{lem} \label{lem-cell-in-ball}
Fix $\zeta\in (0,1)$. 
With superpolynomially high probability as $C\rta\infty$, each cell in $\mcl H$ which intersects $B_C(0;D_h)$ has $D_h$-diameter at most $C^\zeta$. 
\end{lem}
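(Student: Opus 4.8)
The plan is to show that if a Voronoi cell $H$ intersects $B_C(0;D_h)$ but has large $D_h$-diameter, then we can find two points of $\mcl P$ whose cells are both ``far'' from the rest of $\mcl P$ in a quantitative sense that is extremely unlikely. The cleaner route, however, is to exploit that a cell $H_z$ with large $D_h$-diameter forces the point $z$ of $\mcl P$ to be isolated: there must be a point $u \in H_z$ with $D_h(u,z) \geq \tfrac12 C^\zeta$, and by the definition of the Voronoi cell this means the $D_h$-ball $B_{D_h(u,z)}(u;D_h)$ — or rather the ball $B_{D_h(u,z)/2}(w;D_h)$ around the midpoint $w$ of a geodesic from $u$ to $z$ — contains no point of $\mcl P$ other than (possibly) $z$ itself, hence at most one point of $\mcl P$. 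So the event that some cell meeting $B_C(0;D_h)$ has $D_h$-diameter $> C^\zeta$ is contained in the event that some $D_h$-ball of radius $\tfrac14 C^\zeta$ centered at a point of $B_{2C}(0;D_h)$ contains at most one point of $\mcl P$. (Here I would first use the bi-Hölder property and Proposition~\ref{prop-lqg-ball-upper} to control where such balls can live in Euclidean terms, but the essential content is the $\mcl P$-mass of the ball.)

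The key step is then a volume/Poisson estimate. By Proposition~\ref{prop-0cone-ball-bounds} (more precisely its local version, item~\ref{item-0cone-union}, together with the scaling property~\eqref{eqn-cone-scale-mm} of the $0$-quantum cone), it holds with superpolynomially high probability as $C \to \infty$ that every $D_h$-ball of radius $\tfrac14 C^\zeta$ centered at a point of $B_{2C}(0;D_h)$ has $\mu_h$-mass at least $C^{3\zeta}$, say — one gets this by applying the known estimate for balls of radius $1$ after rescaling distances by $C^{-\zeta}$ (which rescales areas by $C^{-4\zeta}$) and absorbing the loss. Conditionally on $h$, $\mcl P$ is a Poisson point process with intensity $\mu_h$, so the conditional probability that a \emph{fixed} such ball contains at most one point of $\mcl P$ is at most $(1 + \mu_h(\text{ball})) e^{-\mu_h(\text{ball})} \leq (1 + C^{3\zeta})e^{-C^{3\zeta}}$, which decays faster than any power of $C$. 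A union bound over a net of $O(C^{4+o(1)})$ such balls — the number needed follows from Lemma~\ref{lem-ball-covering} plus the bi-Hölder continuity of $D_h$ to convert between $D_h$-nets and the relevant Euclidean scales — still gives a superpolynomially small bound, since $e^{-C^{3\zeta}}$ beats any polynomial in $C$. Combining with the superpolynomially-high-probability volume event yields the claim.

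The main obstacle is the reduction to a ball with few Poisson points and, simultaneously, controlling the Euclidean locations well enough to take a union bound over a manageable net. One has to be careful that the ``at most one point of $\mcl P$'' event for a cell-with-large-diameter is genuinely implied: a cell $H_z$ of $D_h$-diameter $\geq C^\zeta$ contains a point $u$ with $D_h(u,z) \geq \tfrac12 C^\zeta$, and every point of the $D_h$-ball of radius $\tfrac14 C^\zeta$ around the geodesic midpoint $w$ is strictly $D_h$-closer to $z$ than to any other point of $\mcl P\setminus\{z\}$ (by the triangle inequality and the Voronoi-cell property $D_h(u,z) \le D_h(u,x)$ for $x \in \mcl P \setminus \{z\}$, applied along the geodesic), so that ball indeed meets $\mcl P$ in at most the single point $z$. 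Once this deterministic implication is in hand, the probabilistic input is exactly Proposition~\ref{prop-0cone-ball-bounds}, the Poisson tail bound, and a union bound, all of which are routine; the only genuinely delicate point is choosing the net and the exponents $\zeta$ so that the bi-Hölder distortion of $D_h$ versus the Euclidean metric does not spoil the superpolynomial decay, which is handled by noting we have superpolynomial room to spare since $e^{-C^{3\zeta}}$ dominates $C^{-p}$ for every $p$.
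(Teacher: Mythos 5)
Your core idea is correct and gives a valid proof, but it takes a genuinely different route from the paper's. The paper builds a ``wall'' of $D_h$-balls via Lemma~\ref{lem-separating-balls}: it covers $B_C(0;D_h)$ by unit $D_h$-balls $B_1(z;D_h)$, and for each center $z$ constructs a collection of $D_h$-balls inside the annulus $\ol{B_{3C^\zeta}(z;D_h)\setminus B_{C^\zeta}(z;D_h)}$ whose union separates; it then shows (via the volume bound and the Poisson tail) that with superpolynomially high probability every ball of the wall, plus $B_{C^\zeta/4}(z;D_h)$, contains a Poisson point, which traps every cell meeting $B_1(z;D_h)$ inside $B_{3C^\zeta}(z;D_h)$. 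Your approach dispenses with the separating-balls construction entirely: you observe that if a cell $H_z$ meeting $B_C(0;D_h)$ has $D_h$-diameter $>C^\zeta$, then some point $w \in H_z$ lying within $O(C)$ of the origin satisfies $D_h(w,z)\geq cC^\zeta$, hence $B_{cC^\zeta}(w;D_h)\cap\mcl P=\emptyset$; by the volume bound (Proposition~\ref{prop-0cone-ball-bounds} after rescaling) the $\mu_h$-mass of such a ball is $\geq C^{c'\zeta}$, and a Poisson tail bound plus a union bound over a fixed $h$-measurable net of order $C^{O(1)}$ centers kills this event. Both proofs lean on the same quantitative inputs (Proposition~\ref{prop-0cone-ball-bounds}, the Poisson tail, a counting lemma in the spirit of Lemma~\ref{lem-ball-covering}), but yours avoids Lemma~\ref{lem-separating-balls} and is slightly more direct.

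Two small imprecisions in your write-up worth fixing. First, rather than ``the midpoint of a geodesic from $u$ to $z$'', you should take the point at $D_h$-distance $\tfrac14 C^\zeta$ along a geodesic from a point of $H_z\cap B_C(0;D_h)$ toward $z$ (or from $z$, depending on which is nearby); this keeps the ball center at $D_h$-distance $O(C)$ from the origin. The midpoint is not guaranteed to lie near the origin when the cell is long, which is precisely the uncontrolled situation. With this modification, the argument you sketch (the Voronoi inequality $D_h(w,x)\geq D_h(w,z)$ for $w\in H_z$ and $x\in\mcl P\setminus\{z\}$, applied along the geodesic) gives that the $D_h$-ball of radius $\tfrac18 C^\zeta$ around $w$ contains no point of $\mcl P$ at all --- not merely ``at most one'' --- since $z$ itself is also at $D_h$-distance $\geq\tfrac18 C^\zeta$ from $w$. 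Second, your invocation of the bi-H\"older property to pass to a Euclidean net is unnecessary: you can work directly with a maximal $D_h$-separated set in $B_{3C}(0;D_h)$ at scale $\tfrac{1}{16}C^\zeta$; its cardinality is at most $C^{4+o(1)}$ by the volume bounds of Proposition~\ref{prop-0cone-ball-bounds} (the same packing argument as in Lemma~\ref{lem-ball-covering}), and since this net is determined by $h$ alone, the conditional Poisson tail bound $e^{-\mu_h(\text{ball})}$ applies to each net ball independently of $\mcl P$. With these cleanups your proof is complete.
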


To prove Lemma~\ref{lem-cell-in-ball}, we will use the following lemma to build a ``wall" of Voronoi cells which separate the boundaries of two concentric $D_h$-balls.

\begin{lem} \label{lem-separating-balls}
For $\zeta\in(0,1)$, it holds with superpolynomially high probability as $C\rta\infty$ that the following is true.
For each $z\in B_C(0;D_h)$, we can find a finite collection of at most $C^\zeta$ $D_h$-metric balls of radius $1/2$ which are contained in $ B_{3 }(z;D_h) \setminus B_1(z;D_h) $ and whose union disconnects $B_1(z;D_h)$ from $\BB C\setminus B_{3}(z;D_h)$.
\end{lem}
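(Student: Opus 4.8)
The plan is to build the required ``wall'' of radius-$1/2$ balls from a maximal packing of radius-$1/4$ balls whose centers lie on the metric sphere $\{w\in\BB C : D_h(z,w)=2\}$. Fix $z\in B_C(0;D_h)$ and, using Zorn's lemma, let $\mcl Z_z$ be a maximal subset of $\{w : D_h(z,w)=2\}$ with the property that the balls $\{B_{1/4}(w;D_h)\}_{w\in\mcl Z_z}$ are pairwise disjoint. I claim $\{B_{1/2}(w;D_h)\}_{w\in\mcl Z_z}$ has the desired properties. First, for $w\in\mcl Z_z$ the triangle inequality gives $B_{1/2}(w;D_h)\subset B_{5/2}(z;D_h)\setminus B_{3/2}(z;D_h)\subset B_3(z;D_h)\setminus B_1(z;D_h)$, so every ball in the collection lies in the required annulus. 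Second, maximality of $\mcl Z_z$ forces every point $v$ with $D_h(z,v)=2$ to satisfy $D_h(v,w)<1/2$ for some $w\in\mcl Z_z$ (otherwise $B_{1/4}(v;D_h)$ would be disjoint from all $B_{1/4}(w;D_h)$, contradicting maximality), so $\bigcup_{w\in\mcl Z_z}B_{1/2}(w;D_h)\supseteq\{w:D_h(z,w)=2\}$. Finally, since $D_h$ induces the Euclidean topology, for any connected set $A\subset\BB C$ meeting both $B_1(z;D_h)$ and $\BB C\setminus B_3(z;D_h)$ the continuous function $D_h(z,\cdot)|_A$ takes a value $<1$ and a value $>3$, hence also the value $2$; thus $A$ meets the metric sphere and therefore the union of the balls. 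This gives the disconnection statement (which is how the lemma is used in Lemma~\ref{lem-cell-in-ball}, with $A$ a Voronoi cell).

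It remains to bound $\#\mcl Z_z$ by $C^\zeta$, uniformly over $z\in B_C(0;D_h)$, with superpolynomially high probability as $C\rta\infty$. Since the balls $B_{1/4}(w;D_h)$, $w\in\mcl Z_z$, are pairwise disjoint and all contained in $B_3(z;D_h)$, we have $\sum_{w\in\mcl Z_z}\mu_h(B_{1/4}(w;D_h))\leq\mu_h(B_3(z;D_h))$, so
\[
\#\mcl Z_z\;\leq\;\frac{\mu_h(B_3(z;D_h))}{\min_{w\in\mcl Z_z}\mu_h(B_{1/4}(w;D_h))}.
\]
Now I invoke Proposition~\ref{prop-0cone-ball-bounds} together with the exact scaling property~\eqref{eqn-cone-scale-mm} of the $0$-quantum cone — which lets one freely rescale distances and areas and thereby reduce statements about radius-$3$ and radius-$1/4$ balls to statements about unit-radius balls — to conclude that, with superpolynomially high probability as $C\rta\infty$: (i) $\mu_h(B_3(z;D_h))\leq C^{\zeta/2}$ for every $z\in B_C(0;D_h)$, and (ii) $\mu_h(B_{1/4}(w;D_h))\geq C^{-\zeta/2}$ for every $w\in B_{2C}(0;D_h)$. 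Since $\mcl Z_z\subset\{w:D_h(z,w)=2\}\subset B_{C+2}(0;D_h)\subset B_{2C}(0;D_h)$ for $C$ large, combining (i), (ii) and the displayed inequality gives $\#\mcl Z_z\leq C^{\zeta/2}\cdot C^{\zeta/2}=C^\zeta$ simultaneously for all $z\in B_C(0;D_h)$ on the relevant event. (In particular $\mcl Z_z$ is finite on this event, so the collection produced is indeed finite with at most $C^\zeta$ elements.)

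The only genuinely technical point is the bookkeeping in the last step: Proposition~\ref{prop-0cone-ball-bounds} is stated for unit-radius $D_h$-balls, so deriving the uniform mass bounds (i) and (ii) for balls of radius $3$ and $1/4$ requires applying the equality in law $(\BB C,D_h,\mu_h)\eqD(\BB C,b^{1/4}D_h,b\mu_h)$ with $b$ a fixed power of $3$ or of $4$, re-identifying the relevant metric balls (e.g.\ $B_{1/4}(w;D_h)=B_1(w;4D_h)$) and the domain $B_{C'}(0;D_h)$ on which the uniform estimate is valid, and then choosing $C'$ a fixed multiple of $C$. One should also check that these $C$-dependent rescalings do not destroy the ``superpolynomially high probability as $C\rta\infty$'' conclusion — they do not, since the rescaling factor is a fixed constant and only the radius of the family-of-balls statement changes while $C'$ is carried through as a fixed multiple of $C$. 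Everything else is a soft packing/triangle-inequality argument together with the fact that $D_h$ induces the Euclidean topology.
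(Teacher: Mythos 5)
Your proof is correct and follows essentially the same route as the paper's: a maximal packing of radius-$1/4$ balls centered on the $D_h$-sphere of radius $2$ around $z$, the observation that the corresponding radius-$1/2$ balls cover that sphere and hence separate $B_1(z;D_h)$ from $\BB C\setminus B_3(z;D_h)$, and a cardinality bound via Proposition~\ref{prop-0cone-ball-bounds} applied to the disjoint radius-$1/4$ balls inside $B_3(z;D_h)$. Your version merely spells out a few points the paper leaves implicit (the intermediate-value argument for disconnection, the need to enlarge $B_C(0;D_h)$ slightly so the uniform ball-mass bounds apply to the centers $w$, and the fixed rescaling needed to pass from radius-$1$ to radius-$3$ and radius-$1/4$ balls), none of which changes the substance of the argument.
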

\begin{proof} 
By Proposition~\ref{prop-0cone-ball-bounds}, it holds with superpolynomially high probability as $C\rta\infty$ that 
\eqb \label{eqn-separating-balls-bound}
\mu_h(B_3(z;D_h)) \leq C^{\zeta/2} \quad\op{and} \quad \mu_h(B_{1/4}(z;D_h) ) \geq C^{-\zeta/2} ,\quad \forall z \in B_C(0;D_h) .
\eqe 
Henceforth assume that~\eqref{eqn-separating-balls-bound} holds and fix $z\in B_C(0;D_h)$. We will construct a collection of $D_h$-balls as in the statement of the lemma. 
 
Let $\mcl C $ be a maximal collection of points in $ \bdy B_2(z;D_h)$ such that the balls $B_{1/4}(w;D_h)$ for $w\in \mcl C $ are disjoint. 
The union of the balls $B_{1/2}(w;D_h)$ for $w\in\mcl C$ covers $ \bdy B_2(z;D_h)$ (otherwise, we could find a point in $ \bdy B_2(z;D_h)$ which lies at distance at least $1/2$ from each $w\in \mcl C $, which contradicts the maximality of $\mcl C $). 
Consequently, $\bigcup_{w \in \mcl C} B_{1/2}(w;D_h)$ disconnects $B_1(z;D_h)$ from $\BB C\setminus B_3(z;D_h)$. 
Furthermore, each of the balls $B_{1/2}(w;D_h)$ for $w\in\mcl C$ is centered at a point of $\bdy B_2(z;D_h)$, so is contained in $B_3(z;D_h)\setminus B_1(z;D_h)$. 
Finally, since the balls $B_{1/4}(w;D_h)$ for $w\in  \mcl C $ are disjoint and contained in $B_3(z;D_h)$, we see from~\eqref{eqn-separating-balls-bound} that $\#\mcl C\leq C^\zeta$. 
\end{proof}

\begin{proof}[Proof of Lemma~\ref{lem-cell-in-ball}]
By Lemma~\ref{lem-ball-covering}, with superpolynomially high probability as $C\rta\infty$, we can find a collection $\mcl Z_C$ of at most $C^{4+\zeta}$ points of $B_C(0;D_h)$ such that the union of the balls $B_1(z;D_h)$ for $z\in\mcl Z_C$ covers $B_C(0;D_h)$. 
By Lemma~\ref{lem-separating-balls} and the scaling property~\eqref{eqn-cone-scale-mm} of the 0-quantum cone, with superpolynomially high probability as $C\rta\infty$ we can find for each $z\in \mcl Z_C$ a finite collection $\mcl C(z)$ of at most $C^\zeta$ $D_h$-balls of radius $\frac12 C^\zeta $ which are contained in $\ol{B_{3C^\zeta}(z;D_h) \setminus B_{C^\zeta}(z;D_h)}$ and whose union disconnects $B_{ C^\zeta}(z;D_h)$ from $\BB C\setminus B_{3C^\zeta}(z;D_h)$. By Proposition~\ref{prop-0cone-ball-bounds}, it holds with superpolynomially high probability as $C\rta\infty$ that the $\mu_h$-mass of each ball in each of the collections $\mcl C(z)$ is at least $C^{4\zeta(1-\zeta)}$

By the formula for the Poisson distribution, if this is the case then the conditional probability given $h$ that each of the balls in $\bigcup_{z\in\mcl Z_C} \mcl C(z)$ contains a point of $\mcl P$ is at least $1 - \exp\left( - C^{4\zeta(1-\zeta)} \right)$. 
By a union bound over the at most $C^{4+2\zeta}$ balls in $\bigcup_{z\in\mcl Z_C} \mcl C(z)$,
we find that with superpolynomially high probability as $C\rta\infty$, each of the balls this collection contains a point of $\mcl P$. 
Similarly, it holds with superpolynomially high probability as $C\rta\infty$ that $B_{C^\zeta/4}(z ;D_h)$ contains a point $w_z \in \mcl P$ for each $z\in \mcl Z_C$. 
Since each point of $B_1(z ; D_h)$ lies within $D_h$-distance $C^\zeta/4+1$ of $w_z$, this means that the center point of each Voronoi cell which intersects $B_1(z;D_h)$ is contained in $B_{C^\zeta/4+2}(z;D_h) \subset B_{C^\zeta/2}(z;D_h)$. 

If the events described in the preceding paragraph are satisfied, then for $z \in \mcl Z_C$, each point of $\BB C\setminus B_{3C^\zeta}(z;D_h)$ is $D_h$-closer to a point of $\mcl P$ which is contained in one of the balls in $\mcl C(z)$ than it is to any point of $B_{C^\zeta/2}(z;D_h)$. 
This means that no such point can be contained in a cell whose center point is in $B_{C^\zeta/2}(z;D_h)$, hence no such point can be contained in a cell which intersects $B_1(z;D_h)$. 
Hence each Voronoi cell which intersects $B_1(z;D_h)$ is contained in $B_{3C^\zeta}(z;D_h)$ with superpolynomially high probability as $C\rta\infty$. 
Since the union of the balls $B_1(z;D_h)$ for $z\in\mcl Z_C$ covers $B_C(0;D_h)$, this gives the statement of the lemma with $3C^\zeta$ in place of $C^\zeta$, which is sufficient. 
\end{proof}

We can now prove our main moment estimates. Recall that $B_H$ denotes the smallest $D_h$-metric ball containing the cell $H$ which is centered at the center point of $H$. 
 
\begin{lem} \label{lem-ball-ratio}
With superpolynomially high probability as $C\rta \infty$, 
\eqb \label{eqn-ball-ratio}
  \max_{H \in \mcl H : 0 \in B_H} \frac{\op{diam}(B_{H})^2}{\op{area}(B_{H})}   \leq C .
\eqe 
\end{lem}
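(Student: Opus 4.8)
The plan is to combine the estimates already established for the $0$-quantum cone—namely the ball-volume bounds of Proposition~\ref{prop-0cone-ball-bounds}, the ``fat ball'' estimate of Proposition~\ref{prop-0cone-swallow}, the cell-diameter bound of Lemma~\ref{lem-cell-in-ball}, and the covering bound of Lemma~\ref{lem-ball-covering}—into a union bound over the relevant cells. First I would observe that if $0\in B_H$ then, by definition of $B_H$, the center point $z_H\in\mcl P$ of $H$ satisfies $D_h(0,z_H)\le \op{diam}(B_H)$, and $B_H$ is a $D_h$-ball of radius $D_h(0,z_H)\le \op{diam}(B_H)$ around $z_H$. By Lemma~\ref{lem-cell-in-ball}, with superpolynomially high probability as $C\to\infty$ every cell of $\mcl H$ meeting $B_C(0;D_h)$ has $D_h$-diameter at most $C^{\zeta_0}$ for a small $\zeta_0$; in particular, on this event, every cell $H$ with $0\in B_H$ has $z_H\in B_{C^{\zeta_0}}(0;D_h)$ and $B_H\subset B_{2C^{\zeta_0}}(z_H;D_h)$, and the $D_h$-radius of $B_H$ lies in $(0,C^{\zeta_0}]$. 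This reduces the problem to controlling $\op{diam}(B)^2/\op{area}(B)$ uniformly over all $D_h$-balls $B$ of $D_h$-radius at most $C^{\zeta_0}$ that are contained in $B_{2C^{\zeta_0}}(0;D_h)$.

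Next I would split according to the $D_h$-radius $r$ of the ball $B$. For the lower end, one needs to rule out balls of tiny $D_h$-radius behaving badly; here I would use the scaling property~\eqref{eqn-cone-scale-mm} of the $0$-quantum cone together with Proposition~\ref{prop-0cone-ball-bounds}\ref{item-0cone-mass}, applied after rescaling, to say that with superpolynomially high probability every $D_h$-ball of $D_h$-radius $r\in[C^{-1},C^{\zeta_0}]$ contained in $B_{2C^{\zeta_0}}(0;D_h)$ has $\mu_h$-mass at least $r^{4}C^{-\zeta}\ge C^{-O(\zeta)}$ (using $r\ge C^{-1}$) and at most $C^{O(\zeta)}$. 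Since for a $D_h$-ball $B$ of $D_h$-radius $r$ we have (by the lower mass bound, after rescaling by $r^{-1}$ and invoking~\eqref{eqn-cone-scale-mm}) that $\mu_h(B)\ge r^{4+\zeta}$ with superpolynomially high probability, and by Proposition~\ref{prop-0cone-swallow} (again rescaled by $r^{-1}$ via~\eqref{eqn-cone-scale-mm}) that $B$ contains a Euclidean ball of radius at least $r^{-\zeta}\op{diam}(B)$—hence $\op{area}(B)\gtrsim r^{-2\zeta}\op{diam}(B)^2$—we would get
\[
\frac{\op{diam}(B)^2}{\op{area}(B)} \preceq r^{2\zeta} \preceq C^{2\zeta\zeta_0}\le C
\]
for $\zeta,\zeta_0$ small, on a superpolynomially high probability event. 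For the step where one invokes Proposition~\ref{prop-0cone-swallow}, one must check its hypothesis: that the $D_h$-ball has $D_h$-radius at least $(C')^{-1}$ relative to the rescaled field; after rescaling a radius-$r$ ball to unit radius, the relevant threshold becomes $r^{-1}(C')^{-1}$, which is handled by taking $C'$ a suitable power of $C$. The case $r< C^{-1}$ is even easier: by~\eqref{eqn-cone-scale-mm} and Proposition~\ref{prop-0cone-ball-bounds} the contribution of such balls is controlled after rescaling by $r$, and the ratio $\op{diam}(B)^2/\op{area}(B)$ is scale-invariant, so the same argument applies with $C$ replaced by $rC^{O(1)}$.

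Finally I would assemble these pieces by a union bound over a net of scales $r=2^{-k}$ with $2^{-k}\in[C^{-O(1)},C^{\zeta_0}]$ and, for each scale, over a net of $C^{O(1)}$ base points (using Lemma~\ref{lem-ball-covering} to control the number of radius-$1$ $D_h$-balls needed, suitably rescaled), noting that each event in the union holds with superpolynomially high probability with a rate uniform in the base point and scale (by translation invariance of the law of $h$ modulo additive constant and the scaling relation~\eqref{eqn-cone-scale-mm}); any $D_h$-ball $B$ with $0\in B_H$ is comparable, up to a bounded factor in radius and base point, to one in the net, and the ratio $\op{diam}(B)^2/\op{area}(B)$ changes by at most a bounded factor under such comparison. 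The main obstacle I anticipate is bookkeeping the interplay between the two different rescalings in play—the scaling~\eqref{eqn-cone-scale-mm} of the whole surface used to reduce to unit-scale balls, and the cut-offs ($D_h$-radius $\ge C^{-1}$, Euclidean diameter small enough) under which Propositions~\ref{prop-0cone-swallow} and~\ref{prop-ball-vol} are valid—so that the small exponents $\zeta$ coming from the volume and fatness estimates genuinely combine to give an exponent below $1$ on the right-hand side of~\eqref{eqn-ball-ratio}; this is a matter of choosing $\zeta$ and $\zeta_0$ small enough in the right order, which the superpolynomial decay rates make possible.
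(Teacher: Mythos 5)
Your plan assembles the right ingredients (Lemma~\ref{lem-cell-in-ball}, Propositions~\ref{prop-0cone-ball-bounds} and~\ref{prop-0cone-swallow}, the cone scaling~\eqref{eqn-cone-scale-mm}), but two steps as written would not go through. First, the claimed rescaled form of Proposition~\ref{prop-0cone-swallow}---that a $D_h$-ball $B$ of $D_h$-radius $r$ contains a Euclidean ball of radius at least $r^{-\zeta}\op{diam}(B)$, hence $\op{area}(B)\gtrsim r^{-2\zeta}\op{diam}(B)^2$---is impossible for $r<1$, since any planar set has $\op{area}(B)\le\tfrac{\pi}{4}\op{diam}(B)^2$. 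Rescaling via~\eqref{eqn-cone-scale-mm} cannot turn the factor $C^{-\zeta}$ in Proposition~\ref{prop-0cone-swallow} into a radius-dependent factor $r^{\pm\zeta}$: the proposition's conclusion already holds simultaneously for every $D_h$-ball of $D_h$-radius at least $C^{-1}$ with one and the same factor $C^{-\zeta}$, and the move that works (and that the paper takes) is to apply it \emph{once}, at scale $C^{1/\zeta}$ with the proposition's own parameter chosen as $\zeta/2$, to cover every relevant ball in a single shot with factor $C^{-1/2}$. This removes the need for the multi-scale union bound entirely.

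Second, and more importantly, the case $r<C^{-1}$ is not ``even easier'' and cannot be handled by scale-invariance of the ratio: for arbitrarily small $D_h$-radius $r$, applying Proposition~\ref{prop-0cone-swallow} requires a parameter at least $1/r$, at which point the factor $C^{-\zeta}$ degrades past $C^{-1/2}$, so the ratio $\op{diam}(B)^2/\op{area}(B)$ is no longer bounded by $C$. The missing idea is to use the \emph{Poisson structure} of $\mcl P$ to rule out such balls appearing as $B_H$ with $0\in B_H$ in the first place: if $0\in B_H$ and the $D_h$-radius of $B_H$ is less than $C^{-1/\zeta}$, then the center point of $H$ lies in $B_{2C^{-1/\zeta}}(0;D_h)$. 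By Proposition~\ref{prop-0cone-ball-bounds} and~\eqref{eqn-cone-scale-mm}, $\mu_h(B_{2C^{-1/\zeta}}(0;D_h))\le C^{-4/\zeta+\zeta}$ with superpolynomially high probability, and conditionally on $h$ the number of points of $\mcl P$ there is Poisson with that mean; hence with probability at least $1-O_C(C^{-4/\zeta+\zeta})$ no point of $\mcl P$ lies there at all, so no such $B_H$ exists. Since $4/\zeta-\zeta\to\infty$ as $\zeta\to 0$, sending $\zeta\to 0$ upgrades the polynomial bound to a superpolynomial one. With this fix in place, the lemma follows from one application of Proposition~\ref{prop-0cone-swallow} and no decomposition over dyadic scales or nets of base points is required.
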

\begin{proof}
Fix $\zeta\in (0,1)$, which we will eventually send to 0. 
Lemma~\ref{lem-cell-in-ball} shows that with superpolynomially high probability as $C\rta\infty$, each $H\in\mcl H$ with $0 \in B_H$ has $D_h$-diameter at most $C^\zeta$, so is contained in $B_{2C^\zeta}(0;D_h)$. 
We will now argue that with probability at least $1-O_C(C^{-4/\zeta + \zeta})$, the $D_h$-radius of $B_H$ for each such cell $H$ is at least $C^{-1/\zeta}$. Indeed, Proposition~\ref{prop-0cone-ball-bounds} shows that with superpolynomially high probability as $C\rta\infty$, we have $\mu_h(B_{2C^{-1/\zeta}}(0;D_h)) \leq C^{-4/\zeta + \zeta }$. 
Since the number of points of $\mcl P$ which belong to $B_{2C^{-1/\zeta}}(0;D_h)$ is Poisson with mean $\mu_h(B_{2C^{-1/\zeta}}(0;D_h)) $ conditional on $h$, it follows that with probability at least $1-O_C(C^{-4/\zeta + \zeta})$, no point of $\mcl P$ is contained in $B_{2C^{-1/\zeta}}(0)$. In particular, no cell $H \in \mcl H$ is contained in $B_{2C^{-1/\zeta}}(0)$, so if $0 \in B_H$ then the $D_h$-radius of $B_H$ must be at least $C^{-1/\zeta}$, as required. 

By Proposition~\ref{prop-0cone-swallow}, it holds with superpolynomially high probability as $C\rta\infty$ that each $D_h$-ball $B$ centered at a point of $B_C(0;D_h)$ with $D_h$-radius at least $C^{-1/\zeta}$ contains a Euclidean ball of radius at least $C^{-1/2} \op{diam}(B)$. Combining this with the preceding paragraph shows that~\eqref{eqn-ball-ratio} holds with probability at least $1-O_C(C^{-4/\zeta + \zeta})$. Sending $\zeta\rta 0$ concludes the proof.
\end{proof}

To bound the number of cells with $0 \in B_H$ and their degrees, we will need the following lemma.

\begin{lem} \label{lem-cell-intersect}
For each $\zeta \in (0,1)$, it holds with superpolynomially high probability as $C\rta\infty$ that 
\eqb \label{eqn-cell-intersect}
\#\left\{H\in\mcl H : B_H\cap B_C(0; D_h) \not=\emptyset \right\} \leq C^{4 + \zeta}  
\eqe 
and the $D_h$-diameter of each of the balls $B_h$ which intersects $B_C(0;D_h)$ is at most $C^\zeta$. 
\end{lem}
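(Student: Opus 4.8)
Here is how I would prove Lemma~\ref{lem-cell-intersect}.

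\textbf{Reduction.} The plan is to reduce both assertions to the single claim that, with superpolynomially high probability as $C\rta\infty$, every cell $H\in\mcl H$ with $B_H\cap B_C(0;D_h)\neq\emptyset$ has its center point $z_H$ (the element of $\mcl P$ lying in $H$) inside $B_{2C}(0;D_h)$. Granting this, the diameter bound is immediate from Lemma~\ref{lem-cell-in-ball}: since $z_H\in H$, such a cell intersects $B_{2C}(0;D_h)$, so applying Lemma~\ref{lem-cell-in-ball} at radius $2C$ with parameter $\zeta/4$ gives, with superpolynomially high probability, $\op{diam}(H)\leq (2C)^{\zeta/4}$, hence $\op{diam}(B_H)\leq 2\op{diam}(H)\leq C^\zeta$ for $C$ large. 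For the count, distinct cells have distinct center points, so $\#\{H : B_H\cap B_C(0;D_h)\neq\emptyset\}\leq \#(\mcl P\cap B_{2C}(0;D_h))$, which conditionally on $h$ is Poisson with mean $\mu_h(B_{2C}(0;D_h))$. By~\eqref{eqn-cone-scale-mm} (with $b=(2C)^{-4}$) together with Proposition~\ref{prop-0cone-ball-bounds}(2) applied with $z=0$ and parameter $\zeta/2$, with superpolynomially high probability $\mu_h(B_{2C}(0;D_h))\leq C^{4+\zeta/2}$; a Chernoff bound for the Poisson distribution then yields $\#(\mcl P\cap B_{2C}(0;D_h))\leq C^{4+\zeta}$ with superpolynomially high probability.

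\textbf{The key observation.} It remains to rule out cells $H$ with $B_H\cap B_C(0;D_h)\neq\emptyset$ and $D_h(z_H,0)>2C$. Let $v\in H$ be a point maximizing $D_h(z_H,\cdot)$ over $H$, so that $B_H=B_{\rho_H}(z_H;D_h)$ with $\rho_H:=D_h(z_H,v)$. Since $v$ lies in the Voronoi cell of $z_H$, we have $D_h(v,z')\geq D_h(v,z_H)=\rho_H$ for every $z'\in\mcl P$, and therefore the open ball $B_{\rho_H}^\circ(v;D_h)$ contains \emph{no point of $\mcl P$ whatsoever}. If moreover $B_H$ meets $B_C(0;D_h)$ then $D_h(z_H,0)\leq C+\rho_H$, so the hypothesis $D_h(z_H,0)>2C$ forces $\rho_H>C$ and $D_h(0,v)\leq D_h(0,z_H)+\rho_H<3\rho_H$. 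Hence the event we must rule out is contained in the event that there exists $\rho>C$ and a $D_h$-metric ball of radius $\rho$, centered at a point of $B_{3\rho}(0;D_h)$, which contains no point of $\mcl P$.

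\textbf{Excluding large empty balls.} To bound this event I would split dyadically over $\rho\in[2^k,2^{k+1})$ with $2^k\geq C$; on this range such a ball contains a concentric $\mcl P$-free ball of radius $2^{k-1}$ centered at a point of $B_{2^{k+3}}(0;D_h)$. Using the exact scale invariance~\eqref{eqn-cone-scale-mm} at scale $2^k$ to pass to the unit scale and invoking Proposition~\ref{prop-0cone-ball-bounds} (exactly the same volume inputs used throughout Section~\ref{sec-0cone-estimates}), one gets that with probability superpolynomially close to $1$ in $C$, simultaneously over all $k\geq\log_2 C$, the ball $B_{2^{k+3}}(0;D_h)$ can be covered by at most $C^2$ $D_h$-metric balls of radius $2^{k-2}$, each of which has $\mu_h$-mass at least $2^{4k}/C$ (the union bound converges because the $k\asymp\log_2 C$ term dominates). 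Conditionally on $h$, the probability that a fixed ball of $\mu_h$-mass at least $2^{4k}/C$ contains no point of $\mcl P$ is at most $e^{-2^{4k}/C}\leq e^{-C^{3}}$ when $k\geq\log_2 C$; a union bound over the at most $C^2$ covering balls and then over $k\geq\log_2 C$ gives total probability at most $\sum_{k\geq\log_2 C} C^{2}e^{-2^{4k}/C}$, which is superpolynomially small in $C$. Combining this with the reduction step completes the proof.

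\textbf{Main obstacle.} The difficulty is precisely the last step. Lemma~\ref{lem-cell-in-ball} controls cells that \emph{intersect} $B_C(0;D_h)$, but a priori a cell $H$ far from the origin could have its circumscribed ball $B_H$ reach back into $B_C(0;D_h)$, and such a cell is necessarily enormous. The observation that such a cell forces a large $\mcl P$-free $D_h$-metric ball within a bounded multiple of its own radius of the origin is what makes it possible to exclude this scenario; after that, only a routine union bound over dyadic scales --- using the volume estimates of Proposition~\ref{prop-0cone-ball-bounds}, the scale invariance~\eqref{eqn-cone-scale-mm}, and Poissonian deviation bounds --- is required, though making the union bound uniform in $C$ (covering the metric balls $B_{2^{k+3}}(0;D_h)$ by controllably many smaller balls simultaneously over all $k$) does require a little bookkeeping.
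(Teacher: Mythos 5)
Your proof is correct, but it takes a genuinely different route from the paper's. The paper's argument handles cells whose circumscribed ball $B_H$ reaches back into $B_C(0;D_h)$ from far away by applying Lemma~\ref{lem-cell-in-ball} simultaneously at all dyadic scales $2^k\geq C/2$ (a union bound whose $k$-th term decays superpolynomially in $2^k$): if $H$ first meets a ball $B_{2^{k_H}}(0;D_h)$ with $2^{k_H}\geq 4C$, then $\op{diam}_{D_h}(H)\leq 2^{\zeta k_H-1}\ll 2^{k_H-2}$, so $B_H$ cannot reach $B_C(0;D_h)$; hence every relevant cell lies in $B_{5C}(0;D_h)$ and the count is a Poisson tail estimate. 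You instead rule out such cells through a clean geometric observation which the paper does not make: the point $v\in H$ realizing the circumradius $\rho_H$ is the center of an open $D_h$-ball of radius $\rho_H$ containing no point of $\mcl P$ whatsoever, and if $B_H$ reaches $B_C(0;D_h)$ with $z_H$ far away then $\rho_H>C$ and $v$ sits within $3\rho_H$ of the origin. This replaces the paper's multi-scale diameter bound by a statement about large empty metric balls, which you then exclude directly via Proposition~\ref{prop-0cone-ball-bounds} and Poissonian deviations.

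Both approaches work, and the exchange is roughly a wash: the paper's reuses Lemma~\ref{lem-cell-in-ball} (whose proof already involved building a ``wall'' of Poisson-filled balls), so the union bound over scales comes for free there; yours needs to re-run a volume-plus-Poisson argument across scales, and the convergence of that union bound deserves slightly more care than your parenthetical remark suggests. For it to close, you must apply Proposition~\ref{prop-0cone-ball-bounds} at scale $2^k$ with parameter $C'=2^k$ (so that the failure probability decays as $(2^k)^{-p}$, not as $C^{-p}$ which is constant in $k$), then rescale by $2^{k-2}$ via~\eqref{eqn-cone-scale-mm}; summing $(2^k)^{-p}$ over $k\geq\log_2 C$ then gives $O(C^{-p})$, which is what makes the union bound superpolynomially tight. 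With that bookkeeping spelled out, your argument is complete and arguably more self-contained in its treatment of the far-away cells.
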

\begin{proof}
Fix $\zeta\in (0,1)$. 
By Lemma~\ref{lem-cell-in-ball} and a union bound over dyadic values of $C$, it holds with superpolynomially high probability as $C\rta\infty$ that for each $k \in \BB N$ with $2^k \geq C/2 $, each cell $H\in\mcl H$ which intersects $B_{2^k}(0;D_h)$ has $D_h$-diameter at most $2^{\zeta k -1}$. 
Henceforth assume that this is the case.

For a cell $H\in\mcl H$, let $k_H \in\BB N$ be the smallest integer for which $H \cap B_{2^{k_H}}(0;D_h) \not=\emptyset$. 
If $2^{k_H} \geq 4C$, then the $D_h$-diameter of $H$ is at most $2^{\zeta k_H-1} $, so the ball $B_H$ has $D_h$-diameter at most $2^{\zeta k_H }  < 2^{k_H-2}$. Since this ball intersects $\BB C\setminus B_{2^{k_H }}(0;D_h)$ (by the minimality of $k_H$), it cannot intersect $B_{2^{k_H-2}}(0; D_h)$, so must be disjoint from $B_C(0;D_h)$. 

Consequently, each $H\in\mcl H$ with $B_H\cap B_{C}(0;D_h) \not=\emptyset$ must intersect $B_{4C}(0;D_h)$ and hence must each have $D_h$-diameter at most $4 C^\zeta$. 
In particular, each such cell is contained in $B_{5C}(0;D_h)$. We are thus left to bound the number of cells contained in $B_{5C}(0;D_h)$. 
By Proposition~\ref{prop-0cone-ball-bounds}, it holds with superpolynomially high probability as $C\rta\infty$ that $\mu_h(B_{5C}(0;D_h)) \leq C^{4+\zeta/2}$.
Conditioned on this event, the number of points of $\mcl P$ which belong to $B_{5C}(0;D_h)$ is Poisson with mean at most $C^{4+\zeta/2}$. 
By the elementary estimate
\eqbn
\BB P\left[ X  > x\right] \leq \frac{e^{-\lambda} \lambda^x}{x^x} ,\quad \text{for} \quad x\sim \op{Poisson}(\lambda) ,
\eqen
we see that the probability that $B_{5C}(0;D_h)$ contains more that $C^{4+ \zeta}$ points of $\mcl P$ decays superpolynomially in $C$.
This gives~\eqref{eqn-cell-intersect}.
\end{proof}

\begin{lem} \label{lem-cell-deg}
With superpolynomially high probability as $C\rta\infty$, 
\eqb \label{eqn-cell-deg}
\#\left\{ H\in \mcl H : 0 \in B_H \right\} \leq C \quad \text{and} \quad
\max_{H\in\mcl H : 0 \in B_H} \op{deg}(H) \leq C .
\eqe
\end{lem}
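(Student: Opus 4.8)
The plan is to derive Lemma~\ref{lem-cell-deg} directly from Lemma~\ref{lem-cell-intersect}, applied at a scale which tends to infinity but only polynomially slowly in $C$. First, fix once and for all $\zeta = 1/2$ and apply Lemma~\ref{lem-cell-intersect} with $C$ replaced by $C^{1/100}$ (which still tends to $\infty$ as $C\rta\infty$, so the resulting estimate, being superpolynomial in $C^{1/100}$, holds with superpolynomially high probability as $C\rta\infty$). This produces a good event $G_C$, occurring with superpolynomially high probability as $C\rta\infty$, on which
\[
\#\left\{H\in\mcl H : B_H\cap B_{C^{1/100}}(0;D_h)\not=\emptyset\right\}\leq \left(C^{1/100}\right)^{4+\zeta} = C^{9/200}
\]
and every ball $B_H$ which intersects $B_{C^{1/100}}(0;D_h)$ has $D_h$-diameter at most $\left(C^{1/100}\right)^{\zeta}=C^{1/200}$. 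For $C$ large, $C^{9/200}\leq C$, so it suffices to show that on $G_C$ both quantities in~\eqref{eqn-cell-deg} are bounded by $C^{9/200}$.

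For the first bound, note that if $0\in B_H$ then $0$ lies both in $B_H$ and in $B_{C^{1/100}}(0;D_h)$, so $B_H\cap B_{C^{1/100}}(0;D_h)\not=\emptyset$; hence on $G_C$ we have $\#\{H\in\mcl H : 0\in B_H\}\leq \#\{H\in\mcl H : B_H\cap B_{C^{1/100}}(0;D_h)\not=\emptyset\}\leq C^{9/200}\leq C$. For the second bound, fix $H\in\mcl H$ with $0\in B_H$. On $G_C$ the ball $B_H$ has $D_h$-diameter at most $C^{1/200}$, and since both $0$ and each point $u$ of the cell $H\subseteq B_H$ lie in $B_H$, we get $D_h(0,u)\leq C^{1/200}<C^{1/100}$ for every $u\in H$, i.e.\ $H\subseteq B_{C^{1/100}}(0;D_h)$. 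Any cell $H'$ adjacent to $H$ intersects $H$, hence intersects $B_{C^{1/100}}(0;D_h)$, and therefore so does $B_{H'}$ (which contains $H'$). Consequently $\op{deg}(H)\leq \#\{H'\in\mcl H : B_{H'}\cap B_{C^{1/100}}(0;D_h)\not=\emptyset\}\leq C^{9/200}\leq C$ on $G_C$. Taking the maximum over all $H$ with $0\in B_H$ gives~\eqref{eqn-cell-deg}.

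I do not expect a genuine obstacle here: this is a short deduction from Lemma~\ref{lem-cell-intersect}. The only points requiring a little care are the exponent bookkeeping — one needs the auxiliary scale $C^{1/100}$ to grow to infinity (so that Lemma~\ref{lem-cell-intersect} applies and yields a superpolynomial error in $C$) while still satisfying $(C^{1/100})^{4+\zeta}\leq C$ — and the elementary observation that a ball $B_H$ containing the origin automatically meets every $D_h$-metric ball centered at the origin, which is precisely what lets us invoke Lemma~\ref{lem-cell-intersect} without any further control on the $D_h$-diameter of the cell $H$ itself (that control is instead extracted, for free, from the second clause of Lemma~\ref{lem-cell-intersect}).
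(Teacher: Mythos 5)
Your proof is correct and follows essentially the same route as the paper's: both deduce the claim from Lemma~\ref{lem-cell-intersect}, using the $B_H$-diameter bound there to trap the cell $H$ (and hence all its neighbours) inside a $D_h$-ball around the origin, and then using the cardinality bound from that same lemma. The only difference is bookkeeping — you apply Lemma~\ref{lem-cell-intersect} directly at scale $C^{1/100}$, whereas the paper applies it at scale $2C$ to get a $C^5$ bound and then substitutes $C\mapsto C^{1/5}$; these are interchangeable. (Incidentally, you correctly locate the diameter bound in Lemma~\ref{lem-cell-intersect}; the paper's text cites ``Lemma~\ref{lem-cell-deg}'' for this step, which is a self-referential typo.)
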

\begin{proof}
By Lemma~\ref{lem-cell-intersect} (applied with any choice of $\zeta \in (0,1)$) it holds with superpolynomially high probability as $C\rta\infty$ that each $H\in\mcl H$ with $ 0 \in B_H$ has $D_h$-diameter at most $C $, so is contained in $B_{2C}(0;D_h)$. This means that each neighbor of each such cell intersects $B_{2C}(0;D_h)$. 
Therefore, Lemma~\ref{lem-cell-intersect} implies that with superpolynomially high probability as $C\rta\infty$, the total number of cells such that either $0 \in B_H$ or $H \sim H'$ for some cell $H'$ with $0 \in B_{H'}$ is at most $C^5$. Replacing $C$ with $C^{1/5}$ concludes the proof. 
\end{proof}

\begin{proof}[Proof of Proposition~\ref{prop-ball-moment}]
By Lemmas~\ref{lem-ball-ratio} and~\ref{lem-cell-deg}, it holds with superpolynomially high probability as $C\rta\infty$ that
\eqb
\sum_{H\in \mcl H : 0 \in B_H} \frac{\op{diam}(H)^2 \op{deg}(H)}{\op{area}(B_H)} \leq C^3 .
\eqe
Consequently, this sum has finite moments of all positive orders.  
\end{proof}

\section{Open problems}
\label{sec-open-problems}

Perhaps the most natural question to ask about Brownian motion on the Brownian map is the following. 

\begin{prob}
\label{prob-rw-conv}
Show that random walk on uniform random planar maps (e.g., uniform quadrangulations or triangulations) converges to Brownian motion on the Brownian map with respect to the Gromov-Hausdorff-Prokhorov-uniform topology, the natural topology for curve-decorated metric measure spaces~\cite{gwynne-miller-uihpq}. 
\end{prob}

It is known that self-avoiding walk and percolation interfaces on uniform random planar maps converge to SLE$_{8/3}$ and SLE$_6$, respectively~\cite{gwynne-miller-saw,gwynne-miller-perc}. In contrast to the case of $\BB Z^2$, however, random walk on a random planar map seems harder to analyze than SAW or percolation interfaces since the random walk can re-trace its past, so one cannot explore the curve and the planar map simultaneously using peeling. 
 
Our results only concern Brownian motion on Brownian surfaces viewed modulo time parameterization. 
The natural way to parameterize Brownian motion on a $\sqrt{8/3}$-LQG surface, equivalently a Brownian surface, is called \emph{Liouville Brownian motion} and is constructed in~\cite{berestycki-lbm,grv-lbm}. 

\begin{prob}
\label{prob-lbm}
Show that in the setting of Theorem~\ref{thm-rw-conv0}, the random walk on $\mcl P^\lambda$, parameterized so that it traverses one edge in one unit of time, converges to Liouville Brownian motion with respect to the uniform topology as $\lambda\rta\infty$. 
\end{prob}

There are other natural types of random walks that one can consider on Brownian surfaces which one would expect to converge to Brownian motion in the scaling limit.  For example, one can generate a random walk which at each step moves to a point sampled uniformly at random from the metric ball of radius $\epsilon$ centered at its current position.  As a second example, the Brownian snake construction of the Brownian map involves describing the Brownian map as a gluing of the tree of geodesics back to the root together with a dual tree (and instance of the CRT) rooted at the dual root.  The peanocurve which ``snakes between these two trees'' is a space-filling curve, which one may use to give a graph approximation analogous to the mated-CRT map considered in \cite{gms-tutte,gms-harmonic}.  In particular, if $(\mcl X,D,\mu)$ denotes the unit-area Brownian map, the Brownian snake construction gives a quotient map $p : [0,1] \rta \mcl X$. We then fix $\ep > 0$ and consider the random walk on the adjacency graph of $\mu$-mass $\ep$ cells $p([x-\ep,x])$ for $x\in [0,1] \cap (\ep\BB Z)$. 

One of the appeals of this construction is that one can sample from it in linear time (one just needs to generate an instance of the Brownian snake) and then compute its Tutte embedding efficiently using a sparse matrix package (c.f.~\cite[Remark 1.2]{gms-tutte}).

\begin{prob}
\label{prob-other-approximations}
Show that Theorem~\ref{thm-rw-conv0} holds for random walk on other graph approximations of Brownian surfaces, such as the two mentioned just above.	
\end{prob}

\begin{prob}
\label{prob-bm}
Show that the complementary connected components of a Brownian motion on the Brownian map (run for a fixed amount of time) are independent Brownian disks conditional on their boundary length.  
\end{prob}

The analog of the property of Problem~\ref{prob-bm} for Brownian motion on certain random planar maps (like the UIPT or the UIPQ) follows from the so-called spatial Markov property, a.k.a.\ peeling; see, e.g.,~\cite{benjamini-curien-uipq-walk}. 
It is known that the complementary connected components of SLE$_6$ on a Brownian surface are Brownian disks (this follows from the results of~\cite{wedges,sphere-constructions} and the equivalence of Brownian and $\sqrt{8/3}$-LQG surfaces), so it may be possible to solve Problem~\ref{prob-bm} using the relationship between SLE$_6$ and Brownian motion. 
An alternative approach to Problem~\ref{prob-bm} is via Theorem~\ref{thm-rw-conv0}. 
Indeed, we know that the complementary connected components of a metric ball on the Brownian map are Brownian disks conditional on their boundary lengths~\cite{tbm-characterization,legall-disk-snake}. Moreover, a Poisson-Voronoi cell is determined by the metric ball centered at its center point whose radius equals twice the distance from the center point to the boundary of the cell, together with the points of the Poisson point process which intersect this ball. 
It is possible that one could apply this property at the cells hit by the walk to solve Problem~\ref{prob-bm}. 

Problem~\ref{prob-bm} might have some relevance to Problem~\ref{prob-rw-conv}. Indeed, if one can show that Brownian motion on the Brownian map is uniquely characterized by the Markov property of Problem~\ref{prob-bm} together with the law of the boundary lengths of the complementary connected components, then potentially this could be used to identify a subsequential scaling limit of random walk on random planar maps (one would also have to establish tightness). A similar strategy is used to prove the convergence of percolation on random planar maps to SLE$_6$ in~\cite{gwynne-miller-perc}. 

Theorem~\ref{thm-rw-conv0} together with the result of Yadin and Yehudayoff~\cite{yy-lerw} allow us to give an intrinsic definition of SLE$_2$ on a Brownian surface as the limit of the loop-erased random walk on Poisson-Voronoi tessellations.

\begin{prob} \label{prob-sle2}
Does the perspective of this paper lead to any insights about SLE$_2$ on a Brownian surface (concerning, e.g., the law of the surface parameterized by its complement or its relationship to random planar maps)?
\end{prob}

Problem~\ref{prob-sle2} would be very interesting to solve since currently very little is known about the behavior of SLE$_\kappa$ curves on a $\gamma$-LQG surface for $\kappa \notin \{\gamma^2,16/\gamma^2\}$. 

Appendix~\ref{sec-voronoi-cells} includes several basic properties of Voronoi cells which are needed in the proofs of our main results. However, there are many questions about such cells which have not been answered, for example the following.

\begin{prob} \label{prob-voronoi-cells}
Is the boundary of a Voronoi cell a.s.\ given by the union of finitely many disjoint simple curves? 
What is the Hausdorff dimension of this boundary (with respect to the Euclidean or $\sqrt{8/3}$-LQG metric)?  Is the collection of Voronoi cell boundaries a.s.\ conformally removable?
\end{prob}

The simulation in Figure~\ref{fig:pv_sim} seems to suggest that the answer to the first part of Problem~\ref{prob-voronoi-cells} is affirmative.
Although we will not explain this in detail here, we expect that the Hausdorff dimension w.r.t.\ the $\sqrt{8/3}$-LQG metric should be $2$. (Roughly speaking, this is because one expects that on a Brownian surface, the set of points equidistant to generic points $z_1$ and $z_2$ should be a curve that has the same local structure as a branch of the dual of the tree of geodesics drawn toward a fixed root.)  We do not have a conjecture for the Euclidean Hausdorff dimension of the cell boundaries. 

\appendix

\section{Basic properties of Voronoi cells}
\label{sec-voronoi-cells}
 
In this section we will prove a number of a.s.\ properties of Voronoi cells which are intuitively obvious from, e.g., the simulations (see Figure~\ref{fig:pv_sim}).
In particular, we will show that such cells are connected (Lemma~\ref{lem-cell-geodesic}), they are compact and locally finite (Lemma~\ref{lem-cell-basic}), and their boundaries have zero LQG measure and zero Lebesgue measure (Lemmas~\ref{lem-zero-lqg-mass} and~\ref{lem-zero-lebesgue}).
Throughout, we will consider the setup described at the beginning of Section~\ref{sec-tutte-conv}, so that for a GFF-type distribution $h $ and $\lambda  > 0$, $\mcl P_h^\lambda$ and $\mcl H_h^\lambda$ denote the associated Poisson point process and collection of Voronoi cells, respectively. The arguments in this section do not use any of the results of Sections~\ref{sec-tutte-conv} and~\ref{sec-ball-estimates}, so can be read independently of those sections.
 
\begin{remark}
\label{remark-const}
We will often consider the Voronoi tessellations $\mcl H_h^\lambda$ for fields $h$ for which there is a choice in the way that the additive constant is fixed.  Different choices scale $\mu_h$, and hence the intensity measure for $\mcl P_h^\lambda$, by a constant factor. 
However, for any fixed choice of additive constant for $h$, the conditional law of $\mcl P_h^\lambda$ given $h$ is well-defined. 
Furthermore, if $\mu_h$ is a.s.\ finite and we condition on $h$ and the event $\{\#\mcl P_h^\lambda = n\}$ for $n\in\BB N$, then the conditional law of $\mcl P_h^\lambda$ is that of a collection of $n$ i.i.d.\ samples from $\mu_h$, so this conditional law does not depend on the choice of additive constant for $h$ (or on $\lambda$). 
If $\mu_h$ is only locally finite, one can apply the preceding sentence with $h$ replaced by its restriction to a compact set.
As a particular consequence of this, the a.s.\ statements for Voronoi cells which we consider in this section do not depend on the choice of additive constant, so we will not specify it. 
\end{remark}  
 
We start with the following elementary deterministic fact.

\begin{lem} \label{lem-cell-geodesic}
Suppose $h$ is a whole-plane GFF.
For each $z\in \mcl P_h^\lambda$ and each $u$ in the corresponding cell $H_z$ (resp.\ each $u$ in the interior of $H_z $), each $D_h$-geodesic from $z$ to $u$ is contained in $H_z $ (resp.\ the interior of $H_z $).  
In particular, $H_z  $ and its interior are both connected.  
The same is true with $h$ replaced by a free-boundary GFF on a Jordan domain or an embedding of a quantum cone, sphere, disk, or wedge.
\end{lem}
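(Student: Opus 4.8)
The plan is to prove this by a direct triangle-inequality argument, using only the fact that $D_h$ is a genuine length metric that induces the Euclidean topology and the definition of the Voronoi cell. First I would fix $z \in \mcl P_h^\lambda$ and a point $u \in H_z$, so that by definition $D_h(u,z) \leq D_h(u,w)$ for every $w \in \mcl P_h^\lambda \setminus \{z\}$. Let $\gamma$ be a $D_h$-geodesic from $z$ to $u$ (which exists by property~\ref{item-metric-geodesic} of the $\sqrt{8/3}$-LQG metric in the whole-plane case, and by the corresponding statement for the other surfaces, noting that for the disk/wedge one needs $u$ to be at distance from the boundary less than $D_h(z,u)$, or one works with the interior points only). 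For any point $v$ on $\gamma$, since $\gamma$ is a geodesic we have $D_h(z,v) + D_h(v,u) = D_h(z,u)$, hence for any $w \in \mcl P_h^\lambda \setminus \{z\}$,
\[
D_h(v,w) \geq D_h(u,w) - D_h(u,v) \geq D_h(u,z) - D_h(u,v) = D_h(z,v) ,
\]
where the first inequality is the triangle inequality, the second uses $u \in H_z$, and the final equality uses that $\gamma$ is a geodesic. This shows $v$ is (weakly) $D_h$-closer to $z$ than to any other point of $\mcl P_h^\lambda$, i.e. $v \in H_z$. Since $v$ was arbitrary, $\gamma \subset H_z$, and connectedness of $H_z$ follows because every point of $H_z$ is joined to $z$ by such a geodesic lying in $H_z$.

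For the interior statement, I would argue that if $u$ lies in the interior of $H_z$ then in fact the geodesic $\gamma$ from $z$ to $u$ lies, except possibly at its endpoint considerations, in the interior of $H_z$. The cleanest way: I would show the strict inequality $D_h(v,w) > D_h(v,z)$ holds in a neighborhood and conclude $v$ is interior. Concretely, if $v$ is on $\gamma$ and $v \notin \op{int}(H_z)$, then $v$ is a limit of points $v_n \notin H_z$, meaning for each $n$ there is $w_n \in \mcl P_h^\lambda \setminus \{z\}$ with $D_h(v_n, w_n) < D_h(v_n, z)$; by local finiteness of $\mcl P_h^\lambda$ (only finitely many relevant $w$'s near $v$, using that $D_h$ and the Euclidean metric induce the same topology) we may pass to a subsequence with $w_n \equiv w$, and continuity of $D_h$ gives $D_h(v,w) \leq D_h(v,z)$, which combined with the displayed inequality forces $D_h(v,w) = D_h(v,z)$. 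Then $D_h(u,w) \leq D_h(u,v) + D_h(v,w) = D_h(u,v) + D_h(v,z) = D_h(u,z)$, i.e. $u$ is also on the bisector between $z$ and $w$, contradicting $u \in \op{int}(H_z)$ (one needs here that a point on a bisector cannot be interior, which follows since moving slightly toward $w$ along a geodesic strictly decreases the distance to $w$ while the distance to $z$ changes continuously — a short separate sublemma). Connectedness of $\op{int}(H_z)$ then follows as before.

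The statements for a free-boundary GFF on a Jordan domain or for embeddings of quantum cones, spheres, disks, or wedges require no new ideas: the only inputs used are the existence of $D_h$-geodesics (property~\ref{item-metric-geodesic}, which holds in all these cases, with the boundary caveat handled as noted above), the fact that $D_h$ induces the Euclidean topology (property~\ref{item-metric-holder}), local finiteness of the Poisson point process (which is immediate since $\mu_h$ is locally finite in every case), and the triangle inequality. I would simply remark that the proof above goes through verbatim, with $\mcl X$ or $\ol{\mcl D}$ in place of $\BB C$ and geodesics taken within the surface.

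I expect the main obstacle to be the interior statement, specifically the sublemma that a point lying on the $D_h$-bisector of two points of $\mcl P_h^\lambda$ cannot be in the interior of either cell. In a general length space this can fail (bisectors can have interior), so one genuinely needs a local argument exploiting geodesics: from a point $v$ with $D_h(v,z) = D_h(v,w)$, follow a geodesic from $v$ toward $w$ a tiny bit to reach $v'$ with $D_h(v',w) < D_h(v,w) = D_h(v,z) \leq D_h(v',z) + D_h(v',v)$; choosing the step small enough that $D_h(v',v)$ is tiny shows $v'$ is strictly closer to $w$ than to $z$ up to the tiny error, and a slightly more careful bookkeeping (or iterating) shows $v' \notin H_z$, so $v$ is not interior. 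Getting this bookkeeping exactly right — making sure the geodesic toward $w$ does not pass near $z$ or another Poisson point, which is where local finiteness and the Euclidean-topology equivalence enter — is the only delicate point; everything else is the straightforward triangle-inequality computation displayed above.
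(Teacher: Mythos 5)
Your argument is essentially the same as the paper's. The paper proves the first (closed-cell) part by contradiction: if some point $v=\gamma(t)$ on a geodesic $\gamma$ from $u$ to $z$ were strictly closer to some $z'\ne z$, then the triangle inequality together with $D_h(z,u)=D_h(z,v)+D_h(v,u)$ forces $D_h(u,z')<D_h(u,z)$, contradicting $u\in H_z$; you give the same computation phrased as a direct inequality rather than a contradiction, which is a cosmetic difference. Your observation that connectedness follows from the cell being "geodesically star-shaped" around $z$ is also exactly what the paper does.

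For the interior statement, both you and the paper rely on the assertion that a point equidistant from $z$ and some other $z'\in\mcl P_h^\lambda$ cannot lie in $\op{int}(H_z)$ — the paper states this as an immediate contradiction, while you isolate it as a sublemma and (to your credit) flag it as the one delicate spot. Your diagnosis of the delicacy is, however, slightly off-target. Your bookkeeping correctly gets to $D_h(v',z)\ge D_h(v',w)$ after stepping a small amount $\epsilon$ toward $w$, but you attribute the failure to reach strict inequality to the geodesic possibly "passing near $z$ or another Poisson point." The actual obstruction is subtler: equality $D_h(v',z)=D_h(v',w)$ would mean that the initial segment of the geodesic from $v$ toward $w$ is also the initial segment of some geodesic from $v$ toward $z$ — a branching-type phenomenon that can genuinely occur in an abstract length space, and which avoiding other Poisson points does nothing to rule out. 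Ruling it out for $D_h$ requires some input beyond the triangle inequality, e.g.\ a perturbation argument in the spirit of the proofs of Lemmas \ref{lem-zero-lebesgue} and \ref{lem-no-triple}, or uniqueness/confluence properties of LQG geodesics. So while your proof is structurally identical to the paper's, the residual work you (correctly) sense remains in the interior case is about excluding shared initial geodesic segments to distinct Poisson points, not about Euclidean proximity to the point set.
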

\begin{proof}
Suppose $z\in \mcl P_h^\lambda$ and $ u \in H_z$ (resp.\ $u$ is in the interior of $H_z $). Let $\gamma_{u,z}$ be a $D_h$-geodesic from $u$ to $z$. 
If $\gamma_{u,z}$ were not contained in $H_z $ (resp.\ its interior), then there would be a $t\in [0,D_h(u,z)]$ and a $z' \in \mcl P_h^\lambda$, $z'\not=z$, such that $\gamma_{u,z}(t)$ is strictly (resp.\ weakly) $D_h$-closer to $z'$ than to $z$. This implies that $u$ is strictly (resp.\ weakly) $D_h$-closer to $z'$ than to $z$, which contradicts that $u\in H_z $ (resp.\ $u$ is in the interior of $H_z $). 
\end{proof}

\begin{lem} \label{lem-max-cell-diam}
Suppose $h$ is a whole-plane GFF. For each compact set $K\subset\BB C$,  
\eqb \label{eqn-max-cell-diam}
\sup_{H\in\mcl H_h^\lambda , \: H\cap K\not=\emptyset} \sup_{z,w \in H} D_h(z,w) \rta 0 ,\quad \text{in law as $\lambda\rta\infty$} .
\eqe 
The same is true with $h$ replaced by a free-boundary GFF on a Jordan domain or an embedding of a quantum cone, sphere, disk, or wedge.
In the case of a free-boundary GFF or a quantum sphere or wedge, the compact set $K$ is only required to lie in the closure of the domain for $h$.
\end{lem}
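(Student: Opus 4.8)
The plan is to exploit the basic identity that for any $z\in\mcl P_h^\lambda$ and any $u$ in the corresponding cell $H_z$, continuity of $D_h$ and the definition of the cell give $D_h(u,z)=\min_{w\in\mcl P_h^\lambda}D_h(u,w)=:D_h(u,\mcl P_h^\lambda)$; that is, a point of a cell is exactly as far (in $D_h$) from the cell centre as from the whole point process. Consequently, to bound the $D_h$-diameters of the cells meeting $K$ it suffices to show that, with probability tending to one, every point of a fixed $D_h$-neighbourhood of $K$ lies within small $D_h$-distance of $\mcl P_h^\lambda$. Since this is really a statement conditional on $h$, I would fix $\ep>0$, work on the probability space of $\mcl P_h^\lambda$ given $h$, and pass back at the end by dominated convergence.

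Concretely, conditionally on $h$ I would set $K':=\{u\in\ol{\mcl D}:D_h(u,K)\le 3\ep\}$. This set is compact: for the quantum disk, sphere and wedge and the free-boundary GFF it lies in a compact domain, while for the whole-plane GFF or a quantum cone it is compact because $D_h$ a.s.\ induces the Euclidean topology (property~\ref{item-metric-holder}) and is a complete locally compact length metric, hence proper by Hopf--Rinow. Cover $K'$ by finitely many open balls $B_{\ep/2}(x_i;D_h)$, $i=1,\dots,N$, with $x_i\in K'$, and let $A_\lambda$ be the event that each $B_{\ep/2}(x_i;D_h)$ contains a point of $\mcl P_h^\lambda$. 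On $A_\lambda$ I claim that every cell $H_z$ with $H_z\cap K\neq\emptyset$ satisfies $\sup_{z',w'\in H_z}D_h(z',w')\le 4\ep$. Indeed, picking $u_0\in H_z\cap K$ and the ball of the cover containing $u_0$ gives $D_h(u_0,z)=D_h(u_0,\mcl P_h^\lambda)<\ep$; and if some $u\in H_z$ had $D_h(z,u)>2\ep$, then running a $D_h$-geodesic from $z$ to $u$ --- which stays in $H_z$ by Lemma~\ref{lem-cell-geodesic}, and exists by property~\ref{item-metric-geodesic} --- to $D_h$-distance $2\ep$ produces a point $v\in H_z$ with $D_h(v,z)=2\ep$ and $D_h(v,u_0)\le 3\ep$, so $v\in K'$; but then $v$ lies in some $B_{\ep/2}(x_j;D_h)$, which contains a point of $\mcl P_h^\lambda$, so $D_h(v,\mcl P_h^\lambda)<\ep$, contradicting $D_h(v,z)=D_h(v,\mcl P_h^\lambda)=2\ep$. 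Hence every relevant cell has $D_h$-diameter at most $4\ep$, so $\sup_{H\cap K\neq\emptyset}\sup_{z,w\in H}D_h(z,w)\le 4\ep$ on $A_\lambda$.

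To finish, conditionally on $h$ each ball $B_{\ep/2}(x_i;D_h)$ is a nonempty Euclidean-open set, so $\mu_h(B_{\ep/2}(x_i;D_h))>0$ a.s.\ by positivity of the LQG measure; thus $\BB P[A_\lambda^c\mid h]\le N(h)\exp(-\lambda\min_i\mu_h(B_{\ep/2}(x_i;D_h)))\rta 0$ for a.e.\ $h$ as $\lambda\rta\infty$ (the bound $e^{-\lambda m}$ for a ball of $\mu_h$-mass $m$ is the same in the finite-area cases, where $\mcl P_h^\lambda$ is a Poisson number of i.i.d.\ samples). Dominated convergence gives $\BB P[A_\lambda^c]\rta 0$, hence $\sup_{H\cap K\neq\emptyset}\sup_{z,w\in H}D_h(z,w)\rta 0$ in probability, which is the asserted convergence in law. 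The argument is word for word the same in the free-boundary GFF, cone, sphere, disk and wedge cases, using only: existence of $D_h$-geodesics, that $D_h$ induces the Euclidean topology (and hence, via Hopf--Rinow, properness of $K'$ in the unbounded cases), positivity of $\mu_h$ on open sets, and Lemma~\ref{lem-cell-geodesic}. I expect no real obstacle here; the only points requiring a line of care are the compactness of $K'$ in the unbounded cases and the fact that a geodesic from the centre of a cell into the cell stays inside it, both of which are already available.
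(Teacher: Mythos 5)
Your proof is correct and is essentially the paper's argument — cover by $D_h$-balls of small radius and positive $\mu_h$-mass, invoke Poisson concentration so that each ball receives a point as $\lambda\to\infty$ — but you carry out the one step the paper states without comment (``on this event, each cell which intersects $K$ has $D_h$-diameter at most $2\ep$'') with more care: you enlarge $K$ to a $D_h$-neighbourhood $K'$, and use that $D_h$-geodesics from a cell's centre stay in the cell (Lemma~\ref{lem-cell-geodesic}) to produce a contradiction if the cell reached $D_h$-distance $2\ep$ from its centre, since the geodesic would hit a point of $K'$ that is closer to some other Poisson point. That step is genuinely needed (a priori a cell meeting $K$ could have a thin tentacle leaving $K$), and your version makes it rigorous. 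The only thing worth a line of caution is your appeal to completeness of $D_h$ via Hopf--Rinow in the unbounded cases to get compactness of $K'$; this is true for all the surfaces listed but is not among the properties of $D_h$ catalogued in Section~\ref{sec-lqg-metric}, so you should either cite it or sidestep it (e.g.\ cover a fixed large Euclidean ball around $K$ and separately use that $D_h(K,\partial B_R(0))\to\infty$ a.s.\ as $R\to\infty$). Otherwise the proof is sound and complete.
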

\begin{proof}
For each $\ep > 0$, a.s.\ $K$ can be covered by finitely many $D_h$-balls of radius at most $\ep$ and each of these balls has positive $\mu_h$-mass. 
The conditional probability given $h$ that each such ball contains a point of $\mcl P_h^\lambda$ tends to 1 as $\lambda\rta\infty$. 
On this event, each cell which intersects $K$ has $D_h$-diameter at most $2\ep$. 
\end{proof}

\begin{lem} \label{lem-cell-basic}
Suppose $h$ is a whole-plane GFF and $\lambda > 0$.
Almost surely, each cell in $\mcl H_h^\lambda$ has non-empty interior and is compact.
Furthermore, a.s.\ each compact subset of $\BB C$ intersects only finitely many cells of $\mcl H_h^\lambda$.  
The same is true with $h$ replaced by a free-boundary GFF on a Jordan domain or an embedding of a quantum cone, sphere, disk, or wedge.
\end{lem}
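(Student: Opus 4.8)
\textbf{Proof plan for Lemma~\ref{lem-cell-basic}.} The plan is to prove the three assertions—nonempty interior, compactness, local finiteness—in that order, using only elementary properties of the $\sqrt{8/3}$-LQG metric (bi-H\"older with respect to the Euclidean metric, so $D_h$ induces the Euclidean topology), the fact that $\mu_h$ assigns positive and finite mass to every nonempty open bounded set, and the definition of $\mcl P_h^\lambda$ as a Poisson point process with intensity $\lambda\mu_h$. I will treat the whole-plane GFF case; all the other cases (free-boundary GFF on a Jordan domain, quantum cone/sphere/disk/wedge) are handled identically since for each of these $h$ is, in a neighborhood of every point, absolutely continuous with respect to a GFF, the associated $D_h$ still induces the Euclidean topology on the domain, and $\mu_h$ is still locally finite with full support; I will say this explicitly at the end rather than repeating the argument.

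\emph{Nonempty interior.} Fix $z\in\mcl P_h^\lambda$ with cell $H_z$. Let $\delta>0$ be the $D_h$-distance from $z$ to the nearest other point of $\mcl P_h^\lambda$ (this is a.s.\ positive and finite since $\mcl P_h^\lambda$ is locally finite in the Euclidean, hence $D_h$, topology—this will follow from the local finiteness assertion, but can also be seen directly since $\mu_h$ is locally finite). Then the open $D_h$-ball $B_{\delta/2}(z;D_h)$ consists of points strictly $D_h$-closer to $z$ than to any other point of $\mcl P_h^\lambda$, so it is contained in $H_z$; since $D_h$ induces the Euclidean topology, $B_{\delta/2}(z;D_h)$ is Euclidean-open and nonempty, giving the claim.

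\emph{Compactness.} Since $D_h$ induces the Euclidean topology and $H_z$ is $D_h$-closed by definition, it suffices to show $H_z$ is Euclidean-bounded. For this I would argue that $H_z$ is contained in a $D_h$-ball of finite radius around $z$: indeed, if $w\in H_z$ then $D_h(w,z)\le D_h(w,z')$ for every $z'\in\mcl P_h^\lambda$, and for a $\mcl P_h^\lambda$-point $z'$ with $D_h(z,z')=:R$ one gets $D_h(w,z)\le D_h(w,z')\le D_h(w,z)+R$ is vacuous, so instead: pick any $D_h$-ball $B_r(z;D_h)$ with $\mu_h$-mass large enough that a.s.\ it contains at least, say, four points of $\mcl P_h^\lambda$ besides $z$ (possible since $\mu_h(B_r(z;D_h))\uparrow\infty$ as $r\uparrow\infty$), and observe that any $w$ with $D_h(w,z)>2r$ satisfies $D_h(w,z')\le D_h(w,z)-r<D_h(w,z)$ for such a point $z'\in B_r(z;D_h)$, so $w\notin H_z$. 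Hence $H_z\subset \overline{B_{2r}(z;D_h)}$, which is Euclidean-bounded because $D_h$-bounded sets are Euclidean-bounded (again by the bi-H\"older/topology-equivalence property, together with the fact that $D_h$ is a genuine metric on $\BB C$). So $H_z$ is Euclidean-closed and bounded, hence compact.

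\emph{Local finiteness.} Let $K\subset\BB C$ be compact. By the previous paragraph each cell intersecting $K$ is contained in $\overline{B_{2r}(z;D_h)}$ for its center $z$; more usefully, I would argue directly: fix $R>0$ with $\mu_h(B_R(z;D_h))$ large for the relevant $z$'s, or more simply, note that any cell $H_z$ intersecting $K$ has $z$ at $D_h$-distance at most $\op{diam}_{D_h}(K)+ (\text{distance from } K \text{ to } z\text{'s cell})$ of $K$. Cleanest: let $U$ be a bounded open set with $K\subset U$. If $H_z\cap K\neq\emptyset$, then by the compactness argument there is a point $z'\in\mcl P_h^\lambda$ within $D_h$-distance $r$ of $z$ with $r$ chosen (depending only on $z$ through its cell) so small is not possible; instead I will use that $z\in H_z$ and $H_z$ meets $K$, so $D_h(z,K)\le \op{diam}_{D_h}(H_z)$, and $\op{diam}_{D_h}(H_z)\le 2r_z$ where $B_{r_z}(z;D_h)$ is the first $D_h$-ball around $z$ whose $\mu_h$-mass reaches a fixed threshold forcing another $\mcl P_h^\lambda$-point inside—since the $\mcl P_h^\lambda$-points are a.s.\ locally finite, only finitely many $z\in\mcl P_h^\lambda$ lie in the bounded $D_h$-neighborhood $\{w: D_h(w,K)\le 2\,\sup_z r_z\}$ once we control $\sup_z r_z$ over cells meeting $K$. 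To avoid circularity I would instead prove local finiteness \emph{first}, from scratch: $\mcl P_h^\lambda$ restricted to any bounded Euclidean set is a.s.\ finite (the intensity $\lambda\mu_h$ is finite there), so $\mcl P_h^\lambda$ is a.s.\ locally finite in $\BB C$; then since each cell is a subset of the closed $D_h$-ball of radius $2r_z$ around its center and the collection of centers is locally finite while the radii are a.s.\ finite, only finitely many cells can meet any fixed compact set. The \textbf{main obstacle} is precisely this bookkeeping: making the ``$H_z$ is small compared to $K$ so only finitely many centers are relevant'' argument non-circular, i.e., establishing local finiteness of $\mcl P_h^\lambda$ and an a.s.\ uniform (over cells meeting $K$) bound on cell $D_h$-diameters before invoking compactness of individual cells. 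Once one notes that for \emph{each fixed} compact $K$, a.s.\ there is $r>0$ with $\mu_h(B_r(w;D_h))\ge \lambda^{-1}\cdot(\text{const})$ for all $w$ in a fixed bounded $D_h$-neighborhood of $K$ (uniform lower bound on $\mu_h$-mass of small $D_h$-balls near $K$, which holds since $\mu_h$ has full support and this neighborhood is compact), forcing every such $B_r(w;D_h)$ to contain a point of $\mcl P_h^\lambda$ with probability $1-o_r(1)$—hence a.s.\ for $r$ small along a sequence—every cell meeting $K$ has $D_h$-diameter $\le 2r$, and then finiteness is immediate from local finiteness of $\mcl P_h^\lambda$. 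Finally, for the other surfaces I would remark that $D_h$ is still defined (Lemma~\ref{lem-metric-f} and the surrounding discussion), still induces the Euclidean topology, and $\mu_h$ is still locally finite with full support on the (closed) domain, so each step above goes through verbatim, with $K$ allowed to touch the boundary in the free-boundary/sphere/wedge cases since the existence of geodesics and the topology statement extend to $\ol U$ there.
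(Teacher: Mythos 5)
Your first step (nonempty interior) is correct and matches the paper's: a $D_h$-ball around $z\in\mcl P_h^\lambda$ avoiding all other Poisson points contains a Euclidean neighborhood of $z$ since $D_h$ induces the Euclidean topology. Closedness is also immediate as you say. But the compactness and local finiteness arguments both have genuine gaps, and they are exactly the ones the paper flags as the source of difficulty (ruling out long ``tentacles'').

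The compactness step is wrong as written. You claim that if $z'\in B_r(z;D_h)$ and $D_h(w,z)>2r$ then $D_h(w,z')\le D_h(w,z)-r<D_h(w,z)$, so $w\notin H_z$. The triangle inequality gives $D_h(w,z')>D_h(w,z)-D_h(z,z')>D_h(w,z)-r$, i.e., the inequality you wrote is always false, not always true. Substantively: placing finitely many Poisson points \emph{inside} a ball around $z$ does nothing to block $w$ from being closest to $z$ if $w$ sits on the opposite side of $z$ from all those points (already in $\BB R^2$ with the Euclidean metric, four points in the right half of a ball around the origin do not prevent a point far to the left from being Voronoi-assigned to the origin). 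To bound the cell you need Poisson points \emph{surrounding} $z$ — a wall, not a cluster — and no choice of $r$ with a.s.\ four points in $B_r(z;D_h)$ gives that. This is precisely the ``tentacle'' phenomenon the paper has to rule out, and it is why the paper does something much more global: it shows (via the LQG measure lower bound, the change-of-coordinates scaling, and Borel--Cantelli) that a.s.\ every dyadic annulus $B_{2^k}(0)\setminus B_{2^{k-1}}(0)$ eventually contains a Poisson point, and then, using scale invariance of the law modulo additive constant together with triviality of the tail $\sigma$-algebra, that a certain ``separating annulus'' event $E_k$ (the $D_h$-width of a dyadic annulus dominates the $D_h$-diameters of the neighboring annuli) occurs for infinitely many $k$. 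The conjunction of those two facts produces actual shells of Poisson points that no cell can cross, which is what yields boundedness and local finiteness simultaneously.

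The local finiteness step also has an error beyond the circularity you correctly sensed. Your patch asserts that a.s.\ for small $r$ every $D_h$-ball $B_r(w;D_h)$ with $w$ in a compact $D_h$-neighborhood of $K$ contains a Poisson point (hence every cell meeting $K$ has $D_h$-diameter $\le 2r$). For fixed $\lambda$ this is false: as $r\to 0$, a Poisson process with fixed intensity has gaps of every scale inside any region of positive measure, so the probability that \emph{all} radius-$r$ balls in a region contain a point tends to $0$, not $1$. (The statement that cell diameters shrink uniformly is Lemma~\ref{lem-max-cell-diam} and it is a statement as $\lambda\to\infty$, not for fixed $\lambda$ and $r\to0$.) So ``a.s.\ for $r$ small along a sequence'' does not hold, and the deduction breaks. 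The paper avoids this by working at fixed $\lambda$ but letting the \emph{scale} go to infinity and exploiting the 0--1 law for the tail of the field to produce, at some random large scale, a Poisson-populated separating annulus; nothing is needed to hold at all small scales simultaneously.

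In short: your outline correctly isolates the bookkeeping around uniform control of cell sizes as the main obstacle, but both of the proposed resolutions (cluster-of-points near $z$ for compactness; small-ball hitting for local finiteness) are false as stated, and the actual paper proof goes through a scale-invariance plus tail-triviality argument to build the missing separating shells.
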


The proof may seem harder than the reader expects. This is because we need to rule out cells which have extremely long (perhaps even infinitely long) ``tentacles" which could make the cell unbounded or cause the cell to intersect a compact set very far from its center point (which may cause difficulties with local finiteness).
This requires some basic control on how ``spread out" the points of $\mcl H_h^\lambda$ can be. 
We will prove much more quantitative estimates for cells in Section~\ref{sec-ball-estimates}.

\begin{proof}[Proof of Lemma~\ref{lem-cell-basic}]
Since $\mu_h$ is a.s.\ a locally finite measure, a.s.\ we can find a $D_h$-ball centered at any given point of $\mcl P_h^\lambda$ which does not contain any other points of $\mcl P_h^\lambda$. Since $D_h$ induces the Euclidean topology on the domain of $h$ a.s.\ each $D_h$-ball contains a Euclidean neighborhood of its center point. 
Hence a.s.\ every cell in $\mcl H_h^\lambda$ has non-empty interior. 
By the continuity of $(z,w) \mapsto D_h(z,w)$ with respect to the Euclidean topology (which follows from the fact that $D_h$ induces the Euclidean topology on its domain), we see that the cells of $\mcl H_h^\lambda$ are a.s.\ closed.

We will now argue that a.s.\ all of the cells of $\mcl H_h^\lambda$ are bounded and that a.s.\ each compact subset of $\BB C$ intersects only finitely many cells of $\mcl H_h^\lambda$ in the case when $h$ is a whole-plane GFF normalized so that its circle average over $\bdy\BB D$ is zero.  
We first claim that a.s.\ for each large enough $k\in\BB N$, there is a point of $\mcl P_h^\lambda$ in $B_{2^k}(0) \setminus B_{2^{k-1}}(0)$. To see this, we first observe that by standard estimates for the $\sqrt{8/3}$-LQG measure~\cite[Lemma 4.6]{shef-kpz}, the LQG coordinate change formula, and the fact that $h(2^{ k}\cdot) - h_{2^{ k}}(0) \eqD h$, it holds except an event of probability decaying faster than any power of $2^{-k}$ that 
\eqbn
\mu_h\left( B_{2^k}(0) \setminus B_{2^{k-1}}(0) \right)  \geq  2^{\left(Q\sqrt{8/3} - \frac{1}{100} \right) k} e^{\sqrt{8/3} h_{2^k}(0)} .
\eqen
Since $t\mapsto h_{e^t}(0)$ evolves as a standard linear Brownian motion~\cite[Section 3.1]{shef-kpz}, by the Borel-Cantelli lemma it follows that a.s.\ $\mu_h\left( B_{2^k}(0) \setminus B_{2^{k-1}}(0) \right)$ grows exponentially in $k$ as $k\rta\infty$. Since $\mcl P_h^\lambda$ is a Poisson point process with intensity measure $\lambda\mu_h$, our claim now follows.

For $k \in \BB N$, let $E_k$ be the event that 
\alb
&\left( \max_{z,w\in B_{2^k}(0) \setminus B_{2^{k-1}}(0)} D_h(z,w) \right) \vee \left( \max_{z,w\in B_{2^{k-2}}(0) \setminus B_{2^{k-3}}(0)} D_h(z,w)    \right)  \notag\\
&\qquad \qquad < D_h\left( \bdy B_{2^{k-1}}(0 )  , \bdy B_{2^{ k-2}}(0) \right) \wedge D_h\left( \bdy B_{2^{k-3}}(0 )  , \bdy B_{2^{ k-4}}(0) \right)
\ale
i.e., the $D_h$-diameters of the two dyadic annuli on either side of $ B_{2^{k-1}}(0 ) \setminus   B_{2^{ k-2}}(0)$ are each strictly smaller than the $D_h$-distance across this annulus and the $D_h$-distance across $\bdy B_{2^{k-3}}(0 ) \setminus B_{2^{ k-4}}(0)$.  
We claim that a.s.\ $E_k$ occurs for arbitrarily large values of $k\in\BB N$. 
It is easily seen from the local absolute continuity properties of the GFF (see, e.g.,~\cite[Lemma 4.2]{gwynne-miller-gluing}) that $\BB P[E_1] > 0$.
By the scale invariance of the law of $h$, viewed modulo additive constant, we see that there is a universal constant $p > 0$ such that $\BB P[E_k]  \geq p$ for each $k \in \BB N$. 
The event $E_k$ is determined by $h|_{\BB C\setminus B_{2^{k-2}}(0)}$, so since the tail $\sigma$-algebra $\bigcap_{r>0} \sigma\left( h|_{\BB C\setminus B_{2^{ k-2}}(0)} \right)$ is trivial (see, e.g.,~\cite[Lemma 2.2]{hs-euclidean}), we obtain our claim.

Combining the preceding two paragraphs shows that a.s.\ there exists arbitrarily large values of $k \in\BB N$ for which $E_k$ occurs and $B_{2^{ k}}(0)\setminus B_{2^{ k-1}}(0)$ and $B_{2^{k-2}}(0)\setminus B_{2^{k-3}}(0)$ each contain a point of $\mcl P_h^\lambda$. If $k$ is one of these values, then each point of $\BB C \setminus B_{2^{ k-1}}(0)$ is $D_h$-closer to one of the points of $\mcl P_h^\lambda$ in $B_{2^k}(0)\setminus B_{2^{k-1}}(0)$ than it is to any point of $B_{2^{k-2}}(0)$, so cannot be contained in a cell centered at a point of $B_{2^{k-2}}(0)$. Hence every cell centered at a point of $B_{2^{k-2}}(0)$ is contained in $B_{2^{k-1}}(0)$, so in particular is bounded. 
Furthermore, each point of $B_{2^{k-2}}(0)$ is $D_h$-closer to a point of $\mcl P_h^\lambda$ which is in $B_{2^{k-2}}(0)\setminus B_{2^{k-3}}(0)$ than it is to any point of $\BB C\setminus B_{2^{k-1}}(0)$. Hence each cell which intersects $B_{2^{k-2}}(0)$ must be centered at a point in $B_{2^{k-2}}(0)$. 
Since $\mu_h(B_{2^{k-2}}(0) ) < \infty$, a.s.\ $B_{2^{k-2}}(0)$ contains at most finitely many points of $\mcl P_h^\lambda$, so $B_{2^{k-3}}(0)$ intersects only finitely many cells of $\mcl H_h^\lambda$. 
Since this happens for arbitrarily large values of $k$, we conclude the proof in the case of the whole-plane GFF. 
 
The case when $h$ is a quantum cone or a quantum wedge can be deduced from the case of a whole-plane GFF using local absolute continuity, or alternatively be treated similarly to the case of a whole-plane GFF using the radii $R_b$ of~\eqref{eqn-mass-hit-time} with $b  = 2^k$ and the relation~\eqref{eqn-cone-scale}. 
\end{proof}

From basic properties of the Brownian disk, we get the following. 

\begin{lem} \label{lem-zero-lqg-mass}
Suppose $h$ is a whole-plane GFF (with some choice of additive constant).  
Almost surely, the boundary of each of the cells in $\mcl H_h^\lambda$ has zero $\sqrt{8/3}$-LQG area measure.  
In fact, it is a.s.\ the case that for any $z,w\in\mcl P_h^\lambda$, the set of $u \in\BB C$ for which $D_h(u,z) = D_h(u,w)  $ has zero $\sqrt{8/3}$-LQG measure. 
The same is true with $h$ replaced by a free-boundary GFF on a Jordan domain or an embedding of a quantum cone, sphere, disk, or wedge.

\end{lem}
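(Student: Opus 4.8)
The plan is to reduce the statement, via Palm calculus for the Poisson point process, to a standard fact about the Brownian disk: for a $\mu$-typical point $u$, the metric-ball volume profile $r \mapsto \mu(B_r(u;D))$ is continuous.

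First, note that the second assertion implies the first. If $u \in \bdy H_z$ for some $z \in \mcl P_h^\lambda$, then $u \in H_z$, so $D_h(u,z) \leq D_h(u,x)$ for all $x \in \mcl P_h^\lambda$; and since $u$ is a limit of points not in $H_z$, the local finiteness of $\mcl H_h^\lambda$ (Lemma~\ref{lem-cell-basic}) and the continuity of $D_h$ give a $w \in \mcl P_h^\lambda \setminus\{z\}$ with $D_h(u,w) \leq D_h(u,z)$, hence $D_h(u,z) = D_h(u,w)$. Therefore $\bdy H_z \subseteq \bigcup_{w \in \mcl P_h^\lambda \setminus\{z\}} \{u \in \BB C : D_h(u,z) = D_h(u,w)\}$, a countable union, and it has zero $\mu_h$-measure as soon as each such bisector set does.

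Next, I would treat the bisector sets using Mecke's equation for the Poisson process $\mcl P_h^\lambda$ conditionally on $h$ (the additive-constant ambiguity is irrelevant by Remark~\ref{remark-const}): the expected number of ordered pairs $z \neq w$ in $\mcl P_h^\lambda$ with $\mu_h(\{u : D_h(u,z) = D_h(u,w)\}) > 0$ equals $\lambda^2 \BB E\big[\int\!\int \BB 1[\mu_h(\{u : D_h(u,z)=D_h(u,w)\})>0]\,\mu_h(dz)\,\mu_h(dw)\big]$, so it suffices to show this double integral vanishes a.s. Since for a nonnegative quantity $\phi(z,w)$ the integrals $\int\!\int \BB 1[\phi>0]\,\mu_h^{\otimes 2}$ and $\int\!\int \phi\,\mu_h^{\otimes 2}$ vanish together, and by Tonelli $\int\!\int \mu_h(\{u : D_h(u,z)=D_h(u,w)\})\,\mu_h(dz)\mu_h(dw) = \int_{\BB C}\big(\int\!\int \BB 1[D_h(u,z)=D_h(u,w)]\,\mu_h(dz)\mu_h(dw)\big)\mu_h(du)$, where the inner double integral equals $\sum_a \nu_u(\{a\})^2$ with $\nu_u := (D_h(\cdot,u))_*\mu_h$, the whole problem reduces to showing: a.s., for $\mu_h$-a.e.\ $u$ the $\sigma$-finite measure $\nu_u$ is atomless, i.e.\ $\mu_h(\{z : D_h(z,u) = r\}) = 0$ for every $r>0$, equivalently $r \mapsto \mu_h(B_r(u;D_h))$ is continuous.

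It remains to establish this last statement. For the quantum disk, use the equivalence $(\ol{\BB D}, D_h, \mu_h) \eqD (\mcl X, D, \mu)$ with the Brownian disk (\cite[Corollary~1.5]{lqg-tbm2}): adding a marked interior point sampled from $\mu_h$ turns it into a $\mu$-typical point of the Brownian disk, for which the Brownian-snake construction gives that the volume of the radius-$r$ metric ball varies continuously in $r$ (equivalently $\mu(\bdy B_r(u;D)) = 0$ for all $r$). The quantum sphere case is identical via the Brownian map. For the whole-plane GFF, quantum cone, quantum wedge, and free-boundary GFF, reduce to these cases by local absolute continuity: for $u$ in a bounded region and $r$ small enough that $\ol{B_r(u;D_h)}$ stays inside a fixed large ball $B$, the ball $B_r(u;D_h)$ and $\mu_h|_{B_r(u;D_h)}$ are determined by $h|_B$ (locality of the metric, property~\ref{item-metric-local}), whose law is mutually absolutely continuous with the corresponding restriction of a field for which the property is known; since every level $r$ is eventually covered as $B\uparrow\BB C$, this handles all $r$, and a $\mu_h$-positive set of exceptional $u$ would already appear in the bounded-region problem. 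The main obstacle is the Brownian-surface input: pinning down a clean reference or a short self-contained argument for the continuity of the ball-volume profile around a $\mu$-typical point (e.g.\ via the head of the Brownian snake, using that its occupation measure charges no level, together with the snake formula for $\mu(B_r(u;D))$). The Mecke-plus-Tonelli reduction and the local-absolute-continuity transfer are routine by comparison.
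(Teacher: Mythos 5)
Your proposal is correct and takes essentially the same approach as the paper: both reduce via a Mecke/Fubini argument (which the paper states without the explicit Tonelli bookkeeping) to the fact that for a $\mu_h$-typical point $u$ the pushforward $\nu_u = (D_h(\cdot,u))_*\mu_h$ is atomless, both obtain this from the equivalence with the Brownian disk via the head of the Brownian snake, and both extend to the remaining fields by local absolute continuity. The ``main obstacle'' you flag is exactly the fact the paper invokes (the set of times at which the head of the Brownian snake equals any fixed value has zero Lebesgue measure), so the two arguments are essentially identical in substance, with your version merely spelling out the $\mcl P_h^\lambda$-to-$\mu_h$ transfer and the reduction from the first assertion to the second more explicitly.
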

\begin{proof}
We will prove the lemma in the case of the quantum disk. The general case follows from this and the local absolute continuity of the fields mentioned in the lemma with respect to an embedding of the quantum disk (this local absolute continuity holds away from the boundary of the domain in the case of the whole-plane GFF or the quantum cone or sphere and up the domain boundary in the case of a free-boundary GFF or quantum wedge). 
Since the quantum disk is equivalent to the Brownian disk, general Brownian disk theory shows that if $v$ is sampled uniformly from $\mu_h$, then a.s.\ for each $r>0$ one has $\mu_h(\bdy B_r(v;D_h))    = 0$: indeed, this follows, e.g., from the fact that the set of times for which the head of the Brownian snake takes any particular value $r>0$ has zero Lebesgue measure.
Since $\mcl P_h^\lambda$ is a Poisson point process with intensity measure $\lambda \mu_h$, this shows that the probability that two points of $\mcl P_h^\lambda$ lie on the boundary of the same $D_h$-ball centered at $v$ is zero. That is, the probability that $D_h(u,v) = D_h(u,w)$ is zero. Since $v$ is sampled uniformly from $\mu_h$, the statement of the lemma follows.
\end{proof}

We next check that cell boundaries have zero Lebesgue measure. 
The basic idea of the proof is as follows. If $\phi$ is a smooth bump function and $a \in \BB R$, then the laws of $h + a\phi$ and $h$ are mutually absolutely continuous.
Furthermore, a certain ``good" generic event for $h+a\phi$ occurs for Lebesgue-a.e.\ choice of $a$. 
By taking $a$ to be random according to a distribution with a density with respect to Lebesgue measure (e.g., sampled from the standard Gaussian distribution) this implies that the desired generic behavior holds with probability 1, which will then allow us to conclude that the cells have zero Lebesgue measure.
Similar arguments can be used to obtain that cells of $\mcl H^\lambda$ a.s.\ have ``generic" behavior in certain senses.
Some results to this affect appeared in an earlier arXiv version of this paper. 

\begin{lem} \label{lem-zero-lebesgue}
Suppose $h$ is a whole-plane GFF (with some choice of additive constant). 
Almost surely, the boundary of each of the cells in $\mcl H_h^\lambda$ has zero Lebesgue measure.
In fact, it is a.s.\ the case that for any $z,w\in\mcl P_h^\lambda$, the set of $u \in\BB C$ for which $D_h(u,z) = D_h(u,w)  $ has zero Lebesgue measure.
The same is true with $h$ replaced by a free-boundary GFF on a Jordan domain or an embedding of a quantum cone, sphere, disk, or wedge.
\end{lem}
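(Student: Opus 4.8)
The plan is to prove Lemma~\ref{lem-zero-lebesgue} via the same absolute-continuity-with-respect-to-a-bump-function strategy sketched in the paragraph just above its statement, combined with a Fubini argument. It suffices to prove the second, stronger assertion: a.s., for every pair $z,w\in\mcl P_h^\lambda$, the bisector set $\mcl L_{z,w} := \{u\in\BB C : D_h(u,z) = D_h(u,w)\}$ has zero Lebesgue measure. Indeed, the boundary of each cell $H_z$ is contained in the (a.s.\ locally finite, by Lemma~\ref{lem-cell-basic}) union $\bigcup_{w\in\mcl P_h^\lambda\setminus\{z\}} \mcl L_{z,w}$, so the cell-boundary statement follows. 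As in Remark~\ref{remark-const}, it is enough to treat a field on a bounded domain, since the statement is local and the general cases (whole-plane GFF, quantum cone/sphere/disk/wedge) follow by local absolute continuity from the free-boundary or quantum-disk case, exactly as in the proof of Lemma~\ref{lem-zero-lqg-mass}.

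First I would condition on $N := \#\mcl P_h^\lambda$ (or, in the locally finite case, restrict to a compact set and condition on the finitely many points of $\mcl P_h^\lambda$ there). By Remark~\ref{remark-const}, the conditional law of $\mcl P_h^\lambda = \{z_1,\dots,z_N\}$ given $h$ is that of $N$ i.i.d.\ samples from $\mu_h$. Fix a deterministic pair of distinct indices $i\neq j$; by a union bound over the finitely many pairs it is enough to show that a.s.\ $\mathrm{area}(\mcl L_{z_i,z_j}) = 0$. Write $z = z_i$, $w = z_j$. The key observation is a perturbation trick: let $\phi$ be a smooth compactly supported bump function with $\phi \equiv 1$ on a small Euclidean ball $B$ around $z$ and $\phi \equiv 0$ on a neighborhood of $w$ and away from $B$. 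For $a\in\BB R$, the law of $h + a\phi$ is mutually absolutely continuous with that of $h$ (Cameron--Martin). By Lemma~\ref{lem-metric-f}, adding $a\phi$ rescales $D_h$-lengths of path segments inside $B$ by $e^{a/\sqrt6}$ while leaving lengths outside $\mathrm{supp}(\phi)$ unchanged; in particular, for a fixed point $u$ sufficiently far from $z$ (outside $\mathrm{supp}\,\phi$), $D_{h+a\phi}(u,w) = D_h(u,w)$ is constant in $a$, whereas $a\mapsto D_{h+a\phi}(u,z)$ is continuous and, for $a$ large enough, is forced to increase without bound (since any geodesic from $u$ to $z$ must enter $B$ near $z$, picking up length at least $e^{a/\sqrt6}$ times a positive $D_h$-distance across an annulus around $z$). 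Hence $a\mapsto D_{h+a\phi}(u,z) - D_{h+a\phi}(u,w)$ is a continuous function of $a$ that is non-constant, so the set of $a$ for which it vanishes can be arranged to have zero Lebesgue measure — more precisely, one shows $\int_{\BB R} \mathbf 1\{D_{h+a\phi}(u,z) = D_{h+a\phi}(u,w)\}\,da$ is small, and then integrates over $u$.

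Concretely, I would set up a double application of Fubini--Tonelli. Consider
\eqb
\BB E\left[ \int_{\BB R} g(a) \left( \int_{\BB C} \mathbf 1\{D_{h+a\phi}(u,z) = D_{h+a\phi}(u,w)\}\, du \right) da \right]
\eqe
where $g$ is the standard Gaussian density (so that $h + a\phi$ with $a\sim g$ has law absolutely continuous w.r.t.\ $h$, with a density bounded on $\{|a|\le M\}$). Exchanging the order of integration, it equals $\int_{\BB C}\BB E\big[\int_{\BB R} g(a)\,\mathbf 1\{D_{h+a\phi}(u,z)=D_{h+a\phi}(u,w)\}\,da\big]\,du$. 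For each fixed $u$, the inner $da$-integral is zero a.s., because for $h$-a.e.\ realization the map $a\mapsto D_{h+a\phi}(u,z) - D_{h+a\phi}(u,w)$ is real-analytic-free but still non-constant and continuous — it suffices that it is \emph{strictly monotone} on the range of $a$ where the $z$-side geodesic passes through $B$, which holds because increasing $a$ strictly increases the contribution of the portion of the geodesic inside $B$ while the complementary portion and the whole $w$-geodesic are unaffected; and for $u$ outside a bounded set this range is all of $\BB R$. A non-constant continuous monotone function has at most one zero, so the $da$-integral vanishes. Therefore the whole expectation is zero, so a.s.\ $\mathrm{area}(\mcl L_{z,w}) = 0$ for the perturbed field $h + a\phi$ with $a\sim g$; by absolute continuity the same holds for $h$ itself.

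The main obstacle is handling points $u$ that are \emph{close} to both $z$ and $w$ — near the bisector ``crossing point'' — and points $u$ inside $\mathrm{supp}\,\phi$, where the clean monotonicity argument above can fail because a geodesic from $u$ to $z$ might not enter the region where $\phi\equiv 1$, or might enter and leave it in a way that depends on $a$. The fix is to localize: rather than one bump function, cover $\BB C$ (or a large compact set) by countably many small balls $B_k$ on which one builds a tailored bump $\phi_k$ supported near $B_k$ and equal to $1$ on $B_k$, chosen so that for $u\in B_k$ the monotonicity argument applies with $z,w$ replaced appropriately — one wants a perturbation region that a geodesic from $u$ to \emph{exactly one} of $z,w$ is forced to traverse while the other geodesic avoids it. Since for Lebesgue-a.e.\ $u$ one has $D_h(u,z)\ne D_h(u,w)$ trivially (this is what we are proving, but we only need it pointwise after perturbing), the delicate set has measure controlled by the above integral, and the countable union over $k$ still has measure zero. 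I would also need to verify measurability of the maps $(u,a)\mapsto D_{h+a\phi}(u,z)$, which follows from Lemma~\ref{lem-weighted-metric} and property~\ref{item-metric-holder} of $D_h$ (joint continuity of $D_h$ in its arguments together with continuity of the metric in the field perturbation from Lemma~\ref{lem-metric-f}).
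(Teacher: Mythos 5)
The central claim in your second paragraph is false: you assert that ``for a fixed point $u$ sufficiently far from $z$ (outside $\mathrm{supp}\,\phi$), $D_{h+a\phi}(u,w) = D_h(u,w)$ is constant in $a$.'' Placing $u$ and $w$ outside $\mathrm{supp}\,\phi$ does not prevent a $D_{h+a\phi}$-geodesic from $u$ to $w$ from passing \emph{through} $\mathrm{supp}\,\phi$ (which lies near $z$), and if it does, $D_{h+a\phi}(u,w)$ genuinely depends on $a$. Once this is acknowledged, the strict monotonicity of $a\mapsto D_{h+a\phi}(u,z) - D_{h+a\phi}(u,w)$ is no longer clear: both terms strictly increase in $a$ whenever their geodesics pick up length in $\{\phi>0\}$, and the difference could in principle have a zero set of positive measure. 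You later identify an obstacle, but you diagnose it as concerning $u$ close to $z$ or $u\in\mathrm{supp}\,\phi$, rather than the real problem just described. Your proposed fix --- tiling $\BB C$ by balls $B_k$ and building bumps $\phi_k$ near $u$ --- is only gestured at: you would need, for each $u$, a region that a geodesic from $u$ to exactly one of $z,w$ is forced to traverse while a geodesic to the other avoids it, but you give no mechanism producing such a region, and putting the bump near $u$ is awkward since both geodesics start at $u$.

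The paper closes exactly this gap by making the ``geodesic avoids the bump'' condition part of the event it works with. For a fixed $u$, it considers events $E_h(u,V,V')$ indexed by deterministic open sets $V\subset V'$ (finite unions of rational balls) where $z\in V$ and, crucially, \emph{some $D_h$-geodesic from $w$ to $u$ avoids $\ol V'$}; the bump $\phi$ is then chosen with $\phi\equiv 1$ on $V$ and $\mathrm{supp}\,\phi\subset V'$. The existence of admissible $(V,V')$ on $\{D_h(u,z)=D_h(u,w)\}$ holds a.s.\ because $D_h$-geodesics have zero $\mu_h$-mass, hence a.s.\ avoid the $\mu_h$-sampled point $z$, so some $w$-to-$u$ geodesic stays outside a small rational neighborhood $V'$ of $z$. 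On $E_h$ (and assuming the corresponding condition at parameter $b$), the $w$-to-$u$ geodesics avoid $\mathrm{supp}\,\phi$ for \emph{both} parameters, so $D_{h+a\phi}(u,w)=D_{h+b\phi}(u,w)$ exactly, while Lemma~\ref{lem-metric-f} gives a strict change in $D_{h+\cdot\phi}(u,z)$; this yields that $\{a : E_{h+a\phi}\}$ is at most a single point, whence the Gaussian-$A$ argument goes through. So your high-level strategy (perturb by a bump, integrate over the Gaussian parameter, invoke absolute continuity) matches the paper's, but you are missing the key structural idea --- building ``the $w$-geodesic avoids the support of $\phi$'' into a countable family of events, with a.s.\ nonemptiness supplied by the zero $\mu_h$-mass of geodesics.
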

\begin{proof}
We will prove the lemma in the case when $h$ is a free-boundary GFF on $\BB D$. 
This implies the lemma in general by local absolute continuity.  
Note that the lemma statement does not depend on the choice of additive constant for $h$ by Remark~\ref{remark-const} and since adding a constant to $h$ scales $D_h$ by a constant factor.  
If we condition on $h$ and the total number of points in $\mcl P_h^\lambda$, then the elements of $\mcl P_h^\lambda$ are i.i.d.\ samples from $\mu_h$, normalized to be a probability measure.
It therefore suffices to show that if $z,w\in \BB D$ are independent samples from $\mu_h$, normalized to be a probability measure, then a.s.\ the set of $u\in\BB D$ with $D_h(u,z) = D_h(u,w)$ has zero Lebesgue measure. 
For this purpose, it suffices to show that for any fixed $u\in\BB D$, we have $\BB P[D_h(u,z) = D_h(u,w)] = 0$. 
\medskip

\noindent\textit{Step 1: reducing to an event with deterministic sets.}
Henceforth fix $u\in\BB D$ and for open sets $V\subset V' \subset \BB D$ with $\ol V  \subset V'$ and $\ol V'\not=\ol{\BB D}$, let $E_h = E_h(u,V,V')$ be the event that the following is true.
\begin{enumerate}
\item $z\in V$ and $D_h(u,z) = D_h(u,w) $. 
\item There is a $D_h$-geodesic from $w$ to $u$ which does not enter $\ol V'$. 
\end{enumerate}
Since $D_h$-geodesics have zero $\mu_h$-mass, the probability that every $D_h$-geodesic from $z$ to $u$ passes through $w$ is zero, and the same is true with $z$ and $w$ interchanged. 
Consequently, on the event $\{ D_h(u,z) =D_h(u,w)\}$ there a.s.\ exists deterministic open sets $V\subset V' \subset \BB D$ with $\ol V  \subset V'$ such that $E_h(u,V,V')$ occurs, and we can take $V$ and $V'$ to be finite unions of Euclidean balls with rational centers and radii. 

It therefore suffices to show that for any fixed deterministic choice of $V$ and $V'$ as above, one has $\BB P[E_h] = 0$. 
Henceforth fix such a deterministic choice of $V$ and $V'$. 
Since adding a constant to $h$ does not effect the occurrence of the event $E_h$, we can assume without loss of generality that the additive constant for $h$ is fixed so that $\mu_h(U) = 0$ for some deterministic open set $U\subset\BB C$ which is disjoint from $\ol V'$. 
\medskip

\noindent\textit{Step 2: randomly perturbing the field.}
Consider a smooth bump function $\phi : \BB D\rta [0,1]$ with $\phi|_{V } \equiv 1$ and $\phi|_{\BB D\setminus V'} \equiv 0$.
For $a\in\BB R$, the laws of the fields $h+a\phi$ and $h$ are mutually absolutely continuous (this follows from, e.g.,~\cite[Lemma 3.4]{ig1} and the fact that $\phi|_U\equiv 0$, so adding $a\phi$ does not affect the choice of normalization for the field).
It follows from this that the joint laws of $(h,z,w)$ and $(h+ a \phi,z,w)$ are mutually absolutely continuous.
We emphasize that this still holds even though we are always sampling $z$ and $w$ from $\mu_h$, rather than from $\mu_{h+a\phi}$, since adding a smooth function to $h$ results in an LQG measure which is absolutely continuous with respect to $\mu_h$. 
 
Let $E_{h+ a \phi}$ be defined in the same manner as above but with $(D_{h+a\phi} ,z,w)$ in place of $(D_h,z,w)$.
Also let $A$ be a standard Gaussian random variable, independent from everything else. 
The laws of $(h + A\phi , z,w)$ and $(h,z,w)$ are mutually absolutely continuous, so it suffices to show that $\BB P[E_{h+A \phi}] =0$.
For this purpose, it is enough to show that if $E_{h+a  \phi}$ occurs for some $a \in \BB R$, then $E_{h+b\phi}$ does not occur for any $b\in\BB R\setminus \{a\}$ (since this implies that $\BB P[E_{h+A\phi} | (h,z,w)  ]= 0$). 

Let us therefore suppose that $E_{h + a\phi}$ occurs and $b\not=a$. 
We seek to show that $E_{h+b\phi}$ does not occur, so we can assume that all of the conditions in the definition of $E_{h+b\phi}$ occur except possibly for the condition that $D_{h+b\phi}(u,z) = D_{h+b\phi}(u,w)$. We seek to show that $D_{h+b\phi}(u,z) \neq D_{h+b\phi}(u,w)$. 
Since there is a $D_{h+a\phi}$-geodesic and a $D_{h+b\phi}$-geodesic from $w$ to $u$ which do not enter $\ol V'$ and $\phi$ is supported on $\ol V'$, we see that $D_{h+a \phi}(u,w) = D_{h+b\phi}(u,w)$.
On the other hand, since $z \in V $ Lemma~\ref{lem-metric-f} implies that $D_{h+a\phi}(u,z) < D_{h+b\phi}(u,z)$ if $b > a$, and one has the reverse inequality if $b < a$. 
This shows that $D_{h+b\phi}(u,z) \neq D_{h+b\phi}(u,w)$, as required. 
\end{proof}

\bibliography{cibiblong,cibib}

\bibliographystyle{hmralphaabbrv}

\end{document}